\renewcommand{\vec}{\bm}
\newcommand\cB{\mathcal{B}}
\newcommand\cT{\mathcal{T}}
\newcommand\cO{\mathcal{O}}
\newcommand\cM{\mathcal{M}}
\newcommand\cN{\mathcal{N}}
\newcommand\bP{\mathbb{P}}
\newcommand\bE{\mathbb{E}}
\newcommand\bB{\mathbb{B}}
\newcommand\bD{\mathbb{D}}
\newcommand\bV{\mathbb{V}}
\newcommand{\bR}{\mathbb{R}}
\newcommand{\reach}{\rm{reach}}
\newcommand{\bfemph}[1]{{\textit{#1}}}
\numberwithin{equation}{section}
\theoremstyle{plain}
\newtheorem{theorem}{Theorem}[section]
\newtheorem{corollary}{Corollary}[theorem]
\newtheorem{lemma}[theorem]{Lemma}
\newtheorem{proposition}[theorem]{Proposition}
\newtheorem*{remark}{Remark}
\newtheorem{Example}[theorem]{Example}
\newtheorem{definition}[theorem]{Definition}
\begin{document}

\begin{frontmatter}
\title{Manifold Fitting}
\runtitle{Manifold Fitting}

\begin{aug}
\author[A]{\fnms{Zhigang} \snm{Yao}\thanksref{t1,t2}},
\author[A]{\fnms{Jiaji} \snm{Su}}
\author[A]{\fnms{Bingjie} \snm{Li}}
\and
\author[C,D]{\fnms{Shing-Tung} \snm{Yau}} 
\address[A]{Department of Statistics and Data Science, National University of Singapore, 117546 Singapore}


\address[C]{Department of Mathematics, Harvard University, 02138 Cambridge USA}
\address[D]{Yau Mathematical Sciences Center, Jingzhai, Tsinghua University, Haidian District, Beijing, 100084 China}

\thankstext{t1}{Research are supported by MOE Tier 2 grant A-0008520-00-00 and Tier 1 grant A8000987-00-00 at the National University of Singapore.}

\thankstext{t2}{ZY thanks the support from the Center of Mathematical Sciences and Applications (CMSA) at Harvard University during his visit since 2022. ZY thanks Professor Charles Fefferman for his helpful discussions. Part of the work has been done during the Harvard Conference on Geometry and Statistics, supported by CMSA during Feb 27-March 1, 2023.}

\end{aug}

\begin{abstract}
While classical data analysis has addressed observations that are real numbers or elements of a real vector space, at present many statistical problems of high interest in the sciences address the analysis of data that consist of more complex objects, taking values in spaces that are naturally not (Euclidean) vector spaces but which still feature some geometric structure. Manifold fitting is a long-standing problem, and has finally been addressed in recent years by  Fefferman et. al
(\cite{fefferman2020reconstruction,fefferman2021reconstruction}).
We develop a method with a theory guarantee that fits a $d$-dimensional underlying manifold from noisy observations sampled in the ambient space $\bR^D$. The new approach uses geometric structures to obtain the manifold estimator in the form of image sets via a two-step mapping approach. 
We prove that, under certain mild assumptions and with a sample size $N=\cO(\sigma^{-(d+3)})$, these estimators are true $d$-dimensional smooth manifolds whose estimation error, as measured by the Hausdorff distance, is bounded by $\cO(\sigma^2\log(1/\sigma))$ with high probability. Compared with the existing approaches proposed in \cite{fefferman2018fitting, fefferman2021fitting, genovese2014nonparametric, yao2019manifold}, our method exhibits superior efficiency while attaining very low error rates with a significantly reduced sample size, which scales polynomially in $\sigma^{-1}$ and exponentially in $d$. Extensive simulations are performed to validate our theoretical results. Our findings are relevant to various fields involving high-dimensional data in machine learning. Furthermore, our method opens up new avenues for existing non-Euclidean statistical methods in the sense that it has the potential to unify them to analyze data on manifolds in the ambience space domain.
\end{abstract}

\begin{keyword}[class=MSC]
\kwd[Primary ]{62R99}
\kwd[; secondary ]{62A99}
\end{keyword}

\begin{keyword}
\kwd{Manifold fitting}
\kwd{Convergence}
\kwd{Hausdorff distance}
\kwd{Reach}
\end{keyword}

\end{frontmatter} 

\section{Introduction}
The Whitney extension theorem, named after Hassler Whitney, is a partial converse to Taylor's theorem.
Broadly speaking, it states that any smooth function defined on a closed subset of a smooth manifold can be extended to a smooth function defined on the entire manifold. 
This question can be traced back to H. Whitney's work in the early 1930s (\cite{whitney1992analytic}), and has finally been answered in recent years by Charles Fefferman \cite{fefferman2006whitney,fefferman2005sharp}. The solution to the Whitney extension problem led to new insights into data interpolation and inspired the formulation of the Geometric Whitney Problems (\cite{fefferman2020reconstruction,fefferman2021reconstruction}): 
\begin{itemize}
    \item [Problem I.] Assume that we are given a set $\mathcal{A}\subset \bR^D$. When can we construct a smooth $d$-dimensional submanifold $\widehat\cM\subset \bR^D$ to approximate $\mathcal{A}$, and how well can $\widehat\cM$ estimate $\mathcal{A}$ in terms of distance and smoothness?
    \item [Problem II.] If $(\mathcal{A},d_\mathcal{A})$ is a metric space, when does there exist a Riemannian manifold $(\widehat{\cM},g_{\widehat{\cM}})$ that approximates $(\mathcal{A},d_\mathcal{A})$ well?
\end{itemize}


To address these problems, various mathematical approaches have been proposed (see \cite{fefferman2018fitting,fefferman2020reconstruction,fefferman2021reconstruction, fefferman2016testing,fefferman2021fitting}). However, many of these methods rely on restrictive assumptions, making it challenging to implement them as efficient algorithms. As the manifold hypothesis continues to be a foundational element in statistical research, the Geometric Whitney Problems, particularly Problem I, merit further exploration and discussion within the statistical community.

The manifold hypothesis posits that high-dimensional data typically lie close to a low-dimensional manifold. The genesis of the manifold hypothesis stems from the observation that numerous physical systems possess a limited number of underlying variables that determine their behavior, even when they display intricate and diverse phenomena in high-dimensional spaces. For instance, while the motion of a body can be expressed as high-dimensional signals, the actual motion signals comprise a low-dimensional manifold, as they are generated by a small number of joint angles and muscle activations. Analogous phenomena arise in diverse areas, such as speech signals, face images, climate models, and fluid turbulence. The manifold hypothesis is thus essential for efficient and accurate high-dimensional data analysis in fields such as computer vision, speech analysis, and medical diagnosis.

In early statistics, one common approach for approximating high-dimensional data was to use a lower-dimensional linear subspace. One widely used technique for identifying the linear subspace of high-dimensional data is Principal Component Analysis (PCA). Specifically, PCA involves computing the eigenvectors of the sample covariance matrix and then employing these eigenvectors to map the data points onto a lower-dimensional space. One of the principal advantages of methods like this is that they can yield a simplified representation of the data, facilitating visualization and analysis. Nevertheless, linear subspaces can only capture linear relationships in the data and may fail to represent non-linear patterns accurately. To address these limitations, it is often necessary to employ more advanced manifold-learning techniques that can better capture non-linear relationships and preserve key information in the data. These algorithms can be grouped into three categories based on their purpose: manifold embedding, manifold denoising, and manifold fitting. The key distinction between them is depicted in Figure \ref{Fig:Embed_Denoise_Fit}.

\begin{figure}[htb]
\tikzset{every picture/.style={line width=0.75pt}}
        \begin{minipage}{.47\textwidth}
            \resizebox{0.95\textwidth}{!}{
                \subfigure[Embedding]{
                \begin{tikzpicture}[x=0.75pt,y=0.75pt,yscale=-1,xscale=1]
                    \draw [densely dotted]   (6,55) .. controls (105,6) and (205,7) .. (306,56) ;
        
                    \fill [black] (26.08,29.67) circle (1pt);\fill [black] (45.08,49.17) circle (1pt);\fill [black] (65.58,17.42) circle (1pt);\fill [black] (73.58,40.17) circle (1pt);
                    \fill [black] (92.58,15.67) circle (1pt);\fill [black] (106.33,34.67) circle (1pt);\fill [black] (118.08,10.17) circle (1pt);\fill [black] (131.08,30.17) circle (1pt);
                    \fill [black] (154.58,28.67) circle (1pt);\fill [black] (148.83,9.17) circle (1pt);\fill [black] (176.33,32.17) circle (1pt);\fill [black] (178.33,9.67) circle (1pt);
                    \fill [black] (198.58,33.42) circle (1pt);\fill [black] (206.33,13.92) circle (1pt);\fill [black] (217.08,35.67) circle (1pt);\fill [black] (232.08,45.67) circle (1pt);
                    \fill [black] (245.83,42.42) circle (1pt);\fill [black] (265.58,50.67) circle (1pt);\fill [black] (285.33,57.67) circle (1pt);\fill [black] (227.58,14.67) circle (1pt);
                    \fill [black] (243.83,24.42) circle (1pt);\fill [black] (263.83,26.42) circle (1pt);\fill [black] (278.83,35.92) circle (1pt);\fill [black] (298.33,44.67) circle (1pt);
                    
                    \draw (145,91.37) -- (147.83,91.37) -- (147.83,70.67) -- (153.5,70.67) -- (153.5,91.37) -- (156.33,91.37) -- (150.67,105.17) -- cycle ;
                    \draw [red, densely dotted]   (0,140) -- (310,140) ;
                    
                    \fill [red] (26.08,140) circle (1pt);\fill [red] (45.08,140) circle (1pt);\fill [red] (65.58,140) circle (1pt);\fill [red] (73.58,140) circle (1pt);
                    \fill [red] (92.58,140) circle (1pt);\fill [red] (106.33,140) circle (1pt);\fill [red] (118.08,140) circle (1pt);\fill [red] (131.08,140) circle (1pt);
                    \fill [red] (154.58,140) circle (1pt);\fill [red] (148.83,140) circle (1pt);\fill [red] (176.33,140) circle (1pt);\fill [red] (178.33,140) circle (1pt);
                    \fill [red] (198.58,140) circle (1pt);\fill [red] (206.33,140) circle (1pt);\fill [red] (217.08,140) circle (1pt);\fill [red] (232.08,140) circle (1pt);
                    \fill [red] (245.83,140) circle (1pt);\fill [red] (265.58,140) circle (1pt);\fill [red] (285.33,140) circle (1pt);\fill [red] (227.58,140) circle (1pt);
                    \fill [red] (243.83,140) circle (1pt);\fill [red] (263.83,140) circle (1pt);\fill [red] (278.83,140) circle (1pt);\fill [red] (298.33,140) circle (1pt);
                \end{tikzpicture}    
            }    
            }
        \end{minipage}
        \begin{minipage}{.47\textwidth}            
            \resizebox{0.95\textwidth}{!}{
            \subfigure[Denoising]{
                \begin{tikzpicture}[x=0.75pt,y=0.75pt,yscale=-1,xscale=1]
                    \draw [densely dotted]   (6,55) .. controls (105,6) and (205,7) .. (306,56) ;
        
                    \fill [black] (26.08,29.67) circle (1pt);\fill [black] (45.08,49.17) circle (1pt);\fill [black] (65.58,17.42) circle (1pt);\fill [black] (73.58,40.17) circle (1pt);
                    \fill [black] (92.58,15.67) circle (1pt);\fill [black] (106.33,34.67) circle (1pt);\fill [black] (118.08,10.17) circle (1pt);\fill [black] (131.08,30.17) circle (1pt);
                    \fill [black] (154.58,28.67) circle (1pt);\fill [black] (148.83,9.17) circle (1pt);\fill [black] (176.33,32.17) circle (1pt);\fill [black] (178.33,9.67) circle (1pt);
                    \fill [black] (198.58,33.42) circle (1pt);\fill [black] (206.33,13.92) circle (1pt);\fill [black] (217.08,35.67) circle (1pt);\fill [black] (232.08,45.67) circle (1pt);
                    \fill [black] (245.83,42.42) circle (1pt);\fill [black] (265.58,50.67) circle (1pt);\fill [black] (285.33,57.67) circle (1pt);\fill [black] (227.58,14.67) circle (1pt);
                    \fill [black] (243.83,24.42) circle (1pt);\fill [black] (263.83,26.42) circle (1pt);\fill [black] (278.83,35.92) circle (1pt);\fill [black] (298.33,44.67) circle (1pt);
                    
                    \fill [red] (26.08,40) circle (1pt);\fill [red] (46.08,41.5) circle (1pt);\fill [red] (63.58,30.42) circle (1pt);\fill [red] (74.58,31.17) circle (1pt);
                    \fill [red] (91.58,25.67) circle (1pt);\fill [red] (106.33,24.67) circle (1pt);\fill [red] (117.08,20.17) circle (1pt);\fill [red] (131.08,21.17) circle (1pt);
                    \fill [red] (148.83,19.17) circle (1pt);\fill [red] (154.58,19.67) circle (1pt);\fill [red] (176.33,21.67) circle (1pt);\fill [red] (178,22.17) circle (1pt);
                    \fill [red] (198.58,23.42) circle (1pt);\fill [red] (206.33,23.92) circle (1pt);\fill [red] (217.08,25.67) circle (1pt);\fill [red] (227.58,23.67) circle (1pt);
                    \fill [red] (232.08,27.67) circle (1pt);\fill [red] (243.83,34.42) circle (1pt);\fill [red] (245.83,32.42) circle (1pt);\fill [red] (261.83,36.42) circle (1pt);
                    \fill [red] (265.58,38.67) circle (1pt);\fill [red] (278.83,41.92) circle (1pt);\fill [red] (291.33,48.67) circle (1pt);\fill [red] (298.33,51.67) circle (1pt);
                \end{tikzpicture}    
            }}           
            
                \vspace{20pt}
            
            \resizebox{0.95\textwidth}{!}{          
            \subfigure[Fitting]{
                \begin{tikzpicture}[x=0.75pt,y=0.75pt,yscale=-1,xscale=1]
                    \draw [densely dotted]   (6,55) .. controls (105,6) and (205,7) .. (306,56) ;
        
                    \fill [black] (26.08,29.67) circle (1pt);\fill [black] (45.08,49.17) circle (1pt);\fill [black] (65.58,17.42) circle (1pt);\fill [black] (73.58,40.17) circle (1pt);
                    \fill [black] (92.58,15.67) circle (1pt);\fill [black] (106.33,34.67) circle (1pt);\fill [black] (118.08,10.17) circle (1pt);\fill [black] (131.08,30.17) circle (1pt);
                    \fill [black] (154.58,28.67) circle (1pt);\fill [black] (148.83,9.17) circle (1pt);\fill [black] (176.33,32.17) circle (1pt);\fill [black] (178.33,9.67) circle (1pt);
                    \fill [black] (198.58,33.42) circle (1pt);\fill [black] (206.33,13.92) circle (1pt);\fill [black] (217.08,35.67) circle (1pt);\fill [black] (232.08,45.67) circle (1pt);
                    \fill [black] (245.83,42.42) circle (1pt);\fill [black] (265.58,50.67) circle (1pt);\fill [black] (285.33,57.67) circle (1pt);\fill [black] (227.58,14.67) circle (1pt);
                    \fill [black] (243.83,24.42) circle (1pt);\fill [black] (263.83,26.42) circle (1pt);\fill [black] (278.83,35.92) circle (1pt);\fill [black] (298.33,44.67) circle (1pt);
                    
                    \draw [red]   (7.75,55.5) .. controls (31,37.25) and (36.5,36.5) .. (48.75,33) .. controls (61,29.5) and (64.75,33.75) .. (82.75,30) .. controls (100.75,26.25) and (107,19.75) .. (122,18.25) .. controls (137,16.75) and (157.5,20.75) .. (175,21.75) .. controls (192.5,22.75) and (207.75,19.25) .. (224.25,23) .. controls (240.75,26.75) and (301.75,55.5) .. (307.5,59.25) ;
                \end{tikzpicture}    
            }}
        \end{minipage}       
    \caption{Illustrations for (a) manifold embedding, (b) manifold denoising, and (c) manifold fitting.}
    \label{Fig:Embed_Denoise_Fit}
\end{figure}
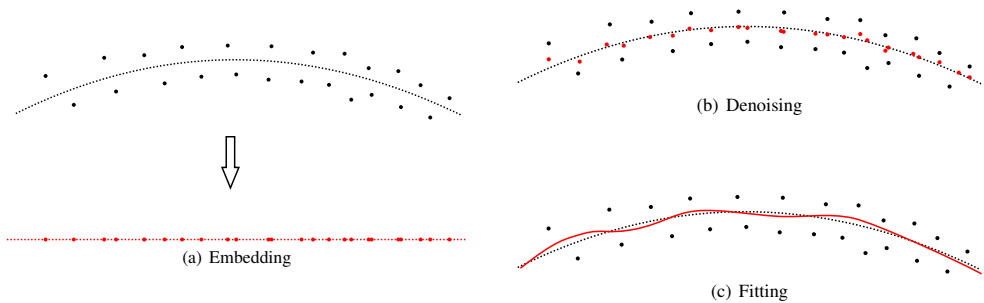


\textit{Manifold embedding}, a technique that aims to find a low-dimensional representation of high-dimensional data sets sampled near an unknown low-dimensional manifold, has gained significant attention and contributed to the development of dimensionality reduction, visualization, and clustering techniques since the beginning of the 21st century. 
This technique seeks to preserve the distances between points on the manifold. Thus the Euclidean distance between each pair of low-dimensional points is similar to the manifold distance between the corresponding high-dimensional points. Manifold embedding tries to learn a set of points in a low-dimensional space with a similar local or global geometric structure to the manifold data. The resulting low-dimensional representation usually has better aggregation and clearer demarcation between classes. Many scholars have performed various types of research on manifold-embedding algorithms, such as Isometric Mapping (\cite{tenenbaum2000global}), Locally Linear Embedding (\cite{roweis2000nonlinear, donoho-hessian-lle}), Laplacian Eigenmaps (\cite{belkin2003laplacian}), Local Tangent Space Alignment (\cite{zhang2004principal}), and Uniform Manifold Approximation Map (\cite{mcinnes2018umap}). Although these algorithms achieve useful representations of real-world data, few of them provide theoretical guarantees. Furthermore, these algorithms typically do not consider the geometry of the original manifold or provide any illustration of the smoothness of the embedding points.

\textit{Manifold denoising} aims to address outliers in data sets distributed along a low-dimensional manifold. Because of disturbances during collection, storage, and transportation, real-world manifold-distributed data often contain noise. Manifold denoising methods are designed to reduce the effect of noise and produce a new set of points closer to the underlying manifold. There are two main approaches to achieving this: feature-based and expectation-based methods. Feature-based methods extract features using techniques such as wavelet transformation (\cite{deutsch2016manifold,yang2021manifold}) or neural networks (\cite{luo2020differentiable}) and then drop non-informative features to recover denoised points via inverse transformations. However, such methods are typically validated only through simulation studies, lacking theoretical analysis. On the other hand, expectation-based methods can achieve manifold denoising by shifting the local sample mean (\cite{wang2010manifold}) or by fitting a local mean function (\cite{sober2020manifold}). However, these methods lack a solid theoretical basis or require overly restrictive assumptions.

\textit{Manifold fitting} is a crucial and challenging problem in manifold learning. It aims to reconstruct a smooth manifold that closely approximates the geometry and topology of a hidden low-dimensional manifold, using only a data set that lies on or near it. Unlike manifold embedding or denoising, manifold fitting strongly emphasizes the local and global properties of the approximation. It seeks to ensure that the generated manifold's geometry, particularly its curvature and smoothness, is precise. The application of manifold fitting can significantly enhance data analysis by providing a deeper understanding of data geometry. A key benefit of manifold fitting is its ability to uncover the shape of the hidden manifold by projecting the samples onto the learned manifold.
For example, when reproducing the three-dimensional structure of a given protein molecule, the molecule must be photoed from different angles several times via cryo-electron microscopy (cryo-EM). Although the orientation of the molecule is equivalent to the Lie group $SO(3)$, the cryo-EM images are often buried by a high-dimensional noise because of the scale of the pixels.
Manifold fitting helps recover the underlying low-dimensional Lie group of protein-molecule images and infer the structure of the protein from it. In a similar manner, manifold fitting can also be used for light detection and ranging (\cite{kim2021nanophotonics}), as well as wind-direction detection (\cite{dang2015wind}). In addition, manifold fitting can generate manifold-valued data with a specific distribution. This capability is potentially useful in generative machine-learning models, such as Generative Adversarial Network (GAN, \cite{goodfellow2014generative}).

\subsection{Main Contribution}
The main objective of this paper is to address the problem of manifold fitting by developing a smooth manifold estimator based on a set of noisy observations in the ambient space. Our goal is to achieve a state-of-the-art geometric error bound while preserving the geometric properties of the manifold. To this end, we employ the Hausdorff distance to measure the estimation error and reach to quantify the smoothness of manifolds. Further details and definitions of these concepts are provided in Section \ref{Section:Notation}.

Specifically, we consider a random vector $Y \in \mathbb{R}^D$ that can be expressed as
\begin{equation}
    \label{eq:Add_model}
    Y = X + \xi,
\end{equation}
where $X \in \bR^D$ is an unobserved random vector following a distribution $\omega$ supported on the latent manifold $\cM$, and $\xi \sim \phi_\sigma$ represents the ambient-space observation noise, independent of $X$, with a standard deviation $\sigma$. The distribution of $Y$ can be viewed as the convolution of $\omega$ and $\phi_\sigma$, whose density at point $y$ can be expressed as
\begin{equation}
    \label{eq:def:nu}
    \nu(y) = \int_\cM \phi_\sigma(y-x)\omega(x)d x.
\end{equation}
Assume $\mathcal{Y} = \{y_i\}_{i=1}^N \subset \bR^D$ is the collection of observed data points,
also in the form of
\begin{equation}
    y_i = x_i + \xi_i, \quad \text{ for } i = 1,\cdots,N, 
\end{equation}
with $(y_i, x_i,\xi_i)$ being $N$ independent and identical realizations of $(Y,X,\xi)$. Based on $\mathcal{Y}$, we construct an estimator $\widehat{\cM}$ for $\cM$ and provide theoretical justification for it under the following main assumptions:

\begin{itemize}
    \item The latent manifold $\cM$ is a compact and twice-differentiable $d$-dimensional sub-manifold, embedded in the ambient space $\bR^D$. Its volume with respect to the $d$-dimensional Hausdorff measure is upper bounded by $V$, and its reach is lower bounded by a fixed constant $\tau$.
    
    \item The distribution $\omega$ is a uniform distribution, with respect to the $d$-dimensional Hausdorff measure, on $\mathcal{M}$.
    
    \item The noise distribution $\phi_\sigma$ is a Gaussian distribution supported on $\mathbb{R}^D$ with density function 
        $$
         \phi_\sigma (\xi)= (\frac{1}{2\pi \sigma^2})^{\frac{D}{2}}\exp{(-\frac{\|\xi\|_2^2}{2\sigma^2})}.
        $$
    \item The intrinsic dimension $d$ and noise standard deviation $\sigma$ are known.
\end{itemize}

In general, $\widehat{\cM}$ is constructed by estimating the projection of points. For a point $y$ in the domain $\Gamma = \{y: d(y,\mathcal{M})\leq C\sigma\}$, we estimate its projection on $\cM$ in a two-step manner: determining the direction and moving $y$ in that direction.
The estimation has both theoretical and algorithmic contributions. From the theoretical perspective:
\begin{itemize}
    \item On the population level, given the observation distribution $\nu$ and the domain $\Gamma$, we are able to obtain a smoothly bordered set $\mathcal{S}\in \mathbb{R}^D$ such that the Hausdorff distance satisfies
    $$d_H(\mathcal{S}, \mathcal{M})<c\sigma^2{\log(1/\sigma)}.$$

    \item On the sample level, given a sample set $\mathcal{Y}$, with sample size $N = \cO(\sigma^{-(d+3)})$ and $\sigma$ being sufficiently small, we are able to obtain an estimator $\widehat{\cM}$ as a smooth $d$-dimensional manifold such that
    \begin{itemize}
        \item For any point $y \in \widehat{\cM}$, $d(y, \mathcal{M})$ is less than $C\sigma^2{\log(1/\sigma)}$;
        \item For any point $x \in \mathcal{M}$, $d(x, \widehat{\cM})$ is less than $C\sigma^2{\log(1/\sigma)}$;
        \item For any two points $y_1$, $y_2$, we have $\|y_1 - y_2\|_2^2/d(y_2, T_{y_1}\widehat{\cM}) \geq c\sigma\tau$,
    \end{itemize}
    with probability $1 - C_1\exp(-C_2\sigma^{-c_1})$, for some positive constant $c$, $c_1$, $C$, $C_1$, and $C_2$.
\end{itemize}

In summary, given a set of observed samples, we can provide a smooth $d$-dimension manifold $\widehat{\cM}$ which is higher-order closer to $\mathcal{M}$ than $\mathcal{Y}$.  Meanwhile, the approximate reach of $\widehat{\cM}$ is no less than $c\sigma\tau$.

In addition to its theoretical contributions, our method has practical benefits for some applications. This paper diverges from previous literature in its motivation, as other works often define output manifolds through the roots or ridge set of a complicated mapping $f$. In contrast, we estimate the orthogonal projection onto $\cM$ for each point near $\cM$. Compared with previous manifold-fitting methods, our framework offers three notable advantages:
\begin{itemize}
    \item Our framework yields a definitive solution to the output manifold, which can be calculated in two simple steps without iteration. This results in greater efficiency than existing algorithms.
    \item Our method requires only noisy samples and does not need any information about the latent manifold, such as its dimension, thereby broadening the applicability of our framework.
    \item Our framework computes the approximate projection of an observed point onto the hidden manifold, providing a clear relationship between input and output. In comparison, previous algorithms used multiple iterative operations, making it difficult to understand the relationship between input samples and the corresponding outputs.
\end{itemize}

\subsection{Related Works}
One main source of manifold fitting would be the Delaunay triangulation \cite{lee1980two} from the 1980s. Given a sample set, a Delaunay triangulation is a meshing
in which no samples are inside the circumcircle of any triangle in the triangulation.  Based on this technique, the early manifold-fitting approaches \cite{cheng2005manifold,boissonnat2009manifold} consider dense samples without noise. In other words, the given data set constitutes $(\epsilon,\delta)$-net of the hidden manifold.  Both \cite{cheng2005manifold} and \cite{boissonnat2009manifold} generate a piecewise linear manifold by triangulation that is geometrically and topologically similar to the hidden manifold. However, the generated manifold is not smooth and the noise-free and densely distributed assumption of the given data prevents the algorithm from being widespread. 

In recent years, manifold fitting has been more intensively studied and developed, the research including the accommodation of multiple types of noise and sample distributions, as well as the smoothness of the resulting manifolds.
Genovese et al. have obtained a sequence of results from the perspective of minimax risk under Hausdorff distance (\cite{genovese2012minimax,genovese2012manifold}) with Le Cam's method. Their work starts from \cite{genovese2012minimax}, where noisy sample points are also modeled as the summation of latent random variables from the hidden manifold and additive noise, but the noise term is assumed to be bounded and perpendicular to the manifold.
The optimal minimax estimation rate is lower bounded by $\cO(N^{-2/(2+d)})$ with properly constructed extreme cases, and upper bounded by $\cO((\frac{\log N}{N})^{2/(2+d)})$ with a sieve maximum likelihood estimator (MLE).
Hence, they conclude the rate is tight up to logarithmic factors, and the optimal rate of convergence is $\cO(N^{-2/(2+d)})$. This result is impressive since the rate only depends on the intrinsic dimension $d$ instead of the ambient dimension $D$.
However, the noise assumption is not realistic, and the sieve MLE is not computationally tractable.
Their subsequent work \cite{genovese2012manifold} considers the noiseless model, clutter noise model, and additive noise model. In the additive model, the noise assumption is relaxed to general Gaussian distributions. They view the distribution of samples as a convolution of a manifold-valued distribution and a distribution of noise in ambient space, and the fitting problem is treated as a deconvolution problem. They find a lower bound for the optimal estimation rate, $\cO(\frac{1}{\log N})$, with the same methodology in \cite{genovese2012minimax}, and an upper bound as a polynomial of $\frac{1}{\log N}$ with a standard deconvolution density estimator.
Nevertheless, their output is not necessarily a manifold, and they claim that this method requires a known noise distribution, which is also unrealistic. Meanwhile, to guarantee a small minimax risk, the required sample size should be in exponential form, which is unsatisfactory.

Since a consistent estimation of the manifold requires a very large sample size, Genovese et al. avoid this difficulty by studying the ridge of the sample distribution as a proxy \cite{genovese2014nonparametric}. They begin by showing that the Hausdorff distance between the ridge of the kernel density estimator (KDE) and the ridge of the sample density is $\cO_P((\frac{\log N}{N})^{2/(D+8)})$, and then prove that the ridge of the sample density is $\cO(\sigma^2\log(1/\sigma))$ in the Hausdorff distance with their model. Consequently, the ridge of the KDE density is shown to be an estimator with rate $\cO_P((\frac{\log N}{N})^{2/(D+8)}) + \cO(\sigma^2\log(1/\sigma))$, and they adopt the mean-shift algorithm \cite{ozertem2011locally} to estimate it.
In two similar works, \cite{chen2015asymptotic,mohammed2017manifold}, ridge estimation is implemented by two other approaches with convergence guarantee. 
While these methods yield favorable results in terms of minimax risk, evaluating the smoothness of their estimators presents a challenge. Despite claims that some methods require only a small sample size, their complex algorithms may prove impractical even for toy examples. Furthermore, the feasibility of the KDE-based algorithm in high-dimensional cases remains unverified. As noted by \cite{dunson2022graph}, kernel-based methodologies which fail to consider the intrinsic geometry of the domain may lead to sub-optimal outcomes, such as convergence rates that are dependent on the ambient dimensionality, $D$, rather than the intrinsic dimensionality, $d$. Although \cite{dunson-wu-2022} introduce a local-covariance-based approach that transforms the global manifold reconstruction problem into a local Gaussian process regression problem, thereby facilitating interpolation of the estimated manifold between fitted data points, their resulting output estimator is still in the form of discrete point sets.


The manifold generated with the above methods may have a very small reach, resulting in small twists and turns that do not align with the local geometry of the hidden manifold. To address this, some new research has aimed to ensure a lower-bounded reach of the output manifold, such as \cite{fefferman2018fitting}, \cite{ yao2019manifold} and \cite{fefferman2021fitting}.
Together with \cite{mohammed2017manifold}, all four papers design smooth mappings to capture some spatial properties and depict the output manifold as its root set or ridge.
Despite the different techniques used, all these papers provide estimators, which are close to $\cM$ and have a lower-bounded reach, with high probability.
Their required sample size depends only on $\sigma$ and $d$, which is noteworthy and instructive.
The main difference is that \cite{mohammed2017manifold}, \cite{fefferman2018fitting}, and \cite{yao2019manifold} estimate the latent manifold with accuracy $\cO(\sigma)$, measured in terms of Hausdorff distance, while \cite{fefferman2021fitting} achieves a higher approximation rate $\cO(\sigma^2)$.
However, the method in \cite{fefferman2021fitting} requires more knowledge of the manifold, which conflicts with the noisy observation assumption, and the restriction of sample size and the immature algorithms for estimating the projection direction hinder the implementation of the idea.
On the other hand, obtaining a manifold defined as the ridge or root set of a function requires additional numerical algorithms.
These algorithms can be computationally expensive and affect the accuracy of the estimate. A detailed technical comparison of these approaches is provided in Section \ref{section:Detail_Review} for completeness.

\subsection{Detailed review of existing fitting algorithms}
\label{section:Detail_Review}

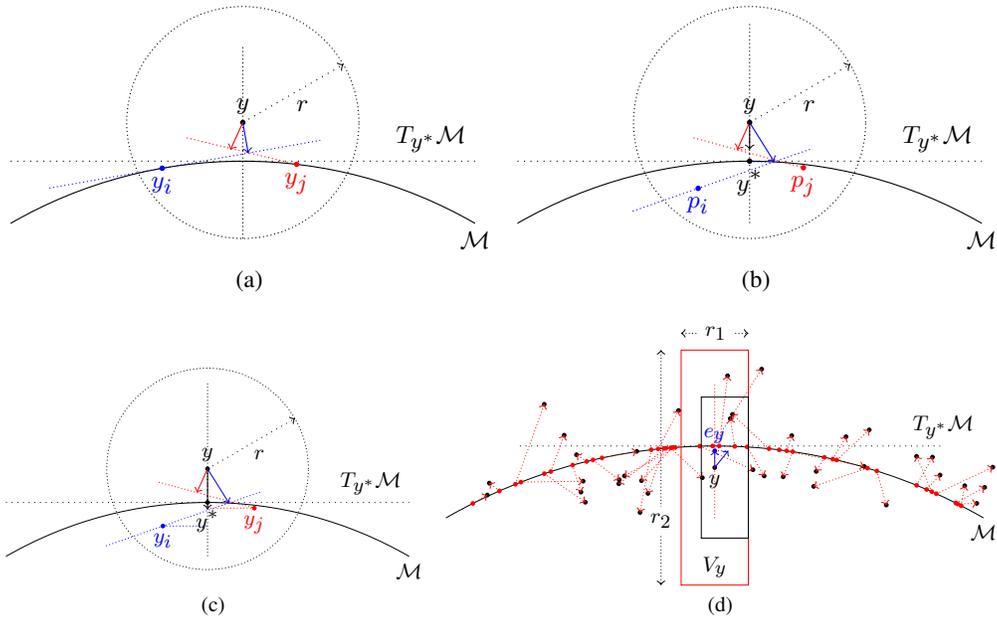
\begin{figure}[htbp]
    \centering
    \resizebox{.95\textwidth}{!}{
        \subfigure[]{
            \begin{tikzpicture}
                \draw (3,-0.8040) node[below] {$\cM$} arc (60:120:6);
                \draw [dotted] (-3,0) --(3,0) node[above left] {$T_{y^*}\cM$};
                \draw [densely dotted] (0,0.5) circle (1.5);
                \fill (0,0.5) node[above]{$y$} circle (1pt);
                \draw [densely dotted] (0,-1) -- (0,1.5);
                \draw [dotted,->] (0,0.5) -- (1.299,1.25) node[left = 15pt,below=10pt]{$r$};
                
                \fill [blue] (-1.041889, -0.09115348) node[below]{$y_i$} circle (1pt);
                \draw [blue,densely dotted] (-2.5, -0.3482578) -- (1,0.2688866);
                \draw [blue,->](0,0.5) -- (0.0696764,0.1048455);

                \fill [red] (0.695935336485, -0.04049717) node[below]{$y_j$} circle (1pt);
                \draw [red,densely dotted] (-0.7245861814802, 0.3016111038197) -- (0.695935336485, -0.04049717);
                \draw [red,->](0,0.5) -- (-0.1603345777298,0.1481173725313);
            \end{tikzpicture}
            \label{Fig:PreviousWork:Mohammed2017}
        }
        \subfigure[]{
            \begin{tikzpicture}
                \draw (3,-0.8040) node[below] {$\cM$} arc (60:120:6);
                \draw [dotted] (-3,0) --(3,0) node[above left] {$T_{y^*}\cM$};
                \draw [densely dotted] (0,0.5) circle (1.5);
                \fill (0,0) node[below]{$y^*$} circle (1pt);
                \fill (0,0.5) node[above]{$y$} circle (1pt);
                \draw [dotted] (0,0) -- (0,0.5);
                \draw [dotted,->] (0,0.5) -- (1.299,1.25) node[left = 15pt,below=10pt]{$r$};
                
                \fill [blue] (-0.6647, -0.3515) node[below]{$p_i$} circle (1pt);
                \draw [blue,densely dotted] (-1.510, -0.6469) -- (0.8,0.1603);
                \draw [blue,->](0,0.5) -- (0.3183,-0.0080);
                \draw [blue,densely dotted] (0,-0.0080)-- (0.3183,-0.0080);

                \fill [red] (0.695935336485, -0.0848142899562) node[below]{$p_j$} circle (1pt);
                \draw [red,densely dotted] (-0.7245861814802, 0.3016111038197) -- (0.695935336485, -0.0848142899562);
                \draw [red,->](0,0.5) -- (-0.1603345777298,0.1481173725313);
                \draw [red,densely dotted] (0,0.148117) -- (-0.1603345777298,0.1481173725313);

                \draw [->] (0,0.5) -- (0,0.15);
                \draw [densely dotted] (0,-0.8) -- (0,1.8);
            \end{tikzpicture}            
            \label{Fig:PreviousWork:Fefferman2018}
        }
    }    
    \resizebox{.95\textwidth}{!}{
        \subfigure[]{
            \begin{tikzpicture}
                \draw (3,-0.8040) node[below] {$\cM$} arc (60:120:6);
                \draw [dotted] (-3,0) --(3,0) node[above left] {$T_{y^*}\cM$};
                \draw [densely dotted] (0,0.5) circle (1.5);
                \fill (0,0) node[below]{$y^*$} circle (1pt);
                \fill (0,0.5) node[above]{$y$} circle (1pt);
                \draw [dotted] (0,0) -- (0,0.5);
                \draw [dotted,->] (0,0.5) -- (1.299,1.25) node[left = 15pt,below=10pt]{$r$};
                
                \fill [blue] (-0.6647, -0.3515) node[below]{$y_i$} circle (1pt);
                \draw [blue,densely dotted] (-1.510, -0.6469) -- (0.8,0.1603);
                \draw [blue,->](0,0.5) -- (0.3183,-0.0080);
                \draw [blue,densely dotted] (-0.6647, -0.3515) -- (0, -0.3515);
                \draw [red,densely dotted] (0.69593, -0.0848) -- (0, -0.0848);

                \fill [red] (0.695935336485, -0.0848142899562) node[below]{$y_j$} circle (1pt);
                \draw [red,densely dotted] (-0.7245861814802, 0.3016111038197) -- (0.695935336485, -0.0848142899562);
                \draw [red,->](0,0.5) -- (-0.1603345777298,0.1481173725313);
                \draw [->] (0,0.5) -- (0,-0.1);
                \draw [densely dotted] (0,-0.8) -- (0,1.8);
            \end{tikzpicture}            
            \label{Fig:PreviousWork:Yao2019}
        }
        \subfigure[]{
            \begin{tikzpicture}    
                \draw (0 ,0) node[below] {$\cM$} arc (60:120:8);
                \draw [dotted] (-6.9,1.0718) --(-4,1.0718) --(-0.6,1.0718) node[above] {$T_{y^*}\cM$};
                \draw (-4,-1) node[above] {$V_y$};
                \draw[red] (-4.5,-1) rectangle (-3.5,2.5);
                \draw[densely dotted, <->] (-4.5,2.75) --(-3.5,2.75) node[left = 6pt,fill=white] {$r_1$};
                \draw[densely dotted, <->] (-4.8,2.5) --(-4.8,-1) node[above = 21pt, fill=white] {$r_2$};
                \fill[red] (-2.8390,0.9871) circle (1pt);\fill (-2.9811,0.6126) circle (1pt);\draw[densely dotted, ->,red] (-2.8390,0.9871) --(-2.9811,0.6126);
                \fill[red] (-0.7018,0.3603) circle (1pt);\fill (0.1380,0.7562) circle (1pt);\draw[densely dotted, ->,red] (-0.7018,0.3603) --(0.1380,0.7562);
                \fill[red] (-4.9451,1.0158) circle (1pt);\fill (-5.1115,0.0816) circle (1pt);\draw[densely dotted, ->,red] (-4.9451,1.0158) --(-5.1115,0.0816);
                \fill[red] (-4.6281,1.0471) circle (1pt);\fill (-5.0900,0.5550) circle (1pt);\draw[densely dotted, ->,red] (-4.6281,1.0471) --(-5.0900,0.5550);
                \fill[red] (-1.5871,0.6993) circle (1pt);\fill (-1.4464,0.1926) circle (1pt);\draw[densely dotted, ->,red] (-1.5871,0.6993) --(-1.4464,0.1926);
                \fill[red] (-2.1841,0.8630) circle (1pt);\fill (-1.7007,1.3119) circle (1pt);\draw[densely dotted, ->,red] (-2.1841,0.8630) --(-1.7007,1.3119);
                \fill[red] (-6.5362,0.6591) circle (1pt);\fill (-5.9252,0.2342) circle (1pt);\draw[densely dotted, ->,red] (-6.5362,0.6591) --(-5.9252,0.2342);
                \fill[red] (-4.7549,1.0361) circle (1pt);\fill (-4.9937,0.3653) circle (1pt);\draw[densely dotted, ->,red] (-4.7549,1.0361) --(-4.9937,0.3653);
                \fill[red] (-0.3844,0.2081) circle (1pt);\fill (-0.1001,0.2172) circle (1pt);\draw[densely dotted, ->,red] (-0.3844,0.2081) --(-0.1001,0.2172);
                \fill[red] (-0.9954,0.4861) circle (1pt);\fill (-0.7597,0.9058) circle (1pt);\draw[densely dotted, ->,red] (-0.9954,0.4861) --(-0.7597,0.9058);
                \fill[red] (-3.9336,1.0715) circle (1pt);\fill (-3.8094,2.1171) circle (1pt);\draw[densely dotted, ->,red] (-3.9336,1.0715) --(-3.8094,2.1171);
                \fill[red] (-5.9049,0.8417) circle (1pt);\fill (-6.0577,0.9415) circle (1pt);\draw[densely dotted, ->,red] (-5.9049,0.8417) --(-6.0577,0.9415);
                \fill[red] (-4.6563,1.0448) circle (1pt);\fill (-4.1842,0.5993) circle (1pt);\draw[densely dotted, ->,red] (-4.6563,1.0448) --(-4.1842,0.5993);
                \fill[red] (-3.0402,1.0140) circle (1pt);\fill (-1.7753,0.7675) circle (1pt);\draw[densely dotted, ->,red] (-3.0402,1.0140) --(-1.7753,0.7675);
                \fill[red] (-0.7695,0.3905) circle (1pt);\fill (-0.9972,0.9182) circle (1pt);\draw[densely dotted, ->,red] (-0.7695,0.3905) --(-0.9972,0.9182);
                \fill[red] (-6.9658,0.5017) circle (1pt);\fill (-6.5370,1.6932) circle (1pt);\draw[densely dotted, ->,red] (-6.9658,0.5017) --(-6.5370,1.6932);
                \fill[red] (-3.7018,1.0662) circle (1pt);\fill (-3.7693,1.4801) circle (1pt);\draw[densely dotted, ->,red] (-3.7018,1.0662) --(-3.7693,1.4801);
                \fill[red] (-4.5874,1.0502) circle (1pt);\fill (-5.3435,0.6265) circle (1pt);\draw[densely dotted, ->,red] (-4.5874,1.0502) --(-5.3435,0.6265);
                \fill[red] (-4.2242,1.0687) circle (1pt);\fill (-3.4319,0.7174) circle (1pt);\draw[densely dotted, ->,red] (-4.2242,1.0687) --(-3.4319,0.7174);
                \fill[red] (-6.8839,0.5339) circle (1pt);\fill (-5.9580,0.5476) circle (1pt);\draw[densely dotted, ->,red] (-6.8839,0.5339) --(-5.9580,0.5476);
                \fill[red] (-2.9293,0.9998) circle (1pt);\fill (-2.8541,1.2264) circle (1pt);\draw[densely dotted, ->,red] (-2.9293,0.9998) --(-2.8541,1.2264);
                \fill[red] (-0.3336,0.1822) circle (1pt);\fill (-0.2795,0.4803) circle (1pt);\draw[densely dotted, ->,red] (-0.3336,0.1822) --(-0.2795,0.4803);
                \fill[red] (-0.8468,0.4241) circle (1pt);\fill (-0.6471,0.6389) circle (1pt);\draw[densely dotted, ->,red] (-0.8468,0.4241) --(-0.6471,0.6389);
                \fill[red] (-3.5170,1.0572) circle (1pt);\fill (-3.2643,0.6250) circle (1pt);\draw[densely dotted, ->,red] (-3.5170,1.0572) --(-3.2643,0.6250);
                \fill[red] (-2.3676,0.9035) circle (1pt);\fill (-2.4899,0.6930) circle (1pt);\draw[densely dotted, ->,red] (-2.3676,0.9035) --(-2.4899,0.6930);
                \fill[red] (-2.2436,0.8766) circle (1pt);\fill (-2.0460,1.2104) circle (1pt);\draw[densely dotted, ->,red] (-2.2436,0.8766) --(-2.0460,1.2104);
                \fill[red] (-4.0323,1.0717) circle (1pt);\fill (-3.3017,2.2170) circle (1pt);\draw[densely dotted, ->,red] (-4.0323,1.0717) --(-3.3017,2.2170);
                \fill[red] (-1.7272,0.7421) circle (1pt);\fill (-2.5288,0.4695) circle (1pt);\draw[densely dotted, ->,red] (-1.7272,0.7421) --(-2.5288,0.4695);
                \fill[red] (-5.6780,0.8938) circle (1pt);\fill (-5.1595,0.9918) circle (1pt);\draw[densely dotted, ->,red] (-5.6780,0.8938) --(-5.1595,0.9918);
                \fill[red] (-4.8404,1.0275) circle (1pt);\fill (-4.5391,1.6005) circle (1pt);\draw[densely dotted, ->,red] (-4.8404,1.0275) --(-4.5391,1.6005);
                \fill[red] (-7.5996,0.2162) circle (1pt);\fill (-7.3842,0.3415) circle (1pt);\draw[densely dotted, ->,red] (-7.5996,0.2162) --(-7.3842,0.3415);
                \fill[red] (-0.4140,0.2231) circle (1pt);\fill (0.0170,0.4563) circle (1pt);\draw[densely dotted, ->,red] (-0.4140,0.2231) --(0.0170,0.4563);
                \fill[red] (-7.2041,0.4021) circle (1pt);\fill (-7.3544,0.5188) circle (1pt);\draw[densely dotted, ->,red] (-7.2041,0.4021) --(-7.3544,0.5188);
                \fill[red] (-6.1143,0.7874) circle (1pt);\fill (-5.9907,0.3842) circle (1pt);\draw[densely dotted, ->,red] (-6.1143,0.7874) --(-5.9907,0.3842);
                \fill[red] (-4.7259,1.0388) circle (1pt);\fill (-5.4281,0.5091) circle (1pt);\draw[densely dotted, ->,red] (-4.7259,1.0388) --(-5.4281,0.5091);
                \fill[red] (-6.4395,0.6908) circle (1pt);\fill (-6.2409,1.2324) circle (1pt);\draw[densely dotted, ->,red] (-6.4395,0.6908) --(-6.2409,1.2324);
                \fill[red] (-3.1933,1.0310) circle (1pt);\fill (-3.7367,1.5424) circle (1pt);\draw[densely dotted, ->,red] (-3.1933,1.0310) --(-3.7367,1.5424);
                \fill[red] (-5.8119,0.8639) circle (1pt);\fill (-5.4203,0.5675) circle (1pt);\draw[densely dotted, ->,red] (-5.8119,0.8639) --(-5.4203,0.5675);

                \fill (-4,0.75) node[below]{$y$} circle (1pt);
                \fill [blue] (-4,1) node[above]{$e_y$} circle (1pt);
                \draw [blue,densely dotted] (-4,1) -- (-3.8,1);
                \draw [blue,->] (-4,0.75) -- (-4,1);       
                \draw [blue,->] (-4,0.75) -- (-3.8,1);
                \draw [red,densely dotted] (-4,0) -- (-4,2);     
                \draw (-4.2,-0.3) rectangle (-3.5, 1.8);
            \end{tikzpicture}
            \label{Fig:PreviousWork:Fefferman2021}
        }
    }
    \caption{A toy example to illustrate the methodologies in \cite{mohammed2017manifold, fefferman2018fitting, yao2019manifold,fefferman2021fitting}.}
    \label{Fig:PreviousWork}
\end{figure}

This subsection presents a review of the technical details of the previously mentioned work \cite{mohammed2017manifold, fefferman2018fitting, yao2019manifold,fefferman2021fitting}. These papers relax the requirement for sample size by exploiting the geometric properties of the data points. For ease of understanding, we introduce some common geometric notations here, while more detailed notations can be found in Section \ref{Section:Notation}. For a point $x\in \cM$, $T_x\cM$ denotes the tangent space of $\cM$ at $x$, 
and $\Pi_{x}^{\perp}$ is the orthogonal projection matrix onto the normal space of $\cM$ at $x$. For a point $y$ off $\cM$, $y^* = \arg\min_{x\in\cM} \|y-x\|_2$ denotes the projection of $y$ on $\cM$, and $\widehat{\Pi}_{y}^\perp$ is the estimator of ${\Pi}_{y^*}^\perp$. For an arbitrary matrix $A$, $\Pi_{hi}(A)$ represents its projection on the span of the eigenvectors corresponding to the largest $D-d$ eigenvalues. We use the notation $\cB_D(z,r)$ to denote a $D$-dimensional ball with center $z$ and radius $r$. To be consistent with the papers subsequently referred to, we frequently use upper- and lower-case letters (such as $c$, $c_1$, $c_2$, $C$, $C_1$, and $C_2$) to represent absolute constants. The upper and lower cases represent constants greater or less than one, respectively, and their values may vary from line to line.

\subsubsection*{An early work without noise}
One early work on manifold fitting is \cite{mohammed2017manifold}, which only focuses on the case of noiseless sample $\mathcal{X}=\{x_i\in\cM\}_{i=1}^N$. To reconstruct an $\widehat{\cM}$ with $\mathcal{X}$, the authors construct a function $f(y)$ to approximate the squared distance from an arbitrary point $y$ to $\cM$, and the ridge set of $f(y)$ is a proper estimator of $\cM$.

As stated in \cite{mohammed2017manifold}, $f(y)$ can be estimated by performing local Principal Components Analysis (PCA). The procedure is shown in Fig. \ref{Fig:PreviousWork:Mohammed2017}. For an arbitrary point $y$ close to $\cM$, its $r$ neighborhood index set is defined as 
$$I_y = \{i:\|x_i-y\|_2\leq r\}.$$

For each $i\in I_y$, $\widehat\Pi_{x_i}^\perp$ can be obtained via local PCA, and the square distance between $y$ and $T_{x_i}\cM$ is approximated by 
$$f_i(y)= \|\widehat\Pi_{x_i}^\perp(y-x_i)\|_2^2.$$
Then, $f(y)$ is designed as the weighted average of $f_i(y)$'s; that is, 
$$f(y) = \sum_{i\in I_y} \alpha_i(y) f_i(y),$$
with the weights defined as
$$\tilde{\alpha}_i(y)=\theta(\frac{\sqrt{f_i(y)}}{2r}),\quad \tilde{\alpha}(y) =  \sum_{i\in I_y}\tilde{\alpha}_i(y), \quad \alpha_i(y) = \frac{\tilde{\alpha}_i(y)}{\tilde{\alpha}(y)},$$
and $\theta(t)$ is an indicator function such that $\theta(t)=1$ for $t\leq 1/4$ and $\theta(t) = 0$ for $t\geq 1$.

The estimator $\widehat{\cM}$ is given as the ridge set of $f(y)$; that is,
$$\widehat{\cM} = \{y\in\bR^D:~d(y,\cM)\leq cr,~\Pi_{hi}(H_f(y))\partial f(y) = 0\},$$
where $H_f(y)$ is the Hessian matrix of $f$ at point $y$. Such an $\widehat{\cM}$ is claimed to have a reach bounded below by $cr$ and $\cO(r^2)$-close to $\cM$ in terms of Hausdorff distance.

Although this paper does not consider the ambient space noise and relies heavily on a well-estimated projection direction $\widehat\Pi_{x_i}^\perp$, the idea of approximating the distance function with projection matrices is desirable and provides a good direction for subsequent work.

\subsubsection*{An attempt with noise}
In the follow-up work \cite{fefferman2018fitting}, noise from the ambient space is considered.  Similar to \cite{mohammed2017manifold}, the main aim of \cite{fefferman2018fitting} is to estimate the bias from an arbitrary point to the hidden manifold with local PCA. The collection of all zero-bias points can be interpreted as an estimator for $\cM$.

To construct the bias function $f(y)$, the authors assume there is a sample set $\mathcal{Y}_0 = \{y_i\}_{i=1}^{N}$, with the sample size satisfying
$$N/\ln(N)>\frac{CV}{\omega_{min}\beta_d(r^2/\tau)^d},\quad N\leq e^D,$$
where $V$ is the volume of $\cM$, $\beta_d$ is the volume of a Euclidean unit ball in $\bR^d$, and $\omega_{min}$ is the lower bound of $\omega$ on $\cM$. Under such conditions, $\mathcal{Y}_0$ is $Cr^2/\tau$-close to $\cM$ in Hausdorff distance with probability $1-N^{-C}$. Then, a subset  $\mathcal{Y}_1 = \{p_i\} \subset \mathcal{Y}_0$ is selected greedily as a minimal $cr/d$-net of $\mathcal{Y}_0$.
For each $p_i\in \mathcal{Y}_1$, there exists a $D$-dimensional ball $U_i=\cB_D(p_i,r)$ and a $d$-dimensional ball $D_i=\cB_d(p_i,r)$, where $D_i$ can be viewed as a disc cut from $U_i$.
In the ideal case, $D_i$ should be parallel to $T_{p_i^*}\cM$. Thus, the authors provide a new algorithm to estimate the basis of $D_i$ with the sample points falling in $U_i$. The basis of $D_i$ leads to an estimator of $\Pi_{p_i}^\perp$, which is denoted by $\widehat{\Pi}_{p_i}^\perp$.

For $y$ near $\cM$, let $I_y = \{i:\|p_i-y\|_2\leq r \}$, and
$$f_i(y) = \widehat{\Pi}_{p_i}^\perp(y-p_i),\quad \text{for } i \in I_y.$$
Then, $f(y)$ can be constructed as 
\begin{align}\label{fy:fefferman18}
 f(y) = \sum_{i\in I_y} \alpha_i(y)(\widehat{\Pi}_y^\perp\widehat{\Pi}_{p_i}^\perp)(y-p_i),    
\end{align}
with $\widehat{\Pi}_y^\perp = \Pi_{hi}(\sum_{i\in I_y} \alpha_i(y)\widehat{\Pi}_{p_i}^\perp)$, and the weights defined as 
$$\tilde{\alpha}_i(y)=(1 - \frac{\|y - p_i\|_2^2}{r^2})^{d+2},\quad \tilde{\alpha}(y) =  \sum_{i\in I_y}\tilde{\alpha}_i(y), \quad \alpha_i(y) = \frac{\tilde{\alpha}_i(y)}{\tilde{\alpha}(y)},$$
for $y$ satisfying $\|y-p_i\|_2\leq r$ and $0$ otherwise. Subsequently, there is
$$\widehat{\cM} = \{y\in\bR^D:~d(y,\cM)\leq cr,\quad f(y) = 0\}.$$

By setting $r = \cO(\sigma)$, the authors prove $\widehat{\cM}$ is $\cO(r^2) = \cO(\sigma)$-close to $\cM$ and its reach is bounded below by $c\tau$ with probability $1-N^{-C}$. However, it is notable that the algorithm for disc-orientation estimation is not proved theoretically in the paper, and the accuracy of $f(y)$ is limited by the subsequent successive projections $\widehat{\Pi}_y^\perp\widehat{\Pi}_{p_i}^\perp$ and the lack of accuracy in estimating $\widehat{\Pi}_y^\perp$. Moreover, because of the limitation of the sample size $N$, the estimation error of the manifold has a non-zero lower bound and the practical application is very limited.

\subsubsection*{A better estimation for noisy data}
To address the issues in \cite{fefferman2018fitting}, the authors of \cite{yao2019manifold} propose an improved method that avoids the continuous projections and estimates ${\Pi}_{y^*}^\perp$ better. The authors claim that fitting the manifold is enough to estimate the projection direction and the local mean well, because the manifold can be viewed as a linear subspace locally, and the local sample mean is a good reference point for the hidden manifold.
They assume there is a sample set $\mathcal{Y} = \{y_i\}_{i=1}^N$. For each $y_i$, $\widehat{\Pi}_{y_i}^\perp$ is obtained by local PCA with $r = \cO(\sqrt{\sigma})$. Then, for an arbitrary point $y$ with $I_y = \{i:\|y_i-y\|_2\leq r \}$, the bias function can be constructed as 
\begin{align}\label{fy:yao2019}
f(y) = \widehat{\Pi}_y^\perp(y - \sum_{i\in I_y}\alpha_i(y)y_i),    
\end{align}
where $\widehat{\Pi}_y^\perp = \Pi_{hi}(\sum_{i\in I_y}\alpha_i(y)\widehat\Pi_{y_i}^\perp)$. The weights are defined as 
$$\tilde{\alpha}_i(y)=(1 - \frac{\|y - y_i\|_2^2}{r^2})^{\beta},\quad \tilde{\alpha}(y) =  \sum_{i\in I_y}\tilde{\alpha}_i(y), \quad \alpha_i(y) = \frac{\tilde{\alpha}_i(y)}{\tilde{\alpha}(y)},$$
for $y$ satisfying $\|y-y_i\|\leq r$ and $0$ otherwise, with $\beta\geq 2$ being a fixed integer which guarantees $f(y)$ to be derivable in the second order. With such a bias function, the output manifold can be given as 
$$\widehat{\cM} = \{y\in\bR^D:~d(y,\cM)\leq cr,\quad f(y) = 0\},$$
which is shown to be $\cO(\sigma)$-close to $\cM$ in Hausdorff distance and have a reach no less than $c\tau$ with probability $1 - c\exp(-Cr^{d+2}N)$.
Although the theoretical error bound in \cite{yao2019manifold} remains the same as that in \cite{fefferman2018fitting}, the method in \cite{yao2019manifold} vastly simplifies the computational process and outperforms the previous works numerically in many cases.


\subsubsection*{The necessity of noise reduction and an attempt}
Based on the result mentioned above, the error in the manifold fitting can be attributed to two components: sampling bias and learning error, namely
$$d_H(\cM,\widehat{\cM}) \leq d_H(\cM,\mathcal{Y}) + d_H(\mathcal{Y},\widehat{\cM}),$$
where $\mathcal{Y}$ is the generic sample set. Usually, the first term can be regarded as $\cO(\sigma)$, as the Gaussian noise will die out within several $\sigma$, and the second term is bounded by $Cr^2$ with the PCA-based algorithms listed above. The optimal radius of local PCA, which balances the overall estimation error and the computation complicity, should be $r = \cO(\sqrt{\sigma})$, and leads to a fitting error such that 
$$d_H(\cM,\widehat{\cM})\leq C\sigma.$$
Since the sampling bias $d_H(\mathcal{Y}, \cM)= \cO(\sigma)$ prevents us from moving closer to $\cM$, denoising is necessary for a better $\widehat{\cM}$.

On the basis of \cite{fefferman2018fitting}, the same group of authors provides better results in \cite{fefferman2021fitting} with refined points and a net. They refine the points by constructing a mesh grid on each disc $D_i$. As illustrated in Figure \ref{Fig:PreviousWork:Fefferman2021}, each hyper-cylinder of the mesh is much longer in the direction perpendicular to the manifold than parallel.
Subsequently, in each hyper-cylinder, a subset of $\mathcal{Y}_0$ is selected with a complicated design, and their average is denoted by $e_y$.
The collection of such $e_y$ in all hyper-cylinders is denoted by $\mathcal{Y}_1$, which is shown to be $Cd\sigma^2/\tau$-close to $\cM$.

The authors take $\mathcal{Y}_1$ as the input data set of \cite{fefferman2018fitting} to perform subsampling and construct a new group of discs $\{D_i^\prime\}$. With the refined points in $\mathcal{Y}_1$ and refined discs $\{D_i^\prime\}$, the same function $f(y)$ will lead to an $\widehat{\cM}$ which is $\cO(\sigma^2)$-close to $\cM$ and has a reach no less than $c\tau$ with probability $1-N^{-C}$.

 To the best of our knowledge, the result presented in \cite{fefferman2021fitting} constitutes a state-of-the-art error bound for manifold fitting. However, some challenges exist in implementing the method described in that paper:
\begin{itemize}
    \item The refinement step for $e_y$ involves sampling directly from the latent manifold, which contradicts the initial assumption of noisy data.
    \item The algorithms for refining points and determining the orientation of discs are only briefly described and may not be directly applied to real-world data.
    \item The sample-size requirement is similar to that described in \cite{fefferman2018fitting}, further limiting the practical implementation of the algorithm.
\end{itemize}

\subsection{Organization}
This paper is organized as follows. Section 2 presents the model settings, assumptions, preliminary results, and mathematical preliminaries. Section 3 introduces a novel contraction direction-estimation method. The workflow and theoretical results of our local contraction methods are included in Section 4, and the output manifold is analyzed in Section 5. Numerical studies are presented in Section 6, to demonstrate the effectiveness of our approach. Finally, Section 7 provides a summary of the key findings and conclusions of our study, as well as several directions for future research.

\section{Proposed method}
In this section, we present some necessary notations and fundamental concepts, then formally state our primary result regarding the fitting of a manifold. Finally, we introduce several lemmas and propositions crucial for further elaboration.
\subsection{Notations and important concepts}
\label{Section:Notation}
Throughout this paper, we use both upper- and lower-case $C$ to represent absolute constants. The distinction between upper and lower-case letters represents the magnitude of the constants, with the former being greater than one and the latter being less than one. The values of these constants may vary from line to line. In our notation, $x$ represents a point on the latent manifold $\cM$, $y$ represents a point related to the distribution $\nu$, and $z$ represents an arbitrary point in the ambient space. The symbol $r$ is used to denote the radius in some instances.
Capitalized math calligraphy letters, such as $\cM$, $\mathcal{Y}$, and $\mathcal{B}_D(z,r)$, represent concepts related to sets. This last symbol denotes a $D$-dimensional Euclidean ball with center $z$ and radius $r$.

The distance between a point $a$ and a set $\mathcal{A}$ is represented as $d(a,\mathcal{A}) = \min_{a^\prime\in\mathcal{A}} \|a-a^\prime\|_2$, where $\|\cdot\|_2$ is the Euclidean distance. To measure the distance between two sets, we adopt the \emph{Hausdorff distance}, a commonly used metric in evaluating the accuracy of estimators. This distance will be used to measure the distance between the latent manifold $\cM$ and its estimate $\widehat{\cM}$ throughout this paper. Formally, this metric can be defined as follows:
\begin{definition}
    [Hausdorff distance] Let $\mathcal A_1$ and $\mathcal A_2$ be two non-empty subsets of $\bR^D$. We define their Hausdorff distance $d_H(\mathcal A_1,\mathcal A_2)$ induced by Euclidean distance as
    $$
    d_H(\mathcal A_1,\mathcal A_2) = \max \{\sup_{a\in \mathcal A_1} \inf_{b\in \mathcal A_2}\|a-b\|_2,~\sup_{b\in \mathcal A_2} \inf_{a\in \mathcal A_1}\|a-b\|_2\}.
    $$
\end{definition}
\begin{remark}
    For any $\mathcal A_1,~\mathcal A_2 \subset \bR^D$,  $d_{H}(\mathcal A_1,\mathcal A_2)<\epsilon$ is equivalent to the fact that, for $\forall a\in \mathcal A_1$ and $\forall b\in \mathcal A_2$,
    $$
        d(a,\mathcal A_2) <\epsilon \text{ and } d(b,\mathcal A_1) <\epsilon.
    $$
\end{remark}

In the context of geometry, the Hausdorff distance provides a measure of the proximity between two manifolds. It is commonly acknowledged that a small Hausdorff distance implies a high level of alignment between the two manifolds, with controlled discrepancies.

We also require some basic geometrical concepts related to manifolds, more of which can be found in the supplementary material. Given a point $x$ in the manifold $\cM$, the tangent space at $x$, denoted by $T_x\cM$, is a $d$-dimensional affine space containing all the vectors tangent to $\cM$ at $x$. To facilitate our analysis, we introduce the projection matrices $\Pi_x^-$ and $\Pi_x^\perp$, which project any vector $v\in\mathbb{R}^D$ onto the tangent space $T_x\cM$ and its normal space, respectively. These two projection matrices are closely related as $\Pi_x^\perp = I_D - \Pi_x^-$, where $I_D$ is the identity mapping of $\mathbb{R}^D$. Furthermore, given an arbitrary point $z$ not in $\cM$, its projection onto the manifold is defined as $z^* = \arg\min_{x\in\cM} \|x-z\|_2$, and we use $\widehat{\Pi}_z^\perp$ and $\widehat{\Pi}_z^-$ as estimators for $\Pi_{z^*}^\perp$ and $\Pi_{z^*}^-$, respectively.

The concept of \emph{Reach}, first introduced by Federer \cite{federer1959curvature}, plays a crucial role in measuring the regularity of manifolds embedded in Euclidean space. Reach has proven to be a valuable tool in various applications, including signal processing and machine learning, making it an indispensable concept in the study of manifold models. It can be defined as follows:
\begin{definition}
    [Reach] Let $\mathcal A$ be a closed subset of $\bR^D$. The reach of $\mathcal A$, denoted by $\reach(\mathcal A)$, is the largest number $\tau$ to have the following property: any point at a distance less than $\tau$ from $\mathcal A$ has a unique nearest point in $\mathcal A$.
\end{definition}
\begin{remark}
    The value of $\reach(\cM)$ can be interpreted as a second-order differential quantity if $\cM$ is treated as a function. Namely, let $\gamma$ be an arc-length parameterized geodesic of $\cM$; then, according to \cite{niyogi2008finding}, $\|\gamma^{\prime\prime}(t)\|_2\leq \reach(\cM)^{-1}$ for all $t$.
\end{remark}
For example, the reach of a circle is its radius, and the reach of a linear subspace is infinite. Intuitively, a large reach implies that the manifold is locally close to the tangent space. This phenomenon can be explained by the following lemma in \cite{federer1959curvature}:
\begin{lemma}
    \label{Lemma:ReachCond}
    [Federer's reach condition] Let $\cM$ be an embedded sub-manifold of $\bR^{D}$ with reach $\tau$. Then,
    $$\tau^{-1} = \sup \left\{\frac{2d(b,T_a\cM)}{\|a-b\|_2^2}|a,b\in\cM,~a\neq b\right\}.$$
\end{lemma}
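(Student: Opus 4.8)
The plan is to prove the two inequalities $S\le\tau^{-1}$ and $S\ge\tau^{-1}$ separately, writing $S:=\sup\{2d(b,T_a\cM)/\|a-b\|_2^2 : a,b\in\cM,\ a\ne b\}$ for the right-hand side. Two elementary facts are used throughout: since $\cM$ is compact, every $z\in\bR^D$ has a nearest point in $\cM$; and if $a$ is a nearest point of $z$, then differentiating $x\mapsto\|z-x\|_2^2$ along a smooth curve in $\cM$ through $a$ yields the first-order condition $z-a\perp T_a\cM$.

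\emph{The bound $S\le\tau^{-1}$.} Fix $a\ne b$ in $\cM$; we may assume $w:=\Pi_a^\perp(b-a)\ne 0$, since otherwise the corresponding ratio is zero. Set $\hat n:=w/\|w\|_2$, a unit normal to $\cM$ at $a$. The geometric input here is the standard property of sets of positive reach: for every $t\in[0,\tau)$ the point $a+t\hat n$ lies in the tube of radius $\tau$ around $\cM$ and has $a$ as its unique nearest point. Hence $\|a+t\hat n-a\|_2^2\le\|a+t\hat n-b\|_2^2$, and expanding the square gives $2t\langle\hat n,b-a\rangle\le\|a-b\|_2^2$. Since $\langle\hat n,b-a\rangle=\langle w,b-a\rangle/\|w\|_2=\|w\|_2=d(b,T_a\cM)$, dividing by $t$ and letting $t\uparrow\tau$ gives $2\tau\,d(b,T_a\cM)\le\|a-b\|_2^2$; taking the supremum over $a\ne b$ yields $S\le\tau^{-1}$.

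\emph{The bound $S\ge\tau^{-1}$.} I show directly that $\reach(\cM)\ge 1/S$ (with $1/S=+\infty$ when $S=0$), which is equivalent. Let $z$ satisfy $\delta:=d(z,\cM)<1/S$ and suppose $a_1,a_2\in\cM$ are both nearest points of $z$; it suffices to deduce $a_1=a_2$. On one hand, $z-a_1\perp T_{a_1}\cM$ and Cauchy--Schwarz give $\langle z-a_1,a_2-a_1\rangle=\langle z-a_1,\Pi_{a_1}^\perp(a_2-a_1)\rangle\le\delta\,d(a_2,T_{a_1}\cM)$. On the other hand, the polarization identity together with $\|z-a_1\|_2=\|z-a_2\|_2=\delta$ gives $\langle z-a_1,a_2-a_1\rangle=\tfrac12\|a_1-a_2\|_2^2$. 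If $a_1\ne a_2$, then combining these with $d(a_2,T_{a_1}\cM)\le\tfrac{S}{2}\|a_1-a_2\|_2^2$ (the definition of $S$) and cancelling $\tfrac12\|a_1-a_2\|_2^2>0$ forces $\delta\ge 1/S$, a contradiction; hence $a_1=a_2$. Thus every point within distance $1/S$ of $\cM$ has a unique nearest point, so $\reach(\cM)\ge 1/S$, and together with the previous step $\tau^{-1}=S$.

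The main obstacle is the geometric fact invoked in the first step: that the normal fibre $\{a+t\hat n:0\le t<\tau\}$ projects onto $a$. This is exactly where the twice-differentiability of $\cM$ (which makes it a set of positive reach) enters. A self-contained proof handles small $t$ via the local $C^2$ graph representation of $\cM$ over $T_a\cM$ near $a$, and then propagates the conclusion across $[0,\tau)$ by a connectedness argument using continuity of the nearest-point projection on the tube; in our setting it is cleanest to cite Federer's original treatment \cite{federer1959curvature}. Apart from that single input, the argument is elementary linear algebra.
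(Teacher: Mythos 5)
The paper does not actually prove this lemma: it is stated with a citation to Federer \cite{federer1959curvature}, and no argument appears in the supplement, so there is no in-paper proof to compare against. Your argument is correct and is essentially Federer's own characterization (his Theorem 4.8). The direction $S\ge\tau^{-1}$ is fully self-contained and elementary: the first-order condition places $z-a_1$ in the normal space at $a_1$, so $\langle z-a_1,a_2-a_1\rangle=\langle z-a_1,\Pi_{a_1}^\perp(a_2-a_1)\rangle\le\delta\,d(a_2,T_{a_1}\cM)$; the polarization step gives $\langle z-a_1,a_2-a_1\rangle=\tfrac12\|a_1-a_2\|_2^2$; and plugging in the definition of $S$ closes the contradiction, so $\reach(\cM)\ge 1/S$. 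The direction $S\le\tau^{-1}$ rests on the one cited input that for $0\le t<\tau$ the metric projection of $a+t\hat n$ onto $\cM$ is $a$, and you are right to single this out as the non-elementary ingredient: it is not an immediate consequence of the uniqueness-of-nearest-point definition of reach. For small $t$ it follows from the local $C^2$ graph picture, but extending to all $t\in[0,\tau)$ genuinely requires continuity of the nearest-point projection on the tube together with a continuation argument, which is part of Federer's Theorem 4.8; granting it, the expansion $0\le\|a-b\|_2^2-2t\langle\hat n,b-a\rangle$ and the identity $\langle\hat n,b-a\rangle=\|\Pi_a^\perp(b-a)\|_2=d(b,T_a\cM)$ give the bound after letting $t\uparrow\tau$. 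One small remark: you invoke compactness of $\cM$ to guarantee nearest points exist; closedness suffices for that, and under the paper's standing assumptions $\cM$ is in any case compact.
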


\subsection{Overview of the main results}
As stated in the introduction, the fundamental objective of this paper is to develop an estimator $\widehat{\cM}$ for the latent manifold $\cM$, using the available sample set $\mathcal{Y}$. To this end, we employ a two-step procedure for each $y \in \Gamma =\{y: d(y,\cM)\leq C\sigma\}$, involving (i) identification of the contraction direction and (ii) estimation of the contracted point. It should be noted that contraction is distinct from projection, as the former entails movement in a singular direction in normal space.

\subsubsection*{Determining the contraction direction}
To enhance the accuracy of our algorithm, we introduce a novel approach for estimating the direction of $y^*-y$ for each $y$, instead of estimating the basis of $T_{y^*}\cM$.

On the population level, consider a $D$-dimensional ball $\cB(y,r_0)$ with $r_0 = C\sigma$. Let 
$$\mu_y^\mathbb{B} = \bE_{Y\sim \nu}(Y|Y\in \cB_D(y, r_0)).$$
$\mu_y^\mathbb{B} - y$ estimates the direction of $y^* - y$ with an error upper bounded by $C\sigma\sqrt{\log(1/\sigma)}$ (Theorem \ref{Thm:AngleCondition}). To make the estimation continuous with respect to $\mathcal Y$ and $y$, we let
$$
    F(y) = \sum\alpha_i(y) y_i,
$$
with the weights $\alpha_i$'s given in Section \ref{Sec:F(z)}. When the total sample size $N = C_1r_0^{-d}\sigma^{-3}$, $F(y) - y$ estimates the direction of $y^* - y$ with an error upper bounded by $C\sigma\sqrt{\log(1/\sigma)}$ with probability no less than $1 - C_1\exp(-C_2\sigma^{-c})$ (Theorem \ref{Thm:Angle:F(z)}).

\subsubsection*{Estimating the contracted point} The estimation of the projection points is discussed in three distinct scenarios in Section 4, the most notable of which is using $F(y)$ to estimate the contraction direction.

Let $\widetilde{U}$ be the projection matrix onto the direction of $\mu_y^\mathbb{B}-y$. Consider a cylinder region
$$
    {\bV}_{y} = \cB_{D-1}(y,r_1) \times \cB_1(y,r_2),
$$
where the second ball is an open interval in the direction of $\mu_y^\mathbb{B} - y$, and the first ball is in the complement of it in $\bR^D$, with $r_1 = c\sigma$ and $r_2 = C\sigma\sqrt{\log(1/\sigma)}$. On the population level, let the contracted version of $y$ be denoted by
$$\mu_y^\bV = y + \widetilde{U} \bE_{Y\sim \nu}\left(Y-y|Y\in {\bV}_{y}\right);$$
then, $\|\mu_y^\bV - y^*\|_2 \leq C\sigma^2\sqrt{\log(1/\sigma)}$ (Theorem \ref{Thm:ContractWithEstDir2}). For the sake of continuity, we let
$$
    \widehat{U} = \frac{(F(y)-y)(F(y)-y)^T}{\|(F(y)-y)\|_2^2},
$$
and construct another smooth map
$$
G(y) = \sum\beta_i(y) y_i, 
$$
where the weights $\beta_i$'s are related to $\widehat{U}$; their definition can be found in Section \ref{Sec:G(z)}. Then, the distance between $G(y)$ and $y^*$ is upper bounded by $C\sigma^2{\log(1/\sigma)}$ with probability at least $1 - C_1\exp(-C_2\sigma^{-c})$ (Theorem \ref{Thm:appro G(z)}).

\subsubsection*{Constructing the manifold estimator}
In Section 5, we propose a variety of methods to construct the manifold estimator for various scenarios. We begin by considering the case where the distribution $\nu$ is known, and demonstrate that the set
$$\mathcal{S}=\{\mu_y^\bV:y\in\Gamma\}$$
has a Hausdorff distance of $\cO(\sigma^2\log(1/\sigma))$ to $\cM$ (Theorem \ref{Thm:out_manifold_1}).

Next, we use the sample set $\mathcal{Y}$ to obtain an estimated version,
$$\widehat{\mathcal{S}}=\{G(y):y\in\Gamma\},$$
which has an approximate Hausdorff distance to $\cM$ in the order of $\cO(\sigma^2\log(1/\sigma))$ with high probability (Theorem \ref{Thm:out_manifold_2}).

Finally, we consider the scenario in which there exists a $d$-dimensional preliminary estimation $\widetilde{\cM}$ that is $\cO(\sigma)$ close to $\cM$. In this case, we show that, with high probability, $G(\widetilde{\cM})$ is a $d$-dimensional manifold having an approximate Hausdorff error of $\cO(\sigma^2\log(1/\sigma))$ and a reach no less than $c\sigma\reach(\widetilde{\cM})$ (Theorem \ref{Thm:out_manifold_global_d}).

\subsection{Lemmas and propositions} In this subsection, we present some propositions and lemmas for reference. Their proofs are omitted from the main content and can be found in the supplementary material.

A notable phenomenon when analyzing the distribution in the vicinity of the manifold is the prevalence of quantities contingent upon $d$ rather than $D$. This phenomenon is particularly evident in the subsequent lemma and its corollary.
\begin{lemma}
\label{Lemma:prob_in_a_ball}
    For any arbitrary point $z$ such that its neighborhood
     $\cB_D(z,r) \cap \cM \neq \varnothing$ with $r = C\sqrt{2d}\sigma\sqrt{\log(1/\sigma)}$, the probability that $Y\sim \nu$ falls in $\cB_D(z,r)$ is 
    $$
        \bP(Y\in \cB_D(z,r)) = c r^d
    $$
    for some small constant $c$.
\end{lemma}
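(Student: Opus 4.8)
The plan is to compute $\bP(Y\in\cB_D(z,r))$ directly from the convolution formula~\eqref{eq:def:nu}. Interchanging the order of integration gives $\bP(Y\in\cB_D(z,r))=\int_{\cM}\omega(x)\,g(x)\,dx$, where $g(x):=\int_{\cB_D(z,r)}\phi_\sigma(y-x)\,dy=\bP_{\xi\sim\phi_\sigma}(x+\xi\in\cB_D(z,r))$ is the probability that $x$ perturbed by the noise lands in the ball. Always $g(x)\le 1$, and since $\|\xi\|^2/\sigma^2$ is $\chi^2_D$ one has $g(x)\le\bP(\|\xi\|\ge(\|x-z\|-r)_+)$, which decays super-exponentially once $x$ is more than a few $\sigma$ from $\cB_D(z,r)$. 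Because $\omega$ has constant density $V_\cM^{-1}$ with $V_\cM:=\mathrm{vol}_d(\cM)$ (a fixed model quantity, as are $d,D,\tau,V$), the question reduces to a weighted $d$-dimensional volume integral over $\cM$.

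For the upper bound I would split $\cM$ into $\cM_0:=\cM\cap\cB_D(z,2r)$ and its complement. On $\cM_0$ I use $g\le1$ together with the standard reach estimate that, once $r$ is small enough for $\cM$ to be a Lipschitz graph over its tangent plane in balls of radius $\cO(r)\ll\tau$, $\mathrm{vol}_d(\cM\cap\cB_D(z,2r))\le Cr^d$; this contributes $\le Cr^d$. On the complement, $\|x-z\|>2r$ gives $\|x-z\|-r\ge\|x-z\|/2$, so $g(x)\le\bP(\|\xi\|\ge\|x-z\|/2)$, and a $\chi^2_D$ tail bound with a layer-cake integration in $t=\|x-z\|$ — using $\mathrm{vol}_d(\cM\cap\cB_D(z,t))\le Ct^d$ for $t\lesssim\tau$ and the trivial bound $V$ beyond — bounds this part by a constant times $\sigma^d\int_{2r/\sigma}^{\infty}s^{d-1}e^{-s^2/C'}\,ds$; since $r/\sigma=C\sqrt{2d\log(1/\sigma)}\to\infty$, choosing the constant $C$ in $r$ large enough makes this $\ll r^d$. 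Hence $\bP(Y\in\cB_D(z,r))\le Cr^d$.

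For the matching lower bound I would take $x_0=z^*\in\cM$ (well-defined since $d(z,\cM)<\tau$) and pass to the tubular coordinates $y=x_0+(a,b)$ with $a\in T_{x_0}\cM\cong\bR^d$ and $b\in(T_{x_0}\cM)^{\perp}\cong\bR^{D-d}$; the reach bound (Lemma~\ref{Lemma:ReachCond} and the curvature estimate $\|\gamma''\|\le\tau^{-1}$) controls every distortion at the scale $\cO(\sigma\sqrt{\log(1/\sigma)})\ll\tau$. In these coordinates, integrating the (to leading order flat) $d$-dimensional Gaussian factor out of the tangent variables shows $\nu(y)=V_\cM^{-1}(2\pi\sigma^2)^{-(D-d)/2}\exp(-\|b\|^2/(2\sigma^2))$ up to a factor $1+o(1)$ — the $\sigma^d$ gain over the crude bound $\nu\le(2\pi\sigma^2)^{-D/2}$ being exactly what produces the exponent $d$ in the statement. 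Since $r\gg\sigma$ and $z-x_0$ is purely normal of length $\le d(z,\cM)\le r/4$ (which holds in every application of this lemma, where in fact $d(z,\cM)=\cO(\sigma)$), the ball $\cB_D(z,r)$ contains the sub-tube $\{x_0+(a,b):\|a\|\le r/4,\ \|b\|\le\sigma\sqrt{2\log(1/\sigma)}\}$; integrating $\nu$ over it gives a tangent $d$-volume $\ge cr^d$ times a normal Gaussian mass $\ge1-\sigma$, so $\bP(Y\in\cB_D(z,r))\ge cr^d$ and, with the upper bound, $\bP(Y\in\cB_D(z,r))=\Theta(r^d)$.

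The step I expect to be the main obstacle is making the tubular-coordinate picture quantitative: one must show that over the relevant radius $\cO(\sigma\sqrt{\log(1/\sigma)})\ll\tau$ the Jacobian of the graph parametrization, the second-order deviation of $\cM$ from its tangent plane, and hence the multiplicative error in the displayed formula for $\nu$ are $1+o(1)$ \emph{uniformly in $z$}, so that the final constants $c,C$ depend only on $d,D,\tau,V$ and not on $\sigma$. The remaining points are routine quantitative bookkeeping: $C$ in $r=C\sqrt{2d}\,\sigma\sqrt{\log(1/\sigma)}$ must be large enough for the $\chi^2_D$ tail to beat $r^d$ in the far-part estimate, and the lower bound uses $d(z,\cM)$ being a small fraction of $r$, which is automatic since the lemma is only applied to points $z$ with $d(z,\cM)=\cO(\sigma)$.
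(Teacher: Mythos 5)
Your proof is correct but the lower-bound argument takes a genuinely different route from the paper's. The upper bound is essentially the paper's: split $\cM$ into a near part $\cM\cap\cB_D(z,\cO(r))$ (bounded by the $d$-dimensional volume-growth estimate $\mathrm{vol}_d(\cM\cap\cB_D(z,t))\lesssim t^d$ for $t\ll\tau$) and a far part controlled by a Gaussian/$\chi^2_D$ tail at scale $r/\sigma=C\sqrt{2d\log(1/\sigma)}$; your layer-cake version is a careful rendering of the same idea. The lower bound, however, diverges. The paper avoids any density computation entirely: it simply uses independence of $X$ and $\xi$ together with the containment $\{\|\xi\|\le c_1r\}\cap\{X\in\cM\cap\cB_D(z,(1-c_1)r)\}\subset\{Y\in\cB_D(z,r)\}$, so that $\bP(Y\in\cB_D(z,r))\ge(1-c_2r^d)\cdot\mathrm{vol}_d(\cM\cap\cB_D(z,(1-c_1)r))/V\ge c_3r^d$. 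Your route instead passes to tubular coordinates around $z^*$, identifies $\nu(y)\approx V^{-1}(2\pi\sigma^2)^{-(D-d)/2}e^{-\|b\|^2/(2\sigma^2)}$, and integrates over a sub-tube of the ball. This is a stronger statement — it pins down the local form of $\nu$, not just the order of the probability — but it requires exactly the uniform-in-$z$ quantitative control of the graph Jacobian and second-order deviation that you flag as the main obstacle; the paper's inclusion argument sidesteps all of that and only ever invokes the crude volume-growth bound from the reach condition. For the purposes of this lemma ($\Theta(r^d)$ is all that is needed downstream), the paper's elementary route is the more economical one. Finally, note that both proofs share the unstated strengthening of the hypothesis: the stated condition $\cB_D(z,r)\cap\cM\neq\varnothing$ only gives $d(z,\cM)<r$, but both you (via $d(z,\cM)\le r/4$) and the paper (via $\cM\cap\cB_D(z,(1-c_1)r)$ having volume $\asymp r^d$) actually need $d(z,\cM)$ bounded away from $r$ — which, as you observe, holds in every application since $d(z,\cM)=\cO(\sigma)\ll r$.
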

\begin{corollary}
\label{Col:Local_sample_size}
    Let $n$ be the number of observed points that fall in $\cB_D(z,r)$. Assume the total sample size is $N=CD\sigma^{-3}r^{-d}$. Then,
    $$\bP(C_1 D\sigma^{-3}\leq n \leq C_2 D\sigma^{-3})\geq 1 - 2\exp\left( -C_3\sigma^{-3}\right),$$
    for some constant $C_1$, $C_2$, and $C_3$.
\end{corollary}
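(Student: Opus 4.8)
The plan is to recognize $n$ as a binomial count and then invoke a standard multiplicative Chernoff bound; no deconvolution or geometry is needed beyond Lemma~\ref{Lemma:prob_in_a_ball}. First I would observe that, since the observations $y_1,\dots,y_N$ are i.i.d.\ copies of $Y\sim\nu$ and $z$ is a fixed point satisfying the hypothesis of Lemma~\ref{Lemma:prob_in_a_ball}, the count $n=\#\{i:\ y_i\in\cB_D(z,r)\}$ is distributed as $\mathrm{Binomial}(N,p)$ with $p=\bP(Y\in\cB_D(z,r))$. Lemma~\ref{Lemma:prob_in_a_ball} gives $p=cr^d$ for the prescribed radius $r=C\sqrt{2d}\sigma\sqrt{\log(1/\sigma)}$, so with $N=CD\sigma^{-3}r^{-d}$ the mean is $\mu:=\bE[n]=Np=cC D\sigma^{-3}$, i.e.\ of the claimed order $\Theta(D\sigma^{-3})$.

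Next I would control the two tails of $n$ around $\mu$. Using the multiplicative Chernoff inequality for sums of independent indicators, for any $\delta\in(0,1)$ one has $\bP\big(n\ge(1+\delta)\mu\big)\le\exp(-\delta^2\mu/3)$ and $\bP\big(n\le(1-\delta)\mu\big)\le\exp(-\delta^2\mu/2)$. Taking $\delta=1/2$ yields $\bP\big(n\notin[\tfrac12\mu,\tfrac32\mu]\big)\le 2\exp(-\mu/12)$. Setting $C_1=\tfrac12 cC$ and $C_2=\tfrac32 cC$, the event $\{C_1D\sigma^{-3}\le n\le C_2 D\sigma^{-3}\}$ coincides with $\{n\in[\tfrac12\mu,\tfrac32\mu]\}$, whose complement has probability at most $2\exp(-cCD\sigma^{-3}/12)$; since the ambient dimension satisfies $D\ge 1$, this is bounded by $2\exp(-C_3\sigma^{-3})$ with $C_3=cC/12$, which is exactly the assertion.

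I do not expect a genuine obstacle here; the proof is essentially a direct application, and the only care required is bookkeeping: keeping the small constant $c$ of Lemma~\ref{Lemma:prob_in_a_ball} distinct from the free constant $C$ in $N=CD\sigma^{-3}r^{-d}$, and noting that the Chernoff bound is being used in the large-deviation regime, which is legitimate because $\sigma$ is assumed small so $\mu=cCD\sigma^{-3}\to\infty$. If a fully self-contained argument is preferred, the Chernoff step can be replaced by Bernstein's inequality for the sum of the bounded i.i.d.\ indicators $\mathbf{1}\{y_i\in\cB_D(z,r)\}$, which gives the same exponential rate $\exp(-c'\,\sigma^{-3})$ and closes the proof identically.
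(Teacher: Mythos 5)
Your proposal is correct and takes essentially the same approach as the paper: recognize $n$ as $\mathrm{Binomial}(N,p)$ with $p=cr^d$ from Lemma~\ref{Lemma:prob_in_a_ball}, then apply a two-sided Chernoff bound. The only cosmetic difference is that the paper invokes the Kullback--Leibler form of the Chernoff bound (its Corollary~\ref{col:Bi_Chernoff_bound}) rather than the multiplicative form $\exp(-\delta^2\mu/3)$, but these yield the same exponential rate and the bookkeeping is identical.
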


Since the Gaussian distribution can be approximated to vanish within a few standard deviations ($\sigma$), adopting a radius that is marginally larger than $\sigma$ can result in polynomial benefits for local estimation. For instance, when computing the conditional expectation within a ball near the origin, we have the following proposition:

\begin{proposition}
    \label{Prop:TruncateNormalConcentration}
    Let $\xi$ be a $D$-dimensional normal random vector with mean $0$ and covariance matrix $\sigma^2 I_D$. Assume there is a $D$-dimensional ball $\cB_D(z,r)$ centered at point $z$ with radius $r = C_1\sigma\sqrt{\log(1/\sigma)}$, and $\|z\|_2 = C_2\sigma$. Then, the truncated version of $\xi$ satisfies
    $$\|\bE(\xi|\xi\in\cB_D(z,r))\|_2\leq C_3\sigma^2,$$
    for some constants $C_1$, $C_2$, and $C_3$.
\end{proposition}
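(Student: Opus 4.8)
The plan is to rescale the whole picture by $\sigma$, reducing to a fixed standard Gaussian truncated to a ball whose radius grows without bound as $\sigma\to 0$, and then to observe that the truncation perturbs the mean only through an exponentially light tail. Concretely, set $\eta=\xi/\sigma$, $w=z/\sigma$, and $R=C_1\sqrt{\log(1/\sigma)}$, so that $\eta$ is a standard $D$-dimensional Gaussian, $\|w\|_2=C_2$, and $\cB_D(z,r)=\sigma\,\cB_D(w,R)$. By homogeneity of conditional expectation, $\bE(\xi\mid \xi\in\cB_D(z,r))=\sigma\,\bE(\eta\mid \eta\in\cB_D(w,R))$, so it suffices to show $\|\bE(\eta\mid \eta\in\cB_D(w,R))\|_2\le C_3\sigma$. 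Writing $p=\bP(\eta\in\cB_D(w,R))$ and using $\bE\eta=0$, we have
$$
\bE\big(\eta\mid \eta\in\cB_D(w,R)\big)=\frac{1}{p}\,\bE\big(\eta\,\mathbf 1\{\eta\in\cB_D(w,R)\}\big)=-\frac{1}{p}\,\bE\big(\eta\,\mathbf 1\{\eta\notin\cB_D(w,R)\}\big),
$$
so the whole estimate reduces to controlling the tail contribution in the numerator and bounding $p$ from below.

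For the numerator I would use the triangle inequality: since $\|w\|_2=C_2$, the event $\{\|\eta-w\|_2>R\}$ is contained in $\{\|\eta\|_2>R-C_2\}$, hence
$$
\big\|\bE\big(\eta\,\mathbf 1\{\eta\notin\cB_D(w,R)\}\big)\big\|_2\le \bE\big(\|\eta\|_2\,\mathbf 1\{\|\eta\|_2>R-C_2\}\big).
$$
The random variable $\|\eta\|_2$ has a chi distribution with $D$ degrees of freedom, so this expectation equals, up to a dimensional constant, $\int_{R-C_2}^{\infty}s^{D}e^{-s^2/2}\,ds$; a single integration by parts together with the elementary Gaussian tail bound $\int_t^\infty e^{-s^2/4}\,ds\le \tfrac{2}{t}e^{-t^2/4}$ gives, once $R-C_2\ge\sqrt{2D}$ (true for $\sigma$ small since $R\to\infty$), a bound of the form $C(D)\,(R-C_2)^{D-1}e^{-(R-C_2)^2/2}$. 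Plugging in $R=C_1\sqrt{\log(1/\sigma)}$ turns this into $C(D)\,\sigma^{C_1^2/2}$ times a factor that is sub-polynomial in $1/\sigma$ (namely $(\log(1/\sigma))^{(D-1)/2}e^{C_1 C_2\sqrt{\log(1/\sigma)}}$). For the denominator, the same inclusion gives $p\ge\bP(\|\eta\|_2\le R-C_2)=\bP(\chi^2_D\le (R-C_2)^2)\ge \tfrac12$ for all sufficiently small $\sigma$, because $(R-C_2)^2\to\infty$ eventually exceeds the mean $D$ of $\chi^2_D$.

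Combining the two bounds, $\|\bE(\eta\mid \eta\in\cB_D(w,R))\|_2\le 2C(D)\,\sigma^{C_1^2/2}\cdot(\text{sub-polynomial})$, and undoing the rescaling multiplies by $\sigma$ to yield $\|\bE(\xi\mid \xi\in\cB_D(z,r))\|_2\le C_3\sigma^2$. The main obstacle -- really the only quantitative point -- is the calibration of constants in this last step: one must choose $C_1$ large enough (roughly $C_1^2/2>1$, i.e.\ $C_1>\sqrt2$) so that the power $\sigma^{C_1^2/2}$ dominates both the sub-polynomial prefactor from the chi-tail integral and the extra factor $\sigma^{-1}$ coming from the rescaling, and one must keep track that all the ``constants'' are permitted to depend on the (fixed) ambient dimension $D$. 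Everything else -- the homogeneity reduction, the triangle-inequality containment, and the Gaussian tail estimates -- is routine.
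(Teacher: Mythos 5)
Your proof is correct and takes a genuinely different route from the paper's. You rescale to a standard Gaussian $\eta$, exploit $\bE\eta=0$ to rewrite the conditional mean as $-p^{-1}\bE\bigl(\eta\,\mathbf 1\{\eta\notin\cB_D(w,R)\}\bigr)$, enclose the complement inside the centered-ball complement $\{\|\eta\|_2>R-C_2\}$ via the triangle inequality, and finish with a chi-tail estimate; the whole thing lands below $C_3\sigma^2$ once $C_1>\sqrt2$ and $\sigma$ is small. The paper instead works in the original scale with the shifted ball $\cB_D(z,r)$: it aligns $z$ along the first coordinate axis, cancels the mass on the portion of the ball that is centrally symmetric about the origin (the two shaded crescents in the accompanying figure), and bounds the residual asymmetric contribution by sandwiching between three axis-aligned boxes $V_1,V_2,V_3$ inscribed in or circumscribing the ball, reducing to explicit one-dimensional Gaussian integrals $a,b,\delta_a,\delta_b$ and a product expansion of $I_1^+-I_2^+$. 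Your argument is shorter and arguably more robust: the chi-tail bound is intrinsically radial, so you only need the dimension-free calibration $C_1>\sqrt2$, whereas the paper's inscribed cube $V_2$ has side $\propto(r-\Delta)/\sqrt D$, which forces $C_1\gtrsim\sqrt{2D}$ for the $\delta_a,\delta_b$ estimates to close. The paper's approach buys explicit geometric control of the asymmetric region and avoids passing to the complement, but for the stated conclusion your route is just as rigorous, modulo the same need (which you correctly flag) to take $\sigma$ small and $C_1$ large enough.
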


Analogously, it is sufficient to focus on a subset of $\cM$ when studying certain local structures. For instance, in analyzing the conditional moments of $\nu$ within a $D$-dimensional ball $\cB_D(z,r)$, the submanifold $\cM_R = \cM \cap \cB_D(z,R)$ with $R\gg r$ exerts a significant influence. By incorporating $\cM_R$, $\nu$ can be approximated with
\begin{align*}
    \nu_R(y) = \int_{\cM_R} \phi_\sigma(y-x)\omega(x)dx.
\end{align*}
If we normalize them within $\cB_D(z,r)$, the two densities $\tilde{\nu}$ and $\tilde{\nu}_R$ should be close, and it is sufficient to work with $\tilde{\nu}_R(y)$ directly. These can be summarized as the following lemma:

\begin{lemma}
    \label{Lemma:PartManifold}
    Let $\tilde{\nu}(y)$ be the conditional density function within $\cB_D(z,r)$, and $\tilde{\nu}_R(y)$ be its estimator based on $\cM_R$. By setting 
    $$R = r + C_1\sigma\sqrt{(d+\eta)\log(1/\sigma)},$$
    we have
    \begin{equation}
        \label{eq:Part_Manifold}
        \left| \tilde{\nu}(y) - \tilde{\nu}_R(y) \right| \leq C_2\sigma^\eta \tilde{\nu}_R(y)
    \end{equation}
    for some constant $C_1$ and $C_2$.
\end{lemma}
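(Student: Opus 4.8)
The plan is to split $\nu = \nu_R + \delta$ with $\delta(y) := \int_{\cM\setminus\cM_R}\phi_\sigma(y-x)\,\omega(x)\,dx \ge 0$ (the contribution of the discarded part of $\cM$), and reduce the normalized claim to two tail estimates. Writing $A := \int_{\cB_D(z,r)}\nu$, $A_R := \int_{\cB_D(z,r)}\nu_R$ and $\Delta := A - A_R = \int_{\cB_D(z,r)}\delta$, so that $\tilde\nu = \nu/A$, $\tilde\nu_R = \nu_R/A_R$, a direct computation using only $\nu_R\le\nu$ and $A_R\le A$ gives
\[
    -\frac{\Delta}{A_R}\,\tilde\nu_R(y)\ \le\ \tilde\nu(y)-\tilde\nu_R(y)\ \le\ \frac{\delta(y)}{A}\ \le\ \frac{\delta(y)}{A_R}\,.
\]
Thus \eqref{eq:Part_Manifold} follows once I show (a) $\delta(y)\le C\sigma^\eta\,\nu_R(y)$ for every $y\in\cB_D(z,r)$, and (b) $\Delta\le C\sigma^\eta A_R$; for (b), Lemma~\ref{Lemma:prob_in_a_ball} already provides $A = \bP(Y\in\cB_D(z,r)) = cr^d$, so $A_R = cr^d-\Delta\ge\tfrac12 cr^d$ as soon as $\Delta$ is shown to be of strictly higher order in $\sigma$.

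For the numerator bounds I would use that every $x\in\cM\setminus\cM_R$ satisfies $\|x-z\|_2>R$, so for $y\in\cB_D(z,r)$ one has $\|y-x\|_2\ge\|x-z\|_2-r$, and the gap $R-r = C_1\sigma\sqrt{(d+\eta)\log(1/\sigma)}$ enters the Gaussian exponent. Decomposing $\cM\setminus\cM_R$ into the shells $\{\,R+k\sigma\le\|x-z\|_2<R+(k+1)\sigma\,\}$, $k\ge 0$, each of $\omega$-mass at most $1$, and using $(R-r+k\sigma)^2\ge(R-r)^2+k^2\sigma^2$ to sum the Gaussian weights into a convergent series, I get
\[
    \delta(y)\ \le\ (2\pi\sigma^2)^{-D/2}\,C\,e^{-(R-r)^2/(2\sigma^2)}\ =\ (2\pi\sigma^2)^{-D/2}\,C\,\sigma^{C_1^2(d+\eta)/2}.
\]
The bound $\Delta\le C\sigma^{C_1^2(d+\eta)/2}$ comes out of the same shell decomposition after swapping the order of integration and estimating $\int_{\cB_D(z,r)}\phi_\sigma(y'-x)\,dy' = \bP\!\big(\|\xi-(z-x)\|_2\le r\big)\le\bP\!\big(\langle\xi,(z-x)/\|z-x\|_2\rangle\ge\|x-z\|_2-r\big)\le e^{-(\|x-z\|_2-r)^2/(2\sigma^2)}$; the middle step, which projects the Gaussian onto the single direction $z-x$, is precisely what keeps the estimate free of the ambient dimension $D$.

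It remains to lower-bound $\nu_R(y)$ with a matching $(2\pi\sigma^2)^{-D/2}$ prefactor. Taking $x_0\in\cM\cap\cB_D(z,r)$ (nonempty under the standing assumption $\cB_D(z,r)\cap\cM\neq\varnothing$), for small $\sigma$ we have $R\ge 2r$, hence $\cM\cap\cB_D(x_0,r)\subseteq\cM_R$, and every such point lies within $3r$ of $y$, so $\phi_\sigma(y-x)\ge(2\pi\sigma^2)^{-D/2}e^{-9r^2/(2\sigma^2)}$ there; combining with the reach-based volume lower bound $\mathrm{vol}_d(\cM\cap\cB_D(x_0,r))\ge c\,r^d$ (valid since $r\ll\tau$, so $\cM$ is a controlled graph over $T_{x_0}\cM$ near $x_0$) and $\omega=\mathrm{vol}_d/V$, this yields $\nu_R(y)\ge(2\pi\sigma^2)^{-D/2}\,c\,\sigma^{9C^2d}r^d$. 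The prefactor then cancels in $\delta(y)/\nu_R(y)$ and in $\Delta/A_R$, so (a) and (b) reduce to checking that $\sigma^{C_1^2(d+\eta)/2}$ beats $\sigma^{\eta+9C^2d}r^d$ and $\sigma^{\eta}r^d$ respectively, up to $\mathrm{polylog}(1/\sigma)$ factors; since $r = C\sqrt{2d}\,\sigma\sqrt{\log(1/\sigma)}$ this amounts to an inequality of the form $\tfrac12 C_1^2(d+\eta)>(9C^2+1)d+\eta$, which holds once $C_1$ is taken large enough relative to $C$ — exactly the calibration the statement makes. The only real obstacle is this calibration: the crude lower bound on $\nu_R$ loses a factor $\sigma^{O(d)}$ because $y\in\cB_D(z,r)$ can sit a full distance $\sim r\sim\sigma\sqrt{\log(1/\sigma)}$ off $\cM$, and recovering it is precisely why the radius gap $R-r$ must grow like $\sigma\sqrt{(d+\eta)\log(1/\sigma)}$ with a sufficiently large constant.
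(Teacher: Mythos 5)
Your proof follows the paper's strategy: decompose $\nu = \nu_R + \nu_c$ with $\nu_c(y) = \int_{\cM\setminus\cM_R}\phi_\sigma(y-x)\omega(x)\,dx$, and reduce the normalized claim to a pointwise ratio estimate $\nu_c(y)\le C\sigma^\eta\nu_R(y)$. Your algebraic reduction --- splitting into one-sided bounds and isolating the two conditions $\nu_c(y)\le C\sigma^\eta\nu_R(y)$ and $\int_{\cB}\nu_c\le C\sigma^\eta\int_{\cB}\nu_R$ (the second is of course a consequence of the first after integrating, so your separate argument for it is harmless but redundant) --- is equivalent to the paper's single estimate of $\bigl|\nu_c(y)/\nu_R(y) - \int\nu_c/\int\nu_R\bigr|$. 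The one place you pay more than the paper is the lower bound on $\nu_R(y)$: the paper integrates $\phi_\sigma(y-\cdot)$ over $\cM\cap\cB_D(y,R')$ with $R' = r + C_1\sigma$ (a ball centred at $y$), while you integrate over $\cM\cap\cB_D(x_0,r)$ around a fixed $x_0\in\cM\cap\cB_D(z,r)$, which puts sample points up to $3r$ from $y$ and costs an extra Gaussian factor $e^{-9r^2/(2\sigma^2)}\sim\sigma^{O(d)}$ when $r\sim\sigma\sqrt{\log(1/\sigma)}$. You identify this loss and correctly observe it is absorbed by taking $C_1$ larger, which the statement permits; the paper's comparison pays the same kind of $\sigma^{O(d)}$ factor at that radius anyway, so the two arguments are equivalent up to constants. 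Two details you spell out that the paper leaves implicit are worth keeping: the shell decomposition of $\cM\setminus\cM_R$ using $\omega(\cM)=1$ to sum the tail mass, and the one-dimensional projection bound $\int_{\cB_D(z,r)}\phi_\sigma(y'-x)\,dy'\le e^{-(\|x-z\|_2-r)^2/(2\sigma^2)}$, which controls $\int_{\cB}\nu_c$ without the $(2\pi\sigma^2)^{-D/2}$ prefactor and makes the dimension-independence explicit.
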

\begin{corollary}
    If \eqref{eq:Part_Manifold} holds for all $y\in \cB_D(z,r)$, we have
    \begin{align*}
        \left\|\bE_{\tilde{\nu}} Y - \bE_{\tilde{\nu}_R} Y\right\|_2
        \leq C\sigma^\eta \int_{\cB_D(z,r)} \|y-z\|_2\tilde{\nu}_R(y)\,dy
        \leq Cr\sigma^\eta.
    \end{align*}
\end{corollary}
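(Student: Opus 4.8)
The plan is to reduce the statement to two elementary observations: first, that $\tilde{\nu}$ and $\tilde{\nu}_R$ are both probability densities normalized over the \emph{same} region $\cB_D(z,r)$, so their difference has vanishing total mass; and second, that on this region the pointwise bound \eqref{eq:Part_Manifold} is available. The one genuinely useful idea is to re-center the mean difference at $z$ before estimating. Concretely, I would start from
\[
\bE_{\tilde{\nu}} Y - \bE_{\tilde{\nu}_R} Y = \int_{\cB_D(z,r)} y\,\bigl(\tilde{\nu}(y) - \tilde{\nu}_R(y)\bigr)\,dy = \int_{\cB_D(z,r)} (y-z)\,\bigl(\tilde{\nu}(y) - \tilde{\nu}_R(y)\bigr)\,dy,
\]
where the second equality uses $\int_{\cB_D(z,r)} \tilde{\nu}(y)\,dy = \int_{\cB_D(z,r)} \tilde{\nu}_R(y)\,dy = 1$, hence $\int_{\cB_D(z,r)} z\,(\tilde{\nu} - \tilde{\nu}_R)\,dy = 0$. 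Without this centering one would only control the left-hand side by a quantity of order $\|z\|_2\,\sigma^\eta$, whereas the $r\,\sigma^\eta$ form is what the downstream estimates need.

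Next I would apply the triangle inequality for the vector-valued integral to move the norm inside, obtaining
\[
\left\|\bE_{\tilde{\nu}} Y - \bE_{\tilde{\nu}_R} Y\right\|_2 \le \int_{\cB_D(z,r)} \|y-z\|_2\,\bigl|\tilde{\nu}(y) - \tilde{\nu}_R(y)\bigr|\,dy,
\]
and then invoke the hypothesis \eqref{eq:Part_Manifold}, which holds pointwise for all $y\in\cB_D(z,r)$, to replace $\bigl|\tilde{\nu}(y) - \tilde{\nu}_R(y)\bigr|$ by $C_2\sigma^\eta\,\tilde{\nu}_R(y)$. This yields the first claimed inequality with $C = C_2$. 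For the second inequality I would simply use $\|y-z\|_2 \le r$ for every $y\in\cB_D(z,r)$ together with $\int_{\cB_D(z,r)} \tilde{\nu}_R(y)\,dy = 1$, so that $\int_{\cB_D(z,r)} \|y-z\|_2\,\tilde{\nu}_R(y)\,dy \le r$, giving the bound $C r \sigma^\eta$.

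There is essentially no obstacle here; the proof is a few lines. The only points worth a sentence of care are (i) that $\tilde{\nu}$ and $\tilde{\nu}_R$ share the same normalizing domain $\cB_D(z,r)$, which is exactly how they are introduced in Lemma~\ref{Lemma:PartManifold} and its preamble, and (ii) that $\tilde{\nu}_R$ is well-defined, i.e. its normalizing constant $\int_{\cB_D(z,r)} \nu_R(y)\,dy$ is strictly positive, which is inherited from the standing assumption $\cB_D(z,r)\cap\cM\neq\varnothing$ in this regime (cf.\ Lemma~\ref{Lemma:prob_in_a_ball}) rather than something to be reproved. So the write-up is just the two displays above, followed by substituting $\|y-z\|_2\le r$.
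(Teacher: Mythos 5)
Your proof is correct and is the natural (and essentially unique) argument: the paper states this corollary without proof as a direct consequence of Lemma~\ref{Lemma:PartManifold}, and the intermediate display in the corollary already signals the re-centering at $z$ that you carry out explicitly. You have identified precisely the one step that requires a moment's thought — shifting from $y$ to $y-z$ inside the integral, justified by both densities being normalized over the same ball — and the rest is the triangle inequality, the pointwise bound from \eqref{eq:Part_Manifold}, and $\|y-z\|_2\le r$, exactly as intended.
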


\section{Estimation of contraction direction}
\label{section:Direction_Estimation}
This section presents a novel method for estimating the contraction direction and provides an error bound. Our approach is underpinned by the fact that, in the denoising step, the goal is to ``push'' a point $z$, which is within a distance of $\Delta = C\sigma$ to $\cM$, toward its projection on $\cM$, i.e., $z^*$. Therefore, it is sufficient to estimate the direction of $z^* - z$ instead of estimating the entire basis of $T_{z^*}\cM$. To determine this direction, we focus on a ball $\cB_D(z, r_0)$ centered at $z$ with radius $r_0 = C\sigma$ and provide population-level and sample-level estimators.

\subsection{Population level}
Let the conditional expectation of $\nu$ within the ball be $\mu_z^\mathbb{B}$, namely
\begin{equation}
\label{eq:mu_z^B}
    \mu_z^\mathbb{B} = \bE_{Y\sim \nu}(Y|Y\in \cB_D(z, r_0)).
\end{equation}
The accuracy of the vector $\mu_z^\mathbb{B} - z$ in estimating the direction of $z^* - z$ is reported in Theorem \ref{Thm:AngleCondition}. The proof of this theorem is presented in the remainder of this subsection. This result demonstrates that the vector $\mu_z^\mathbb{B} - z$ performs well in estimating the contraction direction, providing further support for its use in the denoising step. The proof of lemmas and propositions is omitted here and can be found in the supplementary material.

\begin{theorem}
    \label{Thm:AngleCondition}
    For a point $z$ such that $d(z,\cM) = \cO(\sigma)$, we can estimate the direction of $z^* - z$ with $$\mu_z^\mathbb{B} - z = \bE_{Y\sim \nu}(Y - z|Y\in \cB_D(z, r_0)).$$
    The estimation error can be bounded as
    \begin{equation}
        \label{eq:AngleCondition}
        \sin \{\Theta\left(\mu_z^\mathbb{B} - z, \ z^*-z\right)\} \leq C\sigma\sqrt{\log(1/\sigma)}.
    \end{equation}
\end{theorem}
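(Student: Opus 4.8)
The idea is that $z^*-z$ is a normal vector at $z^*$ and that $\mu_z^\mathbb{B}-z$ should be dominated by its component along $z^*-z$, every other component being of curvature order; so I would work in a frame adapted to $z^*$, strip off a flat model in which the direction is exact by symmetry, and estimate the curvature-induced remainder. Write $t=d(z,\cM)$ and run the argument in the regime $t\asymp\sigma$ (for $z$ much closer to $\cM$ the direction $z^*-z$ carries no information). Choose an orthonormal frame centred at $z^*$ with $T_{z^*}\cM=\bR^d\times\{0\}^{D-d}$ and $z=(0,\dots,0;t,0,\dots,0)$, and write $y=(y^-,y^\perp)\in\bR^d\times\bR^{D-d}$; then $z^*-z=-t\,e_1^\perp$, with $e_1^\perp$ the first normal basis vector. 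It suffices to produce a splitting $\mu_z^\mathbb{B}-z=u_0+w$ with $u_0$ exactly parallel to $z^*-z$, $c\sigma\le\|u_0\|_2\le C\sigma$, and $\|w\|_2\le C\sigma^2\sqrt{\log(1/\sigma)}$: then $\sin\{\Theta(\mu_z^\mathbb{B}-z,\,z^*-z)\}\le\|w\|_2/\|u_0\|_2\le C\sigma\sqrt{\log(1/\sigma)}$, which is \eqref{eq:AngleCondition}.

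First I would localize: by Lemma~\ref{Lemma:PartManifold} and its corollary, with $R=r_0+C_1\sigma\sqrt{(d+\eta)\log(1/\sigma)}$, replacing $\nu$ by the localized density $\nu_R$ built from $\cM_R=\cM\cap\cB_D(z,R)$ perturbs $\bE_{Y\sim\nu}(Y\mid Y\in\cB_D(z,r_0))$ by only $\cO(r_0\sigma^\eta)=\cO(\sigma^{1+\eta})$, absorbed into $w$ once $\eta\ge 1$. Since $R\ll\tau$, I would write $\cM_R$ as a graph $u\mapsto(u,\Psi(u))$ over $T_{z^*}\cM$ with $\Psi(0)=0$, $D\Psi(0)=0$, $\|\Psi(u)\|_2\le\|u\|_2^2/(2\tau)$ (Lemma~\ref{Lemma:ReachCond}) and Jacobian $1+\cO(\|u\|_2^2/\tau^2)$; for $y\in\cB_D(z,r_0)$ only $\|u\|_2=\cO(\sigma\sqrt{\log(1/\sigma)})$ contributes to $\nu_R(y)=\int\phi_\sigma(y-(u,\Psi(u)))(1+\cO(\|u\|_2^2/\tau^2))\,du$.

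The core step would be the comparison with the flat model $\nu_0(y)=\int_{\bR^d}\phi_\sigma(y-(u,0))\,du\propto\exp(-\|y^\perp\|_2^2/2\sigma^2)$, which depends on $y$ only through $\|y^\perp\|_2$. Since $\cB_D(z,r_0)$ is invariant under $y^-\mapsto -y^-$ and under $y_k^\perp\mapsto -y_k^\perp$ for $k\ge 2$ (the centre $z$ has vanishing coordinates in all these directions), $u_0:=\bE_{\nu_0}(Y-z\mid Y\in\cB_D(z,r_0))$ lies on the $e_1^\perp$-axis, hence is exactly parallel to $z^*-z$; a one-dimensional truncated-Gaussian computation shows it is a negative multiple of $t\,e_1^\perp$ with $c\sigma\le\|u_0\|_2\le C\sigma$, the shrinkage factor of a Gaussian truncated to a radius-$r_0$ ball being strictly in $(0,1)$. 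For $w=\bE_{\nu_R}(Y-z\mid\cdot)-u_0$ I would use $\phi_\sigma(y-(u,\Psi(u)))=\phi_\sigma(y-(u,0))\exp\!\big((2y^\perp\!\cdot\Psi(u)-\|\Psi(u)\|_2^2)/2\sigma^2\big)$ to write $\nu_R=\nu_0(1+E)$, bound $E$ from $\|y^\perp\|_2=\cO(\sigma)$ on the ball, $\|\Psi(u)\|_2\le\|u\|_2^2/(2\tau)$, and the Gaussian concentration of $u$ about $y^-$ (so the $\|u\|_2^2$ from $\Psi$ averages to $\cO(\sigma^2)$ against $\phi_\sigma(y-(u,0))\,du$), and finish with the standard first-order perturbation $\|w\|_2\lesssim\mathrm{diam}(\cB_D(z,r_0))\cdot\sup_{\cB_D(z,r_0)}|E|$; this yields $\|w\|_2\le C\sigma^2\sqrt{\log(1/\sigma)}$. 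Assembling the three facts gives \eqref{eq:AngleCondition}.

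The step I expect to be the main obstacle is the bound on $w$. A crude \emph{pointwise} estimate of $E$ (using $\|\Psi(u)\|_2\lesssim R^2/\tau$ on the support) loses a logarithmic factor, giving only $\sin\Theta=\cO(\sigma\log(1/\sigma))$, so one must exploit that $\|u\|_2^2$ enters through the second fundamental form under a Gaussian of scale $\sigma$; a parity argument — the leading part of $E$ is odd in $y^-$ once the even leading term of $\Psi$ is substituted — additionally removes the tangential part of $w$ to this order. One must also secure the matching lower bound $\|u_0\|_2\asymp\sigma$, which is exactly where the explicit truncated-Gaussian shrinkage factor and the standing regime $d(z,\cM)\asymp\sigma$ are used.
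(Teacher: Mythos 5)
Your proposal takes essentially the same route as the paper: localize with Lemma~\ref{Lemma:PartManifold}, replace the local manifold by a flat model over $T_{z^*}\cM$, use the symmetry of the flat model to produce a vector exactly parallel to $z^*-z$ of norm $\asymp\sigma$, and bound the curvature-induced perturbation. In the paper's packaging these steps are Proposition~\ref{Prop:Angle-Cord} (coordinate reduction of the angle bound), Lemma~\ref{Lemma:LocalDisc} (symmetry gives $\mu_\bD^{(i)}=0$ for $i\neq d+1$ plus the lower bound on the perpendicular coordinate), and Lemma~\ref{Lemma:LocalGeneralDiff} (the curved--versus--disc relative-density estimate); your $u_0$ is the disc-model conditional mean, your $w$ is what Lemmas~\ref{Lemma:PartManifold} and~\ref{Lemma:LocalGeneralDiff} together control, and your remark that the lower bound $\|u_0\|_2\gtrsim\sigma$ needs $d(z,\cM)\asymp\sigma$ is likewise implicit in the proof of Lemma~\ref{Lemma:LocalDisc}, where $\Delta=C_0\sigma$ must stay bounded away from zero.

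The obstacle you flag is indeed where the argument is most delicate. The cross term $y^\perp\cdot\Psi(u)/\sigma^2$ is the leading part of $E$, and the raw pointwise bound $\|\Psi(u)\|_2\lesssim R^2/\tau$ on the disc only gives $\sigma\log(1/\sigma)$. Note that the paper's proof of Lemma~\ref{Lemma:LocalGeneralDiff} passes from $\bigl|\|y-z-\delta_z\|_2^2-\|y-z\|_2^2\bigr|$ directly to $\|\delta_z\|_2^2$, which silently drops precisely this cross term $2(y-z)\cdot\delta_z$; so the two mechanisms you sketch --- Gaussian localization of $u$ to scale $\sigma$ around $y^-$ (rather than the worst case $R$), and parity cancellation of the tangential components of $w$ coming from the evenness of $\Psi$ to leading order together with the symmetry of $\cB_D(z,r_0)$ under $y^-\mapsto -y^-$ --- are exactly what is needed to justify the tangential bound $\cO(\sigma^2\sqrt{\log(1/\sigma)})$ demanded by Proposition~\ref{Prop:Angle-Cord}. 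If you carry those two steps out carefully, your proof is not merely equivalent to the paper's but closes a gap in it.
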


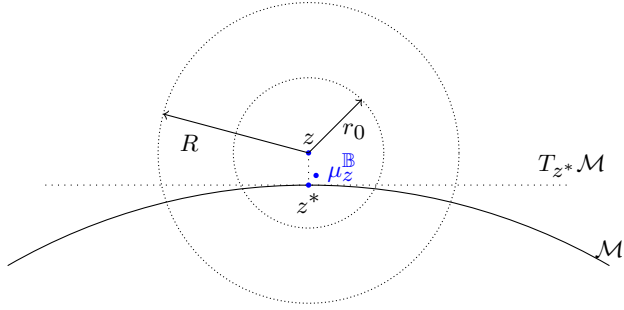
\begin{figure}[htbp]
    \begin{tikzpicture}    
        \draw (0 ,0) node[above] {$\cM$} arc (60:120:8);
        \draw [dotted] (-7.5,1.0718)--(-4,1.0718) node[below] {$z^*$}--(-0.5,1.0718)  node[above] {$T_{z^*}\cM$};
        \draw [dotted] (-4,1.5) node[above] {$z$} --(-4,1.0718);
        \fill [blue] (-4,1.0718) circle (1pt);
        \fill [blue] (-4,1.5) circle (1pt);
        \fill [blue] (-3.9,1.2) node[above = 3pt, right=1pt]{$\mu_z^\mathbb{B}$} circle (1pt);
        \draw [densely dotted] (-4,1.5) circle (1);
        \draw [densely dotted] (-4,1.5) circle (2);
        \draw [->] (-4,1.5) -- (-3.2929,2.2071) node[below = 12pt, left = -5pt]{$r_0$};
        \draw [->] (-4,1.5) -- (-5.9319,2.0176) node[below = 11pt, right = 3pt]{$R$};
    \end{tikzpicture}
    \caption{An illustration for estimating the contraction direction.}
\end{figure}

Without loss of generality, we assume that $z^*$ is the origin, $T_{z^*}\cM$ is the span of the first $d$ Cartesian-coordinate directions, $z^*-z$ is the $(d+1)$-th direction, and the remaining directions constitute the complement in $\bR^D$. To prove Theorem \ref{Thm:AngleCondition}, we first provide a sufficient statement for the error bound in \eqref{eq:AngleCondition}: 
\begin{proposition}    
    \label{Prop:Angle-Cord}
    Let $\mu_z^\mathbb{B} = (\mu^{(1)},\cdots,\mu^{(D)})$, to show \eqref{eq:AngleCondition}
    is sufficient to show 
    \begin{equation*}        
        \left\{
            \begin{array}{cll}
                |\Delta-\mu^{(i)}| &\geq c_1\sigma, \quad &\text{for } i = d+1;\\
                |\mu^{(i)}|   &\leq c_2 \sigma^2\sqrt{\log(1/\sigma)}, \quad &\text{for } i \neq d+1.\\
            \end{array}
        \right.
        \label{eq:Angle-Cord}
    \end{equation*}
\end{proposition}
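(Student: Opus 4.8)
The plan is to reduce \eqref{eq:AngleCondition} to a one-line trigonometric estimate in the orthonormal frame fixed above, and then feed in the two coordinate bounds. Write $\Delta := \|z - z^*\|_2 = d(z,\cM) = \cO(\sigma)$. In the chosen coordinates $z^* = 0$ and $z^* - z$ is parallel to $e_{d+1}$, so (matching the figure) $z = \Delta e_{d+1}$ and $w := z^* - z = -\Delta e_{d+1}$. Put $v := \mu_z^\mathbb{B} - z$; then $v$ has coordinates $v^{(i)} = \mu^{(i)}$ for $i \neq d+1$ and $v^{(d+1)} = \mu^{(d+1)} - \Delta$.

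First I would invoke the standard identity: with $\hat w := w/\|w\|_2 = -e_{d+1}$,
\[
\sin\Theta(v,w) = \frac{\|v - \langle v,\hat w\rangle\hat w\|_2}{\|v\|_2}.
\]
Here $\langle v,\hat w\rangle\hat w = v^{(d+1)} e_{d+1}$, so the orthogonal remainder $v - v^{(d+1)}e_{d+1}$ has coordinates $(\mu^{(1)},\dots,\mu^{(d)},0,\mu^{(d+2)},\dots,\mu^{(D)})$. Using also $\|v\|_2 \geq |v^{(d+1)}|$, this gives
\[
\sin\Theta(v,w) = \frac{\bigl(\sum_{i\neq d+1}(\mu^{(i)})^2\bigr)^{1/2}}{\|v\|_2} \leq \frac{\bigl(\sum_{i\neq d+1}(\mu^{(i)})^2\bigr)^{1/2}}{|v^{(d+1)}|} = \frac{\bigl(\sum_{i\neq d+1}(\mu^{(i)})^2\bigr)^{1/2}}{|\Delta - \mu^{(d+1)}|}.
\]
The hypothesis $|\Delta - \mu^{(d+1)}| \geq c_1\sigma > 0$ also guarantees $v \neq 0$, so the angle is well defined.

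It then remains only to substitute the two assumed estimates. The numerator is a sum of $D-1$ terms, each at most $c_2\sigma^2\sqrt{\log(1/\sigma)}$, hence at most $\sqrt{D-1}\,c_2\sigma^2\sqrt{\log(1/\sigma)}$; the denominator is at least $c_1\sigma$. Dividing, $\sin\Theta(v,w) \leq (\sqrt{D-1}\,c_2/c_1)\,\sigma\sqrt{\log(1/\sigma)}$, which is exactly \eqref{eq:AngleCondition} with the absolute constant $C := \sqrt{D-1}\,c_2/c_1$ (here $D$ is fixed; if one wished to remove the $\sqrt{D}$ one would split the $d$ tangential coordinates from the $D-d-1$ remaining normal ones, but the uniform bound suffices for the statement as phrased).

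I do not expect a genuine obstacle in this proposition: once the coordinate frame is fixed it is purely the computation that $\sin$ of the angle to the $e_{d+1}$-axis is the ratio of the off-axis norm to the total norm, controlled from above by the off-axis norm over the on-axis component. The real difficulty, which this proposition is designed to isolate and postpone, is establishing the two displayed coordinate estimates for the truncated conditional mean $\mu_z^\mathbb{B}$: a quantitative lower bound $|\Delta - \mu^{(d+1)}| \gtrsim \sigma$ saying the local mean genuinely drifts toward $\cM$, and the cancellation bounds $|\mu^{(i)}| \lesssim \sigma^2\sqrt{\log(1/\sigma)}$ for the tangential and the remaining normal directions, where the residual is second order in the curvature. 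That analysis is where Proposition \ref{Prop:TruncateNormalConcentration}, Lemma \ref{Lemma:PartManifold}, and the reach lower bound $\tau$ come into play, and it is carried out in the remainder of the subsection.
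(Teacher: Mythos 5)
Your argument is correct and coincides with the paper's proof: the paper computes $\sin^2\Theta = 1 - \cos^2\Theta = \sum_{i\neq d+1}(\mu^{(i)})^2 / \bigl(\sum_{i\neq d+1}(\mu^{(i)})^2 + (\mu^{(d+1)}-\Delta)^2\bigr)$, which is exactly your $\|v_\perp\|_2^2/\|v\|_2^2$, and then feeds in the same two coordinate bounds. Your step of replacing $\|v\|_2$ by $|v^{(d+1)}|$ in the denominator is a minor cosmetic variant of the paper's bound and changes nothing substantive.
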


To prove Proposition \ref{Prop:Angle-Cord}, we employ a strategy of locally approximating the manifold to the whole and using discs to approximate the local neighborhood of the manifold. Specifically, we use a disc $\bD = T_{z^*}\cM\cap\cB(z, R)$ to approximate $\cM_R$ and generalize the result to the entire manifold. The final error bound is achieved by combining the following lemmas:
\begin{lemma}    
    \label{Lemma:LocalGeneralDiff}
    Let $\tilde{\nu}_R(y)$ be the conditional density function within $\cB_D(z,r_0)$ induced by $\cM_R$, and $\tilde{\nu}_{\bD}(y)$ be its estimator with $\bD$. By setting $R = r_0 + C_1\sigma\sqrt{\log(1/\sigma)}$, we have
    $$\left| \tilde{\nu}_R(y) - \tilde{\nu}_{\bD}(y) \right| \leq C_2\sigma\sqrt{\log(1/\sigma)} \tilde{\nu}_{\bD}(y)$$
    for some constant $C_1$ and $C_2$.
\end{lemma}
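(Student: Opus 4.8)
The plan is to treat the flat disc $\bD$ as a ``toy manifold'' and compare the convolution density it induces with the density induced by the genuine curved patch $\cM_R$, the entire discrepancy being governed by the reach lower bound $\tau$. Write $g_R(y)=\int_{\cM_R}\phi_\sigma(y-x)\,dx$ and $g_{\bD}(y)=\int_{\bD}\phi_\sigma(y-u)\,du$; since $\tilde{\nu}_R$ and $\tilde{\nu}_{\bD}$ are obtained from $g_R$ and $g_{\bD}$ only by multiplication by their (respective, $y$-independent) normalizing constants, which cancel in a conditional density, it suffices to prove the \emph{uniform relative} estimate $|g_R(y)-g_{\bD}(y)|\le C\sigma\sqrt{\log(1/\sigma)}\,g_{\bD}(y)$ for every $y\in\cB_D(z,r_0)$. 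Such a bound is then inherited by the normalizing integrals $\int_{\cB_D(z,r_0)}g_R$ and $\int_{\cB_D(z,r_0)}g_{\bD}$, and hence, after a short manipulation, by $\tilde{\nu}_R$ versus $\tilde{\nu}_{\bD}$, which is exactly the claim.

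In the coordinates fixed after Theorem \ref{Thm:AngleCondition} ($z^*=0$, $T_{z^*}\cM=\bR^d\times\{0\}$) I would represent $\cM_R$ as a graph over its tangent plane: there are a domain $\Omega_R\subset T_{z^*}\cM$ and a map $u\mapsto\psi(u)=u+\mathbf{n}(u)$, with $\mathbf{n}(u)\perp T_{z^*}\cM$, parametrizing $\cM_R$. Federer's reach condition (Lemma \ref{Lemma:ReachCond}) together with the standard $C^2$ consequences of $\reach(\cM)\ge\tau$ recalled in the supplement give $\|\mathbf{n}(u)\|_2\le C\|u\|_2^2/\tau$, $\|D\mathbf{n}(u)\|\le C\|u\|_2/\tau$, hence a volume element $J(u)=\sqrt{\det(I_d+D\mathbf{n}(u)^\top D\mathbf{n}(u))}=1+\cO(\|u\|_2^2/\tau^2)$; moreover $\Omega_R$ and the flat $d$-ball $\bD\subset T_{z^*}\cM$ (of radius comparable to $R$) differ only inside a thin annulus near radius $R$. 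After the substitution, $g_R(y)=\int_{\Omega_R}\phi_\sigma(y-u-\mathbf{n}(u))J(u)\,du$. Since $d(z,\cM)=\cO(\sigma)$ we have $\|y\|_2\le r_0+C\sigma=\cO(\sigma)$, so $\|y^-\|_2,\|y^\perp\|_2=\cO(\sigma)$; and because $R\ge r_0+C_1\sigma\sqrt{\log(1/\sigma)}$, the portions of $g_R$ and $g_{\bD}$ over $\{\|u\|_2\ge R/2\}$ (which contains that annulus) carry $\phi_\sigma$-mass of order $\sigma^{\,cC_1^2}$ and, by the truncation estimate behind Proposition \ref{Prop:TruncateNormalConcentration}, are negligible relative to $g_{\bD}(y)$ once $C_1$ is chosen large.

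On the bulk region $\{\|u\|_2\le R/2\}$ I would estimate the integrand ratio $\phi_\sigma(y-u-\mathbf{n}(u))J(u)/\phi_\sigma(y-u)=e^{h(u)}J(u)$, where $h(u)=\sigma^{-2}\bigl(\langle y^\perp,\mathbf{n}(u)\rangle-\tfrac12\|\mathbf{n}(u)\|_2^2\bigr)$, using $(y-u)^\perp=y^\perp$ because $u\in T_{z^*}\cM$. The bounds on $\mathbf{n}$ and $J$ give $|e^{h(u)}J(u)-1|\le C\bigl(\|y^\perp\|_2\|u\|_2^2/(\sigma^2\tau)+\|u\|_2^4/(\sigma^2\tau^2)\bigr)\le C\bigl(\|u\|_2^2/(\sigma\tau)+\|u\|_2^4/(\sigma^2\tau^2)\bigr)$. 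Integrating this against $\phi_\sigma(y-u)$ and dividing by $g_{\bD}(y)$ — viewing $u\mapsto\phi_\sigma(y-u)$ on $T_{z^*}\cM$ as a constant multiple of an $\mathcal{N}(y^-,\sigma^2 I_d)$ density — the Gaussian moment bounds $\int_{\bD}\|u\|_2^2\phi_\sigma(y-u)\,du/g_{\bD}(y)=\cO(d\sigma^2+\|y^-\|_2^2)=\cO(d\sigma^2)$ and the analogous fourth moment $\cO(d^2\sigma^4)$ yield $|g_R(y)-g_{\bD}(y)|/g_{\bD}(y)\le C\bigl(d\sigma/\tau+d^2\sigma^2/\tau^2\bigr)+(\text{truncation})\le C\sigma\sqrt{\log(1/\sigma)}$, uniformly in $y\in\cB_D(z,r_0)$. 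Passing to the conditional densities as in the first paragraph completes the proof; together with Lemma \ref{Lemma:PartManifold} this reduces the remaining analysis behind Proposition \ref{Prop:Angle-Cord} and Theorem \ref{Thm:AngleCondition} to an explicit moment computation on the flat disc $\bD$.

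I expect the main obstacle to be the curvature bookkeeping on the graph: one must verify that the quadratically small normal deviation $\mathbf{n}(u)$ and the volume-element correction $J(u)-1$ contribute only at order $\cO(\sigma)$ after integration against the Gaussian — crucially using that $y$ has a normal component of size $\cO(\sigma)$ and the $d$-dimensional moment estimates — and that the domain mismatch between $\Omega_R$ and $\bD$ and the tail beyond radius $R/2$ are genuinely of lower order, which is exactly where the generous choice $R=r_0+C_1\sigma\sqrt{\log(1/\sigma)}$ with $C_1$ a large absolute constant is spent (the $\sqrt{\log(1/\sigma)}$ in the statement is slack that absorbs these tail terms). A secondary, routine point is quoting the $C^2$ graph representation from the reach hypothesis rather than reproving it.
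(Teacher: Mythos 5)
Your proposal is correct and follows essentially the same route as the paper's proof: both represent $\cM_R$ as a $C^2$ graph over the tangent disc $\bD$ (your $u\mapsto u+\mathbf{n}(u)$ is the paper's $\Psi(z)$ with normal deviation $\delta_z$), change variables to write the $\cM_R$-convolution as an integral over $\bD$, and bound the two perturbations — the Gaussian shift coming from the normal deviation, and the volume-element (Jacobian) correction — by integrating them against the Gaussian on $T_{z^*}\cM$. The differences are matters of bookkeeping rather than method. You keep the cross term $\langle y^\perp,\mathbf{n}(u)\rangle$ explicit inside $h(u)$, whereas the paper's displayed inequality appears to compress $\bigl|\|y-z-\delta_z\|_2^2-\|y-z\|_2^2\bigr|$ directly into $\|\delta_z\|_2^2$, dropping that cross term from the display; it contributes the same $\cO(\sigma/\tau)$ order after integration, so the final bound survives, but your version is tighter and more transparent. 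You also use the quadratic Jacobian expansion $J(u)=1+\cO(\|u\|_2^2/\tau^2)$, while the paper's Prop.~\ref{Prop:LocalChartJacobian} only invokes the weaker linear bound $|g(z)|\le C\|z\|_2$ — this is precisely why the paper's volume correction carries the extra $\sqrt{\log(1/\sigma)}$ factor while your argument actually delivers the sharper $\cO(\sigma)$, which of course still implies the stated lemma. Finally, you explicitly dispose of the $\Omega_R$/$\bD$ domain mismatch, the tail beyond $R/2$, and the passage from the unnormalized $\nu_R,\nu_\bD$ to the conditional densities $\tilde\nu_R,\tilde\nu_\bD$, all of which the paper leaves implicit. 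Nothing essential is missing; if anything your write-up is more careful than the one in the paper.
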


\begin{lemma}
    \label{Lemma:LocalDisc}
    Let the conditional expectation of $Y\sim\tilde{\nu}_{\bD}$ within $\cB_D(z,r)$ be 
    $$\mu_{z,\bD}^\mathbb{B} = (\mu^{(1)}_\bD,\cdots,\mu^{(D)}_\bD).$$
    Then, there is 
    $$
    \left\{
        \begin{array}{cll}
            |\Delta - \mu_\bD^{(i)}| &\geq c\sigma \quad &\text{for } i = d+1\\
            |\mu_\bD^{(i)}|   &= 0 , \quad &\text{for } i \neq d+1\\
        \end{array}
    \right.,
    $$
\end{lemma}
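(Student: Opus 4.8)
The plan is to use the product structure that replacing $\cM_R$ by the flat disc $\bD\subset T_{z^*}\cM$ creates. With coordinates as in the paper ($z^*=0$, $T_{z^*}\cM=\{y_{d+1}=\cdots=y_D=0\}$, and $z=(0,\dots,0,\Delta,0,\dots,0)$ with $\Delta=d(z,\cM)$), note that $\bD$ is the $d$-ball $\{\|y_{1:d}\|\le\sqrt{R^2-\Delta^2}\}$ inside $\{y_{d+1}=\cdots=y_D=0\}$, and since $\xi\sim N(0,\sigma^2 I_D)$ is isotropic, under $\tilde\nu_\bD$ the vector $Y=X+\xi$ has density $g(y_{1:d})\prod_{i=d+1}^{D}\phi_{1,\sigma}(y_i)$, where $\phi_{1,\sigma}$ is the scalar Gaussian density and $g$, the density of $Y_{1:d}=X_{1:d}+\xi_{1:d}$ with $X_{1:d}$ uniform on $B_d(0,\sqrt{R^2-\Delta^2})$, is radial in $y_{1:d}$. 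The conditioning set $\{Y\in\cB_D(z,r_0)\}$ is $\{\|y_{1:d}\|^2+(y_{d+1}-\Delta)^2+\|y_{d+2:D}\|^2\le r_0^2\}$, which depends on each $y_i$ with $i\ne d+1$ only through $y_i^2$. Hence the conditional law of $Y$ is invariant under $y_i\mapsto-y_i$ for every $i\ne d+1$, and therefore $\mu_\bD^{(i)}=0$ for all such $i$, which is the second half of the claim.

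For $i=d+1$ I would integrate out the other coordinates. Given $Y_{d+1}=t$, the rest must satisfy $\|Y_{1:d}\|^2+\|Y_{d+2:D}\|^2\le r_0^2-(t-\Delta)^2$, and the conditional law of $(Y_{1:d},Y_{d+2:D})$ does not depend on $t$, so the marginal conditional density of $Y_{d+1}$ is $p(t)\propto \phi_{1,\sigma}(t)\,q(r_0^2-(t-\Delta)^2)\,\mathbf{1}\{|t-\Delta|\le r_0\}$, with $q$ the (nondecreasing, $q(0)=0$) distribution function of $\|Y_{1:d}\|^2+\|Y_{d+2:D}\|^2$. The factor $q(r_0^2-(t-\Delta)^2)$ is symmetric about $t=\Delta$, while $\phi_{1,\sigma}(\Delta-x)>\phi_{1,\sigma}(\Delta+x)$ for every $x>0$ (because $|\Delta-x|<\Delta+x$ and $\phi_{1,\sigma}$ is even and strictly decreasing on $[0,\infty)$). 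Thus $p(\Delta-x)>p(\Delta+x)$ on $(0,r_0]$, which already gives $\mu_\bD^{(d+1)}=\bE_p[t]<\Delta$; in particular $\mu_{z,\bD}^{\mathbb B}-z$ points toward the half-space containing $z^*$.

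It remains to make this quantitative, $\Delta-\mu_\bD^{(d+1)}\ge c\sigma$. Substituting $t=\Delta+r_0v$ and folding $[-1,1]$ onto $[0,1]$ gives
$$
\Delta-\mu_\bD^{(d+1)}=r_0\cdot\frac{\int_0^1 v\,e^{-r_0^2v^2/2\sigma^2}\,Q(v)\,\sinh(\Delta r_0 v/\sigma^2)\,dv}{\int_0^1 e^{-r_0^2v^2/2\sigma^2}\,Q(v)\,\cosh(\Delta r_0 v/\sigma^2)\,dv},\qquad Q(v):=q(r_0^2(1-v^2))\ge 0,
$$
and bounding $\sinh s\ge s$ and $\cosh s\le e^{s}\le e^{\Delta r_0/\sigma^2}$ on $[0,\Delta r_0/\sigma^2]$ shows this is at least $\frac{\Delta r_0^2}{\sigma^2}e^{-\Delta r_0/\sigma^2}\,\bE_{\widetilde Q}[v^2]$, where $\widetilde Q\propto e^{-r_0^2v^2/2\sigma^2}Q(v)$ on $[0,1]$. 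With $r_0\asymp\sigma$ and $\Delta=d(z,\cM)\asymp\sigma$ (the regime of interest; note the statement must fail for $\Delta\downarrow 0$ by the symmetry above), the prefactor is a positive multiple of $\sigma$, so the lemma reduces to a lower bound on $\bE_{\widetilde Q}[v^2]$. This is the main obstacle: it is equivalent to showing that the distribution of $\|Y_{1:d}\|^2+\|Y_{d+2:D}\|^2$ — a sum of a $\chi^2$-type piece and a piece that, near the origin, is comparable to the square-radius of a uniform law on a $d$-ball (because $\sqrt{R^2-\Delta^2}\gg\sigma$ makes $g$ essentially flat on $\cB_D(z,r_0)$) — does not concentrate too sharply just below the window radius $r_0^2$; here the choices $r_0\asymp\sigma$ and $\Delta\asymp\sigma$ enter essentially, and one should keep careful track of how the estimate depends on the ambient and intrinsic dimensions. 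Once this scalar inequality is established, Lemma~\ref{Lemma:LocalDisc} follows, and Proposition~\ref{Prop:Angle-Cord} then results from adding the disc-to-$\cM_R$ perturbation supplied by Lemma~\ref{Lemma:LocalGeneralDiff}.
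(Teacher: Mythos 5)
Your treatment of the $i\neq d+1$ coordinates is correct and is essentially the paper's argument: the paper implements the same reflection invariance through explicit maps $h_i$ on $\cB_D(z,r)$ and $\bD$, while you phrase it via the product form $g(y_{1:d})\prod_{i>d}\phi_{1,\sigma}(y_i)$ of $\nu_\bD$ and the radiality of $g$ — the same symmetry, packaged a bit more cleanly. Your qualitative observation for $i=d+1$ (that the window weight $q(r_0^2-(t-\Delta)^2)$ is symmetric about $t=\Delta$ while $\phi_{1,\sigma}$ pulls mass toward the origin, so $\mu_\bD^{(d+1)}<\Delta$) is a nice complement that the paper does not spell out; so is your remark that the claimed lower bound must degenerate as $\Delta\downarrow 0$ and hence implicitly needs $\Delta\asymp\sigma$, which is consistent with the paper's choice $\Delta=C_0\sigma$ but not stated explicitly in the lemma.

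However, the quantitative half is not actually proved. Your reduction
\[
\Delta-\mu_\bD^{(d+1)}\ \ge\ \frac{\Delta r_0^2}{\sigma^2}\,e^{-\Delta r_0/\sigma^2}\,\bE_{\widetilde Q}[v^2]
\]
is correct, and with $r_0,\Delta\asymp\sigma$ the prefactor is $\asymp\sigma$, but you explicitly leave the required lower bound $\bE_{\widetilde Q}[v^2]\gtrsim 1$ unproved, and this is not a cosmetic step. The weight $Q(v)=q(r_0^2(1-v^2))$ is a \emph{decreasing} function on $[0,1]$ (it is a CDF evaluated at a decreasing argument, with $Q(1)=0$), so the tilted law $\widetilde Q$ piles mass toward $v=0$ and $\bE_{\widetilde Q}[v^2]$ could in principle be arbitrarily small if $q$ fell off steeply below $r_0^2$. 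To rule this out one must show that $q$ behaves like a bona fide power law near zero — roughly $q(s)\gtrsim (s/r_0^2)^{(D-1)/2}\,q(r_0^2)$ for $s\in[0,r_0^2]$ — which is exactly the kind of local-volume/Gaussian estimate the paper carries out, but which you do not. (And note the resulting constant would then scale like $1/D$, so the eventual $c$ in the lemma is dimension-dependent; the paper's constants are allowed to be.) The paper sidesteps the sharp marginal analysis entirely: using Lemma~\ref{Lemma:prob_in_a_ball} it normalizes the denominator to $\asymp r^d$, drops the $\bD$-convolution factor by the near-one bound \eqref{eq:RemoveDisc_append}, and then crops the numerator to the explicit product region $\{t\in[0,\Delta]\}\times\{\sum_{j\neq d+1}(y^{(j)})^2\leq r^2-\Delta^2\}$, producing a one-dimensional Gaussian integral $I_1\geq c\sigma$ and a cropped-ball Gaussian mass $I_2\geq c\sigma^{d}$. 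That cropping supplies precisely the lower bound your $\bE_{\widetilde Q}[v^2]$ needs; without it, or an equivalent estimate on $q$, your proof of $|\Delta-\mu_\bD^{(d+1)}|\geq c\sigma$ is incomplete.
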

%

According to Lemma \ref{Lemma:PartManifold} and Lemma \ref{Lemma:LocalGeneralDiff}, by setting $R = r_0 + C_1\sigma\sqrt{\log(1/\sigma)}$, we have
\begin{equation*}
    \left| \tilde{\nu}_R(z) - \tilde{\nu}_{\bD}(z) \right| \leq C_2\sigma\sqrt{\log(1/\sigma)} \tilde{\nu}_{\bD}(z),
\end{equation*}
and thus the conditional expectations within $\cB_D(z,r_0)$ should also be close, namely
\begin{equation*}
    \|\mu_{z,\bD}^\mathbb{B} - \mu_{z}^\mathbb{B}\|_2 \leq C \sigma^2\sqrt{\log(1/\sigma)},
\end{equation*}
for some constant $C$.

Therefore, together with Lemma \ref{Lemma:LocalDisc}, the statement in Proposition \ref{Prop:Angle-Cord} is fulfilled, and hence the proof of Theorem \ref{Thm:AngleCondition} is completed.

\subsection{Estimation with finite sample}
\label{Sec:F(z)}
In practice, we typically have access to only the data point collection $\mathcal{Y}$, which is sampled from the distribution $\nu(y)$. To construct an estimator for $\mu_z^{\mathbb{B}}$ as defined in \eqref{eq:mu_z^B}, a natural approach is to use the local average, defined as
$$
\tilde{\mu}_z^{\mathbb{B}} = \frac{1}{|I_z|}\sum_{i\in I_z} y_i,
$$
where $I_z$ is the index of $y_i$'s that lie in $\mathcal{Y} \cap\mathcal{B}_D(z,r_0)$.
Although $\tilde{\mu}_z^{\mathbb{B}}$ converges to $\mu_z^{\mathbb{B}}$ as the size of $I_z$ goes to infinity, it is not a continuous mapping of $y$ because of the discontinuity introduced by the change in the neighborhood. The discontinuity can adversely affect the smoothness of $\widehat{\cM}$. To address this issue, we need a smooth version of $\tilde{\mu}_z^{\mathbb{B}}$.

Let the local weighted average at point $z$ be
\begin{equation}
    \label{eq:def:F(z)}
    F(z) = \sum_{i}\alpha_i(z) y_i,
\end{equation}
with the weights being defined as
\begin{equation}
    \label{eq:def:weight_F}
\tilde{\alpha}_i(z)=
\left\{\begin{array}{cc}
\left(1 - \frac{\|z - y_i\|_2^2}{r_0^2}\right)^{k}, & \|z - y_i\|_2\leq r_0;\\
0, &  {\rm otherwise};\\ 
\end{array}\right.
\quad \tilde{\alpha}(z) =  \sum_{i\in I_z}\tilde{\alpha}_i(z),
\quad \alpha_i(z) = \frac{\tilde{\alpha}_i(z)}{\tilde{\alpha}(z)},
\end{equation}
with $k>2$ being a fixed integer guaranteeing a twice-differentiable smoothness. Similar to $\mu_z^\mathbb{B} - z$, the direction of $F(z) - z$ approximates the direction $z$ to $z^*$ well:
\begin{theorem}
\label{Thm:Angle:F(z)}
    If the sample size $N = C_1\sigma^{-(d+3)}$, for a point $z$ such that $d(z,\cM) = \cO(\sigma)$, $F(z)$ as defined in \eqref{eq:def:F(z)} provides an estimation of the contraction direction, whose error can be bounded by 
    $$\sin \{\Theta\left(F(z) - z, \ z^*-z\right)\} \leq C_2\sigma\sqrt{\log(1/\sigma)},$$
    with probability at least $1 - C_3\exp(-C_4\sigma^{-c})$, for some constant $c$, $C_1$, $C_2$, $C_3$, and $C_4$.
\end{theorem}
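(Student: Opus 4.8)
The plan is to derive Theorem~\ref{Thm:Angle:F(z)} from its population counterpart, Theorem~\ref{Thm:AngleCondition}, by controlling the deviation of the smooth weighted average $F(z)$ from $\mu_z^{\mathbb{B}}$. First I would show that the weighted population average
$$
\bar F(z) = \frac{\int_{\cB_D(z,r_0)} \big(1 - \|z-y\|_2^2/r_0^2\big)^k\, y\, \nu(y)\,dy}{\int_{\cB_D(z,r_0)} \big(1 - \|z-y\|_2^2/r_0^2\big)^k\, \nu(y)\,dy}
$$
also estimates the direction $z^* - z$ with error $C\sigma\sqrt{\log(1/\sigma)}$; this follows by re-running the argument behind Theorem~\ref{Thm:AngleCondition} with the extra radial weight, which only changes the disc-level computation in Lemma~\ref{Lemma:LocalDisc} by a harmless rescaling (the weight is even in the first $d$ coordinates, so the off-direction moments still vanish, and the $(d+1)$-st moment stays bounded below by $c\sigma$). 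Equivalently, one can bound $\|\bar F(z) - \mu_z^{\mathbb{B}}\|_2$ directly: both are conditional expectations of $Y$ over nested-type regions, and the angular conclusion is insensitive to a $C\sigma\sqrt{\log(1/\sigma)}$ perturbation of the numerator since $\|\mu_z^{\mathbb{B}} - z\|_2 \asymp \sigma$.

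Next I would pass from $\bar F(z)$ to the empirical $F(z)$. Writing $F(z) = \big(\sum_i \tilde\alpha_i(z) y_i\big)/\big(\sum_i \tilde\alpha_i(z)\big)$, the numerator and denominator are each sums of i.i.d.\ bounded random variables: $\tilde\alpha_i(z) \in [0,1]$ is supported on $\{y_i \in \cB_D(z,r_0)\}$, and $\tilde\alpha_i(z)\|y_i\|_2$ is likewise bounded on that event by roughly $r_0$ times a Gaussian tail factor. By Lemma~\ref{Lemma:prob_in_a_ball} and Corollary~\ref{Col:Local_sample_size} (with $r = r_0 = C\sigma$ and $N = C_1\sigma^{-(d+3)}$, so that the expected local count is $\asymp \sigma^{-3}$), the denominator concentrates around its mean $\asymp r_0^d$ up to relative error $\sigma$, with failure probability $\exp(-C\sigma^{-c})$ via a Bernstein/Chernoff bound. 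The same Bernstein argument applied coordinatewise to the numerator gives $\|F(z) - \bar F(z)\|_2 \le C\sigma \cdot \sigma = C\sigma^2$ on the good event (the first $\sigma$ being the scale $r_0$, the second the relative fluctuation $\asymp (\text{local count})^{-1/2} \asymp \sigma^{3/2}$, which is even smaller). Since $\|\bar F(z) - z\|_2 \asymp \sigma$, a perturbation of size $C\sigma^2$ moves the direction by an angle whose sine is $O(\sigma)$, which is absorbed into the stated bound.

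Combining the two steps: $\sin\Theta(F(z)-z,\ z^*-z) \le \sin\Theta(F(z)-z,\ \bar F(z)-z) + \sin\Theta(\bar F(z)-z,\ z^*-z) \le C\sigma + C\sigma\sqrt{\log(1/\sigma)} \le C_2\sigma\sqrt{\log(1/\sigma)}$, on an event of probability $1 - C_3\exp(-C_4\sigma^{-c})$, as claimed. I expect the main obstacle to be the concentration step done \emph{uniformly in the right way}: a naive union bound over $z$ is impossible since $\Gamma$ is a continuum, so one needs either (i) to note that the bound for a fixed $z$ suffices because later the manifold estimator is built pointwise and smoothness/continuity of $F$ is handled separately, or (ii) to discretize $\Gamma$ on a fine net and use the Lipschitz continuity of $F$ (guaranteed by $k>2$) to interpolate — and in the latter case one must check the net cardinality only costs a $\mathrm{poly}(\sigma^{-1})$ factor, which is swallowed by $\exp(-C\sigma^{-c})$. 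A secondary technical point is ensuring the Gaussian-tail truncation in bounding $\tilde\alpha_i(z)\|y_i - z\|_2$ interacts correctly with Proposition~\ref{Prop:TruncateNormalConcentration} so that no hidden $D$-dependence creeps into the constants; this is where Lemma~\ref{Lemma:PartManifold} and its corollary do the real work of reducing everything to a $d$-dimensional computation.
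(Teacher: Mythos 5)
Your proposal is correct and follows the same two-step architecture as the paper's proof: establish that the weighted population average $\widehat\mu_w = \bE[W(Y)Y]/\bE[W(Y)]$ estimates the contraction direction, then use Corollary~\ref{Col:Local_sample_size} together with a concentration inequality to pass to the empirical $F(z)$. The probabilistic bookkeeping and the order of errors ($\sigma^2$ perturbation of a vector of length $\asymp\sigma$ costing $O(\sigma)$ in angle) match the paper's.

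The one place you diverge is the derivation of the weighted population bound. You propose to re-run the symmetry argument from Lemma~\ref{Lemma:LocalDisc} with the extra radial weight — noting the weight is even in the $i$-th coordinate for $i\neq d+1$ so off-direction moments still vanish, and asserting that the $(d+1)$-st gap stays $\geq c\sigma$. That is correct, but you leave the lower bound as an assertion. The paper instead uses a slicker device: it writes the weight $(r_0^2 - \|y-z\|_2^2)^k$ as $\int_{\|\eta\|_2^2 \le r_0^2 - \|y-z\|_2^2} d\eta$ for an auxiliary variable $\eta\in\bR^{2k}$, so the weighted $D$-dimensional moments literally become unweighted moments over a $(D+2k)$-dimensional ball, and Theorem~\ref{Thm:AngleCondition} applies verbatim in that higher dimension with $(d+2k)$ playing the role of $d$. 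This avoids re-deriving the $(d+1)$-coordinate lower bound at the cost of no new work, and is a nice trick worth internalizing. Your concentration step (Bernstein/Chernoff on numerator and denominator separately) is interchangeable with the paper's route via Lemma~\ref{Thm:Concentration_with_Sample} and Corollary~\ref{col:Concentrate_with_n}. Your remark about uniformity over $z\in\Gamma$ is astute but not a gap here — the theorem is stated pointwise, and the paper treats uniformity (or rather, smoothness of the resulting map) at the level of Section~\ref{sec:fit_mfd}, as you anticipate in your option (i).
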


\section{Local contraction}
This section presents the theoretical results of the local contraction process.
Let $z$ be a point within a distance of $C\sigma$ to $\cM$, and let $\bV_{z}$ be a neighborhood of $z$.
The conditional expectation of $\nu$ within $\bV_{z}$ can be viewed as a denoised version of $z$, namely
$$\bE_{Y\sim \nu}\left(Y|Y\in \bV_{z}\right).$$
To minimize noise and avoid distortion by the manifold, $\bV_z$ should be narrow in the directions tangent to the manifold and broad in the direction perpendicular to it, like inserting a straw into a ball.
Thus, determining the orientation of $\bV_{z}$ and the scale in two directions is crucial.
In the following sub-sections, we analyze the population-level denoising result for three different orientation settings and provide a smooth estimator for the last case.

\subsection{Contraction with known projection direction}
In the simplest scenario, we assume the direction of $T_{z^*} \cM$, i.e., $\Pi^\perp_{z^*}$, is known. Then, $\bV_{z}$ can be constructed as the Cartesian product of two balls. Specifically,
\begin{equation}
\label{eq:V_z:1}
\begin{aligned}
    \bV_{z} 
    &= \cB_d(z,r_1) \times \cB_{D-d}(z,r_2)\\
    &=\Pi^-_{z^*}\cB_D(z,r_1) \times \Pi^\perp_{z^*}\cB_D(z,r_2),
\end{aligned}
\end{equation}
where the first ball is $d$-dimensional, lying in $\bR^{d} = T_{z^*}\cM$, while the second one is in the orthogonal complement of $\bR^{d}$ in $\bR^{D}$ with a radius $r_2\gg r_1$. Let $\mu_z^{\mathbb V}$ be the denoised point, calculated with the conditional expectation within $\bV_{z}$; precisely,
\begin{equation}
\label{eq:mu_z:1}
    \mu_z^{\mathbb V} = z + \Pi^\perp_{z^*}\bE_{Y\sim \nu}\left(Y-z|Y\in \bV_{z}\right),
\end{equation}
where $Y$ is a random vector with density function $\nu(y)$. The refined point $\mu_z^{\mathbb V}$ is much closer to $\cM$. This result can be summarized as the following theorem:

\begin{theorem}
    \label{Thm:ContractWithKnownDir}
    Consider a point $z$ such that $d(z,\cM)<C \sigma$. Let its neighborhood $\bV_{z}$ be defined as \eqref{eq:V_z:1} with radius 
    $$r_1 = c\sigma \quad \text{and} \quad r_2 = C\sigma\sqrt{\log(1/\sigma)}.$$ The refined point $\mu_z^{\mathbb V}$ given by \eqref{eq:mu_z:1} satisfies
    $$d(\mu_z^{\mathbb V},\cM)\leq C\sigma^2\log(1/\sigma),$$
    for some constant $C$.
\end{theorem}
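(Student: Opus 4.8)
The plan is to reduce the $D$-dimensional problem to a tractable one-dimensional estimate along the known normal direction $z^*-z$, controlling two sources of error separately: the bias induced by the curvature of $\cM$ inside the thin cylinder $\bV_z$, and the residual Gaussian noise that survives truncation to $\bV_z$. Adopt the normalization already used for Theorem 3.1: take $z^*$ to be the origin, let $T_{z^*}\cM$ be the span of the first $d$ coordinates, and let $z^*-z$ point along the $(d+1)$-th coordinate, so $z = (0,\dots,0,\Delta,0,\dots,0)$ with $\Delta = d(z,\cM) < C\sigma$. Because $\mu_z^{\mathbb V}-z = \Pi^\perp_{z^*}\bE_{Y\sim\nu}(Y-z\mid Y\in\bV_z)$ lies entirely in the normal space, it suffices to show that this normal-space conditional expectation brings $z$ to within $C\sigma^2\log(1/\sigma)$ of $\cM$; equivalently, since $\cM$ passes through the origin tangent to the first $d$ coordinates with reach $\ge\tau$, it suffices to show the $(d+1)$-th coordinate of $\mu_z^{\mathbb V}$ is $O(\sigma^2\log(1/\sigma))$ and the remaining normal coordinates are $O(\sigma^2\log(1/\sigma))$ as well, after which Federer's reach condition (Lemma 2.3) converts a small ambient displacement off the tangent plane into an $O(\sigma^2\log(1/\sigma))$ distance to $\cM$.

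The key steps, in order, are as follows. First, invoke Lemma 2.5 (or its localized analogue, Lemma 3.4) to replace $\nu$ restricted to $\bV_z$ by the density $\tilde\nu_R$ coming from the truncated submanifold $\cM_R$ with $R = r_2 + C_1\sigma\sqrt{(d+\eta)\log(1/\sigma)}$; choosing $\eta$ a large enough absolute constant makes the relative density error $O(\sigma^\eta)$, which by the corollary to Lemma 2.5 perturbs the conditional mean by at most $O(r_2\sigma^\eta) = O(\sigma^2\log(1/\sigma))$. Second, on $\cM_R$ use the reach bound to write $\cM_R$ as a graph over (a subset of) $T_{z^*}\cM$: a point $x\in\cM_R$ has tangential part $u$ with $\|u\|_2\lesssim R = O(\sigma\sqrt{\log(1/\sigma)})$ and normal part bounded by $\|u\|_2^2/(2\tau) = O(\sigma^2\log(1/\sigma))$ coordinatewise, by Federer's condition. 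Third, compute $\bE_{Y\sim\tilde\nu_R}(Y - z\mid Y\in\bV_z)$ by conditioning on $X = x$ first: given $x$, $Y = x+\xi$ with $\xi\sim\phi_\sigma$, and $Y\in\bV_z$ forces $\xi$ into a cylinder of tangential half-width $r_1 = c\sigma$ and normal half-width $r_2 = C\sigma\sqrt{\log(1/\sigma)}$ about $z-x$; since $z-x$ has tangential size $O(\sigma\sqrt{\log(1/\sigma)})$ and normal size $O(\sigma)$ (dominated by $\Delta$), Proposition 2.4 applies to the truncated Gaussian and yields $\|\bE(\xi\mid \xi\in\text{cylinder})\|_2 = O(\sigma^2)$ — here one needs the cylinder version of Proposition 2.4, but the same sub-Gaussian tail argument gives it since the cylinder contains a ball of radius $c\sigma\sqrt{\log(1/\sigma)}$ and is contained in one of radius $C\sigma\sqrt{\log(1/\sigma)}$. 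Fourth, average over $x$: the contribution of the normal part of $x$ is $O(\sigma^2\log(1/\sigma))$ by step two, the contribution of the surviving Gaussian mean is $O(\sigma^2)$ by step three, and the tangential part of $x$ is killed by the projection $\Pi^\perp_{z^*}$ up to the curvature coupling already accounted for. Summing the three error contributions gives $\|\Pi^\perp_{z^*}\bigl(\mu_z^{\mathbb V}-z - (z^* - z)\bigr)\|_2 = O(\sigma^2\log(1/\sigma))$, i.e. $\mu_z^{\mathbb V}$ agrees with $z^*$ up to that order in the normal directions; combined with the fact that $\mu_z^{\mathbb V}-z$ has no tangential component and $z^*$ is the origin, $\|\mu_z^{\mathbb V} - z^*\|_2 = O(\sigma^2\log(1/\sigma))$ and a fortiori $d(\mu_z^{\mathbb V},\cM) = O(\sigma^2\log(1/\sigma))$.

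The main obstacle I expect is the interplay between the aspect ratio of the cylinder and the curvature term in step three. The cylinder $\bV_z$ is chosen narrow ($r_1 = c\sigma$) in the tangential directions precisely so that the manifold looks flat there, but it is wide ($r_2 = C\sigma\sqrt{\log(1/\sigma)}$) in the normal directions so that the Gaussian is not truncated too aggressively near $z$; one must verify that with $\Delta$ as large as $C\sigma$ the ball $\cB_D(z,\text{stuff})$ still meets $\cM$ sufficiently, that the normalizing constant (the $\nu$-mass of $\bV_z$) is bounded below — this is exactly the flavor of Lemma 2.1 — so that dividing by it does not inflate the $O(\sigma^2)$ numerator, and that the curvature-induced shift in the \emph{location} of the effective Gaussian truncation (the manifold point $x$ sits at normal height $O(\sigma^2\log(1/\sigma))$, which is comparable to the target accuracy) does not feed back a larger-order error. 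Carefully tracking that the cross term "curvature height $\times$ (gradient of truncated-Gaussian mean in the offset)" is genuinely second order, rather than first order, is the delicate computation; everything else is bookkeeping with the already-established Lemmas 2.1, 2.4, 2.5, 3.4 and Federer's reach condition.
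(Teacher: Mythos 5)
Your overall strategy is the same as the paper's: split $\mu_z^{\mathbb V}-z^*$ into a vanishing tangential offset, a curvature term bounded via Federer's reach condition, and a truncated-Gaussian mean bounded via Proposition \ref{Prop:TruncateNormalConcentration}, after localizing to $\cM_R$ with Lemma \ref{Lemma:PartManifold}. The decomposition, the use of Jensen plus reach to turn $\|X-z^*\|_2^2 = O(\sigma^2\log(1/\sigma))$ into the normal displacement bound, and the final summation all mirror the paper's argument.

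However, the justification you give for the "cylinder version of Proposition \ref{Prop:TruncateNormalConcentration}" is incorrect and would not survive scrutiny. The cylinder $\bV_z$ has tangential half-width $r_1 = c\sigma$ and normal half-width $r_2 = C\sigma\sqrt{\log(1/\sigma)}$, so its largest inscribed ball has radius $r_1 = c\sigma$, which is much smaller than $c\sigma\sqrt{\log(1/\sigma)}$. Sandwiching the cylinder between two balls of radius $\Theta(\sigma\sqrt{\log(1/\sigma)})$ therefore fails, and the truncated-Gaussian mean over the inner and outer balls are genuinely different quantities, so the sandwich gives no bound on the cylinder mean. The clean fix, which is what the paper actually does, is to exploit rotational invariance of the isotropic Gaussian: $\Pi^\perp_{z^*}\xi$ and $\Pi^-_{z^*}\xi$ are independent, and the cylinder constraint $\xi\in\bV_z - (z-X)$ factorizes as a product of a tangential ball constraint (on $\Pi^-_{z^*}\xi$) and a normal ball constraint (on $\Pi^\perp_{z^*}\xi$). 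The tangential conditioning is then irrelevant for $\bE(\Pi^\perp_{z^*}\xi\mid\cdot)$, which equals $\bE(\xi'\mid\xi'\in\cB_{D-d}(a_\Delta,r_2))$ with $\xi'=\Pi^\perp_{z^*}\xi$ — a genuine ball in $\bR^{D-d}$ with offset $\|a_\Delta\|_2 = O(\sigma)$ and radius $r_2$, to which Proposition \ref{Prop:TruncateNormalConcentration} applies verbatim. No "cylinder version" is needed; what is needed is this factorization observation, which your write-up misses.
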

\begin{proof}
    Recall that $Y = X + \xi$ in our model setting, and $\Pi^-_{z^*}$ is the orthogonal projection onto $T_{z^*}\cM$. If we analogously write $z$ as 
    $$z = z^* + (z-z^*) := z^* +\delta_z, $$
    $\mu_z^{\mathbb V}$ in \eqref{eq:mu_z:1} can be decomposed as 
    \begin{equation}
    \label{eq:decompose_mu_z}
        \begin{aligned}
            \mu_z^{\mathbb V} &= z + \Pi^\perp_{z^*}\bE_{Y\sim \nu}\left(Y-z|Y\in \bV_{z}\right)\\
            &= z^* + \delta_z + \Pi^\perp_{z^*} \bE_\nu\left((X + \xi) - (z^* + \delta_z)|Y\in \bV_{z}\right)\\
            &= z^* + \Pi^-_{z^*}\delta_z + \bE_\nu\left(\Pi^\perp_{z^*}(X - z^*)|Y\in \bV_{z} \right) + \bE_\nu\left(\Pi^\perp_{z^*}\xi|Y\in \bV_{z} \right).
        \end{aligned}
    \end{equation}
    With such an expression, $\mu_z^{\mathbb V} - z^*$ can be decomposed into three terms. The next step is to show that the norms of these terms are upper bounded by $\cO(\sigma^2\log(1/\sigma))$. According to Lemma \ref{Lemma:PartManifold}, to get a bound in the order of $\cO(\sigma^2\log(1/\sigma))$, we only need to consider a local part of $\cM$, i.e., $\cM_R$ with $R = C\sigma\sqrt{\log(1/\sigma)}$, and thus it is safe to assume $\|X- z\|_2 \leq C\sigma\sqrt{\log(1/\sigma)}$ for some constant $C$.
    
    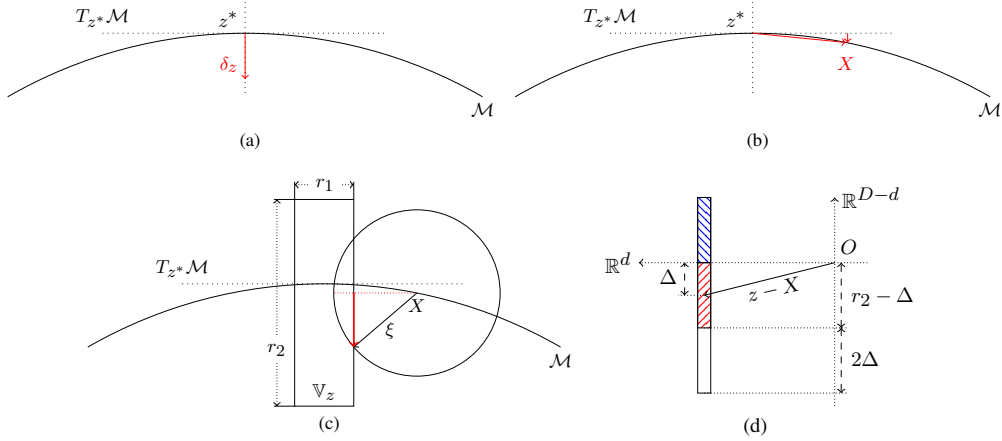
\begin{figure}[htbp]
        \centering
        \resizebox{.95\textwidth}{!}{
            \subfigure[]{
            \begin{tikzpicture}    
                \draw (0 ,0) node[below] {$\cM$} arc (60:120:8);
                \draw [dotted] (-6.4,1.0718) node[above] {$T_{z^*}\cM$}--(-4,1.0718) node[above left] {$z^*$}--(-1.6,1.0718);
                \draw [dotted] (-4,1.6) --(-4,0);
                \draw [red, ->] (-4,1.0718) -- (-4,0.3) node[above left] {$\delta_z$};
            \end{tikzpicture}
            }
            \subfigure[]{
            \begin{tikzpicture}    
                \draw (0 ,0) node[below] {$\cM$} arc (60:120:8);
                \draw [dotted] (-6.4,1.0718) node[above] {$T_{z^*}\cM$}--(-4,1.0718) node[above left] {$z^*$}--(-1.6,1.0718);
                \draw [dotted] (-4,1.6) --(-4,0);
                \draw[red, ->] (-4,1.0718) --(-2.431,0.9164) node[below = 4pt] {$X$};
                \draw[red, ->] (-2.4,1.0718) --(-2.4,0.9164);
            \end{tikzpicture}
            }
        }
        \resizebox{.475\textwidth}{!}{    
            \subfigure[]{
                \begin{tikzpicture}    
                    \draw (0 ,0) node[below] {$\cM$} arc (60:120:8);
                    \draw [dotted] (-6.4,1.0718) node[above] {$T_{z^*}\cM$}--(-4,1.0718) --(-1.6,1.0718);
                    \draw [dotted] (-4,-1) node[above] {$\bV_{z}$};
                    \draw (-4.5,-1) rectangle (-3.5,2.5);
                    \draw (-2.431,0.9164) node[below] {$X$} circle (1.4080);        
                    \draw[red,densely dotted] (-2.431,0.9164) --(-3.839,0.9164);
                    \draw[->] (-2.431,0.9164) --(-3.5,-0) node[above = 8pt, right = 12pt] {$\xi$};
                    \draw[densely dotted, <->] (-4.5,2.75) --(-3.5,2.75) node[left = 6pt,fill=white] {$r_1$};
                    \draw[densely dotted, <->] (-4.8,2.5) --(-4.8,-1) node[above = 21pt, fill=white] {$r_2$};
                    \draw[red, ->,thick] (-3.5,0.914) -- (-3.5,0);
                    \draw (-4.5,2.5) --(-4.85,2.5);        
                    \draw (-4.5,-1) --(-4.85,-1);     
                    \draw (-4.5,2.5) --(-4.5,2.8);
                    \draw (-3.5,2.5) --(-3.5,2.8);
                \end{tikzpicture}
                }
        }\resizebox{.32\textwidth}{!}{ 
                \subfigure[]{
                \begin{tikzpicture}    
                    \draw[densely dotted, ->] (0.5,0) node[above=7pt,right = -15pt]{$O$} -- (-3,0) node[left]{$\bR^{d}$};
                    \draw[densely dotted, ->] (0,-2.2) -- (0,1) node[right]{$\bR^{D-d}$};
                    \draw (-1.9,-2) rectangle (-2.1,1);
                    \draw[pattern=north west lines, pattern color=blue](-2.1,0) rectangle (-1.9,1);
                    \draw[pattern=north east lines, pattern color=red](-2.1,0) rectangle (-1.9,-1);
                    \draw[->] (0,0) -- (-2,-0.5) node[right=32pt, above = -4pt,rotate=13]{$z-X$};
                    \draw[densely dotted] (-2,-0.5) -- (-2.4,-0.5);        
                    \draw[dashed, <->] (-2.3,-0.5) --(-2.3,0) node[below left] {$\Delta$};
                    \draw[densely dotted] (-2,-1) -- (0.2,-1);
                    \draw[dashed, <->] (0.1,-1) --(0.1,0) node[below=15pt,right=1pt] {$r_2 - \Delta$};
                    \draw[densely dotted] (-2,-2) -- (0.2,-2);
                    \draw[dashed, <->] (0.1,-1) --(0.1,-2) node[above=15pt,right=1pt] {$2\Delta$};
                \end{tikzpicture}
                }
            }
        \caption{Illustration for the three parts of the error bound in \eqref{eq:decompose_mu_z}. (a) $\delta_z$, perpendicular to $T_{z^*}\cM$; (b) Projection of $X- z^*$, in a higher order than the length of $X- z^*$; (c, d) Projection of noise term, in two Cartesian-coordinate systems. A large area is canceled out because of symmetry.}
        \label{Fig:Illust3Parts}
    \end{figure}
    
    \begin{itemize}
        \item [(a)]$\Pi^-_{z^*}\delta_z$:
        
        \vspace{5pt}
        
        As $\delta_z \perp T_z\cM$, we have
        \begin{equation}
        \label{eq:Proof_Known_Dir_a}
            \Pi^-_{z^*}\delta_z=0.
        \end{equation}

        \item [(b)]$\bE_\nu\left(\Pi^\perp_{z^*}(X - z^*)|Y\in \bV_{z} \right)$:
        
        \vspace{5pt}
        
        Since $z^*$ and $X$ are exactly on $\cM$, from Jensen's inequality and Lemma \ref{Lemma:ReachCond} we have
        \begin{align*}
            \left\|\bE_\nu\left(\Pi^\perp_{z^*}(X - z^*)|Y\in \bV_{z} \right)\right\|_2
            &\leq \bE_\nu\left(\left\|\Pi^\perp_{z^*}(X - z^*)\right\|_2|Y\in \bV_{z} \right)\\
            &\leq \frac{1}{2\tau}\bE_\nu\left(\|X - z^*\|_2^2|Y\in \bV_{z} \right),
        \end{align*}
        where 
        \begin{align*}
            \|X - z^*\|_2^2 &= \|X - z + z - z^*\|_2^2 \\
            &\leq \|X-z\|_2^2 + \|z-z^*\|_2^2 \\
            &\leq C\sigma^2\log(1/\sigma).
        \end{align*}
        
        Hence,
        \begin{equation}
        \label{eq:Proof_Known_Dir_b}
            \left\|\bE_\nu\left(\Pi^\perp_{z^*}(X - z^*)|Y\in \bV_{z} \right)\right\|_2\leq \frac{C}{\tau}\sigma^2\log(1/\sigma).
        \end{equation}

        \item [(c)]$\bE_\nu\left(\Pi^\perp_{z^*}\xi|Y\in \bV_{z} \right)$:
        
        \vspace{5pt}
        
        Because 
        $$\bE_\nu\left(\Pi^\perp_{z^*}\xi|Y\in \bV_{z} \right)= \bE_\omega\left( \bE_{\phi}\left(\Pi^\perp_{z^*}\xi|X,~X+\xi\in \bV_{z} \right) \right),$$
        we evaluate the inner part $\bE_{\phi}\left(\Pi^\perp_{z^*}\xi|X,~X+\xi\in \bV_{z} \right)$ first.
        Assume the origin is transferred to $X$ as illustrated in Fig. \ref{Fig:Illust3Parts}(d). Now, 
        $$\bV_{z} =\cB_d(\Pi^-_{z^*} (z - X),r_1) \times \cB_{D-d}(\Pi^\perp_{z^*} (z - X),r_2)$$
        and there is a dislocation $\Delta = \|\Pi^\perp_{z^*} (z - X)\|_2$ in $\bR^{D-d}$, which is bounded by
        \begin{align*}
            \Delta \leq \|\Pi^\perp_{z^*} (z - z^*)\|_2 + \|\Pi^\perp_{z^*}( z^* - X) \|_2 \leq C\sigma.
        \end{align*}
        Let $\xi^\prime = \Pi^\perp_{z^*}\xi$; then, according to Proposition \ref{Prop:TruncateNormalConcentration}, we have
        \begin{equation}
        \label{eq:Proof_Known_Dir_c}
            \left\|\bE_{\phi}\left(\Pi^\perp_{z^*}\xi|X,~X+\xi\in \bV_{z} \right)\right\|_2
            = \|\bE\left(\xi^\prime|\xi^\prime \in \cB_{D-d}(a_\Delta,r_2)\right)\|_2
            \leq C\sigma^2,
        \end{equation}
        where $a_\Delta$ is the projection of $z-X$ onto $\bR^{D-d}$.
            
        \end{itemize}
    
        Combining the above result in  \eqref{eq:Proof_Known_Dir_a}, \eqref{eq:Proof_Known_Dir_b}, and \eqref{eq:Proof_Known_Dir_c}, for any $z$ such that $d(z,\cM)<C \sigma$, the corresponding $\mu_z^{\mathbb V}$ satisfies
        $$\|\mu_z^{\mathbb V} - z^*\|_2 \leq C \sigma^2 \log(1/\sigma),$$    
        for some constant $C$. Thus, the revised point $\mu_z^{\mathbb V}$ is $\cO(\sigma^2\log(1/\sigma))$-close to $\cM$.
    \end{proof}

\subsection{Contraction with estimated projection direction}
Usually, the projection matrix is unknown, but it can be estimated via many statistical methods. 
Assume $\widehat\Pi_{z^*}^\perp$ is an estimator for $\Pi_{z^*}^\perp$, whose bias is
\begin{equation}
\label{eq:dir_error1}
    \left\|\widehat\Pi_{z^*}^\perp - \Pi_{z^*}^\perp\right\|_F\leq c \sigma^\kappa.
\end{equation}
Based on this estimation, a similar region $\widehat{\bV}_{z}$ can be defined as
\begin{equation}
    \label{eq:V_z:2}
    \widehat{\bV}_{z} = \cB_d(z,r_1) \times \cB_{D-d}(z,r_2),
\end{equation}
where the first ball $\cB_d(z,r_1)$ is in the span space of $\widehat\Pi_{z^*}^-$ with a radius $r_1 = c\sigma$, and the second one is in the span space of $\widehat\Pi_{z^*}^\perp$ with a radius $r_2 = C\sigma\sqrt{\log(1/\sigma)}$. Then, an estimated version of $\mu_z^{\mathbb V}$ can be obtained:
\begin{equation}
    \label{eq:mu_z:2}
    \widehat \mu_z^{\mathbb V} = z + \widehat\Pi_{z^*}^\perp\bE_{Y\sim \nu}\left(Y-z|Y\in \widehat{\bV}_{z}\right),
\end{equation}
which is still closer to $\cM$. The error bound can be summarized as the following theorem:

\begin{theorem}    
    \label{Thm:ContractWithEstDir1}
   Consider a point $z$ such that $d(z,\cM)<C \sigma$. Let its neighborhood $\widehat{\bV}_{z}$ be defined as in \eqref{eq:V_z:2}, and the estimation error of $\widehat\Pi_{z^*}^\perp$ be bounded as in \eqref{eq:dir_error1}.
   The refined point $\widehat \mu_z^{\mathbb V}$ given by \eqref{eq:mu_z:2} satisfies
    $$d(\widehat\mu_z^{\mathbb V},\cM)\leq C\sigma^{1+\kappa}\sqrt{\log(1/\sigma)},$$
    for some constant $C$.
\end{theorem}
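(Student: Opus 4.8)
The plan is to replay the three-term decomposition from the proof of Theorem~\ref{Thm:ContractWithKnownDir}, inserting at every occurrence of the projection matrix the extra perturbation allowed by \eqref{eq:dir_error1}. Writing $z = z^* + \delta_z$ with $\delta_z\perp T_{z^*}\cM$ and $\|\delta_z\|_2\le C\sigma$, the analogue of \eqref{eq:decompose_mu_z} reads
$$\widehat{\mu}_z^{\mathbb V} - z^* \;=\; \widehat\Pi_{z^*}^-\delta_z \;+\; \widehat\Pi_{z^*}^\perp\,\bE_\nu\!\left(X - z^*\mid Y\in\widehat{\bV}_z\right) \;+\; \widehat\Pi_{z^*}^\perp\,\bE_\nu\!\left(\xi\mid Y\in\widehat{\bV}_z\right).$$
As in the known-direction case, Lemma~\ref{Lemma:PartManifold} (with $R = C\sigma\sqrt{\log(1/\sigma)}$) lets one work with $\cM_R$ only, so throughout one may assume $\|X-z\|_2\le C\sigma\sqrt{\log(1/\sigma)}$ and hence $\|X-z^*\|_2\le C\sigma\sqrt{\log(1/\sigma)}$.

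I would then bound the three terms in turn. For the first, $\Pi_{z^*}^-\delta_z=0$, so $\widehat\Pi_{z^*}^-\delta_z = (\widehat\Pi_{z^*}^- - \Pi_{z^*}^-)\delta_z$, and since $\|\widehat\Pi_{z^*}^- - \Pi_{z^*}^-\|_F = \|\widehat\Pi_{z^*}^\perp - \Pi_{z^*}^\perp\|_F\le c\sigma^\kappa$ by \eqref{eq:dir_error1}, this term is $\le C\sigma^{1+\kappa}$. For the second, split $\widehat\Pi_{z^*}^\perp = \Pi_{z^*}^\perp + (\widehat\Pi_{z^*}^\perp - \Pi_{z^*}^\perp)$ and apply Jensen: the first part is controlled by $\tfrac{1}{2\tau}\bE_\nu(\|X-z^*\|_2^2\mid\,\cdot\,)\le C\sigma^2\log(1/\sigma)$ through Federer's condition (Lemma~\ref{Lemma:ReachCond}), while the perturbation part is $\le c\sigma^\kappa\,\bE_\nu(\|X-z^*\|_2\mid\,\cdot\,)\le C\sigma^{1+\kappa}\sqrt{\log(1/\sigma)}$. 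For the third, condition on $X$ and set $\xi' = \widehat\Pi_{z^*}^\perp\xi$: because $\widehat\Pi_{z^*}^-\xi$ and $\widehat\Pi_{z^*}^\perp\xi$ are independent isotropic Gaussians, the event $\{X+\xi\in\widehat{\bV}_z\}$ constrains $\xi'$ only through $\{\xi'\in\cB_{D-d}(a_\Delta,r_2)\}$ with $a_\Delta = \widehat\Pi_{z^*}^\perp(z-X)$; splitting off $\Pi_{z^*}^\perp$ once more and using Federer's condition together with \eqref{eq:dir_error1} gives $\|a_\Delta\|_2=\Delta\le C\sigma$, so Proposition~\ref{Prop:TruncateNormalConcentration} (applied in dimension $D-d$, with $r_2 = C\sigma\sqrt{\log(1/\sigma)}$ and $\|a_\Delta\|_2 = C\sigma$) bounds this term by $C\sigma^2$. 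Summing, the $\sigma^{1+\kappa}\sqrt{\log(1/\sigma)}$ contribution is dominant for $\sigma$ small, which yields the claim.

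I expect the main obstacle to be the third term, where the conditioning region $\widehat{\bV}_z$ is itself tilted by the estimation error. One must justify (i) that the conditional law of $\xi' = \widehat\Pi_{z^*}^\perp\xi$ still factorizes --- which hinges on the orthogonal decomposition of $\bR^D$ into the spans of $\widehat\Pi_{z^*}^-$ and $\widehat\Pi_{z^*}^\perp$ making the two Gaussian components of $\xi$ independent --- and (ii) that the cylinder offset $\Delta$ stays $O(\sigma)$ even though $\widehat\Pi_{z^*}^\perp$ no longer annihilates the tangential part of $z^*-X$; this last estimate is precisely where the reach bound and the direction-accuracy bound \eqref{eq:dir_error1} have to be combined. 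Everything else is a routine rerun of the known-direction argument with a $c\sigma^\kappa$-perturbation threaded through each application of the projection matrix.
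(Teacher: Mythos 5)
Your proposal is correct and mirrors the paper's own argument almost exactly: the same three-term decomposition of $\widehat\mu_z^{\mathbb V}-z^*$ as in Theorem~\ref{Thm:ContractWithKnownDir}, the same splitting $\widehat\Pi_{z^*}^\perp = \Pi_{z^*}^\perp + (\widehat\Pi_{z^*}^\perp - \Pi_{z^*}^\perp)$ for terms (a) and (b), and the same bound $\Delta\le C\sigma$ feeding into Proposition~\ref{Prop:TruncateNormalConcentration} for term (c). The only thing you add is a brief justification of the factorization of the Gaussian conditioning along the (orthogonal) spans of $\widehat\Pi_{z^*}^-$ and $\widehat\Pi_{z^*}^\perp$, which the paper uses implicitly; this is a welcome clarification but not a different route.
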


Such an estimator $\widehat\Pi_{z^*}^\perp$ can be obtained via classical dimension-reduction methods such as local PCA. Here we cite an error bound of local PCA estimators and implement the result of Theorem \ref{Thm:ContractWithEstDir1} in the following remark.

\begin{lemma}[Theorem 2.1 in \cite{yao2019manifold}]
    For a point $z$ such that $d(z,\cM)<C \sigma$, let $\widehat\Pi_{z}^\perp$ be the estimator of $\Pi_{z^*}^\perp$, obtained via local PCA with $r = C\sqrt{\sigma}$. The difference between $\widehat\Pi_{z}^\perp$ and $\Pi_{z^*}^\perp$ is bounded by 
    $$
    \|\widehat\Pi_{z}^\perp - \Pi_{z^*}^\perp\|_F \leq C\frac{r}{\tau}
    $$
    with high probability.
\end{lemma}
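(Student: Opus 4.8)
The statement is quoted from \cite{yao2019manifold}, so the plan is to reconstruct the standard local–PCA perturbation argument. Recall that local PCA at $z$ forms the empirical covariance $\widehat\Sigma_z = |I_z|^{-1}\sum_{i\in I_z}(y_i-\bar y)(y_i-\bar y)^T$ over $I_z=\{i:\|y_i-z\|_2\le r\}$, with $\bar y$ the local sample mean, and takes $\widehat\Pi_z^\perp$ to be the projector onto the span of the eigenvectors of $\widehat\Sigma_z$ associated with its $D-d$ smallest eigenvalues. I would argue in three stages: (i) analyze the population covariance $\Sigma_z = \bE_\nu[(Y-\mu_z)(Y-\mu_z)^T\mid Y\in\cB_D(z,r)]$, with $\mu_z=\bE_\nu[Y\mid Y\in\cB_D(z,r)]$, and locate its spectrum; (ii) bound $\|\widehat\Sigma_z-\Sigma_z\|$ by concentration; (iii) convert spectral proximity into subspace proximity via the Davis--Kahan $\sin\Theta$ theorem, and triangulate.

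For stage (i) I would choose coordinates with origin at $z^*$, with $T_{z^*}\cM=\bR^d$ and normal space $\bR^{D-d}$. Using $\reach(\cM)=\tau$ and Lemma \ref{Lemma:ReachCond}, every $x\in\cM$ within distance $r$ of $z$ lies within $r+C\sigma\le Cr$ of $z^*$, has tangential component of size $\le Cr$ and normal component of size $\le Cr^2/\tau$. Writing $Y=X+\xi$ and separating the contributions of $X$ and of $\xi$ to $\Sigma_z$, I expect the $X$-part to have a tangential block of size $\Theta(r^2)$, a tangent--normal cross block of size $\cO(r^3/\tau)$, and a normal--normal block of size $\cO(r^4/\tau^2)$, while the $\xi$-part equals $\sigma^2 I_D$ up to anisotropic corrections coming from truncation to $\cB_D(z,r)$ and from the $\cO(\sigma)$ offset of $z$, which are a further factor $\cO(\sigma\log(1/\sigma))$ smaller. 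Since $r=C\sqrt\sigma$ forces $\sigma^2=\cO(r^4)$ and $\sigma=\cO(r^2)$, the $d$ largest eigenvalues of $\Sigma_z$ are $\Theta(r^2)$ while the remaining $D-d$ are $\cO(r^4)$, so the spectral gap is $\ge cr^2$ once $\sigma$ is small; applying Davis--Kahan to $\Sigma_z$ against its block--diagonal (tangent/normal) part, whose off--diagonal perturbation is $\cO(r^3/\tau)$, then gives that the bottom eigenspace $\Pi_z^{\perp,\mathrm{pop}}$ of $\Sigma_z$ satisfies $\|\Pi_z^{\perp,\mathrm{pop}}-\Pi_{z^*}^\perp\|_F\le Cr/\tau$, which is already the target rate.

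For stages (ii)--(iii), I would use a counting estimate in the spirit of Lemma \ref{Lemma:prob_in_a_ball} and Corollary \ref{Col:Local_sample_size} to show that $|I_z|=\Theta(Nr^d)$ with high probability, which is large for the stated sample size. Each rank--one summand of $\widehat\Sigma_z$ has operator norm $\le 4r^2$ since $\|y_i-\bar y\|_2\le 2r$, so matrix concentration (Bernstein) gives $\|\widehat\Sigma_z-\Sigma_z\|\le Cr^2\sqrt{\log|I_z|/|I_z|}$ with high probability, which is $\ll cr^2$, the spectral gap. A second application of Davis--Kahan then bounds $\|\widehat\Pi_z^\perp-\Pi_z^{\perp,\mathrm{pop}}\|_F$ by $C\|\widehat\Sigma_z-\Sigma_z\|/(cr^2)\le Cr/\tau$, and the triangle inequality with the population bound yields $\|\widehat\Pi_z^\perp-\Pi_{z^*}^\perp\|_F\le Cr/\tau$ with high probability.

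I expect the spectral–gap bookkeeping in stage (i) to be the main obstacle: one must verify carefully that $r=C\sqrt\sigma$ keeps the tangential signal $r^2\sim\sigma$ strictly above the noise floor $\sigma^2$ — which is exactly where the smallness of $\sigma$ enters — and that the anisotropic corrections to the truncated–Gaussian covariance (the ball $\cB_D(z,r)$ is not centered at the noise mean because $z$ lies $\cO(\sigma)$ off $\cM$), together with the curvature terms, contribute only $\cO(r/\tau)$ or less to the final subspace error. Deciding which of $r^2$, $\sigma$, $r^2/\tau$, $r^4/\tau^2$, $\sigma^2$, and $r^2\sqrt{\log|I_z|/|I_z|}$ dominates, and in particular confirming that every perturbation is dominated by the gap $cr^2$, is the real subtlety; the concentration and Davis--Kahan steps are otherwise routine.
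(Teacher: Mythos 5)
This lemma is quoted verbatim as Theorem~2.1 of \cite{yao2019manifold}; the paper you are reading does not prove it, so there is no internal argument to compare against. Your reconstruction via the Davis--Kahan $\sin\Theta$ theorem is the canonical local-PCA perturbation argument and is essentially what that reference does. The order bookkeeping is consistent: with $r = C\sqrt{\sigma}$ the tangential eigenvalues of the population covariance are $\Theta(r^2) = \Theta(\sigma)$, while the additive noise floor $\sigma^2 = \Theta(r^4)$ and the normal-block curvature term $\cO(r^4/\tau^2)$ are both one order lower, so the spectral gap is $\Theta(r^2)$; the tangent--normal cross block is $\cO(r^3/\tau)$ by Cauchy--Schwarz from $|X_{\mathrm{tan}}| = \cO(r)$, $|X_{\mathrm{nor}}| = \cO(r^2/\tau)$; and matrix Bernstein with $|I_z| = \Theta(Nr^d)$ pushes the empirical deviation below $r^3/\tau$ for $N$ large enough. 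Dividing by the gap gives $Cr/\tau$ in both Davis--Kahan applications, and the triangle inequality finishes.

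One point you pass over a bit quickly: the argument hinges on the noise contribution to $\Sigma_z$ being an isotropic shift $\sigma^2 I_D$ up to corrections of lower order than $r^3/\tau$. This requires (a) the truncation of $\xi$ to $\cB_D(z,r)$ to be essentially a non-event, which holds because $r/\sigma = C\sigma^{-1/2} \to \infty$ so the ball captures all but an exponentially small tail of the Gaussian, and (b) the cross-covariance between the tangential coordinate of $X$ and $\xi$ (induced by the constraint $X+\xi \in \cB_D(z,r)$ coupling the two) to be small. Point (b) is a boundary effect confined to an annulus of width $\cO(\sigma)$ around $\partial\cB_D(z,r)$, contributing an $\cO(\sigma/r) = \cO(\sqrt\sigma)$ fraction of the mass and hence a correction of order $r^2\cdot\sigma/r = \sigma r$ to the cross block, which is $\cO(r^3)$ and thus harmless; but it does enter the tangent--normal block and would otherwise threaten the $r^3/\tau$ estimate, so it is worth making explicit. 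With that verified, the proposal is sound.
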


\begin{remark} With the PCA estimator $\widehat\Pi_{z^*}^\perp$ mentioned above, the distance between $\widehat \mu_z^{\mathbb V}$ and $\cM$ is bounded by
$$d(\widehat\mu_z^{\mathbb V},\cM)\leq C\sigma^{3/2}\sqrt{\log(1/\sigma)}$$
with high probability.
\end{remark}

\subsection{Contraction with estimated contraction direction}
\label{Sec:G(z)}
In the previous two cases, we attempted to move $z$ closer to $z^*$ in the direction of $\Pi_{z^*}^\perp$.
However, instead of estimating the entire projection matrix, finding an estimator in the main direction is sufficient and can be more accurate. Specifically, let the projection matrix onto $z^* - z$ be
$$U = {(z^* - z)(z^* - z)^T}/{\|z^* - z\|_2^2},$$
and, according to the discussion in Section \ref{section:Direction_Estimation}, there is one estimator 
$$\widetilde{U} = {(\mu_z^\bB - z)(\mu_z^\bB - z)^T}/{\|\mu_z^\bB - z\|_2^2},$$
whose error bound of $\widetilde{U}$ satisfies
\begin{equation}
\label{eq:dir_error2}
    \|\widetilde{U} - U\|_F \leq C\sigma\sqrt{\log(1/\sigma)}.
\end{equation}
A narrow region can be analogously constructed based on $\widetilde{U}$, namely
\begin{equation}
    \label{eq:V_z:3}
    \widehat{\bV}_{z} = \cB_{D-1}(z,r_1) \times \cB_1(z,r_2),
\end{equation}
where the second ball is actually an interval in the direction of $\widetilde{U}$ with $r_2 = C\sigma\sqrt{\log(1/\sigma)}$, and the first ball is in the span space of the complement of $\widetilde{U}$ in $\bR^D$ with $r_1 = c\sigma$. Similarly, $y$ can be refined by
\begin{equation}
    \label{eq:mu_z:3}
    \widehat \mu_z^\bV = z + \widetilde{U} \bE_{\nu}\left(Y-z|Y\in \widehat{\bV}_{z}\right),
\end{equation}
whose distance to $\cM$ can be bounded with the following theorem.

\begin{theorem}
    \label{Thm:ContractWithEstDir2}
     Consider a point $z$ such that $d(z,\cM)<C \sigma$. Let its neighborhood $\widehat{\bV}_{z}$ be defined as in \eqref{eq:V_z:3}, and the estimation error of $\widetilde{U}$ be bounded as in \eqref{eq:dir_error2}. The refined point $\widehat \mu_z^\bV $ given by \eqref{eq:mu_z:3} satisfies
    $$\|\widehat \mu_z^\bV-z^*\|_2\leq C\sigma^{2}\log(1/\sigma)$$
    for some constant $C$.
\end{theorem}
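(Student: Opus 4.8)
The plan is to reproduce the three-term decomposition from the proof of Theorem~\ref{Thm:ContractWithKnownDir}, with the known normal projection $\Pi^\perp_{z^*}$ replaced by the rank-one projection $\widetilde U = uu^T$, where $u = (\mu_z^\bB - z)/\|\mu_z^\bB - z\|_2$, and with the tilt between $u$ and the true contraction direction carried along as an additional error. Normalizing so that $z^* = 0$, $T_{z^*}\cM = \bR^d \times \{0\}$, write $z = z^* + \delta_z$ with $\delta_z \perp T_{z^*}\cM$ and $\|\delta_z\|_2 = d(z,\cM) \le C\sigma$; set $v = (z^*-z)/\|z^*-z\|_2 = -\delta_z/\|\delta_z\|_2$ (a unit normal vector) and $U = vv^T$. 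From \eqref{eq:dir_error2} together with $\|uu^T - vv^T\|_F = \sqrt 2\,\sin\Theta(u,v)$ we get $\sin\Theta(u,v) \le C\sigma\sqrt{\log(1/\sigma)}$, and, after fixing the sign of $u$ so that $u^Tv \ge 0$, also $\|u - v\|_2 \le \sqrt 2\,\sin\Theta(u,v) \le C\sigma\sqrt{\log(1/\sigma)}$. Substituting $Y = X + \xi$ and $z = z^* + \delta_z$ into \eqref{eq:mu_z:3} yields
\begin{equation*}
    \widehat\mu_z^\bV - z^* = (I - \widetilde U)\,\delta_z \;+\; \widetilde U\,\bE_\nu\!\left(X - z^* \mid Y \in \widehat{\bV}_z\right) \;+\; \widetilde U\,\bE_\nu\!\left(\xi \mid Y \in \widehat{\bV}_z\right),
\end{equation*}
and it suffices to bound each term by $C\sigma^2\log(1/\sigma)$. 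As in Theorem~\ref{Thm:ContractWithKnownDir}, invoking Lemma~\ref{Lemma:PartManifold} (applied with the ball $\cB_D(z,\sqrt{r_1^2+r_2^2}) \supseteq \widehat{\bV}_z$ and $\eta$ chosen large) lets us assume $X \in \cM_R$ with $R = C\sigma\sqrt{\log(1/\sigma)}$ up to negligible error, so that $\|X - z^*\|_2 \le C\sigma\sqrt{\log(1/\sigma)}$ in all that follows.

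The first term is immediate: $\delta_z$ lies on the line spanned by $z^*-z$, so $U\delta_z = \delta_z$ and $(I - \widetilde U)\delta_z = (U - \widetilde U)\delta_z$, whence $\|(I - \widetilde U)\delta_z\|_2 \le \|U - \widetilde U\|_F \|\delta_z\|_2 \le C\sigma^2\sqrt{\log(1/\sigma)}$. For the second term I would use $\|\widetilde U \bE_\nu(X - z^*\mid\cdot)\|_2 = |\bE_\nu(u^T(X-z^*)\mid\cdot)|$ and the splitting $u = v + w$ with $\|w\|_2 \le C\sigma\sqrt{\log(1/\sigma)}$: for each $X \in \cM_R$, $u^T(X - z^*) = v^T\Pi^\perp_{z^*}(X - z^*) + w^T(X - z^*)$, where Lemma~\ref{Lemma:ReachCond} gives $|v^T\Pi^\perp_{z^*}(X - z^*)| \le \|\Pi^\perp_{z^*}(X-z^*)\|_2 \le \|X - z^*\|_2^2/(2\tau) \le C\sigma^2\log(1/\sigma)$, and $|w^T(X-z^*)| \le \|w\|_2\|X-z^*\|_2 \le C\sigma^2\log(1/\sigma)$; taking conditional expectation preserves the bound.

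The main work is the noise term $\widetilde U\,\bE_\nu(\xi \mid Y \in \widehat{\bV}_z) = u\cdot \bE_\omega\!\big(\bE_\phi(u^T\xi \mid X,\ X + \xi \in \widehat{\bV}_z)\big)$. Since, by \eqref{eq:V_z:3}, $\widehat{\bV}_z$ factors as a $(D-1)$-ball in $u^\perp$ times an interval along $u$, and the components of $\xi$ along $u$ and along $u^\perp$ are independent Gaussians, the inner conditional expectation reduces to $\bE(\xi' \mid \xi' \in [a - r_2,\, a + r_2])$ with $\xi' = u^T\xi \sim N(0,\sigma^2)$ and center $a = u^T(z - X)$. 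The offset is $O(\sigma)$: $|a| \le |u^T\delta_z| + |u^T(z^* - X)| \le \|\delta_z\|_2 + C\sigma^2\log(1/\sigma) \le C\sigma$, where the middle estimate for $|u^T(z^*-X)|$ is obtained by the same $u = v + w$ decomposition used for the second term. Since $r_2 = C\sigma\sqrt{\log(1/\sigma)}$ and $|a| \le C\sigma$, the one-dimensional instance of Proposition~\ref{Prop:TruncateNormalConcentration} gives $|\bE(\xi'\mid\xi'\in[a-r_2,a+r_2])| \le C\sigma^2$, and averaging over $X \sim \omega$ preserves this. Summing the three bounds yields $\|\widehat\mu_z^\bV - z^*\|_2 \le C\sigma^2\log(1/\sigma)$.

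The two points I expect to require care are: (i) the localization step --- Lemma~\ref{Lemma:PartManifold} is phrased for conditioning on a ball, whereas here we condition on the cylinder $\widehat{\bV}_z$, so one must verify (e.g.\ by rerunning its proof with $\widehat{\bV}_z$ in place of the ball, or by noting $\widehat{\bV}_z \subseteq \cB_D(z,\sqrt{r_1^2+r_2^2})$ and that the Gaussian posterior on $X$ decays outside an $O(\sigma\sqrt{\log(1/\sigma)})$-neighborhood of $z$) that $\|X - z^*\|_2 \lesssim \sigma\sqrt{\log(1/\sigma)}$ may be assumed; and (ii) checking, as above, that the tilt of $\widetilde U$ off the true normal direction enters \emph{every} term only through the cross-products $w^T(\cdot)$ with $\|w\|_2 \lesssim \sigma\sqrt{\log(1/\sigma)}$, so that it never spoils the $\sigma^2\log(1/\sigma)$ rate --- in particular that the interval offset $a$ stays $O(\sigma)$ even though $\|z - X\|_2$ can be of order $r_2$.
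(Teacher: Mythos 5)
Your proof is correct and takes essentially the same route as the paper: the identical three-term decomposition of $\widehat\mu_z^\bV - z^*$ into $(I-\widetilde U)\delta_z$, $\widetilde U\,\bE_\nu(X-z^*\mid\cdot)$, and $\widetilde U\,\bE_\nu(\xi\mid\cdot)$, the same use of Lemma~\ref{Lemma:PartManifold} to localize $X$ to $\cM_R$, Lemma~\ref{Lemma:ReachCond} for the curvature term, and Proposition~\ref{Prop:TruncateNormalConcentration} for the truncated-noise term. The paper phrases the tilt bookkeeping via Frobenius-norm estimates $\|\widetilde U - U\|_F\|\cdot\|_2$, whereas you exploit the rank-one structure $\widetilde U = uu^T$ and carry a decomposition $u = v + w$; these are cosmetically different but yield the same bounds. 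Your flagged concern (i) about conditioning on the cylinder $\widehat{\bV}_z$ rather than a ball is a fair observation, but the paper handles it in the same implicit way (via $\widehat{\bV}_z \subset \cB_D(z,r_2)$ and the Gaussian tail), so it is not a gap relative to the paper's own argument.
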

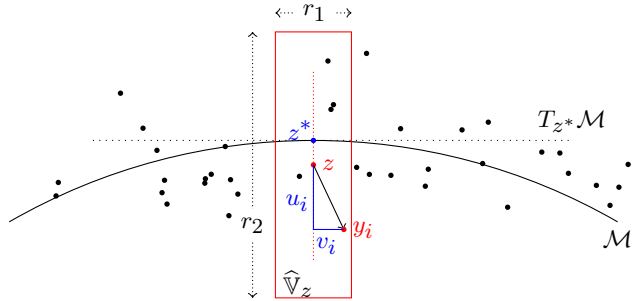
\begin{figure}[htbp]
        \centering
        \resizebox{0.6\textwidth}{!}{
        \begin{tikzpicture}
            \draw (0 ,0) node[below] {$\mathcal{M}$} arc (60:120:8);
            \draw [dotted] (-6.9,1.0718) --(-4,1.0718) --(-0.6,1.0718) node[above] {$T_{z^*}\mathcal{M}$};
            \draw (-4.2,-1.1) node[above] {$\widehat{\bV}_z$};
            \draw [red] (-4.5,-1) rectangle (-3.5,2.5);
            \fill [red] (-4,0.75) node[right]{$z$} circle (1pt);
            \draw[densely dotted, <->] (-4.5,2.75) --(-3.5,2.75) node[left = 6pt,fill=white] {$r_1$};
            \draw[densely dotted, <->] (-4.8,2.5) --(-4.8,-1) node[above = 21pt, fill=white] {$r_2$};
            
            \fill (-2.9811,0.6126) circle (1pt); \fill (0.1380,0.7562) circle (1pt); \fill (-5.1115,0.0816) circle (1pt); \fill (-5.0900,0.5550) circle (1pt);
            \fill (-1.4464,0.1926) circle (1pt); \fill (-1.7007,1.3119) circle (1pt); \fill (-5.9252,0.2342) circle (1pt); \fill (-4.9937,0.3653) circle (1pt);
            \fill (-0.1001,0.2172) circle (1pt); \fill (-0.7597,0.9058) circle (1pt); \fill (-3.8094,2.1171) circle (1pt); \fill (-6.0577,0.9415) circle (1pt);
            \fill (-4.1842,0.5993) circle (1pt); \fill (-1.7753,0.7675) circle (1pt); \fill (-0.9972,0.9182) circle (1pt); \fill (-6.5370,1.6932) circle (1pt);
            \fill (-3.7693,1.4801) circle (1pt); \fill (-5.3435,0.6265) circle (1pt); \fill (-3.4319,0.7174) circle (1pt); \fill (-5.9580,0.5476) circle (1pt);
            \fill (-2.8541,1.2264) circle (1pt); \fill (-0.2795,0.4803) circle (1pt); \fill (-0.6471,0.6389) circle (1pt); \fill (-3.2643,0.6250) circle (1pt);
            \fill (-2.4899,0.6930) circle (1pt); \fill (-2.0460,1.2104) circle (1pt); \fill (-3.3017,2.2170) circle (1pt); \fill (-2.5288,0.4695) circle (1pt);
            \fill (-5.1595,0.9918) circle (1pt); \fill (-7.3842,0.3415) circle (1pt); \fill (0.0170,0.4563) circle (1pt);
            \fill (-7.3544,0.5188) circle (1pt); \fill (-5.9907,0.3842) circle (1pt); \fill (-5.4281,0.5091) circle (1pt); \fill (-6.2409,1.2324) circle (1pt);
            \fill (-3.7367,1.5424) circle (1pt); \fill (-5.4203,0.5675) circle (1pt);
            
            \draw [red,densely dotted] (-4,-0.5) -- (-4,2);
            
            \draw [->] (-4,0.75) -- (-3.6,-0.1);
            \fill [red] (-3.6,-0.1) node[right]{$y_i$} circle (1pt);
            \draw [blue] (-3.6,-0.1) -- (-4,-0.1) node[right=5pt,below=0pt] {$v_i$} -- (-4,0.75) node[left=6pt,below=8pt] {$u_i$};
            \fill [blue] (-4,1.0718) node[above=4pt, left=-3pt]{$z^*$} circle (1pt);
        \end{tikzpicture}
        }
        \caption{Geometrical interpretation of $u_i$ and $v_i$ defined in \eqref{eq:def_u_i_v_i}: decomposing $y_i-z$ into its components; $u_i$ denotes the projection along $F(z)-z$, and $v_i$ represents the orthogonal component.}
        \label{Fig:Interpret_uv}
\end{figure}

For reasons similar to those discussed in Section \ref{Sec:F(z)}, a smooth estimator constructed with finite samples is needed. Recall that the continuous estimator for $U$ is
$$\widehat{U} = \frac{(F(z)-z)(F(z)-z)^T}{\|(F(z)-z)\|_2^2},$$
whose asymptomatic property is given in Theorem \ref{Thm:Angle:F(z)}. For a data point $y_i$, we define
\begin{equation}
    \label{eq:def_u_i_v_i}
    u_i = \widehat{U}(y_i-z),\quad v_i = y_i - z - u_i,
\end{equation}
which can be interpreted as the illustration in Fig. \ref{Fig:Interpret_uv}.
Let the contracted point of $z$ be
\begin{equation}
    \label{eq:def:G(z)}
    G(z) = \sum_{i}\beta_i(z) y_i,  
\end{equation}
with the weights given by
\begin{equation}\label{eq:def:weight_G}
    \begin{aligned}
        w_u(u_i) &= \left\{
        \begin{array}{cl}
            1, & \|u_i\|_2\leq \frac{r_2}{2} \\
            \left(1 - (\frac{2\|u_i\|_2-r_2}{r_2})^2\right)^k, & \|u_i\|_2\in (\frac{r_2}{2},r_2)\\
            0, & otherwise 
        \end{array}\right.,\\
        w_v(v_i) &= \left\{
        \begin{array}{cl}
            \left(1 - \frac{\|v_i\|_2^2}{r_1^2}\right)^k, & \|v_i\|_2\leq r_1 \\
            0, & otherwise 
        \end{array}\right.,\\
        \beta_i(z) = w_u(u_i)&w_v(v_i),
        \quad \tilde{\beta}(z) =  \sum\tilde{\beta_i}(z),
        \quad \beta_i(z) = \frac{\tilde{\beta_i}(y)}{\tilde{\beta}(z)},
    \end{aligned}
\end{equation}
with $k\geq 2$ being a fixed integer. It is clear that $G$ is a $C^2$-continuous map from $\mathbb{R}^D$ to $\mathbb{R}^D$. The estimation accuracy of $G(z)$ is summarized in the following theorem:
\begin{theorem}\label{Thm:appro G(z)}
    If the sample size $N = C_1\sigma^{-(d+3)}$, for a point $z$ such that $d(z,\cM) = \cO(\sigma)$, $G(z)$, as defined in \eqref{eq:def:G(z)}, provides an estimation of $z^*$, whose error can be bounded by 
    $$\|G(z)-z^*\|_2 \leq C_2\sigma^{2}{\log(1/\sigma)}$$
    with probability at least $1 - C_3\exp(-C_4\sigma^{-c})$, for some constant $c$, $C_1$, $C_2$, $C_3$, and $C_4$.
\end{theorem}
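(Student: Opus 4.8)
The plan is to view $G(z)$ as a finite‑sample, smoothly re‑weighted version of the population contracted point analysed in Theorem~\ref{Thm:ContractWithEstDir2}, and to control separately its population bias and its Monte‑Carlo fluctuation. For a unit vector $u\in\bR^D$ let $\beta^{(u)}_i$ denote the weight $\beta_i(z)$ of \eqref{eq:def:weight_G} with $\widehat U$ replaced by $uu^T$, put $G_u(\mathcal{Y})=\sum_i\beta^{(u)}_i y_i$, and let $\bar G_u(z)=\bE_{\nu}[\beta^{(u)}(Y)\,Y]/\bE_{\nu}[\beta^{(u)}(Y)]$ be its population version; thus $G(z)=G_{\widehat u}(\mathcal{Y})$ with $\widehat u=(F(z)-z)/\|F(z)-z\|_2$. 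Let $\mathcal{E}$ be the event of Theorem~\ref{Thm:Angle:F(z)}, on which $\sin\Theta(F(z)-z,\,z^*-z)\le C\sigma\sqrt{\log(1/\sigma)}$; since $\|uu^T-vv^T\|_F=\sqrt2\,\sin\Theta(u,v)$ for unit vectors, on $\mathcal{E}$ we have $\widehat u\in\mathcal{U}:=\{u:\|uu^T-U\|_F\le C'\sigma\sqrt{\log(1/\sigma)}\}$ with $U=(z^*-z)(z^*-z)^T/\|z^*-z\|_2^2$. The claim will follow from the triangle inequality once we show $\sup_{u\in\mathcal{U}}\|\bar G_u(z)-z^*\|_2\le C\sigma^2\log(1/\sigma)$ (deterministic and uniform) and, on a further high‑probability event, $\|G(z)-\bar G_{\widehat u}(z)\|_2\le C\sigma^2\log(1/\sigma)$.

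\emph{Population bias.} Writing $Y-z^*=(X-z^*)+\xi$ and using that $z^*$ is deterministic, $\bar G_u(z)-z^*=\langle X-z^*\rangle_u+\langle\xi\rangle_u$, where $\langle\cdot\rangle_u$ is the normalized weighted expectation. By Lemma~\ref{Lemma:PartManifold} we may reduce to $\cM_R$ with $R=C\sigma\sqrt{\log(1/\sigma)}$, so $\|X-z^*\|_2\le C\sigma\sqrt{\log(1/\sigma)}$; then $\|\Pi_{z^*}^\perp(X-z^*)\|_2\le\|X-z^*\|_2^2/(2\tau)\le C\sigma^2\log(1/\sigma)/\tau$ pointwise by Lemma~\ref{Lemma:ReachCond}, while the tangential part $\langle\Pi_{z^*}^-(X-z^*)\rangle_u$ cancels to the same order because the $w_v$‑profile is radially symmetric transverse to $u$, $z$ and $z^*$ share the same $T_{z^*}\cM$‑coordinate, and $\cM$ is locally even about $z^*$ — this is the computation already performed in the proof of Theorem~\ref{Thm:ContractWithKnownDir}. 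For $\langle\xi\rangle_u$, conditioning on $X$ and invoking a weighted analogue of Proposition~\ref{Prop:TruncateNormalConcentration} (the effective support is an $O(\sigma\sqrt{\log(1/\sigma)})$‑wide set displaced $O(\sigma)$ from the origin, and the weight is even about its centre transverse to $u$ and monotone along $u$) gives $\|\bE_\xi[\beta^{(u)}(X+\xi)\,\xi\mid X]\|_2\le C\sigma^2\,\bE_\xi[\beta^{(u)}(X+\xi)\mid X]$, hence $\|\langle\xi\rangle_u\|_2\le C\sigma^2$. All of this is uniform over $u\in\mathcal{U}$; relative to Theorem~\ref{Thm:ContractWithEstDir2} the only new features are the smooth weights (handled by the weighted Proposition~\ref{Prop:TruncateNormalConcentration}) and the absence of the extra projection onto $u$, which is harmless because the component of $z^*-z$ transverse to $u$ is $\|z^*-z\|_2\sin\Theta\le C\sigma^2\sqrt{\log(1/\sigma)}$ on $\mathcal{U}$.

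\emph{Monte‑Carlo error.} Every summand of $G(z)=\sum_i\tilde\beta_i(z)y_i\big/\sum_i\tilde\beta_i(z)$ is supported on $\{\|v_i\|_2\le r_1\}\cap\{\|u_i\|_2\le r_2\}$, where $\|y_i-z\|_2\le C\sigma\sqrt{\log(1/\sigma)}$, and this region carries $\nu$‑mass of order $\sigma^d$ (up to logarithmic factors) by Lemma~\ref{Lemma:prob_in_a_ball}; with $N=C_1\sigma^{-(d+3)}$ and $C_1$ large, a Chernoff bound as in Corollary~\ref{Col:Local_sample_size} yields $\sum_i\tilde\beta_i(z)\gtrsim N\sigma^d\gtrsim\sigma^{-3}$ with probability $1-2\exp(-c\sigma^{-3})$, so the denominator is under control. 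For a \emph{fixed} $u$, $G_u(\mathcal{Y})-z$ is a ratio of two i.i.d.\ averages of quantities bounded by $C\sigma\sqrt{\log(1/\sigma)}$, so Bernstein's inequality applied to numerator and denominator gives $\|G_u(\mathcal{Y})-\bar G_u(z)\|_2\le C\sigma^2\sqrt{\log(1/\sigma)}$ with probability $1-\exp(-c\sigma^{-1})$. To pass to the random $\widehat u$, take a deterministic $\sigma^{A}$‑net of $\mathcal{U}$ for a large constant $A$, of cardinality $\exp\big(O(D\log(1/\sigma))\big)$; a union bound keeps the total failure probability below $\exp(-c'\sigma^{-1})$ for small $\sigma$, and the map $u\mapsto G_u(\mathcal{Y})$ is Lipschitz on $\mathcal{U}$ with constant $O\big(\sigma\log(1/\sigma)\big)$ — using $r_1\asymp\sigma$, $\|y_i-z\|_2\lesssim\sigma\sqrt{\log(1/\sigma)}$ on the support, and $\sum_i\tilde\beta_i(z)\gtrsim\sigma^{-3}$ — so the fixed‑$u$ estimate transfers from the nearest net point to $u=\widehat u$ with error $O(\sigma^{A+1}\log(1/\sigma))$. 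Combining with the population bound at that net point gives $\|G(z)-z^*\|_2\le C\sigma^2\log(1/\sigma)$ with probability at least $1-C_3\exp(-C_4\sigma^{-c})$, where $c\le1$ is fixed by the Monte‑Carlo trade‑off above.

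I expect the Monte‑Carlo step to be the main obstacle, and within it the joint handling of the sample‑dependent direction $\widehat U$ and the self‑normalized weighted average: the net‑plus‑Lipschitz argument requires the Lipschitz constant of $u\mapsto G_u(\mathcal{Y})$ to be only polynomial in $1/\sigma$, which rests on the denominator lower bound $\sum_i\tilde\beta_i\gtrsim\sigma^{-3}$ and on $r_1\asymp\sigma$ dominating the $O(\sigma^2\log(1/\sigma))$ wobble of the cylinder axis induced by the error of $\widehat U$. A secondary point is to verify that the symmetry cancellations in the population step genuinely survive the plateau‑and‑taper and radial weight profiles of \eqref{eq:def:weight_G} rather than a plain cylinder indicator, which is precisely what lets us quote the mechanisms of Theorems~\ref{Thm:ContractWithKnownDir}--\ref{Thm:ContractWithEstDir2} with only cosmetic changes.
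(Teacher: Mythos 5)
Your proposal is correct in substance and takes a genuinely different route from the paper on the sampling step, while sharing the same population-level machinery. For the bias, the paper decomposes $\widehat\mu_w - z^*$ into three pieces: $\|\widehat\mu_w - \mu_w\|$ (Lipschitz continuity of $w_u, w_v$ replacing $\widehat U$ by the true $U$), $\|\mu_w - \mu_{w,\bD}\|$ (the disc reduction of Lemma~\ref{Lemma:LocalGeneralDiff}), and $\|\mu_{w,\bD} - z^*\|$, the last being computed directly in Cartesian coordinates with the auxiliary-variable trick that converts the polynomial weight $(r_1^2 - \|v\|_2^2)^k$ into a higher-dimensional indicator, so the symmetry and truncated-Gaussian estimates go through verbatim; your uniform-over-$\mathcal U$ bound is the same content viewed differently. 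One small misattribution: you point to Theorem~\ref{Thm:ContractWithKnownDir} for the cancellation of the tangential part of $\langle X - z^*\rangle_u$, but that theorem's estimate never needs to control the tangential part because $\mu_z^{\bV}$ carries the explicit projection $\Pi^\perp_{z^*}$; $G(z)$ has no such projection, so the paper's cancellation of the components $\mu^{(i)}$, $i\neq d+1$, is a fresh symmetry argument (parallel to Lemma~\ref{Lemma:LocalDisc}) carried out inside the $\mu_{w,\bD}$ computation, and you would need to reproduce it rather than cite Theorem~\ref{Thm:ContractWithKnownDir}. The more substantive divergence is the Monte-Carlo step. The paper invokes Lemma~\ref{Thm:Concentration_with_Sample} and Corollary~\ref{col:Concentrate_with_n} to control $G(z) - \widehat\mu_w$, treating the weights $\beta_i$ as if they were deterministic; but $\widehat U$ is computed from the same sample through $F(z)$, so the hypotheses of that lemma (fixed weight function $W$) are not literally met. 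Your Bernstein-plus-$\sigma^A$-net-plus-Lipschitz argument over $u\in\mathcal U$ closes this gap explicitly, and your estimate that the Lipschitz constant of $u\mapsto G_u(\mathcal Y)$ is $O(\sigma\log(1/\sigma))$ — resting on $r_1\asymp\sigma$, the $O(\sigma\sqrt{\log(1/\sigma)})$ diameter of the support, and the denominator lower bound $\gtrsim\sigma^{-3}$ — is the correct order of magnitude. In short, your approach buys a cleaner treatment of the sample-dependent direction at the cost of an explicit net/union-bound bookkeeping, whereas the paper's approach is lighter but leaves the data-dependence of $\widehat U$ implicit.
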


\section{Fit a smooth manifold}\label{sec:fit_mfd}
Up to this point, we have explicated the techniques for estimating the contraction direction and executing the contraction process for points proximal to $\cM$. In this section, we synthesize these two procedures to yield the ultimate smooth manifold estimator. The estimator is predicated upon a tubular neighborhood of $\cM$, denoted by $\Gamma = \{y: d(y, \cM) \leq C\sigma\}$, and manifests in two distinct incarnations, corresponding to the population and sample levels.

On the population level, we assume the distribution $\nu(y)$ is known, so that we can calculate all the expectations.
As mentioned in the introduction, estimating $\omega$ or $\cM$ with a known density function in the form of $\nu = \omega * \phi_{\sigma}$ is closely related to the singular deconvolution problem discussed in \cite{genovese2012manifold}. In contrast to their approach, our method uses geometrical structures to generate an estimate in the form of an image set, yielding a similar error bound. Formally, we have:
\begin{theorem}
    \label{Thm:out_manifold_1}
    Assume the density function $\nu(y)$ and region of interest $\Gamma$ are given. With the $\widehat{\mu}_y^{\mathbb{V}}$ defined in \eqref{eq:mu_z:3}, we let 
    $$\mathcal{S} = \{\widehat \mu_y^{\mathbb{V}}: ~ y\in\Gamma\}.$$
    Then, we have
    $$d_H(\mathcal{S},\cM)\leq C\sigma^2\log(1/\sigma)$$
    for some constant $C$.
\end{theorem}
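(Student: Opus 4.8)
The plan is to split the Hausdorff distance into its two one-sided suprema,
$$
d_H(\mathcal{S},\cM) = \max\Big\{\sup_{s\in\mathcal{S}} d(s,\cM),\ \sup_{x\in\cM} d(x,\mathcal{S})\Big\},
$$
and to bound each of them by $C\sigma^2\log(1/\sigma)$ using the pointwise contraction estimate already established in Theorem \ref{Thm:ContractWithEstDir2}. That theorem is phrased conditionally on the direction bound \eqref{eq:dir_error2}, so the first order of business is to note that this hypothesis is automatic here: for any $z$ with $0 < d(z,\cM) \le C\sigma$, Theorem \ref{Thm:AngleCondition} gives $\sin\Theta(\mu_z^{\mathbb{B}}-z,\, z^*-z) \le C\sigma\sqrt{\log(1/\sigma)}$, and since $\|uu^T-vv^T\|_F = \sqrt{2}\,\sin\Theta(u,v)$ for unit vectors, this transfers to $\|\widetilde{U}-U\|_F \le C\sigma\sqrt{\log(1/\sigma)}$, which is exactly \eqref{eq:dir_error2}. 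Hence Theorem \ref{Thm:ContractWithEstDir2} applies to every $z$ strictly off $\cM$ inside the tube $\Gamma$ (shrinking, if necessary, the absolute constant in the definition of $\Gamma$ to match the one in Theorem \ref{Thm:ContractWithEstDir2}).

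For the first supremum, take any $s = \widehat{\mu}_y^{\mathbb{V}} \in \mathcal{S}$ with $y\in\Gamma$. If $y\notin\cM$, Theorem \ref{Thm:ContractWithEstDir2} gives $\|\widehat{\mu}_y^{\mathbb{V}} - y^*\|_2 \le C\sigma^2\log(1/\sigma)$; the boundary case $y\in\cM$, where $y^* = y$ and the contraction direction is the degenerate limit of the generic one, is the $z = z^*$ specialization of the very same computation carried out in the proof of Theorem \ref{Thm:ContractWithEstDir2}, so the same bound persists. Since $y^*\in\cM$ in either case, $d(s,\cM) \le \|s - y^*\|_2 \le C\sigma^2\log(1/\sigma)$, and taking the supremum over $\mathcal{S}$ controls the first term.

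For the second supremum, fix $x\in\cM$. Since $\cM$ has positive codimension, the normal space $(T_x\cM)^\perp$ is non-trivial; choose a unit vector $n\in(T_x\cM)^\perp$ and set $y = x + \epsilon n$, where $\epsilon$ is a small fixed multiple of $\sigma$ chosen so that $\epsilon < \tau$ and $y\in\Gamma$. Then $d(y,\cM) = \epsilon$, so $y\in\Gamma$; the bound $\epsilon < \tau$ forces $y^* = x$ by the reach property; and $y\notin\cM$, so Theorem \ref{Thm:ContractWithEstDir2} applies and yields $\|\widehat{\mu}_y^{\mathbb{V}} - x\|_2 = \|\widehat{\mu}_y^{\mathbb{V}} - y^*\|_2 \le C\sigma^2\log(1/\sigma)$. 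As $\widehat{\mu}_y^{\mathbb{V}}\in\mathcal{S}$, this gives $d(x,\mathcal{S}) \le C\sigma^2\log(1/\sigma)$; taking the supremum over $\cM$ and combining with the first term gives $d_H(\mathcal{S},\cM) \le C\sigma^2\log(1/\sigma)$.

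All the real work sits in Theorems \ref{Thm:ContractWithEstDir2} and \ref{Thm:AngleCondition}; the only point demanding genuine care is the degeneracy of the estimated contraction direction on $\cM$ itself, where $U = (z^*-z)(z^*-z)^T/\|z^*-z\|_2^2$ is undefined. For the covering inclusion this is circumvented by lifting each $x\in\cM$ a distance of order $\sigma$ into its normal space before contracting, and for the other inclusion it is harmless, since the elements of $\mathcal{S}$ coming from base points on $\cM$ inherit the bound either as a limiting case or directly from the same geometric estimates.
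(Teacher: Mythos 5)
Your proof is correct and takes essentially the same route as the paper's: both one-sided Hausdorff bounds are reduced to the pointwise contraction estimate of Theorem~\ref{Thm:ContractWithEstDir2}, applied once to an arbitrary $s\in\mathcal{S}$ and once to an arbitrary $x\in\cM$ via a lifted preimage $y_x\in\Gamma$ with $y_x^*=x$. The extra care you take (verifying that the hypothesis~\eqref{eq:dir_error2} follows from Theorem~\ref{Thm:AngleCondition}, handling the degenerate base points on $\cM$ where $U$ is undefined, and giving an explicit normal lift for $y_x$) fills in details the paper leaves implicit, but it does not change the underlying argument.
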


When only the sample set $\mathcal{Y}$ is available, the function $G(y)$, as defined in \eqref{eq:def:G(z)}, can be used as an estimator of $\widehat{\mu}_y^{\mathbb{V}}$. First, $G(y)$ provides a good estimate of $y^*$ with high probability. Additionally, by definition, $G(\cdot)$ is a $C^2$-continuous mapping in $\mathbb{R}^D$. Hence, similar to the population case, the image set of $\Gamma$ under the mapping $G$ also has a good approximation property. Moreover, because of the smoothness of both $G$ and $\Gamma$, the output we obtain is also a smooth manifold.
Specifically, we have the following theorem:
\begin{theorem}
    \label{Thm:out_manifold_2}
    Assume the region $\Gamma$ is given. With the $G(y)$ defined in \eqref{eq:def:G(z)}, we let
    \begin{align}\label{output:G_Gamma}
    \widehat{\mathcal{S}} = G(\Gamma) = \{G(y): ~ y\in\Gamma\}.         
    \end{align}
    Then, $\widehat{\mathcal{S}}$ is a smooth sub-manifold in $\mathbb{R}^D$, and the following claims simultaneously hold for some constant $C$ with high probability:
    \begin{itemize}
        \item For any $x\in\cM$, $d(x,\widehat{\mathcal{S}})\leq C\sigma^2\log(1/\sigma)$;
        \item For any $s\in\widehat{\mathcal{S}}$, $d(x,\cM)\leq C\sigma^2\log(1/\sigma)$.
    \end{itemize}
    
\end{theorem}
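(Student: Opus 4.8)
\emph{Step 1 (uniformizing Theorem~\ref{Thm:appro G(z)} over $\Gamma$).} The plan is to reduce the whole statement to the pointwise estimate of Theorem~\ref{Thm:appro G(z)}, made uniform over $\Gamma$. Since $\cM$ is compact, $\Gamma=\{y:d(y,\cM)\le C\sigma\}$ is a bounded closed tube and admits a $\delta$-net $\Lambda$ with $|\Lambda|\le(C/\delta)^{D}$. Taking $\delta$ a fixed large power of $\sigma$, say $\delta=\sigma^{10}$, the union bound over $\Lambda$ against the tail $C_3\exp(-C_4\sigma^{-c})$ of Theorem~\ref{Thm:appro G(z)} still leaves probability at least $1-C_1\exp(-C_2\sigma^{-c})$ that $\|G(z)-z^*\|_2\le C\sigma^2\log(1/\sigma)$ for every $z\in\Lambda$ (for $\sigma$ small the polynomial factor $|\Lambda|$ is absorbed by the sub-exponential tail). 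To pass from $\Lambda$ to all of $\Gamma$ I would use that $G$ is $C^2$, hence Lipschitz on the compact set $\Gamma$ with a constant $L$ that is at most polynomial in $\sigma^{-1}$ (by the scaling $r_1=c\sigma$, $r_2=C\sigma\sqrt{\log(1/\sigma)}$ of the weights in \eqref{eq:def:weight_G}), and that the nearest-point projection $y\mapsto y^*$ is Lipschitz on $\Gamma$ with constant at most $2$ because $\reach(\cM)\ge\tau$ and $C\sigma\ll\tau$; for $y\in\Gamma$ with nearest net point $z$, $\|G(y)-y^*\|_2\le\|G(y)-G(z)\|_2+\|G(z)-z^*\|_2+\|z^*-y^*\|_2\le L\delta+C\sigma^2\log(1/\sigma)+2\delta$, and $\delta=\sigma^{10}$ makes the end terms negligible. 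Denote by $\mathcal{E}$ the resulting event; then $\bP(\mathcal{E})\ge1-C_1\exp(-C_2\sigma^{-c})$ and on $\mathcal{E}$ one has $\|G(y)-y^*\|_2\le C\sigma^2\log(1/\sigma)$ for all $y\in\Gamma$.

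\emph{Step 2 (the two Hausdorff bounds).} Both claims follow immediately on $\mathcal{E}$. If $s\in\widehat{\mathcal{S}}$, write $s=G(y)$ with $y\in\Gamma$; then $y^*\in\cM$ and $d(s,\cM)\le\|s-y^*\|_2\le C\sigma^2\log(1/\sigma)$. Conversely, for $x\in\cM$ we have $x\in\Gamma$ and $x^*=x$, so $G(x)\in\widehat{\mathcal{S}}$ and $d(x,\widehat{\mathcal{S}})\le\|x-G(x)\|_2=\|G(x)-x^*\|_2\le C\sigma^2\log(1/\sigma)$. In particular $d_H(\widehat{\mathcal{S}},\cM)\le C\sigma^2\log(1/\sigma)$.

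\emph{Step 3 (smooth submanifold).} For the geometric claim I would study the restriction $\Phi:=G|_{\cM}:\cM\to\bR^D$, which is $C^2$ since $G$ is. Differentiating the identity $\Phi(x)=x+(\Phi(x)-x)$ along $\cM$ and using the structure of the weights \eqref{eq:def:weight_G} together with the direction estimate of Theorem~\ref{Thm:Angle:F(z)} (which controls $\widehat U(z)$, hence the $z$-dependence of the $\beta_i$), one obtains $\|D\Phi_x-\iota_{T_x\cM}\|\le c\sigma\sqrt{\log(1/\sigma)}$, where $\iota_{T_x\cM}:T_x\cM\hookrightarrow\bR^D$ is the inclusion; in particular $D\Phi_x$ is injective, so $\Phi$ is a $C^2$ immersion. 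Since $\|\Phi(x)-x\|_2\le C\sigma^2\log(1/\sigma)\ll\tau$, two far-apart points of $\cM$ cannot be identified by $\Phi$ without contradicting the reach bound, while nearby points are separated by the immersion property; hence $\Phi$ is a $C^2$ embedding of the compact manifold $\cM$, so $G(\cM)=\Phi(\cM)$ is a smooth $d$-dimensional submanifold of $\bR^D$ with $\reach(G(\cM))\ge\tau-\cO(\sigma^2\log(1/\sigma))$ by a tubular-neighbourhood estimate. Finally I would identify $\widehat{\mathcal{S}}=G(\Gamma)$ with this submanifold: viewing $\Gamma$ as a tubular neighbourhood of $\cM$, on $\mathcal{E}$ the whole normal fibre over $x$ is mapped by $G$ into $\cB_D(x,C\sigma^2\log(1/\sigma))$, so $G(\Gamma)$ lies in a $\cO(\sigma^2\log(1/\sigma))$-tube of $G(\cM)$; since also $G(\cM)\subseteq G(\Gamma)$, $\widehat{\mathcal{S}}$ coincides with the smooth submanifold $G(\cM)$ up to the stated error. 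Alternatively one verifies directly that $DG$ has constant rank $d$ on $\Gamma$ and invokes the constant-rank theorem, properness being supplied by compactness of $\Gamma$.

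\emph{Main obstacle.} The crux is the $C^1$-control in Step~3, namely the bound $\|D\Phi_x-\iota_{T_x\cM}\|\le c\sigma\sqrt{\log(1/\sigma)}$ and a matching estimate for the second fundamental form of $G(\cM)$, uniformly in $x$ and with high probability. The difficulty is that the weights $\beta_i$ depend on $z$ through the estimated contraction direction $\widehat U(z)=(F(z)-z)(F(z)-z)^{T}/\|F(z)-z\|_2^2$, so the derivatives of $G$ couple to those of $F$, and one must differentiate the concentration estimates underlying Theorems~\ref{Thm:Angle:F(z)} and~\ref{Thm:appro G(z)} rather than use them only at the level of values. The global injectivity of $\Phi$, and making precise in what sense the image of the $D$-dimensional set $\Gamma$ ``is'' a smooth submanifold, are the remaining delicate points; the cleanest resolution of the latter is the identification with $G(\cM)$ above.
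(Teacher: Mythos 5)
Your Steps 1--2 recover the paper's argument for the two Hausdorff bounds, and Step 1 is in fact more careful than the published proof. The paper invokes Theorem~\ref{Thm:appro G(z)} at each of the two generic points $y_s$, $y_x$ ``with high probability'' --- a pointwise statement --- and does not perform the net-plus-Lipschitz uniformization over $\Gamma$ that is needed to make the two bullets hold simultaneously for all $x\in\cM$ and $s\in\widehat{\mathcal{S}}$ on a single high-probability event. Your Step 1 closes that gap cleanly; the $\exp(-C_4\sigma^{-c})$ tail does absorb the polynomial factor $(C/\delta)^D$, and the Lipschitz estimate for $y\mapsto y^*$ on $\Gamma$ follows from $\reach(\cM)\ge\tau\gg C\sigma$ as you say.

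On Step 3 the comparison is more interesting. The paper's entire justification that $\widehat{\mathcal{S}}=G(\Gamma)$ is a smooth submanifold is the one sentence ``From the smoothness of $\Gamma$ and $G$, it is evident that $\widehat{\mathcal{S}}$ becomes a smooth manifold,'' which, as you rightly sense, is not a proof: the image of a smooth set under a $C^2$ map need not be a manifold, and any dimension claim hinges on the rank of $DG$, which this proof never examines. Your proposal correctly identifies the missing ingredient (a $C^1$-level concentration bound for $G$, i.e.\ a derivative version of Theorems~\ref{Thm:Angle:F(z)} and~\ref{Thm:appro G(z)}) and offers two sensible resolutions, both of which go further than the paper. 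Be aware, though, that the paper itself does \emph{not} claim $\widehat{\mathcal{S}}$ is $d$-dimensional here: the remark immediately after Theorem~\ref{Thm:out_manifold_2} describes $\widehat{\mathcal{S}}$ as a ``narrow tubular neighborhood of $\mathcal{M}$'' and explicitly defers the reduction of the intrinsic dimension to $d$ to Theorems~\ref{Thm:out_manifold_part_d} and~\ref{Thm:out_manifold_global_d}, which apply $G$ to a $d$-dimensional preliminary manifold $\widetilde{\cM}$ rather than to the full $D$-dimensional slab $\Gamma$. So your Step 3 pursues a strictly stronger conclusion than the theorem as the paper later uses it; the derivative-level control you flag as the obstacle is precisely what Theorem~\ref{Thm:out_manifold_global_d} needs, and there the paper sidesteps it by introducing the bounds $L_G$, $\ell_G$, $M_G$ on the Jacobian and Hessian of $G$ as assumptions rather than proving them.
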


The output manifold $\widehat{\mathcal{S}}$ furnishes a narrow tubular neighborhood of $\mathcal{M}$. By disregarding any anomalous points situated within a low-probability regime, we establish that the Hausdorff distance separating $\widehat{\mathcal{S}}$ and $\mathcal{M}$ scales as $\cO(\sigma^2\log(1/\sigma))$.
To further refine the intrinsic dimension of the manifold estimator to $d$, we introduce a partial solution in Theorem \ref{Thm:out_manifold_part_d} and a global solution in Theorem \ref{Thm:out_manifold_global_d}.

\begin{theorem}
    \label{Thm:out_manifold_part_d}
    For $x\in \mathcal{M}$, let $\widehat{\Pi}_x$ be the estimation of ${\Pi}_x$ as the one defined in \eqref{fy:yao2019}. Then there exists a constant $c>0$ such that
    $$
    \widehat{\mathcal{M}}_x =  \{y \in \Gamma \cap \cB_D(x,c\tau): \widehat{\Pi}_x^{\perp}(G(y) - y) = 0\}  
    $$
    is a $d$-dimensional manifold embedded in $\bR^D$. Meanwhile, for any point $y \in \widehat{\mathcal{M}}_x$,
    $$d(y, \mathcal{M}) \leq C\sigma^2\log(1/\sigma)$$
    for some constant $C$ with high probability.
\end{theorem}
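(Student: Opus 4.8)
The plan is to realise $\widehat{\mathcal M}_x$ as the zero set of a $C^2$ submersion and to locate it using Theorem~\ref{Thm:appro G(z)}. Fix a small constant $c>0$ (to be pinned down at the end) and work throughout on $\Omega:=\Gamma\cap\cB_D(x,c\tau)$. The first step is to record that on $\Omega$ all the relevant projections are mutually close: the local--PCA--based estimator $\widehat\Pi_x^\perp$ of \eqref{fy:yao2019}, built with radius $C\sqrt\sigma$, satisfies $\|\widehat\Pi_x^\perp-\Pi_x^\perp\|_F\le C\sqrt\sigma/\tau$ with high probability (\cite{yao2019manifold}); and since $\|x-y^*\|_2\le c\tau+C\sigma\le 2c\tau$ for every $y\in\Omega$, the standard reach/$C^2$ bound $\|\Pi_p^- - \Pi_q^-\|_F\le 2\|p-q\|_2/\tau$ for $p,q\in\cM$ gives $\|\Pi_{y^*}^\perp-\Pi_x^\perp\|_F\le 4c$. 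Hence $\|\widehat\Pi_x^\perp-\Pi_{y^*}^\perp\|_F\le\varepsilon_0:=4c+C\sqrt\sigma/\tau$, and I will take $c$ and $\sigma$ small enough that $\varepsilon_0\le\tfrac14$.

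Second, I would prove the distance estimate. Let $y\in\widehat{\mathcal M}_x$, so by definition $G(y)-y\in\widehat T_x:=\mathrm{range}(\widehat\Pi_x^-)$. From the identity $y-y^*=(G(y)-y^*)-(G(y)-y)$, applying $\widehat\Pi_x^\perp$ annihilates the last term and Theorem~\ref{Thm:appro G(z)} gives $\|\widehat\Pi_x^\perp(y-y^*)\|_2\le\|G(y)-y^*\|_2\le C\sigma^2\log(1/\sigma)$ on the high--probability event of that theorem. Since $y$ lies well inside the reach tube of $\cM$, the segment $y-y^*$ is normal to $\cM$ at $y^*$, so $\widehat\Pi_x^-(y-y^*)=(\widehat\Pi_x^--\Pi_{y^*}^-)(y-y^*)$ has norm at most $\varepsilon_0\|y-y^*\|_2$. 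The triangle inequality then yields $\|y-y^*\|_2\le\varepsilon_0\|y-y^*\|_2+C\sigma^2\log(1/\sigma)$, hence $d(y,\cM)\le 2C\sigma^2\log(1/\sigma)$. That $\widehat{\mathcal M}_x$ is non-empty follows from a short continuity argument applied to the map $\Phi$ below on the normal slices $x'+\widehat N_x$ through points $x'\in\cM\cap\cB_D(x,c\tau/2)$.

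Third, for the manifold structure I would apply the regular--value theorem to the $C^2$ map $\Phi:\Omega\to\widehat N_x$, $\Phi(y)=\widehat\Pi_x^\perp(G(y)-y)$, whose target has dimension $D-d$; it suffices to show $D\Phi(y)=\widehat\Pi_x^\perp(DG(y)-I_D)$ is onto $\widehat N_x$ at every $y\in\Phi^{-1}(0)=\widehat{\mathcal M}_x$. Testing on $v\in\widehat N_x$ only, $D\Phi(y)v=\widehat\Pi_x^\perp DG(y)v-v$, so surjectivity onto $\widehat N_x$ follows once $\|\widehat\Pi_x^\perp DG(y)\widehat\Pi_x^\perp\|_{\mathrm{op}}<1$. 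Here I invoke the Jacobian estimate for $G$ that also underpins the smoothness claim of Theorem~\ref{Thm:out_manifold_2}: with high probability $\|DG(y)-\Pi_{y^*}^-\|_{\mathrm{op}}$ is uniformly small on $\Omega$, because the nearest--point projection $\pi_\cM$ has $D\pi_\cM(y)|_{N_{y^*}\cM}=0$ and $\|D\pi_\cM(y)-\Pi_{y^*}^-\|_{\mathrm{op}}\le C\sigma/\tau$, and $G$ tracks $\pi_\cM$ together with its first derivative. For $v\in\widehat N_x$ (so $\widehat\Pi_x^- v=0$) this gives $\|\widehat\Pi_x^\perp DG(y)v\|_2\le\|DG(y)v\|_2\le\|\Pi_{y^*}^-v\|_2+o(1)\|v\|_2=\|(\Pi_{y^*}^--\widehat\Pi_x^-)v\|_2+o(1)\|v\|_2\le(\varepsilon_0+o(1))\|v\|_2<\|v\|_2$, exactly the needed operator bound. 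Hence $0$ is a regular value and $\widehat{\mathcal M}_x=\Phi^{-1}(0)$ is an embedded $C^2$ submanifold of $\bR^D$ of dimension $D-(D-d)=d$, lying within $2C\sigma^2\log(1/\sigma)$ of $\cM$ by the second step.

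The main obstacle is precisely this Jacobian control of $G$: Theorem~\ref{Thm:appro G(z)} supplies only the pointwise bound $\|G(y)-y^*\|_2\le C\sigma^2\log(1/\sigma)$, which by itself says nothing about $DG(y)$. To obtain it one differentiates $G(y)=\sum_i\beta_i(y)y_i$, uses $\sum_i\nabla\beta_i(y)=0$ to rewrite $DG(y)=\sum_i(y_i-y^*)\nabla\beta_i(y)^T$ with $\|y_i-y^*\|_2\le C\sigma\sqrt{\log(1/\sigma)}$ on the cylinder $\widehat{\bV}_y$, and controls the weight gradients $\nabla\beta_i$ using the thinness of $\widehat{\bV}_y$ (recall $r_1=c\sigma\ll r_2=C\sigma\sqrt{\log(1/\sigma)}$), the concentration of the local sample count (Corollary~\ref{Col:Local_sample_size}), and the smooth, accurate dependence of $\widehat U(y)$ on $y$ from Theorem~\ref{Thm:Angle:F(z)}. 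A secondary, purely bookkeeping step is to choose $c$ small enough that $\varepsilon_0$ and the curvature terms $\|x-y^*\|_2/\tau$ stay below the thresholds used above, and to intersect the high--probability events drawn from Theorem~\ref{Thm:appro G(z)}, the Jacobian estimate, and Corollary~\ref{Col:Local_sample_size}.
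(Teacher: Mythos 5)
Your distance argument matches the paper's: both apply $\widehat\Pi_x^\perp$ to the decomposition $y-y^*=(G(y)-y^*)-(G(y)-y)$, annihilate the second term by the defining equation of $\widehat{\cM}_x$, bound the first by Theorem~\ref{Thm:appro G(z)}, and recover $\|y-y^*\|_2$ from the proximity of $\widehat\Pi_x^\perp$ to $\Pi_{y^*}^\perp$. The paper does this last step with a reverse triangle inequality $\|\widehat\Pi_x^\perp(y-y^*)\|_2\ge(1-c_1)\|y-y^*\|_2$; you instead bound the tangential piece $\|\widehat\Pi_x^-(y-y^*)\|_2\le\varepsilon_0\|y-y^*\|_2$ and absorb. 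The two are equivalent.

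On the manifold-structure claim you are more explicit than the paper, which simply asserts that $y\mapsto\widehat\Pi_x^\perp(G(y)-y)$ has constant rank $D-d$ and cites the constant-rank level-set theorem, without justifying that rank. You formulate the needed surjectivity of $D\Phi(y)=\widehat\Pi_x^\perp(DG(y)-I_D)$ onto the range of $\widehat\Pi_x^\perp$ and correctly trace it to the operator bound $\|\widehat\Pi_x^\perp\, DG(y)\,\widehat\Pi_x^\perp\|_{\mathrm{op}}<1$, which in turn rests on a uniform estimate that $\|DG(y)-\Pi_{y^*}^-\|_{\mathrm{op}}$ is small --- something Theorem~\ref{Thm:appro G(z)} does not supply, as it is purely a pointwise bound on $G(y)$. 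You rightly flag this Jacobian control as an unproved step; note that the paper's own proof has the same gap, since it offers no derivative estimate for $G$ (its Proposition~\ref{Prop:M_in_d_dim} provides such a bound only for $F$). Your sketched route, differentiating the weights $\beta_i$ and exploiting the concentration of the local sample count from Corollary~\ref{Col:Local_sample_size} together with the regularity of $\widehat U(y)$ from Theorem~\ref{Thm:Angle:F(z)}, is the natural way to close it and would tighten the paper's argument as well.
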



Theorem \ref{Thm:out_manifold_part_d} provides a local solution, by guaranteeing that the function $\widehat{\Pi}_x^{\perp}(G(y) - y)$ has a constant rank $D-d$ through predetermined regions of interest and a fixed projection matrix. The resulting estimator is a piecewise $d$-dimensional manifold, which is more natural and smooth, but requires further manipulations to integrate the piecewise manifolds into an entirely smooth one. To avoid these manipulations, we assume there is a smooth initial manifold $\widetilde{\mathcal{M}}$ contained by $\Gamma$. Additionally, since $G$ is a $C^2$ continuous mapping in $\Gamma$,  we can assume that the Jacobi matrix of $G$ is bounded by $L_G$ and $\ell_G$, and the Hessian matrix of $G$ is bounded by $M_G$. Then a global estimator $\widetilde{\cM}$ can be obtained via the following theorem:

\begin{theorem}
    \label{Thm:out_manifold_global_d}
    Let $\widetilde{\cM} \subset \Gamma$ be a $d$-dimensional manifold with a positive reach $\tau_0$. Suppose that for each point $x\in \cM$, there exists a point $y$ such that $y^* = x$. Then, the estimator defined by 
    $
    \widehat{\cM} = G(\widetilde{\cM}) 
    $
    is also a $d$-dimensional manifold with the following conditions holding for some constant $c$ and $C$ with high probability: 
    \begin{itemize}
        \item[(I).] For any point $y \in \widehat{\cM}$, $d(y, \cM)$ is less than $C\sigma^2\log(1/\sigma)$;
        \item[(II).] For any point $x \in \cM$, $d(x, \widehat{\cM})$ is less than $C\sigma^2\log(1/\sigma)$;
        \item[(III).] The reach of $\widehat{\cM}$ is larger than a constant $\widehat{\tau} = \min\left\{c\sigma\tau_0, \  \frac{c\ell_G}{M_G + L_G}\right\}$.
    \end{itemize}
\end{theorem}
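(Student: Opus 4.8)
The assertion bundles three claims: that $\widehat{\cM}=G(\widetilde{\cM})$ is an embedded $d$-dimensional submanifold of $\bR^D$, the two one-sided distance bounds (I)--(II), and the reach bound (III). All three are obtained on a single good event $\mathcal{E}$ which upgrades Theorem \ref{Thm:appro G(z)} to a statement uniform over $\widetilde{\cM}$. To build $\mathcal{E}$, pick a $\delta$-net of the compact manifold $\widetilde{\cM}$ with $\delta$ a fixed power of $\sigma$; its size is $\mathrm{poly}(\sigma^{-1})$. Applying Theorem \ref{Thm:appro G(z)} at each net point and taking a union bound, the failure probability is at most $\mathrm{poly}(\sigma^{-1})\cdot C_3\exp(-C_4\sigma^{-c})\le C_3\exp(-\tfrac12C_4\sigma^{-c})$ for $\sigma$ small, since $\exp(-C_4\sigma^{-c})$ beats any power of $\sigma^{-1}$. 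As $G$ is $L_G$-Lipschitz on $\Gamma$ and the nearest-point projection $y\mapsto y^\ast$ onto $\cM$ is Lipschitz there (the reach of $\cM$ being $\tau\gg\sigma$), the bound $\|G(y)-y^\ast\|_2\le C\sigma^2\log(1/\sigma)$ propagates from the net to every $y\in\widetilde{\cM}$ at the cost of a larger $C$; this is the event $\mathcal{E}$, with $\bP(\mathcal{E})\ge 1-C_1\exp(-C_2\sigma^{-c})$.

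On $\mathcal{E}$, (I) and (II) are immediate: if $a=G(\alpha)\in\widehat{\cM}$ with $\alpha\in\widetilde{\cM}\subset\Gamma$ then $d(a,\cM)\le\|a-\alpha^\ast\|_2\le C\sigma^2\log(1/\sigma)$, and if $x\in\cM$ the hypothesis supplies $y\in\widetilde{\cM}$ with $y^\ast=x$, whence $d(x,\widehat{\cM})\le\|G(y)-x\|_2\le C\sigma^2\log(1/\sigma)$. For the manifold structure: $G|_{\widetilde{\cM}}$ is a $C^2$ immersion because the Jacobian of $G$ restricted to $T_y\widetilde{\cM}$ has smallest singular value at least $\ell_G>0$; and $G|_{\widetilde{\cM}}$ is injective because $G(y_1)=G(y_2)$ forces $\|y_1^\ast-y_2^\ast\|_2\le 2C\sigma^2\log(1/\sigma)$, hence $\|y_1-y_2\|_2=\cO(\sigma)$ (both $y_i$ lie in the $\cO(\sigma)$-thin tube $\Gamma$), which for $\sigma$ small is below the local injectivity radius of $G$ on $\widetilde{\cM}$ (of order $\ell_G/M_G$). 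An injective $C^2$ immersion of a compact manifold is an embedding, so $\widehat{\cM}$ is an embedded $d$-submanifold.

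For (III) we invoke Federer's reach condition (Lemma \ref{Lemma:ReachCond}): it suffices that $2\,d(b,T_a\widehat{\cM})\le\widehat{\tau}^{-1}\|a-b\|_2^2$ for all $a\neq b$ in $\widehat{\cM}$. Write $a=G(\alpha)$, $b=G(\beta)$ with $\alpha,\beta\in\widetilde{\cM}$, so $T_a\widehat{\cM}=DG(\alpha)\big(T_\alpha\widetilde{\cM}\big)$. If $\|a-b\|_2\ge 2\widehat{\tau}$ the inequality is trivial. Otherwise $\|a-b\|_2<2\widehat{\tau}=\cO(\sigma)$, so from $\|\alpha^\ast-\beta^\ast\|_2\le\|a-b\|_2+2C\sigma^2\log(1/\sigma)$ and $\alpha,\beta\in\Gamma$ we get $\|\alpha-\beta\|_2=\cO(\sigma)$; this range is small enough that $G$ is bi-Lipschitz near $\alpha$, giving $\|a-b\|_2\ge\tfrac{\ell_G}{2}\|\alpha-\beta\|_2$. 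A second-order Taylor expansion of $G$ at $\alpha$ writes $b=a+DG(\alpha)(\beta-\alpha)+R$ with $\|R\|_2\le\tfrac{M_G}{2}\|\alpha-\beta\|_2^2$; splitting $\beta-\alpha=w_\parallel+w_\perp$ with $w_\parallel\in T_\alpha\widetilde{\cM}$ and $w_\perp$ orthogonal, the reach of $\widetilde{\cM}$ forces $\|w_\perp\|_2\le\tfrac{1}{2\tau_0}\|\alpha-\beta\|_2^2$. Since $DG(\alpha)w_\parallel\in T_a\widehat{\cM}$,
\[
d(b,T_a\widehat{\cM})\le\|DG(\alpha)w_\perp\|_2+\|R\|_2\le\big(\tfrac{L_G}{2\tau_0}+\tfrac{M_G}{2}\big)\|\alpha-\beta\|_2^2\le\big(\tfrac{L_G}{2\tau_0}+\tfrac{M_G}{2}\big)\tfrac{4}{\ell_G^2}\|a-b\|_2^2 .
\]
Hence $\reach(\widehat{\cM})\ge\frac{\ell_G^2}{4L_G/\tau_0+4M_G}$; using $(A+B)^{-1}\ge\tfrac12\min\{A^{-1},B^{-1}\}$ and inserting the orders of magnitude ($\ell_G,L_G$ of order one up to logarithms, while the $\sigma$-scale of the weights defining $G$ makes $M_G$ of order $\sigma^{-1}$) collapses this lower bound to $\min\{c\sigma\tau_0,\ c\ell_G/(M_G+L_G)\}=\widehat{\tau}$.

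The hard part is the ``close'' case of the Federer argument: one must genuinely rule out that ambient-close points $a,b$ of $\widehat{\cM}$ come from far-apart $\alpha,\beta$ in $\widetilde{\cM}$, for otherwise neither the Taylor remainder $R$ nor the reach-$\tau_0$ term for $\widetilde{\cM}$ would be of higher order in $\|a-b\|_2$ and the estimate would collapse. This is precisely where the uniform form of Theorem \ref{Thm:appro G(z)} on $\mathcal{E}$ and the $\cO(\sigma)$-thinness of $\Gamma$ are used --- the same mechanism that underlies injectivity of $G|_{\widetilde{\cM}}$. Everything else --- the net/union-bound propagation of Theorem \ref{Thm:appro G(z)}, the bi-Lipschitz and Taylor estimates for the $C^2$ map $G$, and the bookkeeping that reduces the raw constant-laden bound to the stated $\widehat{\tau}$ --- is routine once the magnitudes of $L_G,\ell_G,M_G$ relative to $\sigma$ and $\tau_0$ are pinned down.
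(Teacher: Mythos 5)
Your proof takes essentially the same route as the paper's: a case split in Federer's criterion (far pairs trivial, close pairs controlled by Taylor expansion of $G$ plus the reach of $\widetilde{\cM}$), a bi-Lipschitz step $\|\alpha-\beta\|_2\lesssim\ell_G^{-1}\|a-b\|_2$, and an appeal to Theorem \ref{Thm:appro G(z)} for (I)--(II). The only structural difference is cosmetic: the paper passes through a chart $\phi$ over $T_{a_0}\widetilde{\cM}$ and bounds $\|\eta_{b_0}-\phi(\eta_{b_0})\|_2$ by the reach of $\widetilde{\cM}$, whereas you decompose $\beta-\alpha=w_\parallel+w_\perp$ directly and bound $\|w_\perp\|_2\leq\|\alpha-\beta\|_2^2/(2\tau_0)$; these are the same estimate in two notations.

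You do supply two pieces the paper leaves implicit, which strengthens rather than changes the argument. First, you make the ``with high probability'' statement uniform over $\widetilde{\cM}$ via a net and union bound, which is needed because $\widetilde{\cM}$ is uncountable and Theorem \ref{Thm:appro G(z)} is stated pointwise. Second, and more importantly, you explicitly show $G|_{\widetilde{\cM}}$ is injective (using the $\cO(\sigma)$-thinness of $\Gamma$ together with $\|G(y)-y^*\|_2\lesssim\sigma^2\log(1/\sigma)$), so that $a=G(\alpha)$, $b=G(\beta)$ close in the ambient space cannot come from far-apart preimages. The paper tacitly assumes this in writing $a_0=G^{-1}(a)$, $b_0=G^{-1}(b)$ and in applying the lower Jacobian bound; without your injectivity observation, the ``close'' case of the Federer argument has a genuine gap, since the Taylor remainder and the reach-$\tau_0$ term are only of higher order when $\|\alpha-\beta\|_2$ is itself small.

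One caution on the final bookkeeping. Your derived lower bound $\ell_G^2/(4L_G/\tau_0+4M_G)$ is what the calculation actually yields, and the paper's own chain gives essentially the same thing once the hidden $\tau_0^{-1}$ and $\ell_G^{-2}$ are restored (the paper's line ``$\leq C(M_G+L_G)\|\eta_{b_0}\|_2^2$'' silently absorbs $\tau_0^{-1}$, and the last display uses $\|a_0-b_0\|_2\leq\ell_G^{-1}\|a-b\|_2$ only once instead of squared). Collapsing this to $\min\{c\sigma\tau_0,\ c\ell_G/(M_G+L_G)\}$ requires knowing the $\sigma$-orders of $\ell_G,L_G,M_G$, which neither proof pins down rigorously; your parenthetical about these orders is a heuristic, not a proof, but it matches the level of rigor in the paper and does not constitute a gap relative to what was expected.
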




Notably, the estimator defined in Theorem \ref{Thm:out_manifold_global_d} requires an initial estimate $\widetilde{\cM}$, which can be obtained using the methods proposed in \cite{mohammed2017manifold, fefferman2018fitting, yao2019manifold,fefferman2021fitting}. In this paper, we also provide a defined strategy for reference.
\begin{proposition}
    \label{Prop:M_in_d_dim}
    Let $\widetilde{\cM}$ be a level set such that
    \begin{equation*}
        \widetilde{\cM} = \{y\in\Gamma:\Pi^* (F(y) - y)=0\},
    \end{equation*}
    where $\Pi^*$ is any arbitrary fixed projection matrix with rank $D-d$. Then, with high probability,
     $\widetilde{\cM}$ is a $d$-dimensional submanifold embedded in $\Gamma$, and $d_H(\widetilde{\cM},\cM)\leq C\sigma$.
\end{proposition}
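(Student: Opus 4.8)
The plan is to set $g(y)=\Pi^{*}(F(y)-y)$, viewed as a map from the open tube $\Gamma$ into the $(D-d)$-dimensional space $\mathrm{range}(\Pi^{*})$, and to recover $\widetilde{\cM}=g^{-1}(0)$ from the regular value theorem. First I would record that $g$ is $C^{2}$ and well defined on all of $\Gamma$ with high probability: the exponent $k>2$ makes the weights $\alpha_i$, and hence $F$ and $g$, twice continuously differentiable wherever $\tilde\alpha(y)\neq0$, and for $N=C_1\sigma^{-(d+3)}$ every ball $\cB_D(y,r_0)$ with $y\in\Gamma$ contains $\Theta(\sigma^{-3})$ sample points simultaneously, by Lemma~\ref{Lemma:prob_in_a_ball}, Corollary~\ref{Col:Local_sample_size} and a union bound over a net of $\Gamma$ of cardinality polynomial in $\sigma^{-1}$. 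The statement then reduces to: (i) $0$ is a regular value of $g$; and (ii) $\widetilde{\cM}$ is non-empty and contained in an $\cO(\sigma^{2}\log(1/\sigma))$-tube about $\cM$. Given (i) and (ii), $\widetilde{\cM}$ is a $C^{2}$ submanifold of $\Gamma$ of dimension $D-(D-d)=d$; being closed in $\Gamma$ and $\cO(\sigma^{2}\log(1/\sigma))$-close to the compact $\cM$, it is compactly contained in the interior of $\Gamma$, hence an embedded $d$-manifold, and $d_H(\widetilde{\cM},\cM)\le C\sigma$.

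For (i): $Dg(y)=\Pi^{*}(DF(y)-I_D)$, and it suffices that its restriction to the subspace $\mathrm{range}(\Pi^{*})$ already surjects onto $\mathrm{range}(\Pi^{*})$; on that subspace $\Pi^{*}$ acts as the identity, so the restriction equals $B(y)-\mathrm{id}$ with $B(y)=\bigl(\Pi^{*}DF(y)\bigr)\big|_{\mathrm{range}(\Pi^{*})}$, and invertibility follows once $\|B(y)\|_{\mathrm{op}}<1$. To control $B(y)$ I would use that $F$ is a local weighted average: from $F(y)=\sum_i\alpha_i(y)y_i$ and $\sum_i\nabla\alpha_i(y)=0$ one has $DF(y)=\sum_i(y_i-y)\nabla\alpha_i(y)^{T}$, and the Gaussian-deconvolution and curvature estimates underlying Theorems~\ref{Thm:AngleCondition} and~\ref{Thm:Angle:F(z)}, applied to the first derivatives of $F$ (which concentrate about their population values with the same high probability), show that shifting the averaging window normally to $\cM$ moves the conditional mean only sub-linearly, i.e. $\|DF(y)v\|\le(1-c)\|v\|$ on the normal space $\mathrm{range}(\Pi^{\perp}_{y^{*}})$, while on tangent directions $DF(y)$ is merely $\cO(1)$. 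Hence, provided $\mathrm{range}(\Pi^{*})$ makes a uniformly small angle with the normal spaces of $\cM$ (equivalently, $\cM$ is graphical over $\ker\Pi^{*}$ -- the regime in which a single fixed $\Pi^{*}$ takes the place of the point-dependent $\widehat\Pi_x^{\perp}$ of Theorem~\ref{Thm:out_manifold_part_d}), every unit $u\in\mathrm{range}(\Pi^{*})$ splits into a dominant normal part and a small tangential part, giving $\|B(y)u\|\le 1-c/2<1$. Thus $Dg(y)$ has rank $D-d$ at every $y\in\Gamma$, in particular on $\widetilde{\cM}$.

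For (ii): any $y\in\widetilde{\cM}$ lies in $\Gamma$, so $d(y,\cM)\le C\sigma$ at once; moreover $g(y)=0$ puts $F(y)-y$ in $\ker\Pi^{*}$ while Theorem~\ref{Thm:Angle:F(z)} forces it to be nearly parallel to $y^{*}-y\in\mathrm{range}(\Pi^{\perp}_{y^{*}})$, so the angle condition on $\ker\Pi^{*}$ in fact pins $d(y,\cM)$ down to $\cO(\sigma^{2}\log(1/\sigma))$. Conversely, for $x\in\cM$ let $h_x(v)=g(x+v)$ on the disc $\{v\in\mathrm{range}(\Pi^{*}):\|v\|\le C\sigma\}$: by the estimate just obtained $Dh_x$ is invertible with $\|Dh_x^{-1}\|\le C$ throughout, and $\|h_x(0)\|=\|\Pi^{*}(F(x)-x)\|\le C\sigma^{2}\sqrt{\log(1/\sigma)}$ because for $x\in\cM$ the local average returns $x$ up to the tangential bias of Proposition~\ref{Prop:Angle-Cord} and an $\cO(\sigma^{2})$ curvature term; a quantitative inverse-function (Newton--Kantorovich) argument then yields $v_x$ with $\|v_x\|\le C\sigma^{2}\sqrt{\log(1/\sigma)}$ and $h_x(v_x)=0$, so $x+v_x\in\widetilde{\cM}$. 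Hence $\widetilde{\cM}\neq\varnothing$ and $\sup_{x\in\cM}d(x,\widetilde{\cM})\le C\sigma$, which with (i) proves the proposition (and in fact gives the sharper $d_H(\widetilde{\cM},\cM)\le C\sigma^{2}\log(1/\sigma)$).

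The crux, and the step I expect to cost the most, is the rank bound in (i): the naive estimate $\|DF(y)\|_{\mathrm{op}}$ is $\approx 1$ since translating $y$ along $\cM$ translates the local mean almost rigidly, so one must genuinely exploit that $DF(y)$ is a strict contraction \emph{in the normal directions} and that $\mathrm{range}(\Pi^{*})$ is aligned with those directions -- the quantitative transversality/graphicality hypothesis that makes ``any fixed $\Pi^{*}$'' meaningful. Establishing the normal-direction contraction of $DF$ with constants uniform over $\Gamma$, and promoting the population estimates behind Theorems~\ref{Thm:AngleCondition}--\ref{Thm:Angle:F(z)} to their first derivatives with the same $1-C\exp(-c\sigma^{-c'})$ probability, is where the analysis concentrates; the regular value theorem and the quantitative implicit function theorem then finish routinely.
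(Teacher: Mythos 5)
Your framework — realize $\widetilde{\cM}=g^{-1}(0)$ with $g(y)=\Pi^*(F(y)-y)$, invoke a regular-value / constant-rank level-set theorem for the manifold structure, and then argue non-emptiness and closeness separately — is the same as the paper's. The crucial difference is your analysis of $Dg$, and there you have put your finger on a genuine problem with the paper's proof. The paper quotes a bound of the form $\|\partial_v H(y)-v\|_2\le Cr_0$ for \emph{all} unit $v$ (with $H=F-\mathrm{id}$), concludes $J_H$ is full rank $D$, and then claims $\Pi^* J_H$ has rank $D-d$ for an arbitrary fixed $\Pi^*$. But this cannot be right at the population level: a local weighted mean is (to first order) translation-covariant along $\cM$, so $DF(y)v\approx v$ for $v\in T_{y^*}\cM$, while $DF(y)v\approx av$ with $a=\sigma^2/(r_0^2+\sigma^2)<1$ for $v$ normal. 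Hence $J_H=DF-I\approx -(1-a)\Pi^\perp_{y^*}$, whose rank is $D-d$ with kernel $T_{y^*}\cM$ — not full rank. (A one-line check: for $\cM$ an affine subspace, $F$ commutes exactly with tangential translations, so $J_H$ annihilates $T\cM$.) Consequently $\mathrm{rank}(\Pi^*J_H)=D-d$ only when $N_{y^*}\cM\cap\ker\Pi^*=\{0\}$ uniformly in $y^*$, which is exactly the transversality/graphicality hypothesis you identify and which ``any arbitrary fixed $\Pi^*$'' does not ensure — e.g.\ for a circle in $\bR^2$ every rank-one $\Pi^*$ fails this at two antipodal points. You are also right that the paper's one-sentence treatment of the Hausdorff bound only controls $\sup_{y\in\widetilde{\cM}}d(y,\cM)$ via $\widetilde{\cM}\subset\Gamma$ and says nothing about non-emptiness or $\sup_{x\in\cM}d(x,\widetilde{\cM})$; your inverse-function / Newton--Kantorovich step is what is actually needed there.

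Two smaller remarks. First, ``$\mathrm{range}(\Pi^*)$ makes a uniformly small angle with the normal spaces'' is stronger than what the rank argument needs; the right condition is simply $N_{y^*}\cM\cap\ker\Pi^*=\{0\}$ (your ``graphical over $\ker\Pi^*$'' phrasing), which can hold even when the normal spaces are far from any single fixed subspace. Second, when you upgrade the closeness to $\cO(\sigma^2\log(1/\sigma))$ from the angle bound, note that the constant in Theorem~\ref{Thm:Angle:F(z)} degrades as $d(y,\cM)\to 0$ (the denominator $|\Delta-\mu^{(d+1)}|\gtrsim c\sigma$ in Lemma~\ref{Lemma:LocalDisc} uses $\Delta\asymp\sigma$), so the contradiction argument should be phrased as: if $d(y,\cM)$ exceeds $C\sigma^2\log(1/\sigma)$, the near-normality of $F(y)-y$ together with $F(y)-y\in\ker\Pi^*$ is impossible. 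With that caveat, your reading is the more careful one; the paper's proof as written has a rank error and an incomplete Hausdorff argument, both of which your proposal repairs at the cost of an extra (and, I believe, necessary) transversality hypothesis on $\Pi^*$.
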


In summary, we present two manifold estimators in the form of image sets and one in the form of level set, all satisfying the Hausdorff-distance condition under certain statistical conditions. Among them, the estimator proposed in Theorem \ref{Thm:out_manifold_2} is computationally simpler and more suitable for scenarios involving sample points, while the other estimators offer stronger theoretical guarantees for the geometric properties. As discussed in the introduction, prior works often employed level sets as manifold estimators, despite their inherent limitations: the existence of solutions to $f(x) = \bm{0}$, where $f(x)$ maps from $\mathbb{R}^D$ to $\mathbb{R}^D$, is not always evident. Thus the nonemptiness of the level sets is uncertain, requiring additional scrutiny. Furthermore, this approach lacks an explicit solution, making it difficult to obtain the projection of a given point onto $\widehat{\cM}$. Iterative solvers are necessary to approximate the projections, although their convergence remains unproven.

\section{Numerical study}
This section presents a comprehensive numerical investigation of the superior performance of our method (ysl23) in manifold fitting. The experiments are divided into three parts, each showcasing the advantages of  ysl23 from different perspectives.
\begin{itemize}
    \item We comprehensively demonstrate ysl23's effectiveness through various numerical visualizations, performance evaluations on diverse manifolds, and exploration of its asymptotic properties. The experiments confirm that the asymptotic behavior of ysl23 aligns with the main theorems presented in this paper as we increase the number of samples and reduce noise. Through this, we establish the reliability and validity of ysl23.
    \item  We compare ysl23 with three major manifold-fitting methods: yx19 \cite{yao2019manifold}, cf18 \cite{fefferman2018fitting}, and km17 \cite{mohammed2017manifold}, on two constant curvature manifolds and one inconstant curvature manifold. Their performance is evaluated using metrics such as the Hausdorff distance, average distance, and running time. The comparisons demonstrate that ysl23 outperforms the other methods in terms of both accuracy and efficiency.
    \item We apply ysl23 to a particularly challenging class of manifolds, the Calabi–Yau manifolds \cite{calabi2015kahler,yau1978ricci}, which have a complex structure and diverse shapes. We demonstrate the effectiveness of ysl23 by fitting Calabi–Yau manifolds and evaluating their performance by comparing the output with the underlying Calabi–Yau manifold. Through these experiments, we show that ysl23 can accurately fit the most complex manifolds, demonstrating its versatility and applicability in challenging scenarios.
\end{itemize}

To ensure reproducibility, we followed a standardized setup similar to \cite{mohammed2017manifold}. For each manifold $\cM$, The generation and evaluation of the output manifold are based on the following steps.
\begin{itemize}
    \item[1.]  Independently generate the sample set $\mathcal{Y}$ with size $N$ from the distribution $\nu$ defined in \eqref{eq:def:nu}, where $\sigma$ is predefined.      
    \item[2.] Generate another set of initial points $\mathcal{W} = \{w_1,...,w_{N_0}\}$ near the underlying manifold, satisfying $\sigma/2 \leq d(w_i,\cM)\leq 2\sigma$. 
    \item[3.] Project every point in $\mathcal{W}$ from each tested method to the output manifold, respectively. Denote the projection of $\mathcal{W}$ as $\widehat{\mathcal{W}}$.
    \item[4.] Evaluate the performance of all tested methods via the three measures:
      \begin{itemize}
        \item The supremum of the approximation error,
         $
          \max_{j}d(\widehat{w}_j, \cM),
         $
         calculated as an estimation of the Hausdorff distance between $\widehat{\cM}$ and $\cM$.
        \item The average of the approximation error,
         $
          \frac{1}{N_0}\sum_{j}d(\widehat{w}_j, \cM),
         $
         calculated as an estimation of the average distance between $\widehat{\cM}$ and $\cM$.
         \item The CPU time of the tested method.
\end{itemize}    
\end{itemize}

{\bf Implementation and code} The numerical study is conducted on a standard tower workstation with AMD ThreadRipper 3970X @4.5 GHz and 128GB of DDR4-3200mHz RAM. The operating system is Windows 10 Professional 64 Bit. The simulations are implemented with \texttt{Matlab R2023a}, which is chosen for its ability to perform parallel running conveniently and reliably. The detailed algorithm used in this paper can be found in the supplementary material, and the latest version of \texttt{Python} and \texttt{Matlab} implementation are available at \href{https://github.com/zhigang-yao/manifold-fitting}{https://github.com/zhigang-yao/manifold-fitting}.

\subsection{Numerical illustrations of ysl23}
Three different manifolds, including two constant-curvature manifolds - a circle embedded in $\bR^{2}$ and a sphere embedded in $\bR^3$ - and a manifold with negative curvature, namely a torus embedded in $\bR^3$, will be tested in this and the next subsection. A visualization of these simulated manifolds is presented in Figure \ref{Fig:simulated manifold}.
\begin{figure}[htbp]
    \centering
    \includegraphics[width = 1\linewidth, height = 0.23\linewidth]{./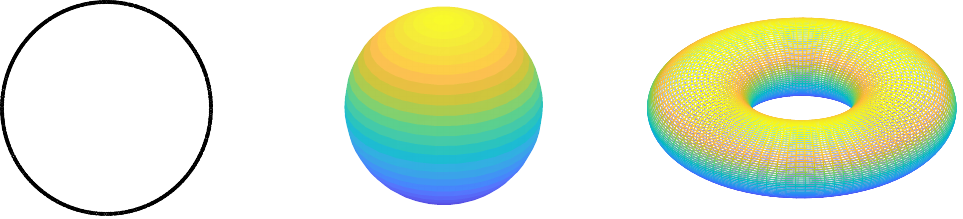}
    \caption{Manifolds employed in the numerical study. Left: a unit circle in $\mathbb{R}^2$; Middle: A unit sphere in $\mathbb{R}^3$; Right: a torus in $\mathbb{R}^3$. }\label{Fig:simulated manifold}
\end{figure}

\begin{algorithm}[ht]
{\color{black}
\caption{ysl23: Project $\mathcal{W}$ onto $\widetilde \cM$.}\label{alg:ysl23}
\raggedright Input: Initial points $\mathcal{W}$, noisy data $\mathcal{Y}$, three radius parameters $r_0$, $r_1$, and $r_2$.\\
\raggedright Output: Projection $\widehat{\mathcal{W}}$ of $\mathcal{W}$ onto $\widetilde \cM$.\\
\begin{itemize}
\item For each $w \in \mathcal{W}$:
\begin{itemize}
    \item[1.] Find the spherical neighborhood of $w$ with radius $r_0$, and denote the index of the samples in it as $I_w$.
    \item[2.] Calculate the weight function $\tilde{\alpha}_i(w)$ and $\alpha_i(w)$ for each  $i \in I_w$ as in (\ref{eq:def:weight_F}), then calculate $F(w)$ by (\ref{eq:def:F(z)}).
    \item[3.] Find the cylindrical neighborhood as in (\ref{eq:V_z:3}) with radius $r_1$ and $r_2$, and denote the index of the samples in it as $\widehat{I}_w$.
    \item[4.] Calculate the weight function $\tilde{\beta}_i(w)$ and $\beta_i(w)$ for each  $i \in \widehat{I}_w$ as in (\ref{eq:def:weight_G}), then calculate $G(w)$ by (\ref{eq:def:G(z)}).
    \item[5.] Obtain the output point as $\widehat{w} = G(w)$.
\end{itemize}
\end{itemize}
}
\end{algorithm}

\subsubsection{The fundamental procedure of ysl23}

Figure \ref{Fig:vis_alg} depicts a visualization of ysl23's steps using the circle as the underlying manifold. There are two simple steps in obtaining the final output for a given noisy point $w$. Firstly, the weighted means of a spherical neighborhood of $w$ are computed using (\ref{eq:def:F(z)}), which yields $F(w)$. The first step captures the crucial information about $w$, i.e., an approximation of the projected direction onto the underlying manifold. In the second step, the weighted means of a cylinder neighborhood of $F(w)$ are calculated to obtain the final output $G(w)$. The long axis of the cylinder is determined by the line connecting $w$ and $F(w)$. Notably, ysl23 requires no iteration or knowledge of the underlying manifold's dimension. Furthermore, ysl23 can map a noisy sample point not only approximately on the underlying manifold but also to its projection's proximity on the manifold, as demonstrated in panel (d) of Figure \ref{Fig:vis_alg}. As a summary, the detail of ysl23 can be found in Algorithm \ref{alg:ysl23}.
We always set the radius parameters as $r_0 = r_1 = 5\sigma/\lg(N)$ and $r_2 = 10\sigma\sqrt{\log(1/\sigma)}/\lg(N)$ in our experiment.   

\begin{figure}[ht]
    \centering
    \includegraphics[width = 1\linewidth, height = 0.24\linewidth]{./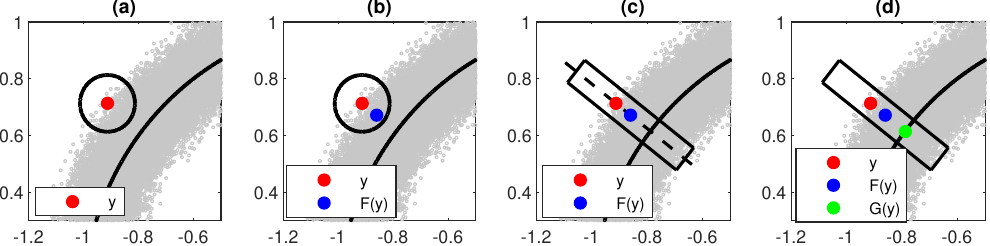}
    \caption{Visualization of ysl23's steps: (a) Locating the neighborhood of a noisy observation $w$. (b) Computing $F(w)$ defined in \eqref{eq:def:F(z)}. (c) Identifying the cylindrical neighborhood (points in the black rectangle) of $w$ based on $F(w)$. (d) Obtaining the output point $G(w)$ using \eqref{eq:def:G(z)}.}
    \label{Fig:vis_alg}
\end{figure}

\begin{figure}[ht]
    \centering
    \includegraphics[width = 0.9\linewidth, height = 0.34\linewidth]{./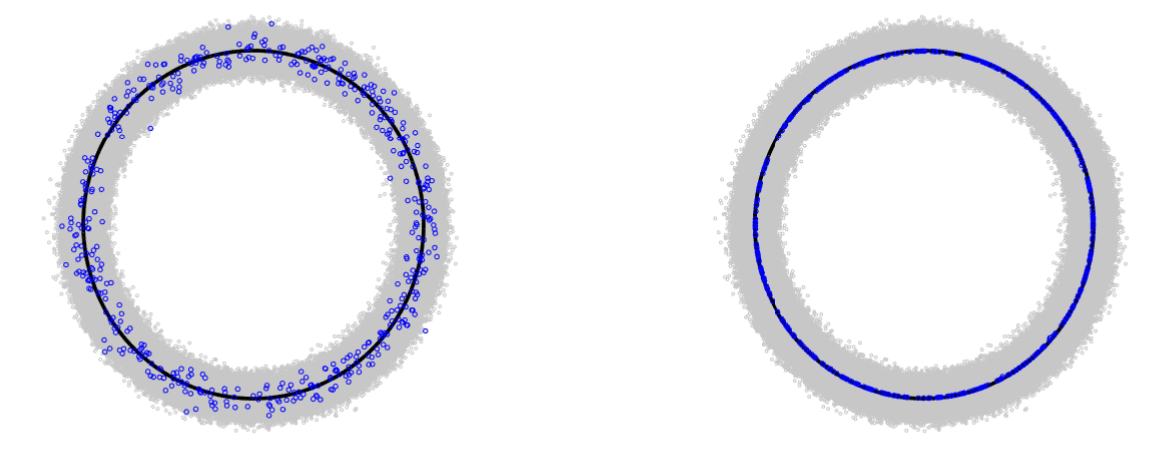}
    \caption{Assessing the performance of ysl23 in fitting the circle ($N = 5\times 10^4$, $N_0 = 100$, $\sigma = 0.06$): the left panel displays points in $\mathcal{W}$ surrounding the underlying manifold, while the right panel illustrates the corresponding points in $\widehat{\mathcal{W}}$.}\label{Fig:per_circle}
\end{figure}

The visualization of ysl23's performance for the circle case is shown in Figure \ref{Fig:per_circle}, and the result for the sphere and the torus case can be found in the supplementary material. In these tests, we set $N = 5\times 10^4$, $N_0 = 100$ for each case. The closer $\widehat{\mathcal{W}}$ are to the underlying manifold, the better it works. As can be observed from Figure \ref{Fig:per_circle}, the output points are significantly closer to the hidden manifold, clearly demonstrating the efficacy of ysl23. Similar phenomena, as shown in the supplementary material, can be observed for both sphere and torus cases.

\begin{figure}[ht]
    \centering
    \includegraphics[width = 1\linewidth, height = 0.6\linewidth]{./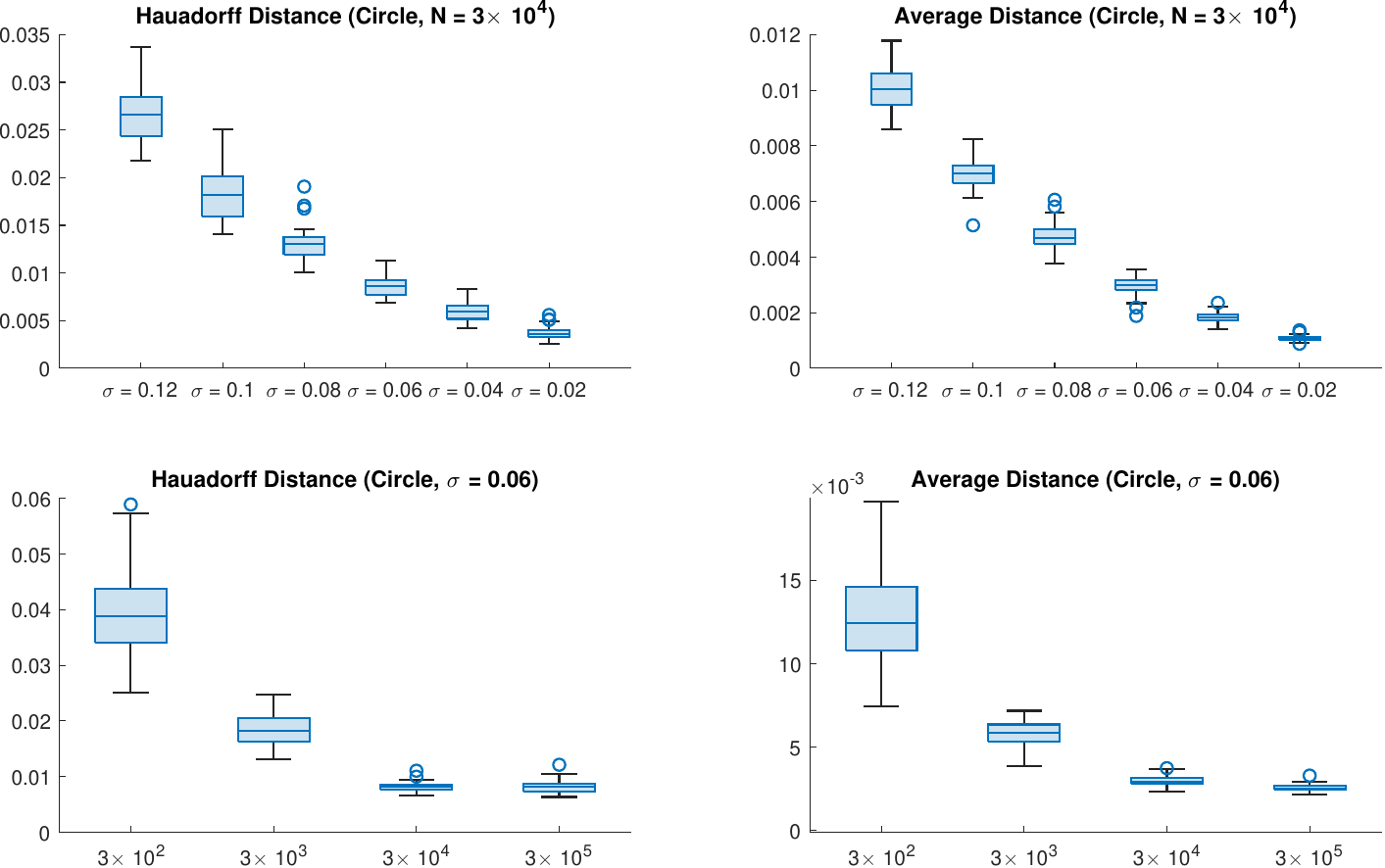}
    \caption{The asymptotic performance of ysl23 when fitting the circle. The top two figures show how the two distances change with $\sigma$, while the bottom two figure show how the two distances change with $N$.}\label{Fig:asym_circle}
\end{figure}

\subsubsection{Asymptotic analysis} 
To investigate the asymptotic properties of ysl23, we increased $N$ to simulate the case where it tends to infinity and decreased $\sigma$ to simulate the case where it tends to zero. Specifically, for the circle case, we considered $N \in \{3\times 10^2, 3\times 10^3, 3\times 10^4, 3\times 10^5\}$, and $\sigma \in \{0.12, 0.1, 0.08,0.06,0.04,0.02\}$. We started by fixing $N = 3\times 10^4$, $N_0 = 100$, and testing the performance of ysl23 with the change of $\sigma$. For each $\sigma$, we randomly selected 50 different $\mathcal{W}$ and executed ysl23 on each of them. The Hausdorff distances and average distances between the output manifold and the underlying manifold is shown at the top of Figure \ref{Fig:asym_circle}. It shows that the Hausdorff distance and average distance decrease at a quadratic rate as $\sigma$ decreases, which matches the upper bound of the error given in Section \ref{sec:fit_mfd}. We also observe that the average distance decreases more rapidly, demonstrating the global stability of ysl23. Similarly, we fixed $\sigma = 0.06$ to test the performance of ysl23 with the change of $N$. The Hausdorff distances and average distances between the output and hidden manifolds are shown at the bottom of Figure \ref{Fig:asym_circle}. It shows that, as $N$ increases, the Hausdorff distances and average distances both decrease significantly. 
This improvement can be attributed to two aspects. Firstly, with the increase of $N$, we can more accurately estimate the local geometry of the manifold. Secondly, the radius of the neighborhood in ysl23 is set to decrease with the increase of the sample size.  Hence, the neighborhood in ysl23 becomes closer to its center point while maintaining a sufficient number of points in the neighborhood. Similar results and phenomena, as shown in the supplementary material, can be observed for both sphere and torus cases.

\subsection{Comparison of other manifold fitting methods}
We performed ysl23, yx19, cf18, and km17 on the three aforementioned manifolds. The circles and spheres cases were combined since they both have constant curvature. The torus case was separately presented due to its inconstant curvature.

\begin{figure}[ht]
    \centering
    \includegraphics[width = 1\linewidth, height = 0.54\linewidth]{./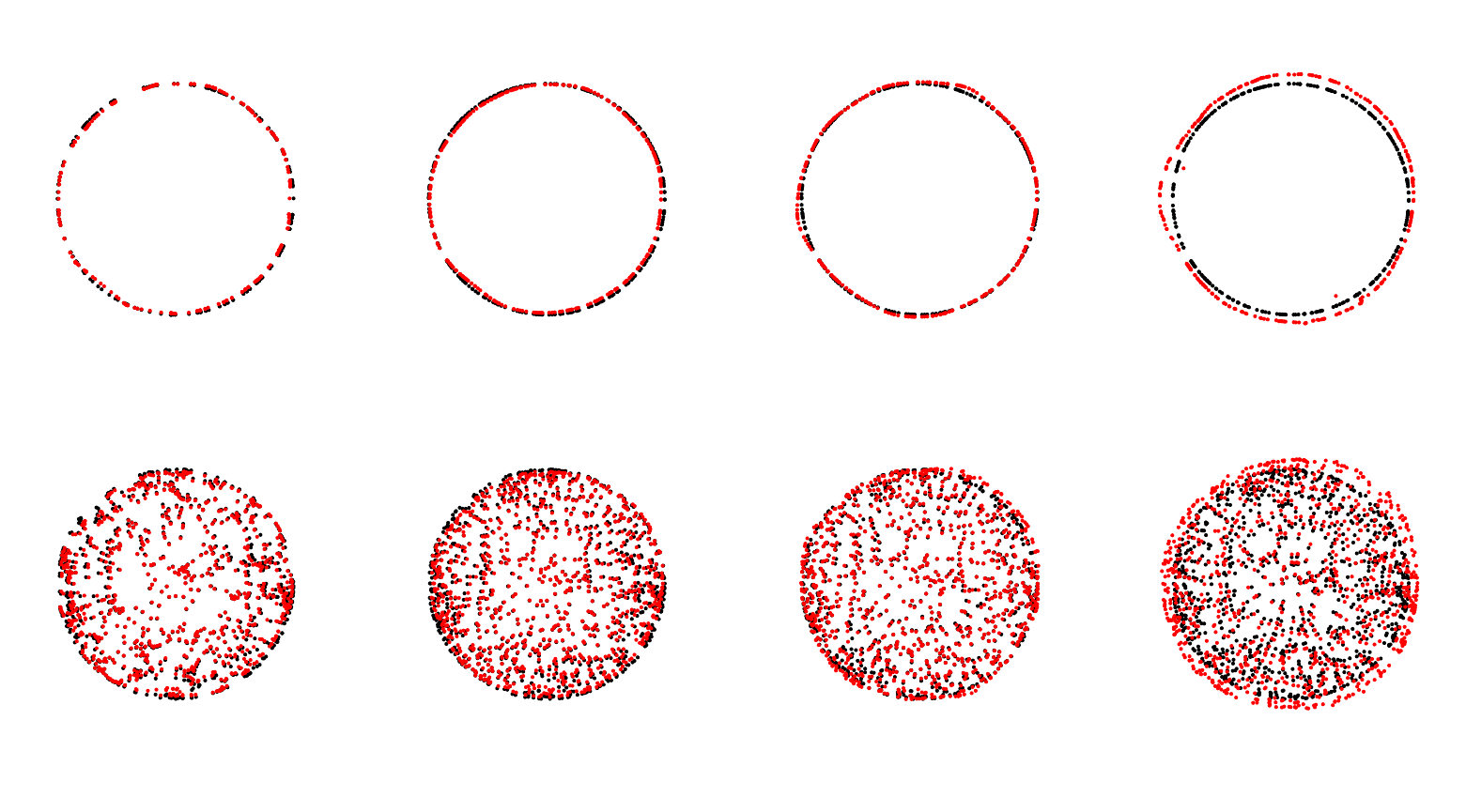}
    \caption{From left to right: the performance of ysl23, yx19, cf18, and km17 when fitting a circle (top, $N = 300$, $\sigma = 0.06$) and a sphere (bottom, $N = 1000$, $\sigma = 0.06$).}\label{Fig:cir_sph_com}
\end{figure}
 
\subsubsection{The fitting of the circle and sphere}
We set $N = N_0 = 300$ for the circle,  and $N = N_0 = 1000$ for the sphere. The radius of the neighborhood was set as $r = 2\sqrt{\sigma}$ for yx19, cf18, and km17. Figure \ref{Fig:cir_sph_com} displays the fitting results. The black and red dots correspond to $\widehat{\cM}$ and $\cM$, respectively. A higher degree of overlap between these two sets indicates a better fit. The first row presents the complete space for the circle embedded in $\mathbb{R}^2$, while the second row shows the view from the positive $z$-axis of the sphere embedded in $\mathbb{R}^3$. Notably, km17 demonstrates inferior performance compared with the other methods. Moreover, the estimated circles by cf18 exhibit two significant gaps, suggesting inaccuracies in the estimator for some local regions. The ysl23, as well as yx19, demonstrates the best performance.

We made an observation of interest when ysl23 successfully mapped the noisy samples to the proximity of the hidden manifold, but the sample distribution on the output manifold was slightly changed. This phenomenon occurred because the number of samples was not sufficient to represent the perturbation of the uniform distribution on the manifold. Because of this, our contraction strategy clustered the output points towards the denser regions on the input points. Fortunately, when the sample size is sufficiently large, ysl23 is able to ensure that the output points are approximately uniformly distributed on $\widehat{\cM}$ 
(see Figure \ref{Fig:scatter_more} in the supplementary material). 

\begin{figure}[ht]
    \centering
    \includegraphics[width = 1\linewidth, height = 0.5\linewidth]{./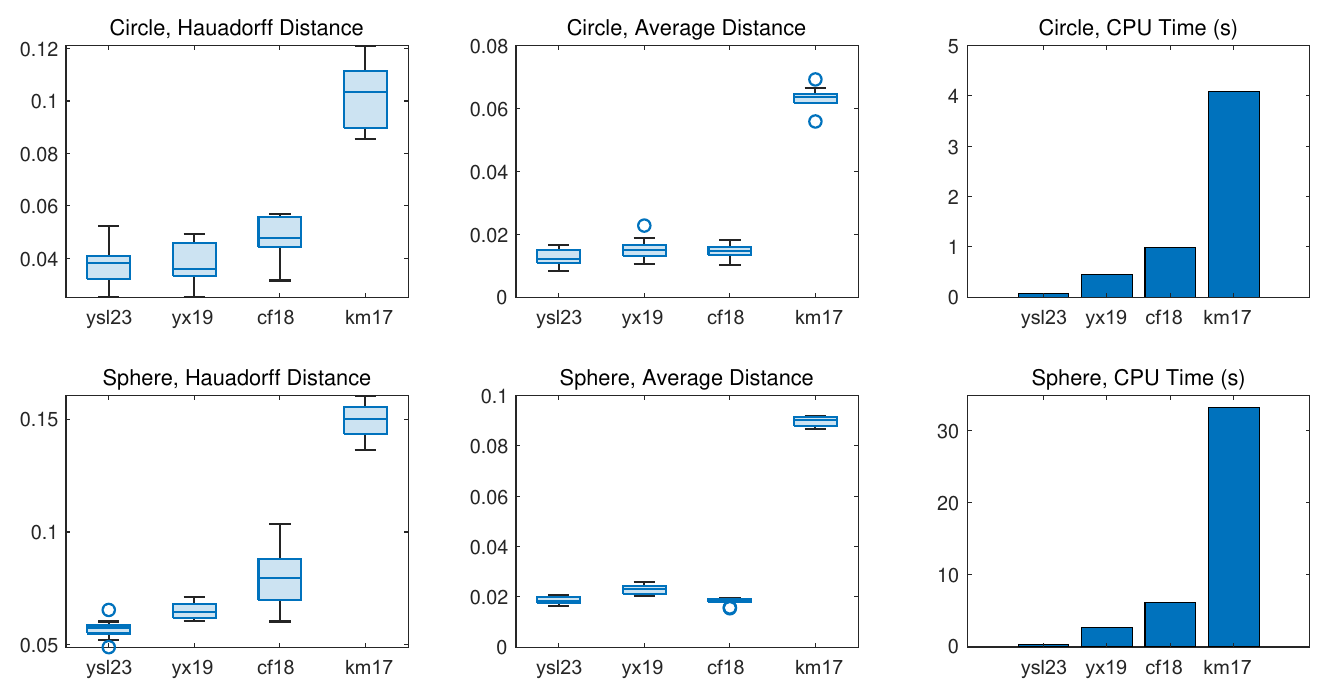}        
    \caption{The Hausdorff distance, average distance, and CPU time of fitting a circle (top, $N = 300$, $\sigma = 0.06$) and a sphere (bottom, $N = 1000$, $\sigma = 0.06$), using ysl23, yx19, cf18, and km17. }\label{Fig:box_circle_sphere}
\end{figure}

We repeated each method $10$ times and evaluated their effectiveness in Figure \ref{Fig:box_circle_sphere}.  We find that ysl23 and yx19 achieve slightly better results than cf18 in terms of the Hausdorff distance, while all three outperform km17 significantly. When evaluating the average distance, ysl23 and cf18 slightly outperform yx19, while all three show significant improvement over km17. Overall, ysl23 consistently ranks among the top across different metrics. In terms of computing time, ysl23 also stands out, with remarkably lower running times than those of the other three methods. Among them, yx19 is the most efficient, while km17 lags behind significantly.

\begin{figure}[ht]
    \centering
    \includegraphics[width = 1\linewidth, height = 0.6\linewidth]{./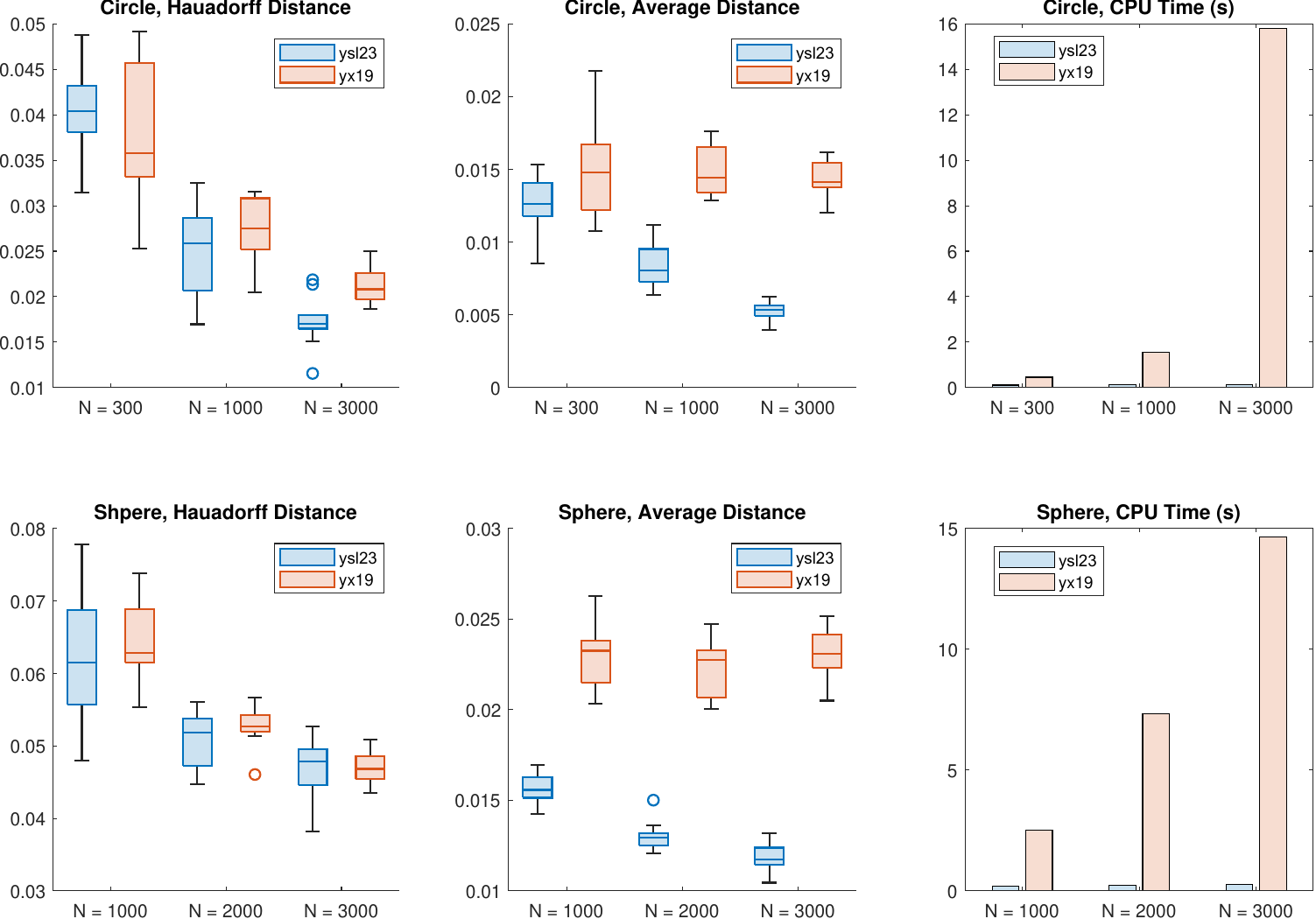}       
    \caption{The Hausdorff distance, average distance, and CPU time of fitting a circle (top, $\sigma = 0.06$) and a sphere (bottom, $\sigma = 0.06$) with increasing $N$, using ysl23 and yx19. }\label{Fig:box_com_asy_circle_sphere}
\end{figure}

We compared ysl23 and the well-performing yx19 by incrementally varying $N$to explore their performance dependence on it. For the circle case, we selected $N \in \{3\times 10^2,1\times 10^3, 3\times 10^3\}$, while for the sphere case, we selected $N \in \{1\times 10^2, 2\times 10^3, 3\times 10^3\}$. Results in terms of Hausdorff and average distance and running time are shown in Figure \ref{Fig:box_com_asy_circle_sphere}. The Hausdorff distance showed a significant decrease for both algorithms as $N$ increased. However, yx19 remained relatively constant with increasing $N$ when using the average distance, while ysl23 achieved a significant reduction. Additionally, ysl23 demonstrated a clear advantage in computational efficiency, with significantly shorter running times than yx19. For example, yx19 took over 10 seconds to terminate when $N$ reached 3000 in the presented examples, while ysl23 was completed in under 0.5 seconds.

\subsubsection{The fitting of the torus}
We set $N = 10^3$ for the torus case. The results, displayed in Figure \ref{Fig:box_com_torus}, show that ysl23 outperformed the other three methods in terms of the Hausdorff distance, average distance, and computing time. 
\begin{figure}[ht]
    \centering
    \includegraphics[width = 1\linewidth, height = 0.3\linewidth]{./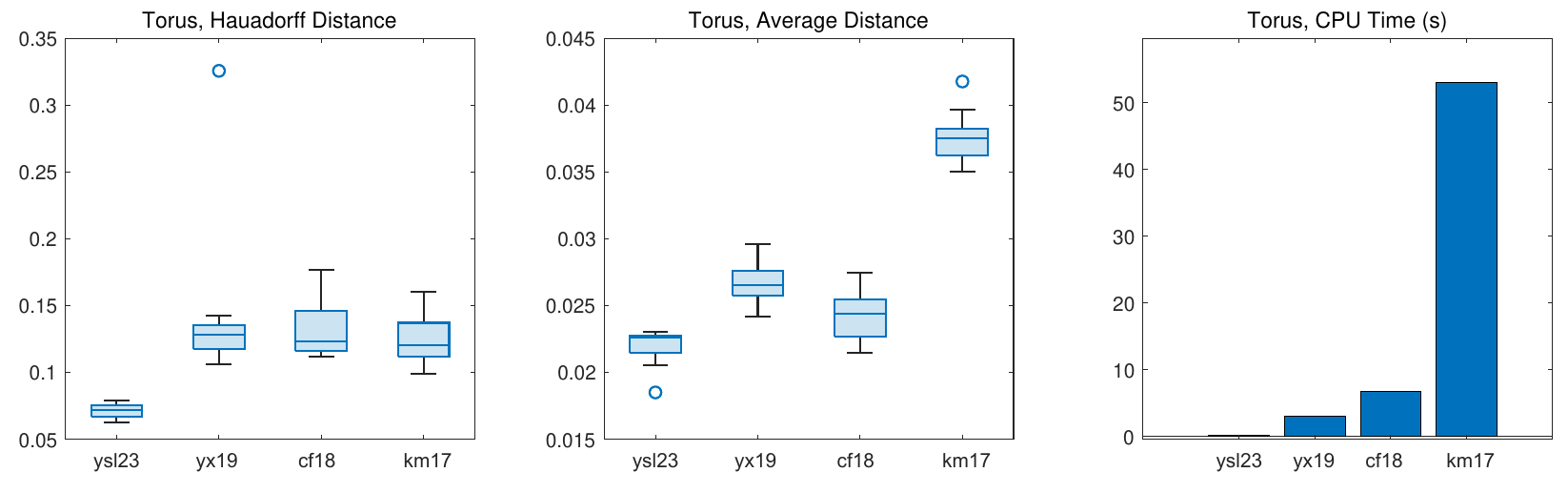}
    \caption{The Hausdorff distance, average distance, and CPU time of fitting a torus ($N = 1000$, $\sigma = 0.06$), using ysl23, yx19, cf18, and km17.}\label{Fig:box_com_torus}
\end{figure}
To evaluate the performance of ysl23 and yx19 on the torus, we set an increasing sample size of $N \in \{1000,2000,3000\}$ and compared their results.  Figure \ref{Fig:asy_com_torus} illustrates the results of both algorithms for each $N$. As $N$ increased, we observed a reduction in the distance for both algorithms. However, ysl23 consistently achieved a much lower distance than yx19, no matter which metric is used. Furthermore, ysl23 demonstrated a remarkable advantage in computational efficiency, completing the task with a significantly shorter running time than yx19. Specifically, in the presented examples, yx19 took over 10 seconds to terminate when $N$ reached 3000, while ysl23 finished in under 0.5 seconds.
\begin{figure}[ht]
    \centering
    \includegraphics[width = 1\linewidth, height = 0.3\linewidth]{./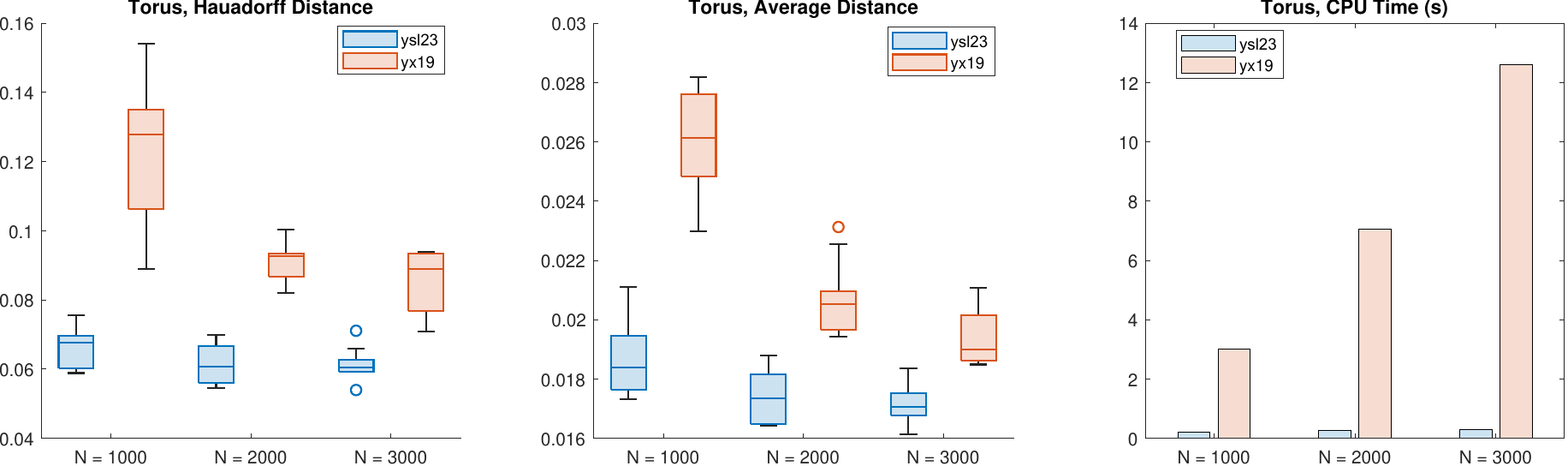}
    \caption{The Hausdorff distance, average distance, and CPU time of fitting a torus ($\sigma = 0.06$) with increasing $N$, using ysl23 and yx19.}\label{Fig:asy_com_torus}
\end{figure}

\subsection{Fitting of a Calabi–Yau manifold}
Calabi–Yau manifolds \cite{calabi2015kahler} are a class of compact, complex Kähler manifolds that possess a vanishing first Chern class. They are highly significant because they are Ricci-flat manifolds, which means that their Ricci curvature is zero at all points, aligning with the universe model of physicists. A simple example of a Calabi–Yau manifold is the Fermat quartic:
\begin{align}\label{cy3d}
x^4 + y^4 + z^4 + w^4 = 0, \quad (x, y, z, w) \in \mathbb{P}^3,
\end{align}
where $\mathbb{P}^3$ refers to the complex projective 3-space. To visualize it, we generate low-dimensional projections of the manifold by eliminating variables as in \cite{hanson1994construction}, dividing by $w^4$, and setting $\frac{z^4}{w^4}$ to be constant.  We then normalize the resulting inhomogeneous equation as
\begin{align}\label{cy_pj}
 x^4 + y^4 = 1, \quad x, y \in \mathbb{C}.
\end{align}
The resulting surface is embedded in 4D and can be projected to ordinary 3D space for display. The parametric representation of (\ref{cy_pj}) is
\begin{align}
    x(\theta, \, k_1) &= e^{2\pi ik_1/4}\cosh(\theta + \zeta i)^{2/4} \label{cy_x}\\ 
    y(\theta, \zeta, k_2) &= e^{2\pi ik_2/4}\sinh(\frac{\theta + \zeta i}{i})^{2/4}, \label{cy_y}
\end{align}
where the integer pair $(k_1,k_2)$ is selected by $0\le k_1,k_2\le 3$. Such $\{(x,y)\}$ can be seen as points in $\mathbb{R}^4$, denoted by $\{Re(x), Re(y), Im(x), Im(y)\}$. A natural 3D projection is
\[
(Re(x), Re(y), \cos(\psi)Im(x) + \sin(\psi)Im(y)),
\]
where $\psi$ is a parameter. The left panel of Figure \ref{Fig:scatter_cy} shows the surface plot of the 3D projection. 

\begin{figure}[ht]
    \centering
    \includegraphics[width = 1\linewidth, height = 0.28\linewidth]{./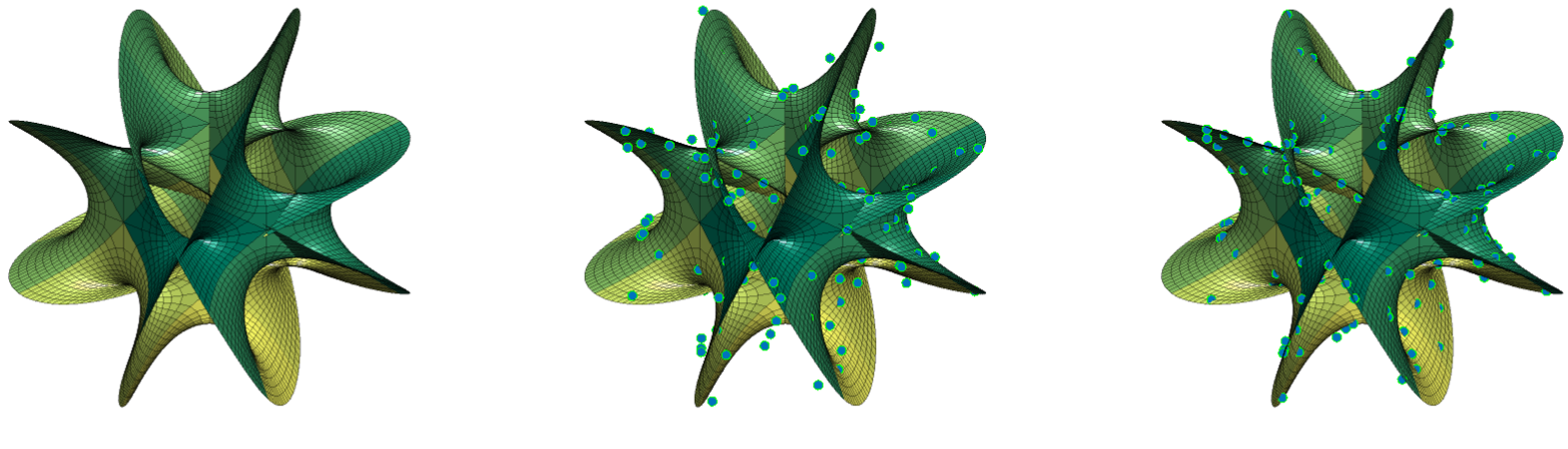}
    \caption{Performance of ysl23 when fitting the real projection of the Calabi–Yau manifold (\ref{cy3d}). The left panel illustrates the shape of the 3D projection. The middle panel shows some noisy points around the manifold, and the right panel shows the points on the output manifold.}\label{Fig:scatter_cy}
\end{figure}

We generated a set of points in (\ref{cy_x}) and (\ref{cy_y}) on a uniform grid $(\theta, \zeta)$, where $\theta$ is a sequence of numbers ranging from $-1.5$ to $1.5$ with a step size of $0.05$ between consecutive values, and $\zeta$ a sequence of numbers ranging from $0$ to $\pi/2$ with a step size of $1/640$ between consecutive values. In total, the dataset contains $N=313296$ samples with Gaussian noise added in $\mathbb{R}^4$. As shown in the middle panel of Figure \ref{Fig:scatter_cy}, the initial point distribution is not close to the manifold. However, after running ysl23, the output is significantly closer to it, as shown in the right panel of Figure \ref{Fig:scatter_cy}. This phenomenon indicates that ysl23 performs well in estimating complicated manifolds. It should be noted that we only applied ysl23 to this example without running other algorithms because the sample size would cause very long running times for other algorithms and would not yield usable results.

\begin{figure}[ht]
    \centering
    \includegraphics[width = 1\linewidth, height = 0.25\linewidth]{./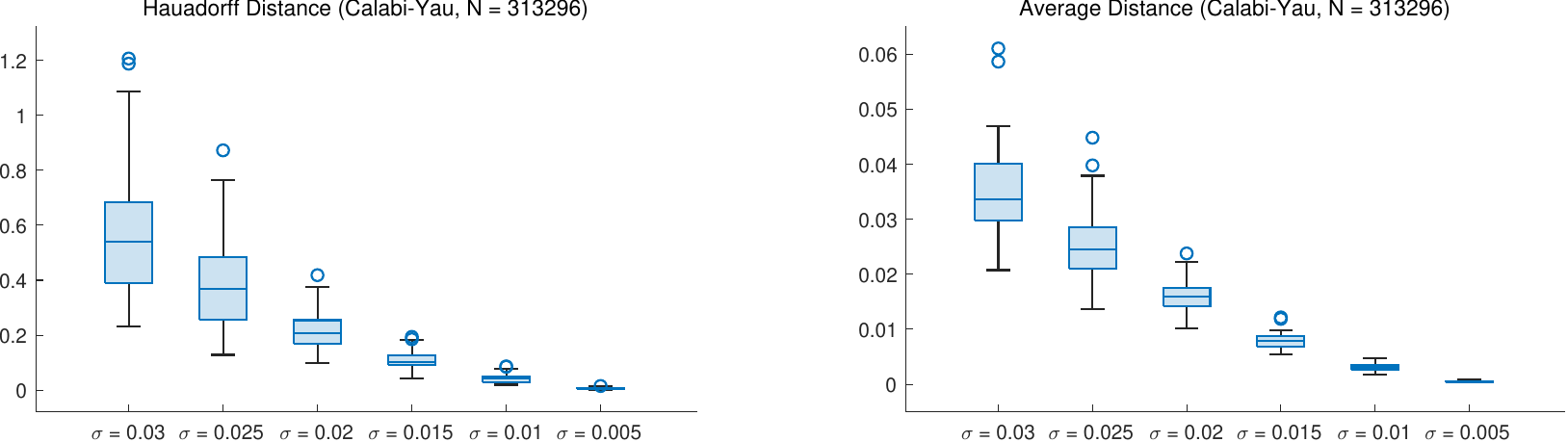}
    \caption{The asymptotic performance of ysl23 fitting the real projection of the Calabi–Yau manifold (\ref{cy3d}). The two panels show how the two distances change with $\sigma$.}\label{Fig:asy_cy}
\end{figure}

We also executed ysl23 with different $\sigma$. Specifically, we tested ysl23 with decreasing $\sigma \in \{0.03, 0.025, 0.02,0.015,0.01,0.005\}$. As we decrease $\sigma$, both the Hausdorff distance and average distance decrease at a quadratic rate, which matches Theorem \ref{Thm:out_manifold_global_d}.  These results further support the effectiveness and reliability of ysl23.

\section{Conclusion}
In this paper, the manifold-fitting problem is investigated by proposing a novel approach to construct a manifold estimator for the latent manifold in the presence of ambient-space noise. Our estimator achieves the best error rate, to our knowledge, with a sample size bounded by a polynomial of the standard deviation of the noise term, and preserves the smoothness of the latent manifold. The performance of the estimator is demonstrated through rigorous theoretical analysis and numerical experiments. Our method provides a reliable and efficient solution to the problem of manifold fitting from noisy observations, with potential applications in various fields, such as computer vision and machine learning.

Our approach uses a two-step local contraction strategy to obtain an output manifold with a significantly smaller error. First, we estimate the direction of contraction for a point around $\cM$ using a local average. Compared with previous methods that estimate the basis of the tangent space, our approach provides a significant advantage in terms of the error rate and facilitates the obtaining of better-contracted points. Next, we construct a hyper-cylinder, and the local average within it is regarded as the contracted point. This point is $\cO(\sigma^2\log(1/\sigma))$-close to $\cM$. Our hyper-cylinder has a length in a higher order of $\sigma$ than the width, which differs from the approach proposed in \cite{fefferman2021fitting}. This difference in order allows us to eschew their requirement of directly sampling from $\cM$.


We provide several methods to obtain the estimators of $\cM$. All of these estimators can roughly achieve a Hausdorff distance in the order of $\cO(\sigma^2\log(1/\sigma))$, with or without the high probability statement. Unlike in previous work, we achieve the state-of-the-art error bound by reducing the required sample size to $N=\cO(\sigma^{-(d+3)})$. Using image sets to generate estimators, our method is faster and more applicable to larger data sets. We also conduct comprehensive numerical experiments to validate our theoretical results and demonstrate that our algorithm not only achieves higher approximation accuracy but also consumes significantly less time and computational resources than other methods. These simulation results indicate the significant superiority of our approach in fitting the latent manifold, and suggest its potential in various applications.

Overall, our approach has demonstrated promising results in fitting smooth manifolds from ambient space, but nevertheless has some limitations that warrant further investigation. First, our current assumption that the observations are from the convolution of a uniform distribution on the manifold with a homogeneous Gaussian distribution may not capture the full complexity of real-world data. Therefore, future research could explore the effects of relaxing these assumptions. Second, while our theoretical results are promising, there is still scope for optimization because of the application of inequalities in the proof and the choice of weights in the two-step mapping. This limitation arises from the lack of an explicit expression for some integrations with respect to Gaussian distributions. We believe that further research addressing these limitations can lead to significant advancements in manifold-fitting methods, at both the theoretical and applied levels.

To conclude, we discuss potential avenues for further research. In the real world, data often exist on complicated manifolds, such as spheres, tori, and shape spaces, requiring specialized analysis methods. Our manifold-fitting algorithm projects data onto a low-dimensional manifold, allowing the use of other algorithms.
Firstly, our approach has wide-ranging implications for research involving the manifold hypothesis. For example, in GAN-based image-to-image translation, images are assumed to lie around a low-dimensional manifold. Incorporating our manifold-fitting method can significantly enhance the performance of the discriminator and improve the overall GAN model. Secondly, numerous statistical studies concentrate on non-Euclidean data originating from manifolds, including the principal nested spheres \cite{principal_nested_spheres} and the principal flows \cite{principal_flows}. As our method can fit smooth $d$-dimensional manifolds from ambient space, it provides a natural framework for generalizing statistical work on manifolds to ambient space. Additionally, our method can also aid in the analysis of Euclidean data by facilitating data clustering and simplifying subsequent objectives.
We believe that our approach will inspire further research in these areas.

\clearpage
\begin{appendix}

\section{Mathematical Preliminary}
We briefly review the basic concepts of topology and smooth manifolds essential for the study of manifold fitting; for further details, see, for example, \cite{LeeTM,LeeSM,LeeRM}.

\subsection{Topology}
\subsubsection{Topological Space}
Let $X$ be a set. A \bfemph{topology} on $X$ is a collection $\cT$ of subsets of $S$, called \bfemph{open subsets}, satisfying the following:
\begin{itemize}
    \item [(a)] $X$ and $\varnothing$ are open.
    \item [(b)] The union of any family of open sets is open.
    \item [(c)] The intersection of any finite family of open subsets is open.
\end{itemize}
A pair $(X, \mathcal{T})$ consisting of a set $X$ and a topology $\mathcal{T}$ on $X$ is called a \bfemph{topological space}. Usually, when the topology is understood, these details will be omitted, with only the statement that "$X$ is a topological space".

The most common examples of topological spaces, from which most of our examples of manifolds are built, are presented below.
\begin{Example}[Metric Spaces]    
    A metric space is a set $M$ endowed with a distance function (also called a metric) $d: M \times M \rightarrow \mathbb{R}$ (where $\mathbb{R}$ denotes the set of real numbers) satisfying the following properties for all $x, y, z \in M$ :
    \begin{itemize}
        \item [(a)] \textbf{Positivity}: $d(x, y) \geq 0$, with equality if and only if $x=y$.
        \item [(b)] \textbf{Symmetry}: $d(x, y)=d(y, x)$.
        \item [(c)] \textbf{Triangle inequality}: $d(x, z) \leq d(x, y)+d(y, z)$.
    \end{itemize}
    If $M$ is a metric space, $x \in M$, and $r>0$, the \bfemph{open ball of radius $\boldsymbol{r}$ around} $\boldsymbol{x}$ is the set
    $$
    B(x,r)=\{y \in M: d(x, y)<r\}.
    $$
    The metric topology on $M$ is defined by declaring a subset $S \subseteq M$ to be open if, for every point $x \in S$, there is some $r>0$ such that $B(x,r) \subseteq S$.
\end{Example}
\begin{Example}[Euclidean Spaces]
    For integer $n \geq 1$, the set $\mathbb{R}^n$ of ordered $n$-tuples of real numbers is called \bfemph{$\boldsymbol{n}$-dimensional Euclidean space}.
    We let a point in $\mathbb{R}^n$ be denoted by $\left(x^{(1)}, \cdots, x^{(n)}\right)$ or $\vec x$. The numbers $x^{(i)}$ are called the \bfemph{$\boldsymbol{i}$-th components or coordinates} of $\vec x$.
    For $\vec x \in \mathbb{R}^n$, the \bfemph{Euclidean norm} of $\vec x$ is the nonnegative real number
    $$
    \|\vec x\|_2=\sqrt{\left(x^{(1)}\right)^2+\cdots+\left(x^{(n)}\right)^2},
    $$
    and, for $\vec x, \vec y \in \mathbb{R}^n$, the Euclidean distance function is defined by
    $$
    d(\vec x, \vec y)=\|\vec x-\vec y\|_2 .
    $$
    This distance function turns $\mathbb{R}^n$ into a complete metric space. The resulting metric topology on $\mathbb{R}^n$ is called the Euclidean topology.
\end{Example}

For the purposes of manifold theory, arbitrary topological spaces are too general. To avoid pathological situations arising when there are not enough open subsets of $X$, we often restrict our attention to \bfemph{Hausdorff space}.
\begin{definition}[Hausdorff space]
    A topological space $X$ is said to be a Hausdorff space if, for every pair of distinct points $p, q \in X$, there exist disjoint open subsets $U, V \subseteq X$ such that $p \in U$ and $q \in V$. 
\end{definition}

There are numerous essential concepts in topology concerning \bfemph{maps}, and these will be introduced next. Let $X$ and $Y$ be two topological spaces, and $F: X \rightarrow Y$ be a map between them.
\begin{itemize}
    \item $F$ is \bfemph{continuous} if, for every open subset $U \subseteq Y$, the preimage $F^{-1}(U)$ is open in $X$.
    \item If $F$ is a continuous bijective map with continuous inverse, it is called a \bfemph{homeomorphism}. If there exists a homeomorphism from $X$ to $Y$, we say that $X$ and $Y$ are \bfemph{homeomorphic}.
    \item A continuous map $F$ is said to be a \bfemph{local homeomorphism} if every point $p \in X$ has a neighborhood $U \subseteq X$ such that $F(U)$ is open in $Y$ and $F$ restricts to a homeomorphism from $U$ to $F(U)$.
    \item $F$ is said to be a \bfemph{closed map} if, for each closed subset $K \subseteq X$, the image set $F(K)$ is closed in $Y$, and an \bfemph{open map} if, for each open subset $U \subseteq X$, the image set $F(U)$ is open in $Y$. It is a \bfemph{quotient map} if it is surjective and $V \subseteq Y$ is open if and only if $F^{-1}(V)$ is open.
\end{itemize}
Furthermore, for a continuous map $F$, which is either open or closed, the following rules apply:
\begin{itemize}
    \item [(a)] If $F$ is surjective, it is a \emph{quotient map}.
    \item [(b)] If $F$ is injective, it is a \emph{topological embedding}.
    \item [(c)] If $F$ is bijective, it is a \emph{homeomorphism}.
\end{itemize}

For maps between metric spaces, there are several useful variants of continuity, especially in the case of compact spaces. Assume $\left(M_1, d_1\right)$ and $\left(M_2, d_2\right)$ are metric spaces, and $F: M_1 \rightarrow M_2$ is a map. Then, $F$ is said to be \bfemph{uniformly continuous} if, for every $\epsilon>0$, there exists $\delta>0$ such that, for all $x, y \in M_1, d_1(x, y)<\delta$ implies $d_2(F(x), F(y))<\epsilon$. It is said to be \bfemph{Lipschitz continuous} if there is a constant $C$ such that $d_2(F(x), F(y)) \leq C d_1(x, y)$ for all $x, y \in M_1$. Any such $C$ is called a \bfemph{(globally) Lipschitz constant} for $\boldsymbol{F}$. We say that $F$ is \bfemph{locally Lipschitz continuous} if every point $x \in M_1$ has a neighborhood on which $F$ is Lipschitz continuous.

\subsubsection{Bases and countability}
Suppose $X$ is merely a set, and $\mathcal{B}$ is a collection of subsets of $X$ satisfying the following conditions:
\begin{itemize}
    \item [(a)] $X=\bigcup_{B \in \mathcal{B}} B$.
    \item [(b)] If $B_1, B_2 \in \mathcal{B}$ and $x \in B_1 \cap B_2$, then there exists $B_3 \in \mathcal{B}$ such that $x \in B_3 \subseteq$ $B_1 \cap B_2$.
\end{itemize}
Then, the collection of all unions of elements of $\mathcal{B}$ is a topology on $X$, called the \bfemph{topology generated by $\boldsymbol{\mathcal{B}}$}, and $\mathcal{B}$ is a \bfemph{basis} for this topology.

A set is said to be \bfemph{countably infinite} if it admits a bijection with the set of positive integers, and \bfemph{countable} if it is finite or countably infinite. A topological space $X$ is said to be \bfemph{first-countable} if there is a countable neighborhood basis at each point, and \bfemph{second-countable} if there is a countable basis for its topology. Since a countable basis for $X$ contains a countable neighborhood basis at each point, second-countability implies first-countability.

\subsubsection{Subspaces and Products}
If $X$ is a topological space and $S \subseteq X$ is an arbitrary subset, we define the \bfemph{subspace topology} (or \bfemph{relative topology}) on $S$ by declaring a subset $U \subseteq S$ to be open in $S$ if and only if there exists an open subset $V \subseteq X$ such that $U=V \cap S$. A subset of $S$ that is open or closed in the subspace topology is sometimes said to be \bfemph{relatively open} or \bfemph{relatively closed} in $S$, to make it clear that we do not mean open or closed as a subset of $X$. Any subset of $X$ endowed with the subspace topology is said to be a \bfemph{subspace} of $X$.

If $X$ and $Y$ are topological spaces, a continuous injective map $F: X \rightarrow Y$ is called a \bfemph{topological embedding} if it is a homeomorphism onto its image $F(X) \subseteq Y$ in the subspace topology.

If $X_1, \cdots, X_k$ are (finitely many) sets, their \bfemph{Cartesian product} is the set $X_1 \times \cdots \times X_k$ consisting of all ordered $k$-tuples of the form $\left(\vec x_1, \cdots, \vec x_k\right)$ with $\vec x_i \in X_i$ for each $i$.

Suppose $X_1, \cdots, X_k$ are topological spaces. The collection of all subsets of $X_1 \times$ $\cdots \times X_k$ of the form $U_1 \times \cdots \times U_k$, where each $U_i$ is open in $X_i$, forms a basis for a topology on $X_1 \times \cdots \times X_k$, called the \bfemph{product topology}. Endowed with this topology, a finite product of topological spaces is called a \bfemph{product space}. Any open subset of the form $U_1 \times \cdots \times U_k \subseteq X_1 \times \cdots \times X_k$, where each $U_i$ is open in $X_i$, is called a \bfemph{product open subset}.

\subsubsection{Connectedness and Compactness}
A topological space $X$ is said to be \bfemph{disconnected} if it has two disjoint nonempty open subsets whose union is $X$, and it is \bfemph{connected} otherwise. Equivalently, $X$ is connected if and only if the only subsets of $X$ that are both open and closed are $\varnothing$ and $X$ itself. If $X$ is any topological space, a \bfemph{connected subset} of $X$ is a subset that is a connected space when endowed with the subspace topology.

Closely related to connectedness is \bfemph{path connectedness}. If $X$ is a topological space and $p, q \in X$, a \bfemph{path} in $X$ from $p$ to $q$ is a continuous map $f: I \rightarrow X$ (where $I=[0,1]$ ) such that $f(0)=p$ and $f(1)=q$. If for every pair of points $p, q \in X$ there exists a path in $X$ from $p$ to $q$, then $X$ is said to be \bfemph{path-connected}.

A topological space $X$ is said to be \bfemph{compact} if every open cover of $X$ has a finite subcover. A \bfemph{compact subset} of a topological space is one that is a compact space in the subspace topology. For example, it is a consequence of the Heine–Borel theorem that a subset of $\mathbb{R}^n$ is compact if and only if it is closed and bounded. We list some of the properties of compactness as follows.

\begin{itemize}
    \item If $F: X \rightarrow Y$ is continuous and $X$ is compact, then $F(X)$ is compact.
    \item If $X$ is compact and $f: X \rightarrow \mathbb{R}$ is continuous, then $f$ is bounded and attains its maximum and minimum values on $X$.
    \item Any union of finitely many compact subspaces of $X$ is compact.
    \item If $X$ is Hausdorff and $K$ and $L$ are disjoint compact subsets of $X$, then there exist disjoint open subsets $U, V \subseteq X$ such that $K \subseteq U$ and $L \subseteq V$.
    \item Every closed subset of a compact space is compact.
    \item Every compact subset of a Hausdorff space is closed.
    \item Every compact subset of a metric space is bounded.
    \item Every finite product of compact spaces is compact.
    \item Every quotient of a compact space is compact.
\end{itemize}

\subsection{Smooth Manifold}
\subsubsection{Topological Manifolds}
A $\boldsymbol{d}$\bfemph{-dimensional topological manifold} (or simply a $\boldsymbol{d}$\bfemph{-manifold}) is a second-countable Hausdorff topological space that is \bfemph{locally Euclidean of dimension $\boldsymbol{d}$}, which means every point has a neighborhood homeomorphic to an open subset of $\mathbb{R}^d$. Given a $d$-manifold $\cM$, a \bfemph{coordinate chart} for $\cM$ is a pair $(U, \varphi)$, where $U \subseteq \cM$ is an open set and $\varphi: U \rightarrow \widetilde{U}$ is a homeomorphism from $U$ to an open subset $\widetilde{U} \subseteq \mathbb{R}^d$. If $p \in \cM$ and $(U, \varphi)$ is a chart such that $p \in U$, we say that $(U, \varphi)$ is a \bfemph{chart containing} $p$.

On occasion, we may need to consider manifolds with boundaries. A $\boldsymbol{d}$\bfemph{-dimensional topological manifold with boundary} a is a second-countable Hausdorff topological space in which every point has a neighborhood homeomorphic either to an open subset of $\mathbb{R}^d$ or to an open subset of the half space of $\bR^d$. The corresponding concepts are slightly different with the manifolds without boundaries. For consistency, in the following sections a \emph{manifold} without further qualification is always assumed to be a manifold without a boundary.

\subsubsection{Smooth Manifolds}
Briefly speaking, \bfemph{smooth manifolds} are topological manifolds endowed with an extra structure that allows us to differentiate functions and maps. To introduce the smooth structure, we first recall the smoothness of a map $F:U\to\bR^k$. When $U$ is an open subset of $\bR^d$, $F$ is said to be \bfemph{smooth} (or $C^\infty$), and all of its component functions have continuous partial derivatives of all orders.
More generally, when the domain $U$ is an arbitrary subset of $\bR^d$, not necessarily open, $F$ is said to be smooth if, for each $x \in U,$ $F$ has a smooth extension to a neighborhood of $x$ in $\mathbb{R}^n$. A \bfemph{diffeomorphism} is a bijective smooth map whose inverse is also smooth.

If $\cM$ is a topological $d$-manifold, then two coordinate charts $(U, \varphi),(V, \psi)$ for $\cM$ are said to be \bfemph{smoothly compatible} if both of the \bfemph{transition maps} $\psi \circ \varphi^{-1}$ and $\varphi \circ \psi^{-1}$ are smooth where they are defined (on $\varphi(U \cap V)$ and $\psi(U \cap V)$, respectively). Since these maps are inverses of each other, it follows that both transition maps are in fact diffeomorphisms. An \bfemph{atlas} for $\cM$ is a collection of coordinate charts whose domains cover $\cM$. It is called a \bfemph{smooth atlas} if any two charts in the atlas are smoothly compatible. A \bfemph{smooth structure} on $\cM$ is a smooth atlas that is maximal, which means it is not properly contained in any larger smooth atlas. A smooth manifold is a topological manifold endowed with a specific smooth structure. If $\cM$ is a set, a smooth manifold structure on $\cM$ is a second-countable, Hausdorff, locally Euclidean topology together with a smooth structure, making it a smooth manifold.
If $\cM$ is a smooth $d$-manifold and $W \subseteq \cM$ is an open subset, then $W$ has a natural smooth structure consisting of all smooth charts $(U, \varphi)$ for $\cM$ such that $U \subseteq W$, and so every open subset of a smooth $d$-manifold is a smooth $d$ manifold in a natural way.

Suppose $\cM$ and $\cN$ are smooth manifolds. A map $F: \cM \rightarrow \cN$ is said to be \bfemph{smooth} if, for every $p \in \cM$, there exist smooth charts $(U, \varphi)$ for $\cM$ containing $p$ and $(V, \psi)$ for $\cN$ containing $F(p)$ such that $F(U) \subseteq V$ and the composite map $\psi \circ F \circ \varphi^{-1}$ is smooth from $\varphi(U)$ to $\psi(V)$. In particular, if $\cN$ is an open subset of $\mathbb{R}^k$ or $\mathbb{R}_{+}^k$ with its standard smooth structure, we can take $\psi$ to be the identity map of $\cN$, and then smoothness of $F$ simply means that each point of $\cM$ is contained in the domain of a chart $(U, \varphi)$ such that $F \circ \varphi^{-1}$ is smooth. It is a clear and direct consequence of the definition that identity maps, constant maps, and compositions of smooth maps are all smooth. A map $F: \cM \rightarrow \cN$ is said to be a \bfemph{diffeomorphism} if it is smooth and bijective and $F^{-1}: \cN \rightarrow \cM$ is also smooth.

We let $C^{\infty}(\cM, \cN)$ denote the set of all smooth maps from $\cM$ to $\cN$, and $C^{\infty}(\cM)$ the vector space of all smooth functions from $\cM$ to $\mathbb{R}$. For every function $f: M \rightarrow \mathbb{R}$ or $\mathbb{R}^k$, we define the support of $f$, denoted by $supp~f$, as the closure of the set $\{x \in \cM: f(x) \neq 0\}$. If $A \subseteq \cM$ is a closed subset and $U \subseteq \cM$ is an open subset containing $A$, then a \bfemph{smooth bump function} for $A$ supported in $U$ is a smooth function $f: \cM \rightarrow \mathbb{R}$ satisfying $0 \leq f(x) \leq 1$ for all $x \in M,\left.f\right|_A \equiv 1$, and $supp~f \subset U$. Such smooth bump functions always exist.

There are various equivalent approaches to define tangent vectors on $\cM$. The most convenient one is via the following definition: for every point $p \in \cM$, a \bfemph{tangent vector} at $p$ is a linear map $v: C^{\infty}(\cM) \rightarrow \mathbb{R}$ that is a derivation at $p$, which means that, for all $f, g \in C^{\infty}(\cM)$, $v$ satisfies the product rule
$$
v(f g)=f(p) v g+g(p) v f .
$$
The set of all tangent vectors at $p$ is denoted by $T_p \cM$ and called the \bfemph{tangent space} at $p$.

Suppose $\cM$ is $d$-dimensional and $\varphi: U \rightarrow \widetilde{U} \subseteq \mathbb{R}^d$ is a smooth coordinate chart on some open subset $U \subseteq \cM$. Writing the coordinate functions of $\varphi$ as $\left(x^{(1)}, \cdots, x^{(n)}\right)$, we define the coordinate vectors $\partial /\left.\partial x^{(1)}\right|_p, \cdots, \partial /\left.\partial x^{(n)}\right|_p$ by
$$
\left.\frac{\partial}{\partial x^{(i)}}\right|_p f=\left.\frac{\partial}{\partial x^{(i)}}\right|_{\varphi(p)}\left(f \circ \varphi^{-1}\right) .
$$
These vectors form a basis for $T_p \cM$, which therefore has dimension $d$. Thus, once a smooth coordinate chart has been chosen, every tangent vector $\vec v \in T_p M$ can be written uniquely in the form
$$
\vec v=v^{(1)} \partial /\left.\partial x^{(1)}\right|_p+\cdots+ v^{(n)} \partial /\left.\partial x^{(n)}\right|_p.
$$


If $F: \cM \rightarrow \cN$ is a smooth map and $p$ is any point in $\cM$, we define a linear map $d F_p: T_p \cM \rightarrow T_{F(p)} \cN$, called the \bfemph{differential of $\boldsymbol{F}$ at $\boldsymbol{p}$}, with
$$
d F_p(v) f=v(f \circ F), \quad v \in T_p \cM .
$$
Once we have chosen local coordinates $\left(x^{(i)}\right)$ for $\cM$ and $\left(y^{(j)}\right)$ for $\cN$, we find, by unwinding the definitions, that the coordinate representation of the differential map is given by the \bfemph{Jacobian matrix} of the coordinate representation of $F$, which is its matrix of first-order partial derivatives:
$$
d F_p\left(\left.v^{(i)} \frac{\partial}{\partial x^{(i)}}\right|_p\right)=\left.\frac{\partial \widetilde{F}^{(j)}}{\partial x^{(i)}}(p) v^{(i)} \frac{\partial}{\partial y^{(j)}}\right|_{F(p)} .
$$

\subsubsection{Submanifolds}

The theory system of submanifolds is established on the inverse function theorem and its corollaries.

\begin{theorem}[\bfemph{Inverse Function Theorem for Manifolds}, Thm. 4.5 of \cite{LeeSM}]
    Suppose $\cM$ and $\cN$ are smooth manifolds and $F: \cM \rightarrow \cN$ is a smooth map. If the linear map $d F_p$ is invertible at some point $p \in \cM$, then there exist connected neighborhoods $U_0$ of $p$ and $V_0$ of $F(p)$ such that $\left.F\right|_{U_0}: U_0 \rightarrow V_0$ is a diffeomorphism.
\end{theorem}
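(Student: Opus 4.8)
The plan is to reduce the statement to the classical Inverse Function Theorem on open subsets of $\bR^n$ by passing to coordinate charts. First I would observe that, since $dF_p\colon T_p\cM\to T_{F(p)}\cN$ is a linear isomorphism, the tangent spaces at $p$ and $F(p)$ have the same dimension, and hence $\cM$ and $\cN$ have a common dimension $n$ near these points. Choose a smooth chart $(V,\psi)$ for $\cN$ containing $F(p)$, then a smooth chart $(U,\varphi)$ for $\cM$ containing $p$; by continuity of $F$, after shrinking $U$ we may assume $F(U)\subseteq V$. Set $\widehat F=\psi\circ F\circ\varphi^{-1}\colon\varphi(U)\to\psi(V)$, a smooth map between open subsets of $\bR^n$.

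Next I would compute the differential of $\widehat F$ at $\varphi(p)$. By the chain rule for differentials, $d\widehat F_{\varphi(p)}=d\psi_{F(p)}\circ dF_p\circ d(\varphi^{-1})_{\varphi(p)}$, and since the differentials of chart maps and their inverses are linear isomorphisms, this composition is invertible; equivalently, the Jacobian matrix of $\widehat F$ at $\varphi(p)$ is nonsingular. Applying the Euclidean Inverse Function Theorem to $\widehat F$ yields open sets $\widehat U_0\ni\varphi(p)$ contained in $\varphi(U)$ and $\widehat V_0\ni\psi(F(p))$ contained in $\psi(V)$ such that $\widehat F|_{\widehat U_0}\colon\widehat U_0\to\widehat V_0$ is a diffeomorphism. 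Shrinking $\widehat U_0$ to a small ball about $\varphi(p)$ on which $\widehat F$ is still a diffeomorphism onto its image, and replacing $\widehat V_0$ by $\widehat F(\widehat U_0)$, I may assume both $\widehat U_0$ and $\widehat V_0$ are connected.

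Finally I would transport this back. Put $U_0=\varphi^{-1}(\widehat U_0)$ and $V_0=\psi^{-1}(\widehat V_0)$; these are open, connected (homeomorphic images of connected open sets), and contain $p$ and $F(p)$ respectively, with $F(U_0)=V_0$ because $\widehat F(\widehat U_0)=\widehat V_0$. On $U_0$ one has the factorization $F|_{U_0}=\psi^{-1}\circ\widehat F|_{\widehat U_0}\circ\varphi|_{U_0}$, a composition of the diffeomorphism $\varphi|_{U_0}\colon U_0\to\widehat U_0$, the diffeomorphism $\widehat F|_{\widehat U_0}$, and the diffeomorphism $\psi^{-1}\colon\widehat V_0\to V_0$; hence $F|_{U_0}\colon U_0\to V_0$ is a diffeomorphism, as required.

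There is no deep obstacle here once the Euclidean Inverse Function Theorem is granted: the content is entirely bookkeeping with charts. The only points that need genuine care are (i) arranging $F(U)\subseteq V$ \emph{before} forming the chart representation $\widehat F$, so that the composition is well defined; (ii) verifying that $d\widehat F_{\varphi(p)}$ is invertible, which rests precisely on the fact that the differentials of the chart maps are isomorphisms; and (iii) shrinking the neighborhoods produced by the Euclidean theorem to connected ones, which is harmless since one may always pass to a coordinate ball.
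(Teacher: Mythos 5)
Your proof is correct and is essentially the standard argument given in Lee's \emph{Introduction to Smooth Manifolds} for Theorem 4.5, which the paper simply cites without reproducing a proof: pass to charts, check that the chart representative has invertible differential by the chain rule, invoke the Euclidean Inverse Function Theorem, shrink to a coordinate ball for connectedness, and transport back. Nothing further is needed.
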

The most useful consequence of the inverse function theorem is that a smooth map $F: \cM \rightarrow \cN$ is said to have \bfemph{constant rank} if the linear map $d F_p$ has the same rank at every point $p \in \cM$.
\begin{theorem}[\bfemph{Rank Theorem}, Thm. 4.12 of \cite{LeeSM}]
    Suppose $\cM$ and $\cN$ are smooth manifolds of dimensions $m$ and $n$, respectively, and $F: \cM \rightarrow \cN$ is a smooth map with constant rank $r$. For each $p \in \cM$ there exist smooth charts $(U, \varphi)$ for $\cM$ centered at $p$ and $(V, \psi)$ for $\cN$ centered at $F(p)$ such that $F(U) \subseteq V$, in which $F$ has a coordinate representation of the form
    $$
    \widetilde{F}\left(x^{(1)}, \cdots, x^{(r)}, x^{(r+1)}, \cdots, x^{(m)}\right)=\left(x^{(1)}, \cdots, x^{(r)}, 0, \cdots, 0\right)
    $$
\end{theorem}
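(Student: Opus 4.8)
The plan is to reduce to the Euclidean setting using arbitrary preliminary charts, and then straighten $F$ in two successive stages — first in the source, then in the target — invoking the Inverse Function Theorem for Manifolds stated above together with, crucially, the constant-rank hypothesis. First I would choose any smooth charts for $\cM$ about $p$ and for $\cN$ about $F(p)$; after composing with translations, I may assume $F$ is represented by a smooth map from an open neighborhood of the origin in $\bR^m$ into $\bR^n$, sending $0$ to $0$, of constant rank $r$. Composing on each side with a linear isomorphism (which alters neither smoothness nor rank) and relabelling coordinates, I may further assume the top-left $r\times r$ block of the Jacobian $DF(0)$ is invertible. Split coordinates as $(x,y)\in\bR^r\times\bR^{m-r}$ in the source and $(v,w)\in\bR^r\times\bR^{n-r}$ in the target, and write $F=(Q,R)$ with $Q$ valued in $\bR^r$; then $\partial Q/\partial x(0)$ is invertible.

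For the \textbf{source straightening}, I would set $\varphi(x,y)=(Q(x,y),y)$. Its differential at $0$ is block lower triangular with invertible diagonal blocks $\partial Q/\partial x(0)$ and $I_{m-r}$, so by the Inverse Function Theorem $\varphi$ restricts to a diffeomorphism of a connected neighborhood $U_0$ of $0$ onto an open set, which (after shrinking) I take to be a product of convex sets. By the construction of $\varphi$, the composite $\widetilde F := F\circ\varphi^{-1}$ has the form $(x,y)\mapsto(x,S(x,y))$ for a smooth $\bR^{n-r}$-valued map $S$.

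The \textbf{key step}, and the one I expect to be the only genuine subtlety, is the use of the constant-rank hypothesis: the Jacobian of $\widetilde F$ is
\[
\begin{pmatrix} I_r & 0 \\ \partial S/\partial x & \partial S/\partial y \end{pmatrix},
\]
whose rank equals $r+\operatorname{rank}\!\big(\partial S/\partial y\big)$. Since composing with the diffeomorphism $\varphi^{-1}$ preserves rank pointwise, $\widetilde F$ still has constant rank $r$; hence $\partial S/\partial y\equiv 0$ on $U_0$, and because $U_0$ was chosen with connected $y$-slices, $S$ is independent of $y$. Write $S(x,y)=\sigma(x)$ for a smooth map $\sigma$.

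Finally, for the \textbf{target straightening} I would define $\psi(v,w)=(v,\,w-\sigma(v))$ on a neighborhood of $0$ in $\bR^n$; this is a diffeomorphism with explicit inverse $(v,w)\mapsto(v,w+\sigma(v))$, so no further appeal to the Inverse Function Theorem is needed. Then
\[
\psi\circ F\circ\varphi^{-1}(x,y)=\psi\big(x,\sigma(x)\big)=(x,0),
\]
which is the asserted normal form. It then remains only to shrink $U_0$ so that $F$ maps the resulting chart domain $U$ into the domain $V$ of $\psi$, and to compose $\varphi$ and $\psi$ with the preliminary charts; these adjustments, together with the translations making the charts centered at $p$ and $F(p)$, are routine. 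All of the difficulty is concentrated in the constant-rank argument above — everything else is the Inverse Function Theorem plus coordinate bookkeeping.
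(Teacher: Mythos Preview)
Your proof is correct and is essentially the standard argument found in Lee's textbook. However, the paper does not actually prove this theorem: it is stated in the appendix as background material with an explicit citation to Thm.~4.12 of \cite{LeeSM}, and no proof is given. So there is nothing to compare against; your argument simply reproduces the textbook proof the paper is pointing to.
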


The most important types of constant-rank maps are listed below. In all of these definitions, $\cM$ and $\cN$ are smooth manifolds, and $F: \cM \rightarrow \cN$ is a smooth map.
\begin{itemize}
    \item $F$ is a \bfemph{submersion} if its differential is surjective at each point, or equivalently if it has constant rank equal to $\operatorname{dim} \cN$.
    \item $F$ is an \bfemph{immersion} if its differential is injective at each point, or equivalently if it has constant rank equal to $\operatorname{dim} \cM$.
    \item $F$ is a \bfemph{local diffeomorphism} if every point $p \in \cM$ has a neighborhood $U$ such that $\left.F\right|_U$ is a diffeomorphism onto an open subset of $\cN$, or equivalently if $F$ is both a submersion and an immersion.
    \item $F$ is a \bfemph{smooth embedding} if it is an injective immersion that is also a topological embedding (a homeomorphism onto its image, endowed with the subspace topology).
\end{itemize}

\begin{remark}[Prop. 5.5 of \cite{LeeSM}]
    If $\cM$ is a smooth manifold, then an embedded submanifold $\cN \subseteq \cM$ is properly embedded if and only if it is a closed subset of $\cM$.
\end{remark}

Most submanifolds are presented in the following manner. Suppose $\Phi: \cM \rightarrow \cN$ is any map. Every subset of the form $\Phi^{-1}(\{y\}) \subseteq \cM$ for some $y \in \cN$ is called a \bfemph{level set} of $\Phi$, or the \bfemph{fiber} of $\Phi$ over $y$. The simpler notation $\Phi^{-1}(y)$ is also used for a level set when there is no likelihood of ambiguity. Let the \bfemph{codimension of $\cN$} be the difference $\operatorname{dim}\cN - \operatorname{dim}\cM$.

\begin{theorem}[\bfemph{Constant-Rank Level Set Theorem}, Thm. 5.12 of \cite{LeeSM}]
    Suppose $\cM$ and $\cN$ are smooth manifolds, and $\Phi: \cM \rightarrow \cN$ is a smooth map with constant rank $r$. Every level set of $\Phi$ is a properly embedded submanifold of codimension $r$ in $\cM$
\end{theorem}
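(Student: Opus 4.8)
The plan is to localize the problem with the Rank Theorem, which is essentially the only real ingredient; after that everything is bookkeeping. Fix $y\in\cN$ and set $S:=\Phi^{-1}(y)$. If $y$ is not in the image of $\Phi$ then $S=\varnothing$, which is vacuously a properly embedded submanifold of any codimension, so assume $S\neq\varnothing$ and pick an arbitrary $p\in S$; write $m=\dim\cM$.

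First I would invoke the Rank Theorem at $p$: because $\Phi$ has constant rank $r$ on all of $\cM$ (in particular on a neighborhood of $p$, which is what the theorem needs), there are smooth charts $(U,\varphi)$ for $\cM$ centered at $p$ and $(V,\psi)$ for $\cN$ centered at $y$ with $\Phi(U)\subseteq V$ in which $\Phi$ takes the normal form $\widetilde\Phi(x^{(1)},\dots,x^{(m)})=(x^{(1)},\dots,x^{(r)},0,\dots,0)$. Shrinking $U$ so that $\varphi(U)$ is a cube, a point $q\in U$ lies in $S$ if and only if $\widetilde\Phi(\varphi(q))=0$, i.e.\ if and only if the first $r$ coordinates of $\varphi(q)$ vanish; hence $S\cap U$ corresponds under $\varphi$ to the subset of $\varphi(U)$ on which the first $r$ coordinates vanish, an open subset of an $(m-r)$-dimensional coordinate subspace. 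Thus $(U,\varphi)$ is an $(m-r)$-slice chart for $S$ around $p$.

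Next I would glue these local slices into a global smooth structure. Give $S$ the subspace topology; it is Hausdorff and second countable because $\cM$ is. For each $p\in S$, compose the slice chart with the projection onto the last $m-r$ coordinates to get $\bar\varphi_p\colon S\cap U_p\to\bR^{m-r}$, a homeomorphism onto an open set. Any transition map $\bar\varphi_q\circ\bar\varphi_p^{-1}$ is the restriction of the smooth transition map $\varphi_q\circ\varphi_p^{-1}$ to a coordinate slice, hence smooth, so $\{(S\cap U_p,\bar\varphi_p)\}$ is a smooth atlas making $S$ a smooth $(m-r)$-manifold, i.e.\ of codimension $r$ in $\cM$; in slice coordinates the inclusion $S\hookrightarrow\cM$ is a smooth immersion as well as a topological embedding, hence a smooth embedding, so $S$ is an embedded submanifold.

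Finally, for properness: since $\cN$ is Hausdorff, $\{y\}$ is closed in $\cN$, and $\Phi$ is continuous, so $S=\Phi^{-1}(y)$ is a closed subset of $\cM$; by the Remark quoted above (Prop.\ 5.5 of \cite{LeeSM}), a closed embedded submanifold is properly embedded, which completes the argument. The only genuinely substantive step is the appeal to the Rank Theorem, and the one subtlety worth flagging is that constant rank is required on a full neighborhood of each point of $S$ — not merely at the points of $S$ — for that theorem to apply, which is precisely why the hypothesis is stated as global constant rank $r$ rather than a rank condition along the level set.
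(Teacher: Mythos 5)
The paper does not prove this theorem; it simply quotes it as Theorem 5.12 of Lee's \textit{Introduction to Smooth Manifolds} for use in the appendix, so there is no paper-specific proof to compare against. Your argument is correct and is exactly the standard textbook proof that Lee gives: the Rank Theorem produces slice charts, the slices assemble into a smooth atlas for the level set with the subspace topology, the inclusion is then trivially a smooth embedding, and properness follows because a level set of a continuous map is closed and (by the quoted Prop.\ 5.5) closed embedded submanifolds are properly embedded. Your remark at the end --- that the Rank Theorem needs constant rank on a full \emph{neighborhood} of each point, not merely pointwise along the level set --- is the right subtlety to flag, and is indeed why the hypothesis is stated as constant rank on all of $\cM$.
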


\begin{corollary}[\bfemph{Submersion Level Set Theorem}, Cor. 5.13 of \cite{LeeSM}]
     Suppose $\cM$ and $N$ are smooth manifolds, and $\Phi: \cM \rightarrow \cN$ is a smooth submersion. Every level set of $\Phi$ is a properly embedded submanifold of $\cM$, whose codimension is equal to $\operatorname{dim} N$.
\end{corollary}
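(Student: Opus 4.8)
The plan is to deduce this corollary directly from the Constant-Rank Level Set Theorem stated immediately above it. The only work is to observe that a submersion is automatically a constant-rank map with the rank equal to $\operatorname{dim}\cN$, and then to invoke the previous theorem.

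First I would recall the definition: $\Phi\colon\cM\to\cN$ is a submersion precisely when the differential $d\Phi_p\colon T_p\cM\to T_{\Phi(p)}\cN$ is surjective for every $p\in\cM$. For any linear map into $T_{\Phi(p)}\cN$, the rank is at most $\operatorname{dim}\cN$; surjectivity forces the rank to equal $\operatorname{dim}\cN$ exactly. Hence $\operatorname{rank}(d\Phi_p)=\operatorname{dim}\cN$ for every $p$, so $\Phi$ has constant rank $r:=\operatorname{dim}\cN$ in the sense required by the Rank Theorem and the Constant-Rank Level Set Theorem.

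Next I would apply the Constant-Rank Level Set Theorem with this value of $r$: for any $y\in\cN$, the level set $\Phi^{-1}(y)$ is a properly embedded submanifold of $\cM$ of codimension $r=\operatorname{dim}\cN$. Since the previous theorem already establishes proper embeddedness and the codimension count, nothing further is needed; one simply specializes its conclusion. (If one wanted a self-contained argument rather than citing the constant-rank statement, the standard route is: near a point $p\in\Phi^{-1}(y)$, choose charts in which $\Phi$ has the coordinate form $(x^{(1)},\dots,x^{(m)})\mapsto(x^{(1)},\dots,x^{(r)})$ provided by the Rank Theorem; then in these coordinates $\Phi^{-1}(y)$ is the slice $\{x^{(1)}=\dots=x^{(r)}=0\}$, which exhibits it locally as an embedded $(m-r)$-dimensional submanifold, and properness follows because a level set is closed in $\cM$ by continuity of $\Phi$.)

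There is essentially no obstacle here: the statement is a one-line specialization of the Constant-Rank Level Set Theorem, and the only point to check — that a pointwise-surjective differential yields globally constant rank — is immediate from elementary linear algebra. The substantive content lives entirely in the Rank Theorem and the Constant-Rank Level Set Theorem, which we are permitted to assume.
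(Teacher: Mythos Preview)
Your proposal is correct and follows the same approach implicit in the paper's presentation: the paper does not give its own proof of this corollary---it is stated as background material cited from Lee---but its placement immediately after the Constant-Rank Level Set Theorem signals exactly the one-line deduction you carry out. Observing that a submersion has constant rank equal to $\operatorname{dim}\cN$ and invoking the previous theorem is precisely the intended (and standard) argument.
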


In fact, a map does not have to be a submersion, or even to have constant rank, for its level sets to be embedded submanifolds. If $\Phi: \cM \rightarrow \cN$ is a smooth map, a point $p \in \cM$ is called a \bfemph{regular point} of $\Phi$ if the linear map $d \Phi_p: T_p \cM \rightarrow T_{\Phi(p)} \cN$ is surjective, and $p$ is called a \bfemph{critical point} of $\Phi$ if it is not. A point $c \in \cN$ is called a \bfemph{regular value} of $\Phi$ if every point of $\Phi^{-1}(c)$ is a regular point of $\Phi$, and a \bfemph{critical value} otherwise. A level set $\Phi^{-1}(c)$ is called a \bfemph{regular level set} of $\Phi$ if $c$ is a regular value of $\Phi$.

\begin{corollary}[\bfemph{Regular Level Set Theorem}, Cor. 5.14 of \cite{LeeSM}]
    Let $\cM$ and $\cN$ be smooth manifolds, and let $\Phi: \cM \rightarrow \cN$ be a smooth map. Every regular level set of $\Phi$ is a properly embedded submanifold of $\cM$ whose codimension is equal to $\operatorname{dim} \cN$.
\end{corollary}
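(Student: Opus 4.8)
The plan is to reduce the statement to the Submersion Level Set Theorem (Cor. 5.13 of \cite{LeeSM}) by restricting the domain to the open set of regular points. Fix a regular value $c\in\cN$ and write $S=\Phi^{-1}(c)$. Let $U\subseteq\cM$ be the set of regular points of $\Phi$, i.e.\ the points $p$ at which $d\Phi_p$ is surjective. The first step is to check that $U$ is open: surjectivity of $d\Phi_p$ is equivalent to $\operatorname{rank}d\Phi_p=\dim\cN$, which is the maximal possible value of the rank, and in local coordinates the rank is lower semicontinuous (a maximal minor is nonzero on an open set), so the locus where it attains the maximum is open. By the definition of a regular value, every point of $S$ is a regular point, so $S\subseteq U$, and consequently $S=(\Phi|_U)^{-1}(c)$ while $\Phi|_U\colon U\to\cN$ is a smooth submersion.

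Next I would invoke Cor. 5.13 of \cite{LeeSM} for $\Phi|_U$. Since $U$, being an open subset of $\cM$, is a smooth manifold in its own right with its natural smooth structure, and $\Phi|_U$ is a submersion, the level set $S$ is a properly embedded submanifold of $U$ of codimension $\dim\cN$. In particular $S$ is an embedded submanifold of $U$; because $U$ is open in $\cM$, the slice charts exhibiting the submanifold structure of $S$ inside $U$ are also smooth charts of $\cM$, so $S$ is an embedded submanifold of $\cM$ of the same codimension $\dim\cN$.

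It remains to promote ``embedded'' to ``properly embedded'' in $\cM$. For this I would use continuity of $\Phi$ together with the Hausdorff property of the manifold $\cN$: the singleton $\{c\}$ is closed in $\cN$, hence $S=\Phi^{-1}(c)$ is a closed subset of $\cM$. An embedded submanifold that is closed as a subset is properly embedded (Prop. 5.5 of \cite{LeeSM}, recorded above as a remark), which completes the argument.

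I do not expect a genuine obstacle here; the only point that needs care is keeping the three notions straight across the domains $U$ and $\cM$. Proper embeddedness of $S$ in the open subset $U$ only gives closedness of $S$ \emph{in} $U$, not in $\cM$, so the closedness in $\cM$ must be obtained independently, and that is exactly what the closedness of $\{c\}$ supplies. Everything else is bookkeeping: openness of the regular locus is the sole global input beyond the submersion case, and the codimension count is immediate from Cor. 5.13.
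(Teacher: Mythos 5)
Your argument is correct and is exactly the standard proof given in \cite{LeeSM}: restrict to the open set of regular points (openness from lower semicontinuity of rank), apply the Submersion Level Set Theorem there, observe that slice charts in an open subset are slice charts in $\cM$, and recover properness from closedness of $\Phi^{-1}(c)$ via Prop. 5.5. The paper cites this corollary without proof, so there is no alternative route to compare against; the one place where students often slip---confusing ``closed in $U$'' with ``closed in $\cM$''---you handle explicitly and correctly.
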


\subsection{Riemannian manifold}

There are many important geometric concepts in Euclidean space, such as length and angle, which are derived from inner product. To extend these geometric ideas to abstract smooth manifolds, we need a structure that amounts to a smoothly varying choice of inner product on each tangent space.

Let $\cM$ be a smooth manifold. A \bfemph{Riemannian metric} on $\cM$ is a collection of inner products, whose element at $p\cM$ is an inner product $g_p: T_p\cM\times T_p\cM \to \bR$ that varies smoothly with respect to $p$. A \bfemph{Riemannian manifold} is a pair $(\cM, g)$, where $\cM$ is a smooth manifold and $g$ is a specific choice of Riemannian metric on $\cM$. If $\cM$ is understood to be endowed with a specific Riemannian metric, a conventional statement often used is “$\cM$ is a Riemannian manifold.” In the following sections, we assume $(\cM, g)$ is an oriented Riemannian $d$-manifold.

Another important construction provided by a metric on an oriented manifold is a canonical volume form. For $(\cM, g)$, there is a unique $d$-form $d V_g$ on $\cM$, called the \bfemph{Riemannian volume form}, characterized by
$$
d V_g=\sqrt{\operatorname{det}\left(g_{i j}\right)} d x^{(1)} \wedge \cdots \wedge d x^{(d)} ,
$$
where the $d x^{(i)}$ are $1$-forms from any oriented local coordinates. Here, $\operatorname{det}\left(g_{i j}\right)$ is the absolute value of the determinant of the matrix representation of the metric tensor on the manifold.
The Riemannian volume form allows us to integrate functions on an oriented Riemannian manifold. Let $f$ be a continuous, compactly supported real-valued function on $(\cM, g)$. Then, $f d V_g$ is a compactly supported $d$-form. Therefore, the integral $\int_\cM f d V_g$ makes sense, and we define it as the \bfemph{integral of $\boldsymbol{f}$ over $\boldsymbol{\cM}$}. Similarly, we can define probability measures on $\cM$, and if $\cM$ is compact, the \bfemph{volume} of $\boldsymbol{\cM}$ can be evaluated as
$$
\operatorname{Vol}(\cM)=\int_\cM d V_g=\int_\cM 1 d V_g.
$$

A \bfemph{curve} in $\cM$ usually means a \bfemph{parametrized curve}, namely a continuous map $\gamma: I \rightarrow \cM$, where $I \subseteq \mathbb{R}$ is some interval. To say that $\gamma$ is a \bfemph{smooth curve} is to say that it is smooth as a map from $I$ to $M$. A smooth curve $\gamma: I \rightarrow \cM$ has a \bfemph{well-defined velocity} $\gamma^{\prime}(t) \in T_{\gamma(t)} \cM$ for each $t \in I$. We say that $\gamma$ is a \bfemph{regular curve} if $\gamma^{\prime}(t) \neq 0$ for $t \in I$. This implies that the image of $\gamma$ has no “corners” or “kinks.” For brevity, we refer to a \bfemph{piecewise regular curve segment} $\gamma:[a, b] \rightarrow \cM$ as an \bfemph{admissible curve}, and any partition $\left(a_0, \cdots, a_k\right)$ such that $\left.\gamma\right|_{\left[a_{i-1}, a_i\right]}$ is smooth for each $i$ as an \bfemph{admissible partition} for $\gamma$. If $\gamma$ is an admissible curve, we define \bfemph{the length of $\boldsymbol{\gamma}$} as
$$
L_g(\gamma)=\int_a^b\left|\gamma^{\prime}(t)\right|_g d t.
$$

The \bfemph{speed} of $\gamma$ at any time $t \in I$ is defined as the scalar $\left|\gamma^{\prime}(t)\right|$. We say that $\gamma$ is a \bfemph{unit-speed curve} if $\left|\gamma^{\prime}(t)\right|=1$ for all $t$, and a \bfemph{constant-speed curve} if $\left|\gamma^{\prime}(t)\right|$ is constant. If $\gamma:[a, b] \rightarrow M$ is a unit-speed admissible curve, then its \bfemph{arc-length function} has the simple form $s(t)=t-a$. For this reason, a unit-speed admissible curve whose parameter interval is of the form $[0, b]$ is said to be \bfemph{parametrized by arc-length}.

For each pair of points $p, q \in \cM$, we define the \bfemph{Riemannian distance} from $p$ to $q$, denoted by $d_\cM(p, q)$, as the infimum of the lengths of all admissible curves from $p$ to $q$.
When $\cM$ is connected, we say an admissible curve $\gamma$ is a \bfemph{minimizing curve} if and only if $L_g(\gamma)$ is equal to the distance between its endpoints. A unit-speed minimizing curve is also called a \bfemph{geodesic}. Thus, we use \bfemph{geodesic distance} and Riemannian distance interchangeably.

\begin{theorem}[Existence and Uniqueness of Geodesics, Thm 4.27 of \cite{LeeRM}]
    For every $p \in \cM, w \in T_p \cM$, and $t_0 \in \mathbb{R}$, there exist an open interval $I \subseteq \mathbb{R}$ containing $t_0$ and a geodesic $\gamma: I \rightarrow \cM$ satisfying $\gamma\left(t_0\right)=p$ and $\gamma^{\prime}\left(t_0\right)=w$. Any two such geodesics agree on their common domain.
\end{theorem}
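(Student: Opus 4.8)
The plan is to reduce the geodesic condition to a second-order ordinary differential equation in local coordinates, invoke the Picard--Lindel\"of existence-and-uniqueness theorem for ODEs, and then glue the local solutions together with a connectedness argument. Recall that, with respect to the Levi-Civita connection of $g$, a curve $\gamma$ is a geodesic precisely when its covariant acceleration $D_t\gamma'$ vanishes identically. Fixing a smooth chart $(U,\varphi)$ with coordinate functions $(x^1,\dots,x^d)$ and writing $\gamma(t)=(x^1(t),\dots,x^d(t))$ in these coordinates, this condition becomes the geodesic equation
$$
\ddot x^k(t) + \Gamma^k_{ij}\bigl(x(t)\bigr)\,\dot x^i(t)\,\dot x^j(t) = 0, \qquad k = 1,\dots,d,
$$
where the Christoffel symbols $\Gamma^k_{ij}$ are smooth functions of the coordinates, obtained algebraically from $g_{ij}$ and its first derivatives.

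Next I would introduce velocity variables $v^k=\dot x^k$ to convert this into the first-order system $\dot x^k=v^k$, $\dot v^k=-\Gamma^k_{ij}(x)\,v^i v^j$ on the open set $\varphi(U)\times\bR^d\subseteq\bR^{2d}$, whose right-hand side is smooth and hence locally Lipschitz. Applying the standard existence-and-uniqueness theorem for ODEs with the initial data $x(t_0)=\varphi(p)$ and $v(t_0)$ equal to the coordinate representation of $w\in T_p\cM$ produces an open interval $I\ni t_0$ and a unique solution $(x(t),v(t))$ on $I$; transporting back through $\varphi^{-1}$ yields a geodesic $\gamma:I\to\cM$ with $\gamma(t_0)=p$ and $\gamma'(t_0)=w$, which establishes existence.

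For uniqueness on the common domain, suppose $\gamma_1:I_1\to\cM$ and $\gamma_2:I_2\to\cM$ are geodesics with $\gamma_i(t_0)=p$ and $\gamma_i'(t_0)=w$, and set $A=\{t\in I_1\cap I_2 : \gamma_1(t)=\gamma_2(t)\text{ and }\gamma_1'(t)=\gamma_2'(t)\}$. This set contains $t_0$, so it is nonempty; it is closed in the connected interval $I_1\cap I_2$ by continuity of the $\gamma_i$ and $\gamma_i'$; and it is open, because around any $t_1\in A$ one may choose a single chart containing $\gamma_1(t_1)=\gamma_2(t_1)$, write both curves as solutions of the same first-order system with identical data at $t_1$, and apply local ODE uniqueness. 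Connectedness of $I_1\cap I_2$ then forces $A=I_1\cap I_2$, so $\gamma_1$ and $\gamma_2$ agree on their common domain.

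The main obstacle is not the analysis --- the ODE input is classical --- but the bookkeeping that makes the argument chart-free: one must verify that the geodesic equation is genuinely coordinate-independent (equivalently, that $D_t\gamma'=0$ is a well-defined condition), so that the solutions produced in overlapping charts patch to a single curve on $\cM$, and one must justify the openness step in the uniqueness argument by observing that two geodesics with matching position and velocity at a point solve identical initial-value problems in any chart about that point. Once these points are settled, extending $\gamma$ to a maximal interval of existence is routine, obtained as the union of all such local solutions.
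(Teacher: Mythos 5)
Your argument is correct and is essentially the standard proof given in the cited source (Lee's \emph{Riemannian Manifolds}, Thm.~4.27): reduce $D_t\gamma'=0$ to the geodesic ODE system in a chart, invoke Picard--Lindel\"of for local existence and uniqueness, and obtain global uniqueness on the overlap by the open-closed-nonempty argument on the connected interval $I_1\cap I_2$. The paper itself states this result as a cited preliminary without reproducing the proof, so there is no divergence to report.
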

A geodesic $\gamma: I \rightarrow \cM$ is said to be maximal if it cannot be extended to a geodesic on a larger interval. A geodesic segment is a geodesic whose domain is a compact interval. For each $p \in \cM$ , the \bfemph{(restricted) exponential map at $\boldsymbol{p}$}, denoted by $\exp_p$, is defined by
$$
\exp_p (v)=\gamma_v(1),
$$
where $v\in T_p\cM$ and $\gamma_v$ are the unique geodesic with initial location $\gamma_v(0) = p$ and $\gamma^\prime_v = v$. The exponential map is a diffeomorphism in a neighborhood of the tangent space. Similarly, we define the \bfemph{logarithm map} $\log_p$ as the inverse of $\exp_p$. The injectivity radius of $\cM$ at $p$, denoted by $\operatorname{inj}(p)$, is the supremum of all $r>0$ such that $\exp _p$ is a diffeomorphism from $\cB(0,r) \subseteq T_p \cM$ onto its image.

\subsection{Other concepts}
\begin{definition}
    [Normal matrices] A matrix square matrix $A$ is normal when $A A^{*}=A^{*} A$, where $A^{*}$ is its conjugate-transpose. This is equivalent to saying that there exists a unitary matrix $U$ such that $U A U^{*}$ is diagonal (and the diagonal elements are precisely the eigenvalues of $A$). Every Hermitian and every unitary matrix is normal.
\end{definition}

\begin{definition}
    [Trace norm] The trace norm is defined for every $A$ by
    $$
    \|A\|_F^{2}:=\operatorname{Tr}\left(A A^{*}\right)=\operatorname{Tr}\left(A^{*} A\right)=\sum_{1 \leq i, j \leq n}\left|A_{i, j}\right|^{2} .
    $$
    This is also known as the Frobenius, Schur, or Hilbert–Schmidt norm.
\end{definition}

\begin{definition}
    [Principal angles] Suppose $\mathcal A$ and $\mathcal B$ are two vector spaces; we call each
    $$\theta_i(\mathcal A,\mathcal B) = \arccos(\lambda_i(\mathcal A,\mathcal B))$$
    the $i$-th principal angle between $\mathcal A$ and $\mathcal B$, where $\lambda_i(\mathcal A,\mathcal B)$ is the $i$-th largest eigenvalue of $\mathcal A^T \mathcal B$.  Let $\Theta(\mathcal A,\mathcal B)$ denote the diagonal matrix whose $i$-th diagonal entry is $\theta_i(\mathcal A,\mathcal B)$, and let $\sin \Theta(\mathcal A,\mathcal B)$ be performed entrywise, i.e.,
    $$\sin \Theta(\mathcal A,\mathcal B) := diag\left(\sin \theta_i (\mathcal A,\mathcal B)\right). $$
\end{definition}

\section{Proof omitted from the main text}
\subsection{Some useful lemmas and corollaries}
\begin{lemma}[Chernoff bound]
    The generic Chernoff bound for a random variable $X$ is attained by applying Markov's inequality to $e^{tX}$. For every $t>0$, there is
    $$\bP(X \geq a)=\bP\left(e^{t X} \geq e^{t a}\right) \leq \frac{\bE\left(e^{t X}\right)}{e^{t a}}.$$
    Since the inequality holds for every $t>0$, we have
    $$\bP(X \geq a)\leq \inf_{t>0}\frac{\bE\left(e^{t X}\right)}{e^{t a}}.$$
\end{lemma}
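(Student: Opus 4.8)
The plan is to obtain both displayed inequalities as immediate consequences of Markov's inequality, which I treat as a known fact: for any non-negative random variable $Z$ and any threshold $s>0$, one has $\bP(Z\geq s)\leq \bE(Z)/s$. The whole argument is essentially two lines, so the task is really just to lay out the bookkeeping carefully.

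First I would fix $t>0$ and note that the map $x\mapsto e^{tx}$ is strictly increasing on $\bR$; consequently the events $\{X\geq a\}$ and $\{e^{tX}\geq e^{ta}\}$ are identical, which gives the equality $\bP(X\geq a)=\bP(e^{tX}\geq e^{ta})$. Next I would apply Markov's inequality to the non-negative random variable $Z=e^{tX}$ with threshold $s=e^{ta}>0$, producing $\bP(e^{tX}\geq e^{ta})\leq \bE(e^{tX})/e^{ta}$. Chaining these two relations yields the first display. Finally, since the resulting bound $\bP(X\geq a)\leq \bE(e^{tX})/e^{ta}$ holds for \emph{every} $t>0$ while its left-hand side does not depend on $t$, taking the infimum over $t>0$ on the right-hand side preserves the inequality and gives the second display.

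The only point worth a remark is the degenerate case in which $\bE(e^{tX})=+\infty$ for some (or all) $t>0$: the stated inequalities still hold trivially, and the infimum is well-defined as an element of $[0,+\infty]$, so no separate case analysis is needed. I do not anticipate any genuine obstacle; the content of the lemma is exactly the combination of the monotonicity of the exponential with Markov's inequality, and the only care required is to record that the equivalence of events uses $t>0$ (for $t<0$ the inequality would reverse).
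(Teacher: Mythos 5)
Your proof is correct and is exactly the argument the paper implicitly invokes: the lemma's own statement already says the bound ``is attained by applying Markov's inequality to $e^{tX}$,'' and the paper gives no further proof. You fill in precisely that argument (monotonicity of $x\mapsto e^{tx}$ to equate the events, Markov's inequality with $Z=e^{tX}$, then take the infimum over $t>0$ since the left-hand side is $t$-free), including the harmless degenerate case $\bE(e^{tX})=+\infty$; nothing more is needed.
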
  
\begin{corollary}\label{col:Chi_Chernoff_bound}
    Let $\xi\sim N(0,\sigma^2 I_D)$ be a $D$-dimensional normal random vector with mean $0$ and covariance matrix $\sigma^2 I_D$. According to the Chernoff bound,
    $$
    \bP(\|\xi\|_2\geq t) \leq \left(\frac{t^2}{D\sigma^2}\exp\left(1 - \frac{t^2}{D\sigma^2}\right)\right)^{D/2}
    $$
    for $t\geq \sqrt{D}\sigma$.
\end{corollary}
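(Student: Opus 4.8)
The plan is to reduce the statement to a Chernoff bound for a chi-squared random variable and then optimize the free parameter. Since $\xi \sim N(0,\sigma^2 I_D)$, the scaled squared norm $W := \|\xi\|_2^2/\sigma^2$ follows a chi-squared distribution with $D$ degrees of freedom, whose moment generating function is $\bE(e^{sW}) = (1-2s)^{-D/2}$ for $s < 1/2$ (equivalently, one may compute $\bE(e^{s\|\xi\|_2^2})=(1-2s\sigma^2)^{-D/2}$ directly by completing the square in the Gaussian density). The event $\{\|\xi\|_2 \geq t\}$ coincides with $\{W \geq t^2/\sigma^2\}$, so writing $a = t^2/(D\sigma^2)$ — note that the hypothesis $t \geq \sqrt{D}\sigma$ is exactly $a \geq 1$ — it suffices to bound $\bP(W \geq aD)$.

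First I would invoke the Chernoff bound from the preceding lemma: for every $s \in (0,\tfrac12)$,
$$\bP(W \geq aD) \leq \frac{\bE(e^{sW})}{e^{saD}} = (1-2s)^{-D/2} e^{-saD} = \left((1-2s)^{-1} e^{-2sa}\right)^{D/2}.$$
Next I would minimize the bracketed quantity over $s$. Taking logarithms and differentiating $-\log(1-2s) - 2sa$ in $s$, the first-order condition $2/(1-2s) = 2a$ gives the optimizer $s^* = \tfrac12(1 - 1/a)$, which lies in $(0,\tfrac12)$ precisely when $a > 1$, i.e. when $t > \sqrt{D}\sigma$; at the boundary $t = \sqrt{D}\sigma$ the asserted right-hand side equals $1$ and the inequality is trivial. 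Substituting $1 - 2s^* = 1/a$ yields
$$(1-2s^*)^{-1} e^{-2s^* a} = a\, e^{-(a-1)} = a\, e^{1-a},$$
hence $\bP(W \geq aD) \leq (a\,e^{1-a})^{D/2}$, and re-expanding $a = t^2/(D\sigma^2)$ produces exactly
$$\bP(\|\xi\|_2 \geq t) \leq \left(\frac{t^2}{D\sigma^2}\exp\left(1 - \frac{t^2}{D\sigma^2}\right)\right)^{D/2},$$
as claimed.

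There is no genuine obstacle here: the argument is a textbook Chernoff computation. The only point requiring a modicum of care is checking that the unconstrained optimizer $s^*$ falls inside the interval $(0,\tfrac12)$ on which the moment generating function is finite, and — pleasantly — this constraint matches the stated hypothesis $t \geq \sqrt{D}\sigma$ exactly, with the boundary case handled trivially. I would therefore present the proof in the order above: identify the $\chi^2_D$ law, apply the Chernoff inequality, optimize in $s$, and substitute back.
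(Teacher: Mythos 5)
Your proof is correct and is the standard Chernoff computation; the paper presents this corollary without a written proof, deferring to the generic Chernoff-bound lemma, and your filling-in of the details (reduce to the $\chi^2_D$ moment generating function, optimize $s^*=\tfrac12(1-1/a)$, note that the hypothesis $t\geq\sqrt{D}\sigma$ is exactly what keeps $s^*$ inside $(0,\tfrac12)$, with the boundary case trivial) is exactly what is intended.
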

\begin{corollary}\label{col:Bi_Chernoff_bound}
    Let $n\sim Bino(N,p)$ be a binomial random variable with size $N$ and probability $p$. According to the Chernoff bound,
    $$
    \bP\left(\frac{n}{N} \geq p+\epsilon\right) \leq \exp\left\{-N \mathcal{D}_{KL}\left(p+\epsilon\|p\right)\right\},
    $$
    $$
    \bP\left(\frac{n}{N} \leq p-\epsilon\right) \leq \exp\left\{-N \mathcal{D}_{KL}\left(p-\epsilon\|p\right)\right\},
    $$
    for $\epsilon>0$, where
    $$
    \mathcal{D}_{KL}(a\|b) = a\log(\frac{a}{b}) + (1-a)\log(\frac{1-a}{1-b})
    $$
    denotes the Kullback–Leibler divergence between Bernoulli distributions $Be(a)$ and $Be(b)$.
\end{corollary}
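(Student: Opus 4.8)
The plan is to derive both inequalities directly from the Chernoff bound stated above, using that a $Bino(N,p)$ variable is a sum $n=\sum_{i=1}^N X_i$ of i.i.d.\ $Be(p)$ variables, so that its moment generating function factorizes: $\bE(e^{tn}) = (1-p+pe^t)^N$ for every $t\in\bR$.

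First I would handle the upper tail. Set $q=p+\epsilon$ and apply the Chernoff bound at the level $a=Nq$ to obtain
$$
\bP\left(\frac{n}{N}\geq p+\epsilon\right) = \bP(n\geq Nq) \;\leq\; \inf_{t>0}\frac{(1-p+pe^t)^N}{e^{tNq}} \;=\; \inf_{t>0}\left(\frac{1-p+pe^t}{e^{tq}}\right)^{N}.
$$
It then remains only to minimize the scalar function $g(t)=\log(1-p+pe^t)-tq$ over $t>0$. Solving $g'(t)=0$ yields the stationary point $e^{t^\star}=q(1-p)/\bigl(p(1-q)\bigr)$, which exceeds $1$ since $\epsilon>0$, so $t^\star>0$ is admissible; substituting it back gives $1-p+pe^{t^\star}=(1-p)/(1-q)$, and after regrouping the logarithmic terms one finds $g(t^\star)=-\bigl(q\log\tfrac{q}{p}+(1-q)\log\tfrac{1-q}{1-p}\bigr)=-\mathcal{D}_{KL}(p+\epsilon\,\|\,p)$. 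Raising $e^{g(t^\star)}$ to the $N$-th power yields the first inequality; the degenerate case $p+\epsilon\geq 1$ needs no separate treatment, since then the event is empty or the bound is saturated in the limit $t\to\infty$.

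For the lower tail the shortest route is to reduce to the upper tail just proved: since $N-n\sim Bino(N,1-p)$ and $\{n/N\leq p-\epsilon\}=\{(N-n)/N\geq(1-p)+\epsilon\}$, the first inequality applied to $N-n$ gives $\bP(n/N\leq p-\epsilon)\leq\exp\{-N\mathcal{D}_{KL}((1-p)+\epsilon\,\|\,1-p)\}$, and expanding the symmetric definition of $\mathcal{D}_{KL}$ shows at once that $\mathcal{D}_{KL}((1-p)+\epsilon\,\|\,1-p)=\mathcal{D}_{KL}(p-\epsilon\,\|\,p)$, which finishes the proof. (Equivalently, one reruns the Chernoff computation with $e^{-tn}$ in place of $e^{tn}$.) The only real obstacle is the bookkeeping in the minimization step — finding $t^\star$ and simplifying $g(t^\star)$ into Kullback–Leibler form — but this is an elementary, subtlety-free computation.
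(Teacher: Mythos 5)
Your derivation is correct and is the standard Chernoff--Hoeffding computation that the paper implicitly intends by labelling this a corollary of the generic Chernoff bound lemma immediately preceding it (the paper itself gives no explicit proof). The minimization of $g(t)=\log(1-p+pe^t)-tq$ is carried out correctly, the stationary point $e^{t^\star}=q(1-p)/(p(1-q))$ and the simplification $g(t^\star)=-\mathcal{D}_{KL}(q\|p)$ are right, and the reduction of the lower tail to the upper tail via $N-n\sim Bino(N,1-p)$ together with the symmetry $\mathcal{D}_{KL}(1-p+\epsilon\|1-p)=\mathcal{D}_{KL}(p-\epsilon\|p)$ is clean and valid.
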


\begin{lemma}
\label{Thm:Concentration_with_Sample}
    Assume there is a sequence of observed points $\{y_i\}_{i=1}^n$, with a series of weights $W(y_1),\cdots,W(y_1)$. Let the local moving weighted average be
    $$
        \widehat{\mu}_n = \frac{\sum_{i=1}^n W(y_i) y_i}{\sum_{i=1}^n W(y_i)}.
    $$
    Then, if $\{y:W(y)>0\} \subset \cB_D(z,r)$,
    $$
        \sqrt{n} \left(\widehat{\mu}_n - \widehat{\mu}_w\right) \overset{d}{\to} N\left(0,\frac{\Sigma}{\bE(W)^2}\right),
    $$
    with $\Sigma\leq r^2 I_D$ and  $\widehat{\mu}_w = {\bE(WY)}/{\bE(W)}$.
\end{lemma}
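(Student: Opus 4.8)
The plan is to view $\widehat{\mu}_n$ as a ratio $A_n/B_n$ of two empirical means and then to collapse that ratio, through an \emph{exact} algebraic identity (not a linearization with a remainder term), into a single empirical mean of i.i.d.\ mean-zero random vectors. The conclusion then drops out of the classical multivariate central limit theorem, the strong law of large numbers, and Slutsky's theorem, with the bounded support of $W$ supplying all the moment control that is required.

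Concretely, write $A_n=\frac1n\sum_{i=1}^n W(y_i)y_i$ and $B_n=\frac1n\sum_{i=1}^n W(y_i)$, so $\widehat{\mu}_n=A_n/B_n$, and set $b=\bE(W)$ (which we may take positive, since otherwise $\widehat{\mu}_w$ is undefined) and $\widehat{\mu}_w=\bE(WY)/b$. First I would record the identity
$$
\widehat{\mu}_n-\widehat{\mu}_w=\frac{1}{B_n}\cdot\frac1n\sum_{i=1}^n Z_i,\qquad Z_i:=W(y_i)\bigl(y_i-\widehat{\mu}_w\bigr),
$$
valid because $\frac1n\sum_i Z_i=A_n-\widehat{\mu}_w B_n$. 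Since $\widehat{\mu}_w$ is deterministic, the $Z_i$ are i.i.d.\ with $\bE Z_1=\bE(WY)-\bE(W)\widehat{\mu}_w=0$ and covariance $\Sigma:=\bE\!\bigl[W^2(Y-\widehat{\mu}_w)(Y-\widehat{\mu}_w)^T\bigr]$, which is finite because $W$ is bounded and vanishes off $\cB_D(z,r)$. The CLT then gives $\sqrt n\,\frac1n\sum_i Z_i\overset{d}{\to}N(0,\Sigma)$, the strong law gives $B_n\to b$ a.s., and Slutsky's theorem yields
$$
\sqrt n\bigl(\widehat{\mu}_n-\widehat{\mu}_w\bigr)=\frac{1}{B_n}\,\sqrt n\,\frac1n\sum_{i=1}^n Z_i\;\overset{d}{\to}\;\frac1b\,N(0,\Sigma)=N\!\Bigl(0,\tfrac{\Sigma}{\bE(W)^2}\Bigr).
$$

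It remains to bound $\Sigma\le r^2 I_D$. Here I would pass to the probability measure $d\widetilde P=(W/b)\,dP$, which is supported in $\cB_D(z,r)$ because $W$ vanishes outside it; its mean is precisely $\widehat{\mu}_w$, so $\widehat{\mu}_w$ lies in the convex closed ball $\cB_D(z,r)$. For every unit vector $v$, the scalar $v^T Y$ then takes values in an interval of length at most $2r$ under $\widetilde P$, so Popoviciu's inequality on variances gives $\operatorname{Var}_{\widetilde P}(v^T Y)\le r^2$. Using $0\le W\le 1$ (true for the weights $\tilde\alpha_i$ and $\tilde\beta_i$ constructed in this paper), so that $W^2\le W$, we obtain
$$
v^T\Sigma v=\bE\!\bigl[W^2\bigl(v^T(Y-\widehat{\mu}_w)\bigr)^2\bigr]\le\bE\!\bigl[W\bigl(v^T(Y-\widehat{\mu}_w)\bigr)^2\bigr]=\bE(W)\,\operatorname{Var}_{\widetilde P}(v^T Y)\le r^2,
$$
i.e.\ $\Sigma\le r^2 I_D$. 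I expect this variance estimate to be the only step requiring genuine care: bounding $\bigl(v^T(Y-\widehat{\mu}_w)\bigr)^2$ bluntly by $(2r)^2$ costs an extra factor $4\,\bE[W^2]$ and loses the clean constant, so one is forced to normalize the weighted measure and invoke the sharp form of Popoviciu's bound; the rest is the standard CLT/Slutsky pipeline and needs no quantitative estimates.
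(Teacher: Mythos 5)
Your proof is correct, and it follows the same high-level plan as the paper (CLT for the numerator, LLN for the denominator, then combine), but it is substantially more careful and, in one place, actually repairs a gap in the paper's argument. The paper simply invokes the CLT for $\tfrac1n\sum_i W(y_i)y_i$ around $\bE(WY)$ with limiting covariance ``$\Sigma$'' (implicitly $\mathrm{Var}(WY)$), invokes the LLN for $\tfrac1n\sum_i W(y_i)$, and then writes ``Thus, $\sqrt n(\widehat\mu_n-\widehat\mu_w)\overset{d}{\to}N(0,\Sigma/\bE(W)^2)$'' with the \emph{same} $\Sigma$. That step is not a correct application of Slutsky, because the numerator and denominator fluctuations are correlated: the true limiting covariance is $\mathrm{Var}\bigl(W\,(Y-\widehat\mu_w)\bigr)/\bE(W)^2$, not $\mathrm{Var}(WY)/\bE(W)^2$. (A quick sanity check: if $Y\equiv c$ is deterministic and $W$ is random, then $\widehat\mu_n\equiv c$ so the left side is identically $0$, while $\mathrm{Var}(WY)=c^2\mathrm{Var}(W)\neq0$.) Your exact identity $\widehat\mu_n-\widehat\mu_w=\tfrac{1}{B_n}\cdot\tfrac1n\sum_i W(y_i)(y_i-\widehat\mu_w)$ sidesteps this entirely and produces the correct $\Sigma=\bE\bigl[W^2(Y-\widehat\mu_w)(Y-\widehat\mu_w)^T\bigr]$ in one step. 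You also actually \emph{prove} the bound $\Sigma\le r^2 I_D$ (the paper merely asserts it): passing to the tilted law $d\widetilde P=(W/\bE W)\,dP$, supported in $\cB_D(z,r)$ with mean $\widehat\mu_w$, applying Popoviciu's inequality to $v^TY$, and using $W^2\le W$ (valid since all the weight functions in the paper take values in $[0,1]$) is exactly the right chain of observations. The only thing I would add is to make the hypothesis $0\le W\le1$ explicit in your write-up, since the bound on $\Sigma$ genuinely uses it and the lemma statement as given does not say so; for the paper's $\tilde\alpha_i$ and $w_u,w_v$ weights this is automatic.
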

\begin{proof}
    According to the central limit theorem and the law of large numbers,
    $$\frac{\sum_{i=1}^n w_i}{n}\overset{a.s.}{\to}\bE(W),$$
    $$\sqrt{n}\left(\frac{\sum_{i=1}^n w_i y_i}{n} - \bE(WY)\right)\overset{d}{\to} N(0,\Sigma),$$
    where $\Sigma\leq r^2 I_D$. Thus, 
    $$\sqrt{n} \left(\widehat{\mu}_n - \frac{\bE(WY)}{\bE(W)}\right) \overset{d}{\to} N\left(0,\frac{\Sigma}{\bE(W)^2}\right).$$
\end{proof}
\begin{corollary}
\label{col:Concentrate_with_n}
    In the case of $n = CD\sigma^{-3}$, with $\sigma$ sufficiently small,
    $$\bP(\|\widehat{\mu}_n - \widehat{\mu}_w\|_2\leq c\sigma^2) \geq 1 - C_1\sigma^{c_1-1}\exp\left(-C_2 \sigma^{c_1-1}\right),$$
    for some constant $C_1$, $C_2$, and any $c_1\in (0,1)$.
\end{corollary}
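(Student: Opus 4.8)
The statement upgrades the asymptotic normality of Lemma~\ref{Thm:Concentration_with_Sample} to a non-asymptotic deviation bound, so the plan is to bypass the central limit theorem and instead run the Chernoff/Hoeffding machinery directly on the weighted average. Write $\widehat\mu_n = A_n/B_n$ and $\widehat\mu_w = A/B$, where $B_n = \frac1n\sum_{i=1}^n W(y_i)$, $A_n = \frac1n\sum_{i=1}^n W(y_i)(y_i-z)$, $B = \bE(W)$ and $A = \bE\big(W(Y-z)\big)$; centering at $z$ is harmless and makes every summand bounded, since $W(y_i)\in[0,1]$ and $\|y_i-z\|_2\le r$ on the support of $W$, with $r$ of order $\sigma$ up to a $\sqrt{\log(1/\sigma)}$ factor. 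The algebraic identity
\[
\widehat\mu_n - \widehat\mu_w = \frac{A_n - A}{B_n} - \frac{A}{B}\cdot\frac{B_n - B}{B_n}
\]
reduces the problem to controlling the scalar deviation $|B_n-B|$, the vector deviation $\|A_n-A\|_2$, and a lower bound on $B_n$.

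First I would bound the denominator. Since $B_n$ is an average of $n$ i.i.d.\ $[0,1]$-valued variables, Hoeffding's inequality (the same Chernoff argument used in Corollary~\ref{col:Bi_Chernoff_bound}) gives $\bP(|B_n-B|>t)\le 2\exp(-2nt^2)$. Using that, for the weight functions employed in this paper, $B=\bE(W)$ is bounded below by an absolute constant $c_0$ (on the ball of half the radius the weight exceeds $(3/4)^k$, and that sub-ball carries conditional probability bounded below by the volume comparison underlying Lemma~\ref{Lemma:prob_in_a_ball}), the event $|B_n-B|\le c_0/2$ forces $B_n\ge c_0/2$ and has probability at least $1-2\exp(-c\,n)$. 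On this event the identity above gives
\[
\|\widehat\mu_n - \widehat\mu_w\|_2 \;\le\; \frac{2}{c_0}\,\|A_n - A\|_2 \;+\; \frac{2\|A\|_2}{c_0^2}\,|B_n - B| \;\le\; C\big(\|A_n - A\|_2 + r\,|B_n - B|\big),
\]
since $\|A\|_2=\|\bE(W(Y-z))\|_2\le r$. Next, each coordinate of $A_n$ is an average of $n$ i.i.d.\ variables in $[-r,r]$, so Hoeffding gives $\bP(|A_n^{(j)}-A^{(j)}|>s)\le 2\exp(-ns^2/(2r^2))$; a union bound over the $D$ coordinates (or a bounded-difference bound on $\|A_n-A\|_2$ together with $\bE\|A_n-A\|_2\le r\sqrt{D/n}$) yields $\bP(\|A_n-A\|_2>c\sigma^2)\le 2D\exp(-c\,n\sigma^4/r^2)$. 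It remains to combine: take $s=c\sigma^2$ and $t=c\sigma^2/r$, substitute $n=CD\sigma^{-3}$, and note both exponents become of order $D\sigma^{-1}/\log(1/\sigma)$, which exceeds $\sigma^{-(1-c_1)}=\sigma^{c_1-1}$ for any fixed $c_1\in(0,1)$ once $\sigma$ is small; absorbing the $D$, the union-bound factor, and the $\log$ terms into slack in the exponent produces $1-C_1\sigma^{c_1-1}\exp(-C_2\sigma^{c_1-1})$.

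The hard part is not any single inequality but keeping the bookkeeping consistent: one must (i) secure the absolute lower bound $\bE(W)\ge c_0$, which is the only place the explicit weight construction enters and is what keeps the ratio $A_n/B_n$ stable, and (ii) track the three scales — the local count $n=CD\sigma^{-3}$, the neighbourhood radius $r$ (whose $\sqrt{\log(1/\sigma)}$ factor must be cancellable against the budget), and the target accuracy $c\sigma^2$ — so that the surviving exponent is a genuinely negative power $\sigma^{c_1-1}$ of $\sigma$ rather than merely $\log(1/\sigma)$-limited; this is exactly why the statement must permit an arbitrary $c_1<1$ instead of $c_1=1$. An alternative route would be to quantify Lemma~\ref{Thm:Concentration_with_Sample} with a Berry--Esseen estimate and then invoke the Gaussian tail bound of Corollary~\ref{col:Chi_Chernoff_bound}, which also produces the $\sigma^{c_1-1}$ prefactor, but it is messier, so I would favour the direct Hoeffding argument above.
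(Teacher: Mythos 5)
Your proposal is correct and takes a genuinely different route from the paper's. The paper's proof is a two-line application of Corollary~\ref{col:Chi_Chernoff_bound}: it reads off from Lemma~\ref{Thm:Concentration_with_Sample} that $\sqrt{n}(\widehat\mu_n-\widehat\mu_w)$ has limiting law $N(0,\Sigma/\bE(W)^2)$ with $\Sigma\leq r^2 I_D$, then bounds $\bP(\|\widehat\mu_n-\widehat\mu_w\|_2>c\sigma^2)$ by the chi-tail $\bP\bigl(\tfrac{r}{\sqrt{n}}\sqrt{\chi}>c\sigma^2\bigr)$ of that limit and applies the Gaussian Chernoff bound. This is quick but treats the finite-$n$ estimator as exactly Gaussian, skipping precisely the Berry--Esseen quantification you flag as the cost of that route. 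Your decomposition $\widehat\mu_n-\widehat\mu_w=(A_n-A)/B_n-(A/B)\cdot(B_n-B)/B_n$, combined with Hoeffding on the bounded summands $W(y_i)\in[0,1]$ and $W(y_i)(y_i-z)^{(j)}\in[-r,r]$ and the lower bound $\bE(W)\geq c_0$ (which the paper establishes by the explicit integral in the proof of Theorem~\ref{Thm:Angle:F(z)}, and which your half-radius volume argument gives more cheaply), is fully non-asymptotic and therefore strictly more rigorous; it also happens to produce a slightly tighter tail. One bookkeeping slip: after passing from $\|\cdot\|_\infty$ to $\|\cdot\|_2$ you pick up a $1/D$ in the exponent of the $A_n$-bound that cancels the factor $D$ in $n=CD\sigma^{-3}$, so both exponents are of order $\sigma^{-1}/\log(1/\sigma)$ rather than $D\sigma^{-1}/\log(1/\sigma)$ as you wrote; this changes nothing since it still dominates $\sigma^{c_1-1}$ for every $c_1\in(0,1)$ and the polynomial prefactors are absorbed exactly as you describe. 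In short: same conclusion, but your argument closes a gap the paper leaves open, at the cost of more bookkeeping.
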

\begin{proof}
    According to Corollary \ref{col:Chi_Chernoff_bound}, when $\sigma$ is sufficiently small,
    \begin{align*}
        \bP(\|\widehat{\mu}_n &- \widehat{\mu}_w\|_2\leq c\sigma^2) \geq \bP( \frac{r}{\sqrt{n}}\sqrt{\chi}\leq c\sigma^2)\\
        &\geq 1 - \left(\frac{c}{D}\frac{n\sigma^2}{\log(1/\sigma)}\exp\left\{1 - \frac{c}{D}\frac{n\sigma^2}{\log(1/\sigma)}\right\}\right)^{D/2},
    \end{align*}
    for $n\geq \frac{D}{c}\sigma^{-2}\log(1/\sigma)$. Thus, in the case of $n = CD\sigma^{-3}$, the probability is close to 1.
\end{proof}

\subsection{Proof of content in Section 2}
\subsubsection{Proof of Lemma \ref{Lemma:prob_in_a_ball}}
\begin{proof}
    Recall that, in our model, $Y = X + \xi$, with $X\sim\omega(\cM)$ and $\xi\sim N(0,\sigma^2 I_D)$.
    We first check the Chernoff bound for the noise term, which is
    \begin{align*}
        \bP(\|\xi\|_2 \geq c_1r)
        &\leq \left(\frac{c_1^2r^2}{D\sigma^2}\exp\left\{1 - \frac{c_1^2r^2}{D\sigma^2}\right\}\right)^{D/2}\\
        &= \left(\frac{c_1^2 C^2 2d}{D} \log(1/\sigma) \exp\left\{1 - \frac{c_1^2 C^2 2d}{D} \log(1/\sigma)\right\}\right)^{D/2}\\
        & = c_2 \left(\log(1/\sigma)\right)^{D/2} \sigma^{c_1^2C^2d}\\
        &\leq c_2 r^d,
    \end{align*}
    where the first inequality comes from the Chernoff bound, while the last one occurs because $\sigma$ is sufficiently small.
    
    Then, for $\bP(Y\in \cB_D(z,r))$, on one hand,
    \begin{align*}
        \bP(Y\in \cB_D(z,r)) &\geq \bP(\|\xi\|_2\leq c_1r)\bP(X\in \cM\cap \cB_D(z,(1-c_1)r))\\
        &\geq (1-c_2 r^d) \frac{vol (\cM\cap \cB_D(z,(1-c_1)r))}{vol (\cM)}\\
        &\geq c_3 r^d.
    \end{align*}
    On the other,
    \begin{align*}
        \bP(Y\in \cB_D(z,r))&=\bP(X\in \cM\cap \cB_D(z,C_2r),\|Y-z\|_2\leq r) \\
        &\quad + \bP(X\notin \cM\cap \cB_D(z,C_2r),\|Y-z\|_2\leq r)\\
        &\leq\bP(X\in \cM\cap \cB_D(z,C_2r))+\bP(\|\xi\|_2\geq (C_2-1) r)\\
        &\leq \frac{vol (\cM\cap \cB_D(z,(1-c_1)r))}{vol (\cM)} + c_4 r^d\\
        &\leq c_5 r^d.
    \end{align*}

    Therefore, $\bP(Y\in \cB_D(z,r)) = c r^d$ for some constant $c$.
\end{proof}
\subsubsection{Proof of Corollary \ref{Col:Local_sample_size}}
\begin{proof}
    The number of points $n$ can be viewed as a binomial random variable with size $N$ and probability parameter $p = cr^d$. For any $c_1 \in (0,1)$, according to Corollary \ref{col:Bi_Chernoff_bound}, 
    \begin{align*}
        \bP\left(\frac{n}{N} \leq (1-c_1)p\right)
        &\leq \exp\left\{-N\left((1-c_1)p\log(1-c_1)\right)\right\} \\
        &\quad \times \exp\left\{-N\left((1-(1-c_1)p)\log(\frac{1-(1-c_1)p}{1-p})\right)\right\}\\
        &\leq \exp\left( -C_1\sigma^{-3}\right),
    \end{align*}
    \begin{align*}
        \bP\left(\frac{n}{N} \geq (1+c_1)p\right)
        &\leq \exp\left\{-N\left((1+c_1)p\log(1+c_1)\right)\right\} \\
        &\quad \times \exp\left\{-N\left((1-(1+c_1)p)\log(\frac{1-(1+c_1)p}{1-p})\right)\right\}\\
        &\leq \exp\left( -C_2\sigma^{-3}\right).
    \end{align*}
    Therefore,
    $$\bP(C_3 D\sigma^{-3}\leq n \leq C_4 D\sigma^{-3})\geq 1 - 2\exp\left( -C_5\sigma^{-3}\right).$$
    When $\sigma$ is sufficiently small, the probability will be close to $1$.
\end{proof}

\subsubsection{Proof of Proposition \ref{Prop:TruncateNormalConcentration}}
\begin{figure}[htbp]
    \centering
    \hspace{20pt}
    \subfigure[]{
        \begin{tikzpicture}
        \draw [->] (0,-2.5) -- (0,2.5) node[right]{$\bR^{D-1}$};        
        \draw [->] (-2,0) -- (3.5,0) node[above right]{$\bR^{1}$};
        \draw [->] (0.5176,0) -- (1.9316,1.414) node[below=15pt, left=2pt]{$r$};
        \draw (0.5176, 0) circle (2);
        \draw[pattern=north west lines, pattern color=blue] (0 ,-1.93185) arc (-75:75:2);
        \draw[pattern=north east lines, pattern color=red] (0 ,1.93185) arc (105:255:2); 
        \fill (0,0) node[below=8pt, left=2pt,fill=white]{$O$} circle (1pt);
        \fill (0.5176,0) node[below=2pt,fill=white]{$\Delta$} circle (1pt);
    \end{tikzpicture}
    \hspace{40pt}
    }\subfigure[]{
        \begin{tikzpicture}
        \draw [->] (0,-2.5) -- (0,2.5) node[right]{$\bR^{D-1}$};        
        \draw [->] (-2,0) -- (3.5,0) node[above right]{$\bR^{1}$};
        \draw (0.5176, 0) circle (2);
        \draw [densely dashed] (0, 0) circle (1.4824);
        \draw (0 ,-1.93185) arc (-75:75:2);
        \draw (0 ,1.93185) arc (105:255:2); 
        \draw [orange] (-1.4824,-2) rectangle (2.5176,2) node[below left]{$V_1$};
        \draw [cyan] (-1.0482,-1.0482) rectangle (1.0482,1.0482) node[below left]{$V_2$};
        \draw [magenta] (-0.8964,-1.414) rectangle (1.9316,1.414) node[below left]{$V_3$};
    \end{tikzpicture}
    }
    
    \caption{Illustration of the integral region in the proof of Proposition \ref{Prop:TruncateNormalConcentration}: (a) The region of calculating the conditional expectation $\bE(\xi | \xi \in \cB_D(\Delta U, r))$, where the two shaded parts cancel each other out; (b) Three multidimensional cubes designed for bounding the expectation.}
    \label{Fig:IllustTruncated}
\end{figure}
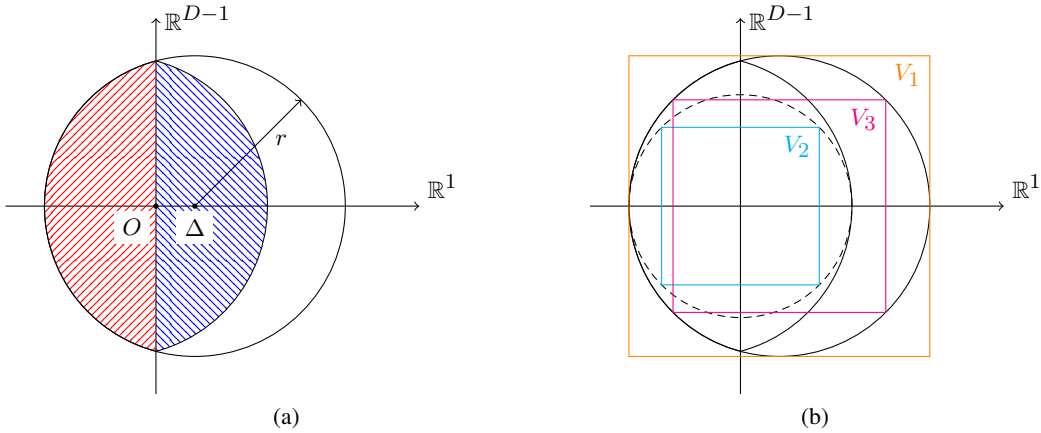
\begin{proof}
    Without loss of generality, we adjust the Cartesian-coordinate system such that $z = (\Delta,0,\cdots,0)$ and $\xi = (\xi^{(1)},\cdots,\xi^{(D)})$, with $\Delta = \|z\|_2\leq C_1\sigma$ for some constant $C_1$. As illustrated in Fig.\ref{Fig:IllustTruncated}, a large part of the calculating region is canceled, and the expectations can be bounded through three integrations on multidimensional cubes. That is, 
    \begin{align*}
        V_1 &= [\Delta-r,~\Delta+r]\times[-r,~r]^{D-1},\\
        V_2 &= [\frac{\Delta-r}{\sqrt{D}},~\frac{r-\Delta}{\sqrt{D}}]^{D},\\
        V_3 &= [\Delta-\frac{r}{\sqrt{D}},~\Delta+\frac{r}{\sqrt{D}}]\times[-\frac{r}{\sqrt{D}},~\frac{r}{\sqrt{D}}]^{D-1}.
    \end{align*}
    To bound the distance between $\bE(\xi|\xi\in\cB_D(z,r))$ and the origin, let 
    \begin{align*}
        &I_1 = \int_{V_1} |\xi^{(1)}| (2\pi\sigma^2)^{-\frac{D}{2}} \exp\{-\frac{\|\xi\|_2^2}{2\sigma^2}\} d\xi,\\
        &I_2 = \int_{V_2} |\xi^{(1)}| (2\pi\sigma^2)^{-\frac{D}{2}} \exp\{-\frac{\|\xi\|_2^2}{2\sigma^2}\} d\xi, \\
        &I_3 = \int_{V_3}(2\pi\sigma^2)^{-\frac{D}{2}} \exp\{-\frac{\|\xi\|_2^2}{2\sigma^2}\} d\xi.
    \end{align*}
    Because of the symmetry,
    \begin{align*}
        \|\bE(\xi|\xi\in\cB_D(z,r))\|_2
        &= \frac{\int_{\cB_D(z,r)} \xi^{(1)} (2\pi\sigma^2)^{-\frac{D}{2}} \exp\{-\frac{\|\xi\|_2^2}{2\sigma^2}\} d\xi}{\int_{\cB_D(z,r)} (2\pi\sigma^2)^{-\frac{D}{2}} \exp\{-\frac{\|\xi\|_2^2}{2\sigma^2}\} d\xi}\\
        &\leq \frac{I_1 - I_2}{I_3}.
    \end{align*}
    For simplicity, denote
    \begin{align*}
        {2^{D-1}}(2\pi\sigma^2)^{\frac{D}{2}}I_1 
        &= \int_{\Delta - r}^{\Delta + r} |t| \exp\{ - \frac{t^2}{2\sigma^2}\}dt\left(\int_{0}^{r}\exp\{-\frac{s^2}{2\sigma^2}\}ds\right)^{D-1} \\
        &:= I_1^+ + I_1^-,\\
        {2^{D-1}}(2\pi\sigma^2)^{\frac{D}{2}}I_2 
        &= \int_{\frac{\Delta-r}{\sqrt{D}}}^{\frac{r-\Delta}{\sqrt{D}}} |t| \exp\{ - \frac{t^2}{2\sigma^2}\}dt\left(\int_{0}^{\frac{r-\Delta}{\sqrt{D}}}\exp\{-\frac{s^2}{2\sigma^2}\}ds\right)^{D-1}\\
        &:= I_2^+ + I_2^-.
    \end{align*}
    Meanwhile, let 
    $$a = \int_{0}^{\frac{r-\Delta}{\sqrt{D}}} t \exp\{ - \frac{t^2}{2\sigma^2}\}dt, \quad \delta_a = \int_{\frac{r-\Delta}{\sqrt{D}}}^{r + \Delta} t \exp\{ - \frac{t^2}{2\sigma^2}\}dt,
    $$
    $$b = \int_{0}^{\frac{r-\Delta}{\sqrt{D}}}\exp\{-\frac{s^2}{2\sigma^2}\}ds, \quad \delta_b = \int_{\frac{r-\Delta}{\sqrt{D}}}^{r}\exp\{-\frac{s^2}{2\sigma^2}\}ds.
    $$
    Then, there is
    $$
    a = \sigma^2\left(1-\exp\{-\frac{Cr^2}{2\sigma^2}\}\right)<\sigma^2, \quad  b = \sigma(\Phi(C) - \Phi(0)) <C\sigma,
    $$
    $$\delta_a = \sigma^2\left(\exp\left\{-\frac{(\frac{r-\Delta}{\sqrt{D}})^2}{2\sigma^2}\right\}-\exp\left\{-\frac{(r + \Delta)^2}{2\sigma^2}\right\}\right)<C\sigma^3,
    $$
    $$
   \delta_b <(r- \frac{r-\Delta}{\sqrt{D}})\exp\left\{-\frac{(\frac{r-\Delta}{\sqrt{D}})^2}{2\sigma^2}\right\} = C\sigma^2\log(1/\sigma)<C\sigma.
    $$
    Furthermore, we can obtain
    \begin{align*}
        I_1^+ - I_2^+ & = (a + \delta_a)(b + \delta_b)^{D-1}  - ab^{D-1}\\ 
        &= a(b + \delta_b)^{D-1} - ab^{D-1} + \delta_a(b + \delta_b)^{D-1}\\ 
        &= \delta_a(b + \delta_b)^{D-1} + a((b + \delta_b)^{D-1} - b^{D-1})\\
        &< C\sigma^{D+2} + a\delta_b\left\{(b+\delta_b)^{D-2} + (b+\delta_b)^{D-3}b + \cdots + (b+\delta_b)b^{D-3} + b^{D-2} \right\}\\
        &< C\sigma^{D+2}.
    \end{align*}
    Thus, $I_1 - I_2 < C\sigma^{-D}(I_1^+ - I_2^+)=C\sigma^2$.
    
    Additionally, it is clear that $I_3>C$, and hence,
    $$\|\bE(\xi|\xi\in\cB_D(z,r))\|_2\leq C\sigma^2.$$

\end{proof}

\subsubsection{Proof of Lemma \ref{Lemma:PartManifold}}
\begin{proof}
    Let $\nu_c(y) =\int_{\cM\backslash\cM_R} \phi_\sigma(y-x)\omega(x)dx$; then, according to the model setting,
    $$\nu_c(y) =  \nu(y) - \nu_R(y) \geq 0.$$
    The probability measure within $\cB_D(z,r)$ is proportional to $\nu(y)$, which can be expressed as 
    \begin{align*}
        \tilde{\nu}(y) &= \frac{\nu(y)}{\int_{\cB_D(z,r)} \nu(y) dy}\\
        &= \frac{\nu_R(y) + \nu_c(y)}{\int_{\cB_D(z,r)} \nu_R(y) dy + \int_{\cB_D(z,r)} \nu_c(y) dy}
    \end{align*}
    for $y \in \cB_D(z,r)$. Then, the relative difference between $\tilde{\nu}(y)$ and $\tilde{\nu}_R(y)$ can be evaluated as 
    \begin{align*}
        \left|\tilde{\nu}(y) - \tilde{\nu}_R(y)\right|
        &=\left|\frac{\nu_R(y) + \nu_c(y)}{\int_{\cB_D(z,r)} \nu_R(y) dy + \int_{\cB_D(z,r)} \nu_c(y) dy} - \frac{\nu_R(y)}{\int_{\cB_D(z,r)} \nu_R(y) dy}\right|\\
        &\leq \left| \frac{\nu_c(y)}{\nu_R(y)} - \frac{\int_{\cB_D(z,r)}\nu_c(y)dy}{\int_{\cB_D(z,r)}\nu_R(y)dy}\right| \tilde{\nu}_R(y).
    \end{align*}
    
    Let $R = r + C_1\sigma\sqrt{(d+\eta)\log(1/\sigma)}$ and $R^\prime = r + C_1\sigma < R$; then, 
    \begin{align*}
        \frac{\nu_c(y)}{\nu_R(y)} &= \frac{\int_{\cM\backslash\cM_R} \phi_\sigma(y-x)\omega(x)dy}{ \int_{\cM_R} \phi_\sigma(y-x)\omega(x)dy}\\
        &\leq \frac{\phi_\sigma(R-r)\omega(\cM)}{\phi_\sigma(R^\prime-r)\omega(\cM\cap \cB_D(y,R^\prime))}\\
        &\leq C\frac{\sigma^{d+\eta}V}{vol(\cM\cap \cB_D(y,R^\prime))}\\
        &\leq C\frac{\sigma^{d+\eta}}{(R^\prime)^d}.
    \end{align*}
    Therefore,
    $$\left|\tilde{\nu}(y) - \tilde{\nu}_R(y)\right| \leq C\sigma^\eta \tilde{\nu}_R(y).$$
\end{proof}

\subsection{Proof of content in Section 3}
\subsubsection{Proof of Proposition \ref{Prop:Angle-Cord}}
\begin{proof}
    Recall that $z^*$ is the origin, and $z - z^*$ is the $(d+1)$-th direction in the Cartesian-coordinate system. Then,
    \begin{align*}
        \mu_z^\mathbb{B} &= (\mu^{(1)},\cdots,\mu^{(d)},\mu^{(d+1)},\mu^{(d+2)},\cdots,\mu^{(D)}),\\
        z &= (0,\cdots,0,\Delta,0,\cdots,0),
    \end{align*}
    where $\Delta = \|z-z^*\|\leq c\sigma$. The angle between $\mu_z^\mathbb{B} - z$ and $z^*-z$ can be represented by its sine as follows:
    \begin{align*}
        \sin^2(\Theta(\mu_z^\mathbb{B} - z,\ z^*-z)) &= 1 - \cos^2(\Theta(\mu_z^\mathbb{B} - z,\ z^*-z))\\
        &= 1 - \left(\frac{(\mu_z^\mathbb{B} - z)\cdot(z^*-z)}{\|\mu_z^\mathbb{B} - z\|_2\|z^*-z\|_2}\right)^2\\
        &= \frac{\sum_{i\neq d+1} (\mu^{(i)})^2}{\sum_{i\neq d+1} (\mu^{(i)})^2 + (\mu^{(d+1)}-\Delta)^2}
    \end{align*}
    If
    \begin{equation}        
        \left\{
            \begin{array}{cll}
                |\mu^{(i)}-\Delta| &\geq c_1\sigma, \quad &\text{for } i = d+1;\\
                |\mu^{(i)}|   &\leq c_2 \sigma^2{\sqrt{\log(1/\sigma)}}, \quad &\text{for } i \neq d+1.\\
            \end{array}
        \right.
        \label{eq:Angle-Cord_appendix}
    \end{equation}
    then
    \begin{align*}        
        \sin^2(\Theta(\mu_z^\mathbb{B} - z,\ z^*-z)) 
        \leq \frac{(D-1)C_2^2\sigma^4\log(1/\sigma)}{(D-1)C_2^2\sigma^4\log(1/\sigma) + C_1^2\sigma^2}
        \leq C\sigma^2\log(1/\sigma)
    \end{align*}
    for some constant $C$.
    
    In other words, equation \eqref{eq:Angle-Cord_appendix} is sufficient for $\sin(\Theta(\mu_z^\mathbb{B} - z,\ z^*-z))\leq C\sigma\sqrt{\log(1/\sigma)}$.
\end{proof}

\subsubsection{Proof of Lemma \ref{Lemma:LocalGeneralDiff}}
\begin{proof}
    To prove Lemma \ref{Lemma:LocalGeneralDiff}, the following propositions are needed.

    \begin{proposition}
        \label{Prop:LocalChart}
        Assume there is a mapping $\Psi: \bD \to \cM_R$ that satisfies, for any point $z = (z_1,\cdots,z_d,0,\cdots,0) \in \bD$,
        $$\Psi(z) = (z_1,\cdots,z_d,\psi(z_1,\cdots,z_d)).$$
    \end{proposition}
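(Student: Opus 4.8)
The plan is to realize $\cM_R$ as a graph over the tangent disc $\bD\subset T_{z^*}\cM$ and to extract from the reach lower bound $\tau$ the quantitative control on the graphing function $\psi$ that the subsequent change-of-variables in the proof of Lemma~\ref{Lemma:LocalGeneralDiff} will require. Write $\pi$ for the orthogonal projection of $\bR^D$ onto $T_{z^*}\cM$ (identified with the span of the first $d$ coordinates, so $\pi(z^*)=0$) and $\pi^\perp=I_D-\pi$. The first step is the observation that $d(\pi|_\cM)_{z^*}$ is the identity on $T_{z^*}\cM$, hence invertible, so by the Inverse Function Theorem for manifolds (Thm.~4.5 of \cite{LeeSM}) $\pi|_\cM$ is a diffeomorphism from a neighbourhood of $z^*$ onto a neighbourhood of $0$ in $T_{z^*}\cM$; I would take $\Psi$ to be the local inverse, written in coordinates as $\Psi(u)=u+\psi(u)$ with $\psi(u)\in(T_{z^*}\cM)^\perp$, which is exactly the asserted form.

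The second step upgrades this local statement to the whole disc $\bD$ of radius $R$. Since $R=r_0+C_1\sigma\sqrt{\log(1/\sigma)}=\cO(\sigma\sqrt{\log(1/\sigma)})$ lies far below the fixed constant $\tau$ once $\sigma$ is small, it suffices to show that for $R\le c\tau$ the map $\pi$ is injective on $\cM\cap\cB_D(z^*,R')$ for a comparable $R'$. I would argue this from Federer's reach condition (Lemma~\ref{Lemma:ReachCond}): for $x\in\cM$ near $z^*$ it gives $\|\pi^\perp(x-z^*)\|_2=d(x,T_{z^*}\cM)\le\|x-z^*\|_2^2/(2\tau)$, and symmetrically the tangent-space tilt bound $\sin\Theta(T_x\cM,T_{z^*}\cM)\le\|x-z^*\|_2/\tau$; the tilt bound keeps $d(\pi|_\cM)_x$ uniformly invertible on $\cM_R$, and it also forces two points of $\cM_R$ sharing a $\pi$-image to coincide (their difference is purely normal to $T_{z^*}\cM$, contradicting the tilt bound unless they are equal). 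A clean alternative is simply to quote the standard fact associated with the tubular-neighbourhood theorem for $\reach(\cM)=\tau$ that $\cM\cap\cB_D(z^*,c\tau)$ is a graph over $T_{z^*}\cM$, and note $R\ll\tau$. Either way this yields $\Psi:\bD\to\cM_R$ of the stated form with $\psi(0)=0$ and $\nabla\psi(0)=0$, since the graph is tangent to $T_{z^*}\cM$ at the origin.

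The third step is the quantitative control of $\psi$. From $\|\psi(u)\|_2=d(\Psi(u),T_{z^*}\cM)\le\|\Psi(u)-z^*\|_2^2/(2\tau)$ and $\|\Psi(u)-z^*\|_2^2=\|u\|_2^2+\|\psi(u)\|_2^2$, writing $h=\|\psi(u)\|_2$, $t=\|u\|_2$, one gets $h^2-2\tau h+t^2\ge0$; by continuity of $\psi$ and $\psi(0)=0$ the value $h$ stays on the lower branch $h\le\tau-\sqrt{\tau^2-t^2}=t^2/(\tau+\sqrt{\tau^2-t^2})$, hence $\|\psi(u)\|_2\le\|u\|_2^2/(2\tau)\,(1+o(1))\le C\|u\|_2^2/\tau$ on $\bD$. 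Differentiating the graph relation and invoking the tangent-tilt bound gives $\|\nabla\psi(u)\|_2\le CR/\tau$, and the geodesic curvature bound $\|\gamma''\|_2\le\tau^{-1}$ (the Remark after the definition of reach) together with the chain rule gives $\|\nabla^2\psi(u)\|\le C/\tau$. These estimates control the change-of-variables Jacobian, $\sqrt{\det(I_d+\nabla\psi^\top\nabla\psi)}=1+\cO(R^2/\tau^2)$, which is what the rest of the proof of Lemma~\ref{Lemma:LocalGeneralDiff} consumes. I expect the main obstacle to be the second step — turning the purely local conclusion of the Inverse Function Theorem into a genuine graph over the full disc of radius $R$ with constants explicit in $\tau$ — since once the reach condition has been brought to bear, the bounds on $\psi$, $\nabla\psi$, $\nabla^2\psi$ are routine differentiation.
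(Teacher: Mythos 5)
The paper does not actually prove Proposition~\ref{Prop:LocalChart}: it is phrased as ``Assume there is a mapping $\Psi\colon\bD\to\cM_R$\,\ldots'' and, together with the companion Proposition~\ref{Prop:LocalChartJacobian} on the Jacobian factor $1+g(z)$, is simply invoked as a known fact about manifolds with reach bounded below by $\tau$. Your proposal therefore fills a gap the paper leaves implicit, and the route you take --- Inverse Function Theorem for the local graph, Federer's reach bound to extend the graph over the full disc of radius $R\ll\tau$, and the reach/curvature bounds for the quantitative control of $\psi$, $\nabla\psi$, $\nabla^2\psi$ --- is the standard one, so the argument is sound.

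Two small remarks. First, the tangent-tilt inequality $\sin\Theta(T_x\cM,T_{z^*}\cM)\le\|x-z^*\|_2/\tau$ is not literally ``symmetric'' to Lemma~\ref{Lemma:ReachCond}: that lemma controls the distance of one point to the other's tangent plane, whereas the tilt bound compares the two tangent planes themselves. It is a genuinely separate (though classical) consequence of positive reach --- cf.\ Federer~\cite{federer1959curvature}, Thm.~4.8(7), or Prop.~6.2 of~\cite{niyogi2008finding}, the latter already cited in the paper --- so it is worth quoting explicitly rather than deriving it by ``symmetry.'' It is, however, exactly the ingredient your injectivity argument needs: if $x,y\in\cM_R$ share a $\pi$-image then $y-x\in N_{z^*}\cM$, so $\|\Pi_x^\perp(y-x)\|_2\ge\|y-x\|_2\cos\Theta$; combined with $\|\Pi_x^\perp(y-x)\|_2\le\|y-x\|_2^2/(2\tau)$ this forces $\cos\Theta\le R/\tau\ll1$, contradicting $\Theta\le R/\tau\ll1$ unless $x=y$. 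Second, your Jacobian estimate $\sqrt{\det(I_d+\nabla\psi^\top\nabla\psi)}=1+\cO(\|u\|_2^2/\tau^2)$ is quadratic in $\|u\|_2$ and hence strictly sharper than the linear bound $|g(z)|\le C\|z\|_2$ stated in Proposition~\ref{Prop:LocalChartJacobian}; the paper only needs the linear version to reach the $\cO(\sigma\sqrt{\log(1/\sigma)})$ conclusion of Lemma~\ref{Lemma:LocalGeneralDiff}, but your estimate is correct and costs nothing extra. The one thing to say carefully (and you flag it) is that $\Psi(\bD)$ and $\cM_R$ may disagree by a thin boundary annulus of width $\cO(R^2/\tau)$; this is harmless for the change of variables inside the Gaussian integral, but deserves a sentence.
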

    
    \begin{proposition}
        \label{Prop:LocalChartJacobian}
        Since the approximation error of the tangent space for the manifold localization is in quadratic order,
        $$
        d(\Psi(z)) = (1+g(z))dz
        $$
        with $|g(z)| < C\|z\|_2$.    
    \end{proposition}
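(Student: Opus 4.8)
The statement asserts that the volume-distortion factor of the graph chart $\Psi$ differs from $1$ by at most $C\|z\|_2$; I would prove it by writing that factor out explicitly and then bounding the gradient $D\psi$ through the reach. Since $\Psi(z)=(z,\psi(z))$ for $z$ ranging over the $d$-disc $\bD\subset T_{z^*}\cM$ and $\psi$ a smooth $\bR^{D-d}$-valued map, the Jacobian of $\Psi$ at $z$ is the $D\times d$ matrix obtained by stacking $I_d$ on top of $D\psi(z)$, so the pulled-back $d$-dimensional Hausdorff measure is
$$
d(\Psi(z))=\sqrt{\det\!\big(I_d+D\psi(z)^T D\psi(z)\big)}\,dz=:(1+g(z))\,dz,
$$
whence $g(z)\ge 0$ and it suffices to show $g(z)\le C\|z\|_2$ on $\bD$. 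Because $z^*$ is the origin and $T_{z^*}\cM$ is the span of the first $d$ coordinates, tangency forces $\psi(0)=0$ and $D\psi(0)=0$, so $g(0)=0$; and for a symmetric PSD matrix $A$ with $\|A\|_{\mathrm{op}}$ small one has $\sqrt{\det(I_d+A)}-1\le C\,\mathrm{tr}(A)\le Cd\,\|A\|_{\mathrm{op}}$. Hence everything reduces to the first-order estimate $\|D\psi(z)\|_{\mathrm{op}}\le C\|z\|_2/\tau$ for $z\in\bD$: this gives $g(z)\le C\|z\|_2^2/\tau^2$, and since $\|z\|_2\le R=r_0+C_1\sigma\sqrt{\log(1/\sigma)}\ll\tau$ we conclude $g(z)\le C'\|z\|_2$.

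The heart of the argument is the gradient bound, and this is where I expect the real work. The graph form means that for a unit vector $v\in\bR^d$ the curve $s\mapsto\Psi(z+sv)$ has velocity $(v,D\psi(z)v)\in T_{\Psi(z)}\cM$ at $s=0$, so $\|D\psi(z)\|_{\mathrm{op}}=\tan\theta$, where $\theta$ is the largest principal angle between $T_{\Psi(z)}\cM$ and $T_{z^*}\cM=\bR^d$; it therefore suffices to show $\theta\le C\|z\|_2/\tau$. I would get this by taking the unit-speed geodesic $\gamma$ of $\cM$ joining $z^*$ to $\Psi(z)$: by the curvature estimate recalled just after the definition of reach, $\|\gamma''(t)\|_2\le\tau^{-1}$, so $\gamma'$ turns at angular rate at most $\tau^{-1}$ and $T_{\gamma(L)}\cM$ sits within angle $L/\tau$ of $T_{\gamma(0)}\cM$, where $L$ is the geodesic length; finally $L$ is comparable to $\|z\|_2$ since, in the regime $R\ll\tau$, the orthogonal projection $\Pi^-_{z^*}$ restricted to $\cM\cap\cB_D(z^*,R)$ is a bi-Lipschitz homeomorphism onto $\bD$ with derivative close to $I_d$ --- a standard consequence of Federer's reach condition (Lemma \ref{Lemma:ReachCond}). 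An alternative route that avoids geodesics: apply Lemma \ref{Lemma:ReachCond} to the pair $z^*,\Psi(z)$ to get $\|\psi(z)\|_2\le\|z\|_2^2/\tau$ for small $\|z\|_2$, then apply it to nearby pairs $\Psi(z),\Psi(z')$ and pass to the limit $z'\to z$ to upgrade the quadratic chord bound into $\|D\psi(z)\|_{\mathrm{op}}\le C\|z\|_2/\tau$.

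The main obstacle is exactly this passage from the second-order (``reach'') information to a first-order, $\|z\|_2$-linear bound on $D\psi$ that is uniform over $\bD$. One must be careful that $\Psi$ is parametrized by Euclidean projection onto $T_{z^*}\cM$ and not by the exponential map, so that comparing chord length, geodesic length, and $\|z\|_2$ genuinely uses $R\ll\tau$ together with a quantitative form of the statement ``$\Pi^-_{z^*}$ restricted to $\cM\cap\cB_D(z^*,R)$ is a diffeomorphism with derivative $I_d+O(\|z\|_2/\tau)$''. Once the gradient bound is in hand, the determinant estimate above is a routine Taylor expansion, and the claimed identity $d(\Psi(z))=(1+g(z))\,dz$ with $|g(z)|<C\|z\|_2$ follows at once.
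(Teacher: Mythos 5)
Your proof is correct, and in fact it establishes the stronger quadratic bound $g(z)\le C\|z\|_2^2/\tau^2$ from which the stated linear bound follows trivially in the regime $\|z\|_2\le R\ll\tau$. Worth noting: the paper itself does not actually supply a proof of this proposition. It is stated (together with Proposition \ref{Prop:LocalChart}) inside the proof of Lemma \ref{Lemma:LocalGeneralDiff} as a consequence of the informal observation that ``the approximation error of the tangent space is in quadratic order,'' and the argument then moves on. So there is no paper proof to compare against; your write-up is filling a gap the authors left implicit.

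On the substance, your reduction to the first-order bound $\|D\psi(z)\|_{\mathrm{op}}\le C\|z\|_2/\tau$ is the right move, and the determinant Taylor expansion is routine. The one spot that deserves a word of care is the claim that the tangent space at $\gamma(L)$ lies within angle $L/\tau$ of the tangent space at $\gamma(0)$: bounding $\|\gamma''\|_2\le\tau^{-1}$ controls the rotation of the single vector $\gamma'$, but for $d>1$ you need the whole $d$-plane to turn slowly, which requires the full second-fundamental-form bound $\|\mathrm{II}\|\le\tau^{-1}$ (equivalent to the reach lower bound) rather than just the geodesic curvature estimate. This is standard — the Lipschitz dependence of $T_x\cM$ on $x$ with constant $O(1/\tau)$ in the principal-angle metric is exactly the content of, e.g., Niyogi–Smale–Weinberger's tangent-space variation lemma — but you should invoke that form of the statement rather than deduce it only from $\|\gamma''\|\le\tau^{-1}$. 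Your alternative route via Lemma \ref{Lemma:ReachCond} applied to the pairs $\Psi(z),\Psi(z')$ and differentiating the quadratic chord bound is also sound and sidesteps this issue entirely, so either version of the argument closes the proposition.
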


    Let $\delta_z = \Psi(z) - z$ for $z\in\bD$. Then, there is
    \begin{align*}
        \nu_R(y) &= \int_{\cM_R} \phi_\sigma(y - x) \omega(x)dx\\
        &= \int_{\cM_R} \phi_\sigma(y - \Psi(z))\omega(\Psi(z))d\Psi(z)\\
        &= \frac{1}{V}\int_{\bD}\phi_\sigma(y - z - \delta_z) (1+g(z))dz\\
        &= \frac{1}{V}\int_{\bD}\phi_\sigma(y - z - \delta_z) dz + \frac{1}{V}\int_{\bD}\phi_\sigma(y - z - \delta_z) g(z)dz.
    \end{align*}
    The difference between $\nu_{\bD}(y)$ and the first term of $\nu_R(y)$ can be expressed as follows:
    \begin{align*}
        \Bigl|\nu_{\bD}(y) &- \frac{1}{V}\int_{\bD}\phi_\sigma(y - z - \delta_z) dz \Bigl|
        = \left|\frac{1}{V}\int_{\bD} \phi_\sigma(y - z) - \phi_\sigma(y - z - \delta_z) dz\right|\\
        &\leq \frac{1}{V}\int_{\bD}\phi_\sigma(y - z) \left| 1 - \exp\left\{ - \frac{\|x-z-\delta_z\|_2^2 - \|x-z\|_2^2}{2\sigma^2}\right\}\right| dz\\
        &\leq \frac{1}{V}\int_{\bD}\phi_\sigma(y - z) \frac{\left|\|x-z-\delta_z\|_2^2 - \|x-z\|_2^2\right|}{2\sigma^2} dz \\
        &\leq \frac{1}{V}\int_{\bD}\phi_\sigma(y - z) \frac{\left\|\delta_z\right\|_2^2}{2\sigma^2} dz\\
        &\leq C \sigma\nu_{\bD}(y),
    \end{align*}
    for some constant $C$. Moreover, the second term of $\nu_R(y)$ is of higher order:
    \begin{align*}
        \left|\frac{1}{V}\int_{\bD}\phi_\sigma(y - z - \delta_z) g(z)dz\right|
        &\leq \frac{1}{V}\int_{\bD}\phi_\sigma(y - z - \delta_z) \|z\|_2dz\\
        &\leq  C\sigma{\sqrt{\log(1/\sigma)}}\nu_{\bD}(y).
    \end{align*}
\end{proof}

\subsubsection{Proof of Lemma \ref{Lemma:LocalDisc}}
\begin{proof}
    Assume the manifold can be regard as $\bD = T_{z^*}\cM\cap\cB_D(y,R)$ locally with 
    $$R = r + C\sigma\sqrt{\log(1/\sigma)}\gg r.$$
    We still investigate the conditional expectation within $\cB_D(z,r)$, where we use $\nu_\bD(y)$ to denote the density function of $y$ and $\widetilde{\nu}_\bD(y)$ to denote its normalized version in $\cB_D(z,r)$. Similarly, we let $z^*$ be the origin, $z - z^*$ be the $(d+1)$-th direction, and $\mu_z^\mathbb{B} = (\mu^{(1)},\cdots,\mu^{(D)})$. Then, for the $i$-th direction,
    $$
        \mu^{(i)} = \int_{\cB_D(z,r)} y^{(i)} \widetilde{\nu}_\bD(y)\,dy = \frac{\int_{\cB_D(z,r)} y^{(i)} {\nu}_\bD(y)\,dy}{\int_{\cB_D(z,r)}{\nu}_\bD(y)\,dy}.
    $$
    
    \vspace{10pt}
    {\noindent\underline{We first prove that $\mu^{(i)} = 0$ for $i \neq d+1$:}}
    \vspace{5pt}
    
    Since the ball $\cB_D(z,r)$ is centrosymmetric for $i\neq d+1$, there exists a mapping $h_i$ for each $i\neq d+1$ such that, for any $y = (y^{(1)},\cdots,y^{(i)},\cdots, y^{(D)}) \in \cB_D(z,r)$,
    $$h_i: (y^{(1)},\cdots,y^{(i)},\cdots, y^{(D)}) \mapsto (y^{(1)},\cdots,-y^{(i)},\cdots, y^{(D)}),$$
    and $h_i(y)\in \cB_D(z,r)$. That is, for all $y\in\cB_D(z,r)$, $h_i(y)$ is its mirror with respect to the $i$-th direction, and
    $$y\in\cB_D(y_r) \Leftrightarrow h_i(y)\in \cB_D(z,r), \quad \text{for } i\neq d+1,$$
    $$x \in \bD \Rightarrow h_i(x) \in \bD,  \quad \text{for } i= 1,\cdots,d,$$
    $$x \in \bD \Rightarrow h_i(x) = x,  \quad \text{for } i= d+1,\cdots,D.$$
    
    Let $\cB_i^+$ and $\cB_i^-$ be two hemispheres such that
    $$
        \cB_i^+ = \left\{y\in\cB_D(z,r): y^{(i)}>0\right\}, \quad \cB_i^- = \left\{y\in\cB_D(z,r): y^{(i)}<0\right\}.
    $$
    Then,
    \begin{align*}
        \mu^{(i)} 
        &= \int_{\cB_i^+} y^{(i)} \widetilde{\nu}_\bD(y) dy + \int_{\cB_i^-} y^{(i)} \widetilde{\nu}_\bD(y) dy\\
        &= \int_{\cB_i^+} y^{(i)} \widetilde{\nu}_\bD(y) dy + \int_{\cB_i^+} (h_i(y))^{(i)} \widetilde{\nu}_\bD(h_i(y)) d(h_i(y))\\
        &= \int_{\cB_i^+} y^{(i)} (\widetilde{\nu}_\bD(y) -  \widetilde{\nu}_\bD(h_i(y)))dy
    \end{align*}
    To show $\mu^{(i)} = 0$, it is sufficient to show $\widetilde{\nu}_\bD(y) =  \widetilde{\nu}_\bD(h_i(y))$ or ${\nu}_\bD(y) =  {\nu}_\bD(h_i(y))$. Recall that
    $$
        \nu_{\bD}(y) = \int_{\bD} \phi_\sigma(y-x)\omega(x) dx,
    $$
    and
    $$
        \|y-x\|_2 = \|h_i(y)- h_i(x)\|_2, \quad \|h_i(y)-x\|_2 = \|y- h_i(x)\|_2.
    $$
    Therefore, for $i = 1,\cdots,d$,
    \begin{align*}
        {\nu}_\bD(h_i(y)) &= \int_{\bD}  \phi_\sigma(h_i(y) - x) \omega(x) dx\\
        &= \int_{\bD}  \phi_\sigma(y - h_i(x)) \omega(h_i(x)) dh_i(x)\\
        &= \nu_\bD(y),
    \end{align*}
    and, for $i = d+2,\cdots,D$,    
    \begin{align*}
        {\nu}_\bD(y) &= \int_{\bD}  \phi_\sigma(y - x) \omega(x) dx\\
        &= \int_{\bD}  \phi_\sigma(h_i(y) - h_i(x)) \omega(h_i(x)) dh_i(x)\\
        &= \int_{\bD}  \phi_\sigma(h_i(y) - x) \omega(x) dx\\
        &= \nu_\bD(h_i(y)).
    \end{align*}
    Thus,
    $$
        \mu^{(i)} = 0, \quad \text{for } i \neq d+1.
    $$
    
    \vspace{10pt}
    {\noindent\underline{For $i = d+1$, we need to bound $|\Delta - \mu^{(d+1)}|$ below:}}
    \vspace{5pt}
    
    According to Lemma \ref{Lemma:prob_in_a_ball} and our model setting,
    \begin{align*}
        |\Delta - \mu^{(d+1)}| 
        &= \left|\frac{\int_{\cB_D(z,r)} \int_\bD (\Delta -y^{(d+1)}) \phi_\sigma(y-x) \omega(x)\,dx\,dy}{\int_{\cB_D(z,r)} \nu_\bD(y)\,dy}\right|\\
        &= Cr^{-d} \left|\int_{\cB_D(z,r)} \int_\bD (\Delta -y^{(d+1)}) \phi_\sigma(y-x) \omega(x)\,dx\,dy\right|.
    \end{align*}
    If we express the numerator in the form of elements in the Cartesian coordinates,
    \begin{align*}
        &\left|\int_{\cB_D(z,r)} \int_\bD (\Delta -y^{(d+1)}) \phi_\sigma(y-x) \omega(x)\,dx\,dy\right|\\
        &= \left|\int_{\cB_D(z,r)} (\Delta -y^{(d+1)})\phi_\sigma(y^{(d+1)}) \int_\bD  \prod_{j=1}^d \phi_\sigma(y^{(j)}-x^{(j)}) \omega(x) \,dx\,\prod_{j=d+2}^D \phi_\sigma(y^{(j)})\,dy\right|\\
        &\geq C\left|\int_{\cB_D(z,r)} (\Delta -y^{(d+1)})\phi_\sigma(y^{(d+1)}) \prod_{j=d+2}^D \phi_\sigma(y^{(j)})\,dy\right|\\
        &\geq C\int_0^\Delta t(\phi_\sigma(t-\Delta) - \phi_\sigma(t+\Delta))\,dt~\int_{\sum_{j\neq d+1} (y^{(j)})^2\leq r^2 - \Delta^2} \phi_\sigma(y^{(j)})\,dy\\
        &:= CI_1 I_2,
    \end{align*}
    where the last inequality is the result of the cropping of the integral area (similar to that in Fig. \ref{Fig:IllustTruncated}), while the first inequality stems from the fact that
    \begin{equation}
    \label{eq:RemoveDisc_append}
        \begin{aligned}
        \int_\bD  \prod_{j=1}^d \phi_\sigma(y^{(j)}-x^{(j)}) \omega(x) \,dx 
        &\geq \bP\left(\|\xi^\prime\|_2\geq (R-r)| \xi^\prime\sim N(0,\sigma^2 I_d)\right)\\
        &\geq 1 - c \sigma^C\approx 1.
        \end{aligned}
    \end{equation}
    
    If we let $p = (y^{(1)},\cdots,y^{(d)})$ and $q = (y^{(d+2)},\cdots,y^{(D)})$, with $\Delta = C_0\sigma$ and $r \geq \Delta + c_0\sigma$, we have
    $$
    I_1 = \int_{-\Delta}^\Delta t \phi_\sigma(t-\Delta)\,dt
        = \frac{C_0\sqrt{\pi}Erf(C_0) - 2(e^{-C_0^2}-1)}{\sqrt{2\pi}}\sigma,
    $$
    \begin{align*}
        I_2 
        &= \int_{\|p\|^2+\|q\|_2^2\leq r^2 - \Delta^2} \phi_\sigma(q)\,dp\,dq\\
        &\geq \int_{\|p\|^2+\|q\|_2^2\leq c_0^2\sigma^2} \phi_\sigma(q)\,dp\,dq\\
        &= C\sigma^{-(D-d-1)}\int_0^{c_0\sigma} (c_0^2 \sigma^2 - s^2)^{\frac{d}{2}}s^{D-d-2}\exp\left(-\frac{s^2}{2\sigma^2}\right)\,ds\\
        &\geq c\sigma^d.
    \end{align*}
    In other words, when $C_0>0$ and $c_0>0$, we have $I_1\geq c \sigma$, $I_2 \geq c\sigma^d$, and thus
    $$|\Delta - \mu^{(d+1)}|\geq C r^{-d} I_1I_2\geq c \sigma.$$
    
    \vspace{10pt}
    {\noindent\underline{Combining all the results above, we have}}
    $$
    \left\{
    \begin{aligned}
            &|\Delta - \mu_\bD^{(d+1)}| \geq c\sigma\\
            &|\mu_\bD^{(i)}| = 0 , \quad \text{for } i \neq d+1
    \end{aligned}
    \right..
    $$
\end{proof}
\subsubsection{Proof of Theorem \ref{Thm:Angle:F(z)}}
\begin{proof}
    The proof is based on the framework of Lemma \ref{Thm:Concentration_with_Sample} and Corollary \ref{Col:Local_sample_size}. We first provide an estimation of the local sample size, and then show the equivalent property between $\mu^\bB_z$ and $\bE(W(Y)Y)/\bE(W(Y))$.
    
    For simplicity, we let the collection of observation that falls in $\cB_D(z,r_0)$ be $\{y_i\}_{i=1}^n$, with size $n$. According to Corollary \ref{Col:Local_sample_size}, if $N = Cr_0^{-d}\sigma^{-3}$,
    $$\bP(C_3 D\sigma^{-3}\leq n \leq C_4 D\sigma^{-3})\geq 1 - 2\exp\left( -C_5\sigma^{-3}\right).$$
    In the definition of $F(z)$, the weight function is constructed as
    $$W(y) = \left(1 - \frac{\|z-y\|_2^2}{r_0^2}\right)^k.$$
    To obtain the asymptotic distribution, we need to evaluate $\bE(W(Y)Y)$ and $\bE(W(Y))$. Same with the proof of Theorem \ref{Thm:AngleCondition}, we only need to work on $\nu_\bD(y)$ under the same setting of Cartesian coordinates, which means, $z^*$ is the origin, $z - z^*$ is the $(d+1)$-th direction in the Cartesian-coordinate system, and 
    $$z = (\underbrace{0,\cdots,0}_d,\Delta, \underbrace{0,\cdots 0}_{D-d-1}).$$
    If we define $p = (y^{(1)},\cdots,y^{(d)})$, $t=y^{(d+1)}$, and $q = (y^{(d+2)},\cdots,y^{(D)})$, and assume $\eta \in \bR^{2k}$ being an auxiliary vector, there is
    \begin{align*}
        &\bE(W(Y)) = \frac{\int_{\cB_D(z,r_0)}\int_\bD W(y)\phi_\sigma(y-x)\omega(x)\,dx\,dy}{\int_{\cB_D(z,r_0)}\int_\bD \phi_\sigma(y-x)\omega(x)\,dx\,dy}\\
        \approx& cr_0^{-d} \int_{\|p\|_2^2+(t-\Delta)^2+\|q\|_2^2\leq r_0}W(y)\phi_\sigma(t-\Delta)\phi_\sigma(q)\,dp\,dt\,dq\\
        =& cr_0^{-(d+2k)} \int_{\|p\|_2^2+(t-\Delta)^2+\|q\|_2^2\leq r_0}(r_0^2 - \|p\|_2^2-(t-\Delta)^2-\|q\|_2^2)^{k}\\
        &\hspace{180pt}\times\phi_\sigma(t-\Delta)\phi_\sigma(q)\,dp\,dt\,dq\\
        =& cr_0^{-(d+2k)} \int_{\|p\|_2^2+(t-\Delta)^2+\|q\|_2^2+\|\eta\|_2^2\leq r_0}\phi_\sigma(t-\Delta)\phi_\sigma(q)\,d\eta\,dp\,dt\,dq\\
        =& c.
    \end{align*}
    Meanwhile, the $i$-th element of $\bE(W(Y)Y)$ can be expressed as
    \begin{align*}
         &(\bE(W(Y)Y))^{(i)} = \frac{\int_{\cB_D(z,r_0)}\int_\bD W(y) y^{(i)} \phi_\sigma(y-x)\omega(x)\,dx\,dy}{\int_{\cB_D(z,r_0)}\int_\bD \phi_\sigma(y-x)\omega(x)\,dx\,dy}\\
        \approx& cr_0^{-d} \int_{\|p\|_2^2+(t-\Delta)^2+\|q\|_2^2\leq r_0} W(y)y^{(i)} \phi_\sigma(t-\Delta)\phi_\sigma(q)\,dp\,dt\,dq\\
        =& cr_0^{-(d+2k)} \int_{\|p\|_2^2+(t-\Delta)^2+\|q\|_2^2+\|\eta\|_2^2\leq r_0} y^{(i)}\phi_\sigma(t-\Delta)\phi_\sigma(q)\,d\eta\,dp\,dt\,dq
    \end{align*}
    where the two approximation marks are the result of \eqref{eq:RemoveDisc_append}. By introducing the auxiliary vector $\eta$, these two expectations can be viewed as the analogy of our manifold-fitting model in a higher-dimension case where the dimensionalities of the ambient space and latent manifold are $D+2k$ and $d+2k$, respectively. 
    
    Hence, let $\widehat{\mu}_w = \bE(W(Y)Y)/\bE(W(Y))$; then, according to Theorem \ref{Thm:AngleCondition},
    \begin{equation*}        
        \left\{
            \begin{array}{cl}
                |\widehat{\mu}_w^{(d+1)}-\Delta| &\geq c_1\sigma\\
                |\widehat{\mu}_w^{(i)}|   &\leq c_2 \sigma^2, \quad \text{for } i \neq d+1\\
            \end{array}
        \right..
    \end{equation*}
    Combining the result with Corollary \ref{Col:Local_sample_size} and Corollary \ref{col:Concentrate_with_n}, if the total sample size $N = Cr_0^{-d}\sigma^{-3}$, 
    $$\bP(\|F(z) - \widehat{\mu}_w\|_2\leq c\sigma^2) \geq 1 - C_1\sigma^{c_1-1}\exp\left(-C_2 \sigma^{c_1-1}\right),$$
    for some constant $C_1$, $C_2$, and any $c_3\in(0,1)$, and thus
    $$\sin \{\Theta\left(F(z) - z, \ z^*-z\right)\} \leq C_1\sigma\sqrt{\log(1/\sigma)}$$
    with probability at least $1 - C_2\exp(-C_3\sigma^{-c})$, for some constant $c$, $C_1$, $C_2$, and $C_3$.
\end{proof}
\subsection{Proof of content in Section 4}
\subsubsection{Proof of Theorem \ref{Thm:ContractWithEstDir1}}
\begin{proof}
    Assume $\widehat\Pi_{z^*}^\perp$ satisfies 
    $$\|\widehat\Pi_{z^*}^\perp - \Pi_{z^*}^\perp\|_F\leq c \sigma^\kappa,$$
    and the region $\widehat{\bV}_{z}$ is constructed correspondingly. As in the proof of Theorem \ref{Thm:ContractWithKnownDir}, $\widehat \mu_z^{\mathbb V}$ can be written as
    \begin{align*}
        \widehat \mu_z^{\mathbb V} 
        &= z + \widehat\Pi_{z^*}^\perp\bE_{Y\sim \nu}\left(Y-z|Y\in \widehat{\bV}_{z}\right)\\
        &= z^* + \widehat\Pi^-_{z^*}\delta_z + \bE_\nu\left(\widehat\Pi^\perp_{z^*}(X - z^*)|Y\in \widehat{\bV}_{z} \right) + \bE_\nu\left(\widehat\Pi^\perp_{z^*}\xi|Y\in \widehat{\bV}_{z} \right),
    \end{align*}
    which also can be divided into three parts. According to Lemma \ref{Lemma:PartManifold}, we can assume $\|X- z^*\|_2 \leq C\sigma\sqrt{\log(1/\sigma)}$ for some constant $C$. Let $\delta_z = z - z^*$, and the three parts can be evaluated as follows:
    \begin{itemize}
        \item [(a)] $\widehat\Pi^-_{z^*}\delta_z$:
        \vspace{5pt}
        
            The norm of $\widehat\Pi^-_{z^*}\delta_z$ is upper bounded as
            \begin{align*}
                \|\widehat\Pi^-_{z^*}\delta_z\|_2
                & = \| \Pi^-_{z^*}\delta_z + (\widehat\Pi^-_{z^*} - \Pi^-_{z^*})\delta_z \|_2\\ 
                &\leq \|\Pi^-_{z^*}\delta_z\|_2+\|(\widehat\Pi^-_{z^*} - \Pi^-_{z^*})\|_F\|\delta_z\|_2\\
                &\leq 0 + c\sigma^\kappa \sigma\\
                &\leq C\sigma^{1+\kappa},
            \end{align*}
            for some constant $C$.
        \vspace{5pt}
        \item [(b)] $\bE_\nu\left(\widehat\Pi^\perp_{z^*}(X - z^*)|Y\in \widehat{\bV}_{z} \right)$:
        \vspace{5pt}
        
            From Jensen's inequality,
            $$\left\|\bE_\nu\left(\widehat\Pi^\perp_{z^*}(X - z^*)|Y\in \widehat{\bV}_{z} \right)\right\|_2
            \leq \bE_\nu\left(\left\|\widehat\Pi^\perp_{z^*}(X - z^*)\right\|_2|Y\in \widehat{\bV}_{z} \right).$$
            
            Since $z^*$ and $X$ are exactly on $\cM$, according to Lemma \ref{Lemma:ReachCond},
            \begin{align*}
                \left\|\widehat\Pi^\perp_{z^*}(X - z^*)\right\|_2
                &= \left\|\Pi^\perp_{z^*}(X - z^*) + (\widehat\Pi^\perp_{z^*} - \Pi^\perp_{z^*})(X - z^*)\right\|_2\\
                &\leq \frac{1}{2\tau}\|X - z^*\|^2_2 + \sigma^\kappa\|X - z^*\|_2,
            \end{align*}
            where 
            \begin{align*}
                \|X - z^*\|^2 &= \|X - z + z - z^*\|_2^2 \\
                &\leq \|X-z\|^2_2 + \|z-z^*\|^2_2 \\
                &\leq C\sigma^2\log(1/\sigma),
            \end{align*}
            and thus
            $$\bE_\nu\left(\widehat\Pi^\perp_{z^*}(X - z^*)|Y\in \widehat{\bV}_{z} \right) \leq C\sigma^{1+\kappa}\sqrt{\log(1/\sigma)}.$$
        \item [(c)] $\bE_\nu\left(\widehat\Pi^\perp_{z^*}\xi|Y\in \widehat{\bV}_{z} \right)$:
        \vspace{5pt}
        
            The dislocation $\Delta$ can be evaluated as follows:
            \begin{align*}
                \Delta &= \|\widehat\Pi^\perp_{z^*} (z - X)\|_2\\
                &\leq \|\widehat\Pi^\perp_{z^*} (z - z^*)\|_2 + \|\widehat\Pi^\perp_{z^*}( z^* - X) \|_2\\
                &\leq \|z - z^*\|_2 + C\sigma^{1+\kappa}\sqrt{\log(1/\sigma)}\\
                &\leq C\sigma.
            \end{align*}
            Thus, if we let $\xi^\prime = \widehat\Pi^\perp_{z^*}\xi$, according to Proposition \ref{Prop:TruncateNormalConcentration},
            \begin{align*}
                \left\|\bE_\nu\left(\widehat\Pi^\perp_{z^*}\xi|Y\in \widehat{\bV}_{z} \right)\right\|_2
                &\leq \bE_\omega\left(\left\|\bE_{\phi}\left[\Pi^\perp_{z^*}\xi|X,~X+\xi\in \bV_{z} \right]\right\|_2\right)\\
                &\leq \bE_\omega\left(\|\bE\left(\xi^\prime|\xi^\prime \in \cB_{D-d}(a_\Delta,r_2)\right)\|_2\right)\\
                &\leq C\sigma^2
            \end{align*}
           for some constant $C$.
    \end{itemize}
    Therefore, 
    $$\|\widehat\mu_z^{\mathbb V}-z^*\|_2\leq C\sigma^{1+\kappa}\sqrt{\log(1/\sigma)},$$    
    for some constant $C$, and $\widehat\mu_z^{\mathbb V}$ is $\cO(\sigma^{1+\kappa}\sqrt{\log(1/\sigma)})$ to $\cM$.
\end{proof}

\subsubsection{Proof of Theorem \ref{Thm:ContractWithEstDir2}}
\begin{proof}
    Assume $U$ is the projection matrix onto $z-z^*$, and $\widetilde{U}$ is its estimation via $\mu_z^\bB - z$ such that $\|U - \widetilde{U}\|_F\leq C\sigma \sqrt{\log(1/\sigma)}$ for some constant $C$. Let $U^-$ be the complement in $\bR^D$; then, $\widehat \mu_z^\bV$ can be rewritten as follows:
    \begin{align*}
        \widehat \mu_z^\bV &= z + \widetilde{U} \bE_{\nu}\left(Y-z|Y\in \widehat{\bV}_{z}\right)\\
        &=z^* + \widetilde{U}^- \delta_z + \bE_\nu\left(\widetilde{U}(X - z^*)|Y\in \widehat{\bV}_{z} \right) + \bE_\nu\left(\widetilde{U} \xi|Y\in \widehat{\bV}_{z} \right),
    \end{align*}
    which also can be divided into three parts. According to Lemma \ref{Lemma:PartManifold}, we can assume $\|X- z^*\|_2 \leq C\sigma\sqrt{\log(1/\sigma)}$ for some constant $C$. Let $\delta_z = z - z^*$. The three parts can be evaluated as follows:
    \begin{itemize}
        \item [(a)] $\widetilde{U}^- \delta_z$:
        \vspace{5pt}
        
        As $\delta_z$ is orthogonal to the base of $U^-$,
        \begin{align*}
            \|\widetilde{U}^- \delta_z\|_2
            \leq \|U^- \delta_z\|_2 + \|U^- -\widetilde{U}^-\|_F\|\delta_z\|_2
            \leq C\sigma^2\sqrt{\log(1/\sigma)}.
        \end{align*}
        \item [(b)] $\bE_\nu\left(\widetilde{U} (X - z^*)|Y\in \widehat{\bV}_{z} \right)$:
        \vspace{5pt}
        
        Using Jensen's inequality,
        $$\left\|\bE_\nu\left(\widetilde{U}(X - z^*)|Y\in \widehat{\bV}_{z} \right)\right\|_2
        \leq \bE_\nu\left(\left\|\widetilde{U}(X - z^*)\right\|_2|Y\in \widehat{\bV}_{z} \right).$$
        
        Since $U$ is one direction of $\Pi^\perp_{z^*}$,
        \begin{align*}
            \| U (X - z^*)\|_2\leq\|\Pi^\perp_{z^*}(X - z^*)\|_2.
        \end{align*}
        
        As $z^*$ and $X$ are exactly on $\cM$, according to Lemma \ref{Lemma:ReachCond},
            \begin{align*}
                \left\|\widetilde{U}(X - z^*)\right\|
                &= \left\|U(X - z^*) + (\widetilde{U} - U)(X - z^*)\right\|_2\\
                &\leq \left\|\Pi^\perp_{z^*}(X - z^*)\right\|_2 + \|\widetilde{U} - U\|_F\left\|X - z^*\right\|_2\\
                &\leq \frac{1}{2\tau}\|X - z^*\|^2_2 + \sigma\|X - z^*\|_2\\
                &\leq C\sigma^2{\log(1/\sigma)}.
            \end{align*}
        \item [(c)] $\bE_\nu\left(\widetilde{U} \xi|Y\in \widehat{\bV}_{z} \right)$:
        \vspace{5pt}
        
         The dislocation $\Delta$ can be evaluated as
            \begin{align*}
                \Delta &= \|\widetilde{U} (z - X)\|_2\\
                &\leq \|\widetilde{U} (z - z^*)\|_2 + \|\widetilde{U}(z^* - X) \|_2\\
                &\leq \|z - z^*\|_2 + C\sigma^{2}\sqrt{\log(1/\sigma)}\\
                &\leq C\sigma.
            \end{align*}
            Thus, if we let $\xi^\prime = \widetilde{U}\xi$, according to Proposition \ref{Prop:TruncateNormalConcentration},
            \begin{align*}
                \left\|\bE_\nu\left(\widetilde{U}\xi|Y\in \widehat{\bV}_{z} \right)\right\|_2
                &\leq \bE_\omega\left(\left\|\bE_{\phi}\left(\widetilde{U}\xi|X,~X+\xi\in \bV_{z} \right)\right\|_2\right)\\
                &\leq \bE_\omega\left(\|\bE\left(\xi^\prime|\xi^\prime \in \cB_{D-d}(a_\Delta,r_2)\right)\|_2\right)\\
                &\leq C\sigma^2
            \end{align*}
           for some constant $C$.
    \end{itemize}
    Therefore,
    $$\|\widehat \mu_z^\bV-z^*\|_2\leq C\sigma^2{\log(1/\sigma)},$$
    for some constant $C$.
\end{proof}
\subsubsection{Proof of Theorem \ref{Thm:appro G(z)}}
\begin{proof}
    Let $n$ be the number of samples falling in $\widehat{\bV}_z$. According to Lemma \ref{Thm:Concentration_with_Sample},
    $$
    \sqrt{n}\left(G(z) -\widehat{\mu}_w\right) \overset{d}{\to} N(0,\Sigma),$$
    where $\Sigma\leq r^2 I_D$ and 
    \begin{align*}
        \widehat\mu_w &= \frac{\bE(\beta(Y)Y)}{\bE(\beta(Y))}= \frac{\int y\beta(y)\nu(y)\,dy}{\int \beta(y)\nu(y)\,dy}\\
        &=\frac{\int y w_u(\widehat{U}(y-z)) w_v((I_D - \widehat{U})(y-z))\nu(y)\,dy}{\int w_u(\widehat{U}(y-z)) w_v((I_D - \widehat{U})(y-z))\nu(y)\,dy}.
    \end{align*}
    To obtain the asymptomatic property of $G(z)$, we need to investigate $\widehat{\mu}_w$ first. For simplicity, we define two more expectations:
    \begin{align*}
        \mu_w &=\frac{\int y w_u({U}(y-z)) w_v((I_D - {U})(y-z))\nu(y)\,dy}{\int w_u({U}(y-z)) w_v((I_D - U)(y-z))\nu(y)\,dy}\\
        &=:\frac{\int y\beta^*(y)\nu(y)\,dy}{\int \beta^*(y)\nu(y)\,dy},\\
        \mu_{w,\bD} &= \frac{\int y\beta^*(y)\nu_{\bD}(y)\,dy}{\int \beta^*(y)\nu_{\bD}(y)\,dy}.
    \end{align*}
    In what follows, we will show that $\widehat\mu_w$ is $\cO(\sigma^2\log(1/\sigma))$-close to $\cM$ with high probability. Since $\|\mu_{w,\bD} - \mu_{w}\|_2\leq C\sigma^2\log(1/\sigma)$, from Lemma \ref{Lemma:LocalGeneralDiff}, we only need to show that $\|\widehat\mu_w - \mu_{w}\|_2\leq C\sigma^2\log(1/\sigma)$ with high probability and $\mu_{w,\bD}$ is $\cO(\sigma^2)$-close to $\cM$. 
    
    \vspace{10pt}
    {\noindent \underline{Bound of $\|\widehat\mu_w - \mu_{w}\|_2$}:}
    \vspace{5pt}
    
    According to Theorem \ref{Thm:Angle:F(z)}, $\|\widehat U - U\|_F\leq C_1\sigma\sqrt{\log(1/\sigma)}$ with probability at least $1 - C_2\exp(-C_3\sigma^{-c})$, and the first derivatives of $w_u$ and $w_v$ are both upper bounded by a constant $C$. We have
    \begin{align*}
        |\widehat{W}_u - W_u|
        &=:|w_u(\widehat{U}(y-z)) - w_u({U}(y-z))|\\
        &\leq C\|\widehat U - U\|_F\|y-z\|_2\\
        &\leq C_4\sigma^2\log(1/\sigma),
    \end{align*}
    \begin{align*}
        |\widehat{W}_v - W_v|
        &=:|w_v((I_D-\widehat{U})(y-z)) - w_v((I_D-{U})(y-z))|\\
        &\leq C\|\widehat U - U\|_F\|y-z\|_2\\
        &\leq C_5\sigma^2\log(1/\sigma),
    \end{align*}
    and thus,
    \begin{align*}
        |\beta^*(y)-\beta(y)|
        &= |W_u W_v - \widehat{W}_u\widehat{W}_v|\\
        &= |W_u W_v - W_u\widehat{W}_v + W_u\widehat{W}_v - \widehat{W}_u\widehat{W}_v|\\
        &\leq W_u|\widehat{W}_v - W_v| + \widehat{W}_v|\widehat{W}_u - W_u|\\
        &\leq C_6\sigma^2\log(1/\sigma),
    \end{align*}
    where the last inequality is the result of both $W_u$ and $\widehat{W}_v$ being in the interval $[0,1]$. Then,
    \begin{align*}
        \|\widehat\mu_w - \mu_{w}\|_2
        &=\left\|\frac{\int y\beta(y)\nu(y)\,dy}{\int \beta(y)\nu(y)\,dy} - \frac{\int y\beta^*(y)\nu(y)\,dy}{\int \beta^*(y)\nu(y)\,dy}\right\|_2\\
        &\leq \left\|\frac{\int y(\beta(y)-\beta^*(y))\nu(y)\,dy}{\int \beta(y)\nu(y)\,dy}\right\|_2 \\
        &\quad + \left\|\frac{\int y\beta^*(y)\nu(y)\,dy}{\int \beta^*(y)\nu(y)\,dy}\right\|_2 \left|\frac{\int y(\beta(y)-\beta^*(y))\nu(y)\,dy}{\int \beta(y)\nu(y)\,dy}\right|\\
        &\leq C_6\sigma^2\log(1/\sigma)\frac{1+\|\mu_w - z^*\|_2}{\bE(\beta(Y))}\\
        &\leq C_7\sigma^2\log(1/\sigma),
    \end{align*}
with probability at least $1 - C_2\exp(-C_3\sigma^{-c})$.

\vspace{10pt}
{\noindent \underline{Property of $\mu_{w,\bD}$}:}
\vspace{5pt}

As in the proof of Section 3, we let $z^*$ be the origin and $z - z^*$ be the $(d+1)$-th direction in the Cartesian-coordinate system. We also let $p = (y^{(1)},\cdots,y^{(d)})$, $t = y^{(d+1)}$, and $q = (y^{(d+2)},\cdots,y^{(D)})$. With $U$ the same as before, let $\|u\| = \|U(y-z)\| = |t - \Delta|$, $\|v\| = \|p + q\|_2$. Assume $\mu_{w,\bD} = (\mu^{(1)},\cdots,\mu^{(D)})$; then, the $i$-th element of $\mu_{w,\bD}$, i.e., $\mu^{(i)}$, can be expressed as
\begin{align*}
\frac{\int_{\|p\|_2^2 +\|q\|_2^2\leq r_1^2} \int_{(t-\Delta)^2\leq r_2^2} y^{(i)}w_u(|t-\Delta|) (r_1^2 - \|p\|_2^2- \|q\|_2^2)^k \phi_\sigma(t)\phi_\sigma(q)\,dt\,dp\,dq}{\int_{\|p\|_2^2 +\|q\|_2^2\leq r_1^2} \int_{(t-\Delta)^2\leq r_2^2} w_u(|t-\Delta|) (r_1^2 - \|p\|_2^2- \|q\|_2^2)^k \phi_\sigma(t)\phi_\sigma(q)\,dt\,dp\,dq}.
\end{align*}
For $i\neq d+1$:
\begin{align*}
    \mu^{(i)} &\approx \frac{\int_{\|p\|_2^2 +\|q\|_2^2\leq r_1^2} y^{(i)} (r_1^2 - \|p\|_2^2- \|q\|_2^2)^k\phi_\sigma(q)\,dp\,dq}{\int_{\|p\|_2^2 +\|q\|_2^2\leq r_1^2}(r_1^2 - \|p\|_2^2- \|q\|_2^2)^k\phi_\sigma(q)\,dp\,dq}\\
    &=\frac{\int_{\|p\|_2^2 +\|q\|_2^2+\|\eta\|_2^2\leq r_1^2} y^{(i)}\phi_\sigma(q)\,d\eta\,dp\,dq}{\int_{\|p\|_2^2 +\|q\|_2^2+\|\eta\|_2^2\leq r_1^2}\phi_\sigma(q)\,d\eta\,dp\,dq}\\
    &= 0,
\end{align*}
where $\eta \in \bR^{2k}$ is an auxiliary vector making the above conditional expectation an analogy of Lemma 3.4 in $D+2k-1$-dimensional space.\\
For $i = d+1$, we assume $r_2 = C\sigma\sqrt{\log(1/\sigma)} > 2\Delta$. We have
\begin{align*}
    \mu^{(d+1)} &\approx \frac{\int_{\Delta- r_2}^{\Delta+ r_2} tw_u(|t-\Delta|) \phi_\sigma(t)\,dt}{\int_{\Delta- r_2}^{\Delta+ r_2}w_u(|t-\Delta|) \phi_\sigma(t)\,dt}\\
    &\leq C \int_{\Delta- r_2}^{\Delta+ r_2} tw_u(|t-\Delta|) \phi_\sigma(t)\,dt\\
    &= C \int_{0}^{\Delta+ r_2} t[w_u(|t-\Delta|)-w_u(|t+\Delta|)] \phi_\sigma(t)\,dt\\
    &= C \int_{r_2/2 - \Delta}^{\Delta+ r_2} t[w_u(|t-\Delta|)-w_u(|t+\Delta|)] \phi_\sigma(t)\,dt\\
    &\leq C \int_{r_2/2 - \Delta}^{\Delta+ r_2} t \phi_\sigma(t)\,dt\\
    &\leq C\sigma^2.
\end{align*}
Therefore, $\|\mu_{w,\bD}- z^*\|_2\leq C\sigma^2$.

\vspace{10pt}
{\noindent Combining all the results above, we have}
\vspace{3pt}
$$\|\widehat\mu_w - z^*\|_2\leq \|\widehat\mu_w - \mu_{w}\|_2+\|\mu_{w} - \mu_{w,\bD}\|_2+\|\mu_{w,\bD}- z^*\|_2\leq C\sigma^2\log(1/\sigma),$$
with probability at least $1 - C_2\exp(-C_3\sigma^{-c})$.

According to Corollary \ref{Col:Local_sample_size} and Corollary \ref{col:Concentrate_with_n}, if the sample size $N= C_1r_1^{-d}\sigma^{-3}$, 
$$\|G(z)-z^*\|_2\leq C_2\sigma^2\log(1/\sigma)$$
with probability at least $1 - C_2\exp(-C_3\sigma^{-c})$, for some constant $c$, $C_1$, $C_2$, and $C_3$.

\end{proof}

\subsection{Proof of content in Section 5}
\subsubsection{Proof of Theorem \ref{Thm:out_manifold_1}}
\begin{proof}
    To show $d_H(\mathcal{S},\cM)\leq C\sigma^2\log(1/\sigma)$ is equivalent to showing that
    \begin{equation*}
        \left\{
        \begin{aligned}
            &d(s,\cM) \leq C\sigma^2\log(1/\sigma),~\text{for all } s \in \mathcal{S},\\
            &d(x,\mathcal{S}) \leq C\sigma^2\log(1/\sigma),~\text{for all } x \in \cM.
        \end{aligned}\right.
    \end{equation*}

    The first condition is clear. For any $s\in\mathcal{S}$, there exists a $y_s\in \Gamma$ such that $s = \widehat \mu_{y_s}^{\mathbb{V}}$. Then, according to Theorem \ref{Thm:ContractWithEstDir2}, 
    \begin{equation}
        \label{eq:proof_dHS_1}
        d(s,\cM) \leq \|\mu_{y_s}^{\mathbb{V}}-y_s^*\|_2\leq C\sigma^2\log(1/\sigma).
    \end{equation}

    For the second inequality, let $x$ be an arbitrary point on $\cM$. Then, there exists a point $y_x \in \Gamma$ such that $x$ is its projection on $\cM$. Hence, from Theorem \ref{Thm:ContractWithEstDir2} again, 
    \begin{equation}
        \label{eq:proof_dHS_2}
        d(x,\mathcal{S}) \leq \|x - \mu_{y_x}^{\mathbb{V}}\|_2\leq C\sigma^2\log(1/\sigma).
    \end{equation}
    
    Because \eqref{eq:proof_dHS_1} and \eqref{eq:proof_dHS_2} hold for any $s \in \mathcal{S}$ and $x\in\cM$, we complete the proof.
\end{proof}

\subsubsection{Proof of Theorem \ref{Thm:out_manifold_2}}
\begin{proof}
From the smoothness of $\Gamma$ and $G$, it is evident that $\widehat{\mathcal{S}}$ becomes a smooth manifold. 

For any $s\in\widehat{\mathcal{S}}$, there exists a $y_s\in \Gamma$ such that $s = G(y_s)$. Then, according to Theorem \ref{Thm:appro G(z)}, 
    \begin{equation}
        \label{eq:proof_dHS_appro_1}
        d(s,\cM) \leq \|G(y_s)-y_s^*\|_2\leq C\sigma^2\log(1/\sigma),
    \end{equation}
with a high probability.
For the second inequality, let $x$ be an arbitrary point on $\cM$. Then, there exists a point $y_x \in \Gamma$ such that $x$ is its projection on $\cM$. Hence, from Theorem \ref{Thm:appro G(z)} again, 
    \begin{equation}
        \label{eq:proof_dHS_appro_2}
        d(x,\mathcal{S}) \leq \|x - G(y_x)\|_2\leq C\sigma^2\log(1/\sigma)
    \end{equation}
with a high probability. Thus the proof is completed.
\end{proof}

\subsubsection{Proof of Theorem \ref{Thm:out_manifold_part_d}}
\begin{proof}
By fixing the projection matrix $\widehat{\Pi}_x^{\perp}$ within a neighbour, the function defining $\widehat{\cM}_x$ is a smooth map with constant rank $D-d$, and thus, according to the Constant-Rank Level-Set Theorem, $ \widehat{\cM}_x $ is a properly embedded submanifold of dimension $d$ in $\bR^D$.

To show the distance, let $y$ be an arbitrary point on $\widehat{\cM}_x$. Then there is
$$
\widehat{\Pi}_x^{\perp}(G(y) - y) =  \widehat{\Pi}_x^{\perp}(G(y) - y^* - (y - y^*) ) = 0,
$$
where $y^*$ is the projection of $y$ onto $\cM$. Thus, there is
$$
\begin{aligned}
    \|\widehat{\Pi}_x^{\perp}(y - y^*)\|_2 
    &= \|\widehat{\Pi}_x^{\perp}(G(y) - y^*)\|_2 \\
    &\leq \|G(y) - y^*\|_2\\
    &\leq C\sigma^2(\log(1/\sigma)),
\end{aligned}
$$
with high probability. Since $y\in \cB_D(x,c\tau)$, there exists $c_1\in (0,1)$ such that $\|\widehat{\Pi}_x^{\perp} - {\Pi}_{y^*}^{\perp}\|\leq c_1$ with high probability. Hence, 
$$
\begin{aligned}
    \|\widehat{\Pi}_x^{\perp}(y - y^*)\|_2 
    &\geq \left|\|{\Pi}_{y^*}^{\perp}(y - y^*)\|_2 - \|({\Pi}_{y^*}^{\perp} - \widehat{\Pi}_x^{\perp})(y - y^*)\|_2 \right|\\
    &\geq |1-c_1| \|y-y^*\|_2\\
    &\geq c \|y-y^*\|_2.
\end{aligned}
$$
Therefore, for any $ y\in \widehat{\cM}_x$
$$d(y, \mathcal{M}) = \|y-y^*\|\leq C\sigma^2\log(1/\sigma)$$
with high probability.

\end{proof}

\subsubsection{Proof of Theorem \ref{Thm:out_manifold_global_d}}
\begin{proof}
The proof of (I) and (II) is exactly the same as the proof in Theorem \ref{Thm:out_manifold_2}. To reveal (III), Let $a, b\in \widehat{\cM}$, and $a\neq b$.
When $\|a-b\|_2 \geq c\sigma\tau_0$,  $\|a - b\|_2^2/d(b, T_{a}\widehat{\cM}) \geq c\sigma\tau_0$ is clearly true since $\|a-b\|_2 \geq d(b, T_a\widehat{\cM})$. Hence, we assume that $\|a-b\|_2 < c\sigma\tau_0$. We further denote $a_0 = G^{-1}(a) \in \widetilde{\cM}$ and $b_0 = G^{-1}(b) \in \widetilde{\cM}$. 

Let $J_G$ denote the Jacobi matrix of $G$, then $J_G(a_0)$ is a linear mapping from $T_{a_0}\widetilde{\cM}$ to $T_{a}\widehat{\cM}$. Consider a local chart of $\Gamma$ at $T_{a_0}\Gamma$, then the natural projection from $\widehat{\cM} \cap \cB(a_0, \|b_0-a_0\|_2 )$ to $T_{a_0}\widehat{\cM} \cap \cB(a_0, \|b_0-a_0\|_2 )$ is an invertible mapping. Denote the inverse mapping of the natural projection as $\phi$, and then there exists $\eta_{b_0}\in T_{a_0}\widehat{\cM}$ such that $\phi(0) = a_0$, and $\phi(\eta_{b_0}) = b_0$. Since $\|a-b\|_2 < c\sigma\tau_0$, there exists $0<c<C$ such that
$$
c\|a_0-b_0\|_2 \le \|\eta_{b_0} - \eta_{a_0}\|_2 = \|\eta_{b_0}\|_2 \le C\|a_0-b_0\|_2.
$$
Using the Taylor expansion of G at $a_0$, there is
\begin{align*}
    d(b, T_{a}\widehat{\cM}) &\leq\|b - J_G(a_0)\eta_{b_0} - G(a_0)\|_2\\
    &= \|G(\phi(\eta_{b_0})) - J_G(a_0)\eta_{b_0} - G(a_0)\|_2\\
    & = \|G(\eta_{b_0}) - J_G(a_0)\eta_{b_0} - G((a_0))\|_2 + \|G(\eta_{b_0})-G(\phi(\eta_{b_0}))\|_2\\
    &\leq \|H_G(z_1)\|_2\|\eta_{b_0}\|_2^2 + \|J_G(z_2)\|_2\|\eta_{b_0}- \phi(\eta_{b_0})\|_2\\
    &\leq C(M_G + L_G)\|\eta_{b_0}\|_2^2\\
    &\leq C(M_G + L_G)\|a_0-b_0\|_2.
\end{align*}
Here, $H_G$ is the Hessian matrix of $G$, $M_G$ and $L_G$ are the upper bound of $\|H_G\|_2$ and $\|J_G\|_2$. Moreover, 
$$
\|a_0-b_0\|_2 \le \frac{1}{\ell_G} \|G(a_0) - G(b_0)\|_2 = \frac{1}{\ell_G} \|G(a_0) - G(b_0)\|_2,
$$
where $\ell_G$ is the lower bound of $J_G$. Hence, we have
$$
d(b, T_{a}\widehat{\cM}) \leq C\frac{M_G + L_G}{\ell_G} \|a-b\|^2_2.
$$
Finally, the reach of $\widehat{\cM}$ can be bounded below as
$$
 {\reach}(\widehat{\cM}) \geq  \min\left\{c\sigma\tau_0, \  c\frac{\ell_G}{M_G + L_G}\right\}.
$$
\end{proof}

\subsubsection{Proof of Proposition \ref{Prop:M_in_d_dim}}
\begin{proof}
    Since $\widetilde{\cM} \subset \Gamma$, it is clear that $d_H(\widetilde{\cM},\cM)\leq C\sigma$. In the following section, we show that $\operatorname{dim} \widetilde{\cM} = d$.
    
    Recall that $F(y) = \sum_i \alpha_i(y) y_i$, with $\sum_i \alpha_i(y) = 1$. Let $H(y) = F(y)-y$; then, we have 
    \begin{align*}
        H(y)
        &= \sum_i \alpha_i(y) y_i - y\\
        &= \sum_i \alpha_i(y) (y_i - y).
    \end{align*}
    According to Lemma 17 and Theorem 18 in \cite{yao2019manifold}, for any unit norm direction vector $v\in\bR^D$,
    $$\|\partial_v H(y) - v\|_2 \leq Cr_0,$$
    with high probability. In the case of $\sigma$ being sufficiently small, the Jacobian matrix of $H$, denoted by $J_H$, is full rank. For any fixed arbitrary rank $D-d$ projection matrix $\Pi^*$,
    $$
        \Pi^*H: \bR^D\to \bR^D,
    $$
    $$
        J_{\Pi^*H} = \Pi^*J_H.
    $$
    In other words, $\Pi^*H$ is a smooth map with constant rank $D-d$, and thus, according to the Constant-Rank Level-Set Theorem, $\widetilde{\cM} = \{y\in\Gamma: \Pi^*H(y) = 0\}$ is a properly embedded submanifold of co-dimension $D-d$ in $\Gamma$. Therefore, $\operatorname{dim} \widetilde{\cM} = d$.
\end{proof}

\section{Supplement to the simulation}

\begin{figure}[ht]
    \centering
    \includegraphics[width = 0.9\linewidth, height = 0.34\linewidth]{./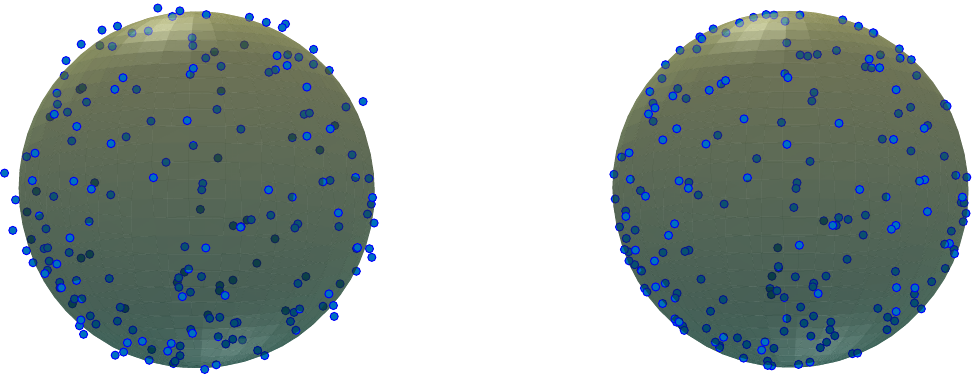}
    \caption{Assessing the performance of ysl23 in fitting the sphere ($N = 5\times 10^4$, $N_0 = 100$, $\sigma = 0.06$): the left panel displays points in $\mathcal{W}$ surrounding the underlying manifold, while the right panel illustrates the corresponding points in $\widehat{\mathcal{W}}$.}\label{Fig:per_sphere}
\end{figure}

\begin{figure}[ht]
    \centering
    \includegraphics[width = 0.85\linewidth, height = 0.26\linewidth]{./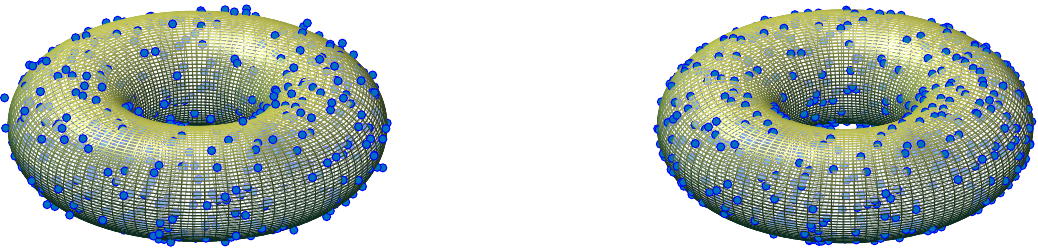}
    \caption{Assessing the performance of ysl23 in fitting the torus ($N = 5\times 10^4$, $N_0 = 100$, $\sigma = 0.06$): the left panel displays points in $\mathcal{W}$ surrounding the underlying manifold, while the right panel illustrates the corresponding points in $\widehat{\mathcal{W}}$.}\label{Fig:per_torus}
\end{figure}

\begin{figure}[ht]
    \centering
    \includegraphics[width = 1\linewidth, height = 0.5\linewidth]{./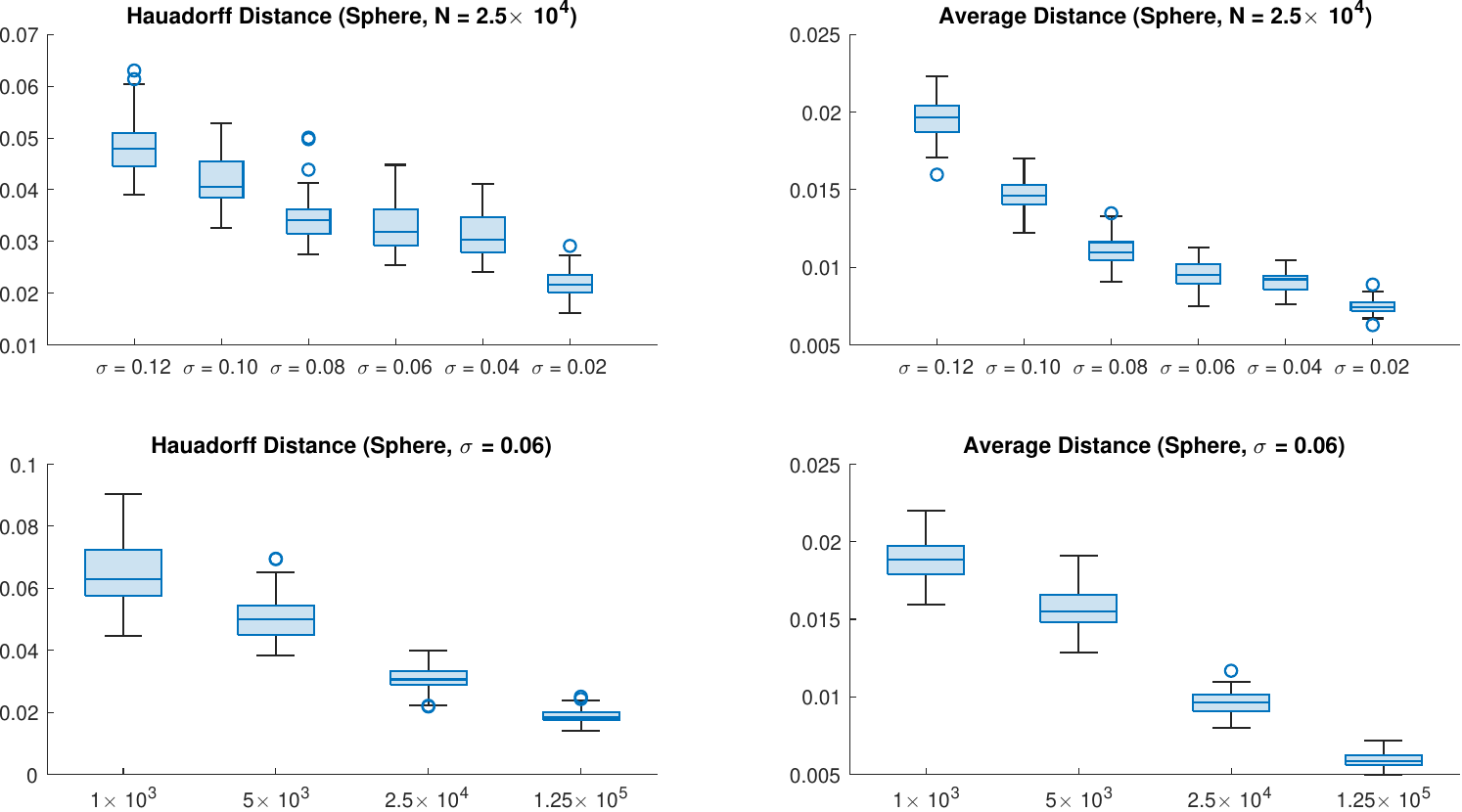}
    \caption{The asymptotic performance of ysl23 when fitting a sphere. The top two figures show how the two distance change with $\sigma$, while the bottom two figures show how the distances change with $N$.}\label{Fig:asym_sphere}
\end{figure}

\begin{figure}[ht]
    \centering
    \includegraphics[width = 1\linewidth, height = 0.5\linewidth]{./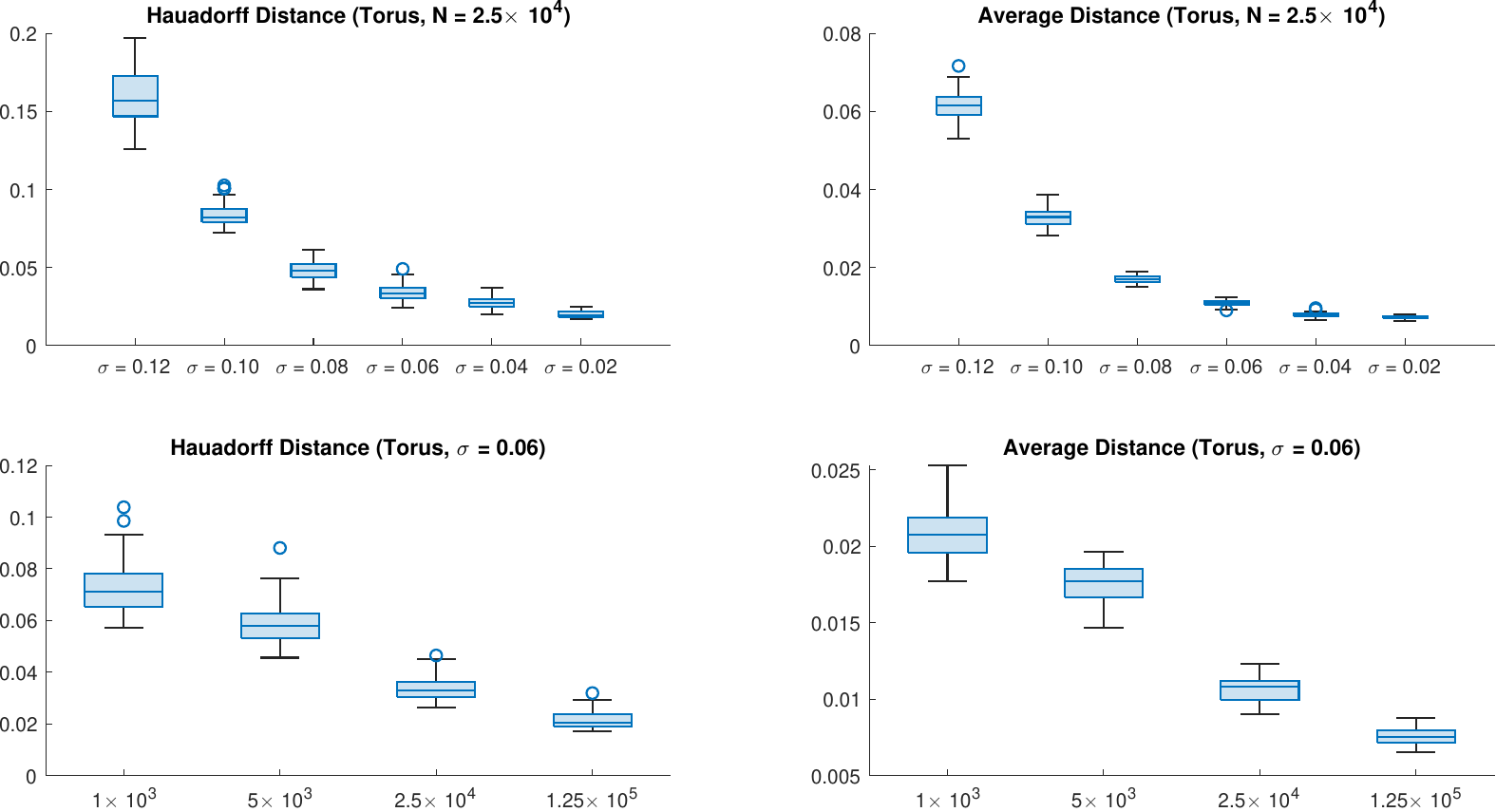}
    \caption{The asymptotic performance of ysl23 when fitting a torus. The top two diagrams show how the two distance change with $\sigma$, while the bottom two figures show how the distances change with $N$.}\label{Fig:asym_torus}
\end{figure}

\begin{figure}[ht]
    \centering
    \includegraphics[width = 1\linewidth, height = 0.7\linewidth]{./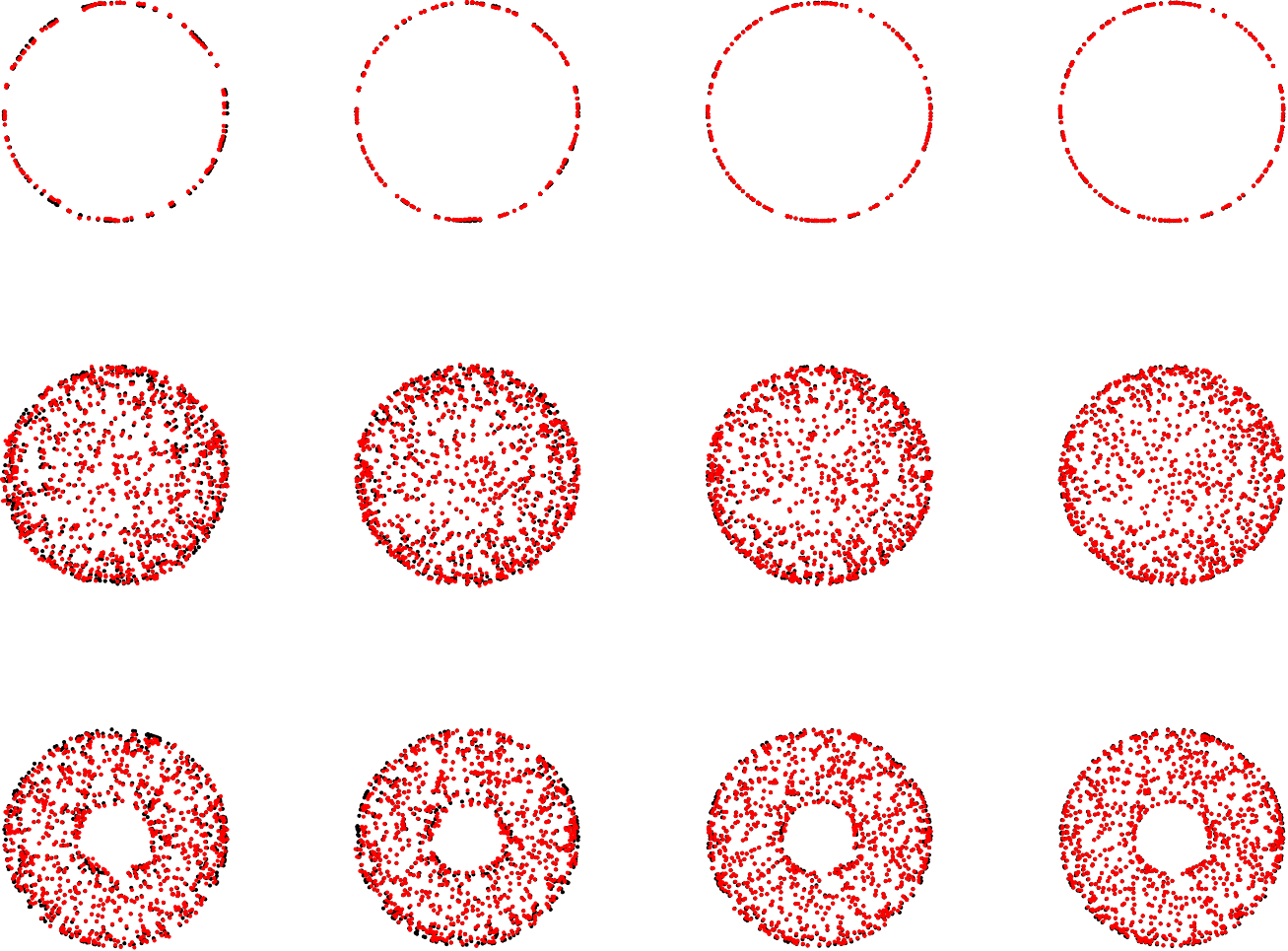}
    \caption{The performance of ysl23 with increasing $N$. Top row, from left to right:  $N = 3\times 10^2$, $3\times 10^3$, $3\times 10^4$, $3\times 10^5$. Middle row, from left to right: $N = 1\times 10^3$, $5\times 10^3$, $2.5\times 10^4$, $1.25\times 10^5$. Bottom row, from left to right: $N = 1\times 10^3$, $5\times 10^3$, $2.5\times 10^4$, $1.25\times 10^5$. It can be observed that for each example, as the number of samples increases, the distribution of $\mathcal{W}$ output by ysl23 becomes more uniform.}\label{Fig:scatter_more}
\end{figure}

\end{appendix}
\clearpage

\begin{supplement}
\stitle{Supplementary material for ``Manifold Fitting: an Invitation to Statistics"}
\slink[doi]{COMPLETED BY THE TYPESETTER}
\sdatatype{.pdf}
\sdescription{We include all materials omitted from the main text. }
\end{supplement}
\bibliographystyle{imsart-number} 
\bibliography{references}
\end{document}